\DeclareMathAlphabet{\mathscr}{OT1}{pzc}{m}{it} 
\numberwithin{equation}{section}
\setlist{labelindent=\parindent,leftmargin=*}
\newtheorem{theorem}{Theorem}[section]
\newtheorem{notation}[theorem]{Notation}
\newtheorem{lemma}[theorem]{Lemma}
\newtheorem{proposition}[theorem]{Proposition}
\newtheorem{corollary}[theorem]{Corollary}
\newtheorem{definition}[theorem]{Definition}
\newtheorem{hypothesis}[theorem]{Hypothesis}
\newtheorem{remark}[theorem]{Remark}
\newenvironment{prooff}[1]{\begin{trivlist}
\item {\it \bf Proof}\quad} {\qed\end{trivlist}}
\newsavebox\myboxA
\newsavebox\myboxB
\newlength\mylenA
\newcommand*\xoverline[2][0.75]{%
    \sbox{\myboxA}{$\m@th#2$}%
    \setbox\myboxB\null% Phantom box
    \ht\myboxB=\ht\myboxA%
    \dp\myboxB=\dp\myboxA%
    \wd\myboxB=#1\wd\myboxA% Scale phantom
    \sbox\myboxB{$\m@th\overline{\copy\myboxB}$}%  Overlined phantom
    \setlength\mylenA{\the\wd\myboxA}%   calc width diff
    \addtolength\mylenA{-\the\wd\myboxB}%
    \ifdim\wd\myboxB<\wd\myboxA%
       \rlap{\hskip 0.5\mylenA\usebox\myboxB}{\usebox\myboxA}%
    \else
        \hskip -0.5\mylenA\rlap{\usebox\myboxA}{\hskip 0.5\mylenA\usebox\myboxB}%
    \fi}
\title{Backward Stochastic Differential Equations with no driving martingale, Markov processes and associated Pseudo Partial Differential Equations.}
\author{
Adrien BARRASSO \thanks{ENSTA ParisTech, Unit\'e de Math\'ematiques
 appliqu\'ees, 828, boulevard des Mar\'echaux, F-91120 Palaiseau, France 
 and Ecole Polytechnique,  F-91128 Palaiseau, France.
E-mail: \sf adrien.barrasso@ensta-paristech.fr \\}
\qquad\quad
%\textsc{Andrea COSSO}\thanks{Politecnico di Milano, Dipartimento di Matematica, via Bonardi 9, 20133 Milano, Italy. e-mail: \texttt{andrea.cosso@polimi.it}} \\
%\qquad\quad
Francesco RUSSO\thanks{ENSTA ParisTech, Unit\'e de Math\'ematiques appliqu\'ees, 828, boulevard des Mar\'echaux, F-91120 Palaiseau, France. E-mail: \sf francesco.russo@ensta-paristech.fr}}
\date{December 2017}
\begin{document}
\maketitle

{\bf Abstract.}

We discuss a class of Backward Stochastic Differential Equations
(BSDEs) with no driving martingale. When the randomness of the driver 
depends on a general Markov process $X$, those BSDEs are denominated 
Markovian BSDEs and can be associated to a deterministic problem,
called Pseudo-PDE which constitute the natural generalization of a  parabolic
semilinear PDE which naturally appears when the underlying filtration 
is Brownian. We consider two aspects of well-posedness for
the Pseudo-PDEs: {\it classical} and {\it martingale} solutions. 

\bigskip 
{\bf MSC 2010} Classification. 
60H30; 60H10; 35S05; 60J35; 60J60; 60J75.

\bigskip
{\bf KEY WORDS AND PHRASES.} Martingale problem; pseudo-PDE; 
Markov processes; backward stochastic differential equation.

%\newpage
\section{Introduction}

This paper focuses on 
a new concept of 
  Backward Stochastic Differential  Equation (in short BSDE)
with no driving martingale of the form
\begin{equation}\label{BSDEIntro2}
Y_t = \xi + \int_t^T {\hat f} \left(r,\cdot,Y_r,\sqrt{\frac{d\langle M\rangle}{dV}}(r)\right)dV_r  -(M_T - M_t),
\end{equation}
defined on a fixed stochastic basis fulfilling the usual conditions. 
 $V$ is a given bounded non-decreasing continuous adapted process, 
  $\xi$ (resp. $\hat f$) is a prescribed terminal condition
(resp. driver). The unknown will be a couple of cadlag adapted processes $(Y,M)$
where $M$ is a martingale.
When $V_t=t$  \eqref{BSDEIntro2} is a particular case
of the class of BSDEs introduced and studied by \cite{qian}, for which bring a new light.
%%% 
%that we have only discovered after concluding the paper. 

A special case of such BSDEs are the Markovian BSDEs of the form
\begin{equation}\label{BSDEIntro}
Y^{s,x}_t = g(X_T) + \int_t^T f\left(r,X_r,Y^{s,x}_r,\sqrt{\frac{d\langle M^{s,x}\rangle}{dV}}(r)\right)dV_r  -(M^{s,x}_T - M^{s,x}_t),
\end{equation}
defined in a canonical space $\left(\Omega,\mathcal{F}^{s,x},(X_t)_{t\in[0,T]},(\mathcal{F}^{s,x}_t)_{t\in[0,T]},\mathbbm{P}^{s,x}\right)$ \\
where 
$(\mathbbm{P}^{s,x})_{(s,x)\in[0,T]\times E}$ corresponds to the laws
 (for different starting times $s$ and starting points $x$) of an
 underlying forward  Markov process  with time index $[0,T]$, 
taking values in a 
Polish state space $E$. Indeed this Markov process is 
supposed to solve a  {\it martingale problem} 
with respect to  a given {\it deterministic} operator $a$, 
which is the natural generalization of stochastic differential
equation in law.
% and which is
% characterized as the solution of a martingale problem
% related to a certain operator $a$.
\eqref{BSDEIntro} will be naturally associated with a deterministic 
problem involving $a$, which will be called {\it Pseudo-PDE}, 
being of the type 
\begin{equation}\label{PDEIntro}
\left\{
\begin{array}{rccc}
 a(u)(t,x) + f\left(t,x,u(t,x),\sqrt{\Gamma(u,u)}(t,x)\right)&=&0& \text{ on } [0,T]\times E   \\
 u(T,\cdot)&=&g,& 
\end{array}\right.
\end{equation}
where $\Gamma(u,u)=a(u^2)-2ua(u)$ is a potential theory operator called
 the {\it carr\'e du champs operator}. 
The Markovian BSDE 
\eqref{BSDEIntro} 
  seems to be appropriated  in the case when
 the forward underlying process $X$ is a general Markov process
which does not rely to a fixed reference process or random field
 as  a Brownian motion or a Poisson measure.
\\

The classical notion of Brownian BSDE was introduced  in 1990 by E. Pardoux and
S. Peng in \cite{parpen90}, after an early work
of J.M. Bismut in 1973 in \cite{bismut}. It is  a stochastic differential
equation with prescribed  terminal condition 
$ \xi$ and  driver $\hat f$; the
unknown is a couple $(Y,Z)$ of adapted processes. Of particular interest
is the case when the randomness of the
 driver is expressed through a forward diffusion process $X$
and the terminal condition only depends on $X_T$.
 %  the BSDE is a Forward BSDE (FBSDE).
The solution, when it exists, is usually indexed by the starting time
$s$ and starting point $x$ of the forward diffusion $X= X^{s,x}$, and it 
is expressed by
\begin{equation}\label{BSDEIntroN}
\left\{\begin{array}{rcl}
X^{s,x}_t &=& x+ \int_s^t \mu(r,X^{s,x}_r)dr +\int_s^t \sigma(r,X^{s,x}_r)dB_r\\
Y^{s,x}_t &=& g(X^{s,x}_T) + \int_t^T f\left(r,X^{s,x}_r,Y^{s,x}_r,Z^{s,x}_r\right)dr  -\int_t^T Z^{s,x}_rdB_r,
\end{array}\right.
\end{equation}
where $B$ is a Brownian motion. 
Existence and uniqueness of \eqref{BSDEIntroN}
(that we still indicate with  BSDE)  above was established first
supposing essentially Lipschitz conditions on $f$ with respect to the third
and fourth variable. $\mu$ and $\sigma$ were also supposed to be 
Lipschitz (with respect to $x$). In the sequel those conditions were
considerably relaxed, see \cite{PardouxRascanu}
 and references therein.
\\
 In \cite{peng1991probabilistic} and in \cite{pardoux1992backward} 
previous BSDE was linked 
to the semilinear PDE
\begin{equation}\label{PDEparabolique}
\left\{
\begin{array}{l}
\partial_tu + \frac{1}{2}\underset{i,j\leq d}{\sum} (\sigma\sigma^\intercal)_{i,j}\partial^2_{x_ix_j}u + \underset{i\leq d}{\sum} \mu_i\partial_{x_i}u + f(\cdot,\cdot,u,\sigma\nabla u)=0\quad \text{ on } [0,T[\times\mathbbm{R}^d \\
u(T,\cdot) = g. 
\end{array}\right.
\end{equation}
In particular, if \eqref{PDEparabolique} has a classical smooth solution  $u$ 
then $(Y^{s,x},Z^{s,x}):=(u(\cdot,X^{s,x}_{\cdot}),\sigma\nabla u(\cdot,X^{s,x}_{\cdot}))$ solves the second line of \eqref{BSDEIntroN}. Conversely,  %on $\mu,\sigma,f,g$, 
only under the Lipschitz type  conditions mentioned after \eqref{BSDEIntroN},
the solution of the BSDE can be expressed 
as a function of the forward process
 $(Y^{s,x},Z^{s,x})=(u(\cdot,X^{s,x}_{\cdot}),v(\cdot,X^{s,x}_{\cdot}))$,
see  \cite{el1997backward}. 
%, ici on donne l'impression que $u$ est solution de viscosité dès que la BSDE a une solution mais ça n'est pas complètement vrai. Par exemple si $g,f$ ne sont pas continues en $x$, la BSDE a une solution mais $u$ n'est pas solution de viscosité 
  When $f$ and $g$ are continuous, 
 $u$ is a viscosity solution of \eqref{PDEparabolique}.
%The existence of $v$ was established by  \cite{el1997backward}. 
Excepted in the case when $u$
has some minimal differentiability properties, see e.g.
\cite{fuhrman2005generalized}, it is difficult to say something more
on $v$. 
One major contribution of this paper consists in specifying $v$.
\\

 Since the pioneering work  of \cite{pardoux1992backward}, 
in the Brownian case,
the relations between more general BSDEs and associated
 deterministic problems 
 have been 
studied extensively, and innovations have been made in several directions.
\\ 
In \cite{barles1997backward} 
the authors  introduced 
 a new kind of BSDE including a term with jumps generated by a
 Poisson measure, where an underlying forward process $X$ solves
a jump diffusion equation with Lipschitz type conditions.
 They associated with it  an Integro-Partial Differential
 Equation (in short IPDE) in which some non-local operators are added 
to the classical partial differential maps, and proved that, under some 
continuity conditions on 
the coefficients, the BSDE provides a viscosity solution of the IPDE. 
In chapter 13 of
 \cite{barles1997sde}, under some specific conditions on the coefficients of
 a Brownian BSDE, one produces  a  solution in the sense of distributions
 of the parabolic PDE. Later,   the notion of mild 
solution of the PDE  was used  in \cite{BSDEmildPardouxBally}
 where the authors tackled diffusion operators generating symmetric Dirichlet forms and associated Markov processes thanks to the theory of Fukushima Dirichlet forms, see e.g. \cite{fuosta}. Those results were extended to the case of non symmetric Markov processes in \cite{ZhuMildBSDE}. Infinite dimensional setups were considered  for example  
in  \cite{fuhrman2005generalized} where an infinite
 dimensional BSDE  could produce the mild solution  of a PDE on a 
Hilbert space. 
Concerning  the study of BSDEs driven by more general martingales than
Brownian motion, we have already mentioned BSDEs driven by Poisson measures.
In this respect, more recently, BSDEs driven by marked point processes were introduced in \cite{Confortola}, see also \cite{BandiniBSDE}; in that case
the underlying process does not contain any diffusion term. 
 Brownian BSDEs involving a supplementary orthogonal term were studied in
 \cite{el1997backward}. 
 We can also  mention the study of
 BSDEs driven by a general martingale in \cite{sant}.  
BSDEs  of the same type, but with partial information
have been investigated in \cite{cretarola}.
A first approach to face deterministic problems for those
equations  appears in \cite{laachir};
that paper also contains an application to financial hedging in incomplete
markets. Finally, BSDEs in general filtered space were studied in \cite{qian} as we have already mentioned.
\\

We come back to the motivations of the paper. 
Besides introducing and studying the new class of BSDEs \eqref{BSDEIntro2},
(resp. Markovian BSDEs \eqref{BSDEIntro}),  
we   study  the corresponding 
Pseudo-PDE \eqref{PDEIntro}
and carefully explore their relations 
in the spirit of the existing links between \eqref{BSDEIntroN} and
 \eqref{PDEparabolique}.
For the Pseudo-PDE, we analyze well-posedness at two different levels:
 {\it classical} 
 solutions, which generalize the $C^{1,2}$-solutions of
 \eqref{PDEparabolique} and the so called {\it martingale solutions}.
In the companion paper (see \cite{paper2}), we  also discuss 
 other (analytical)  solutions, that we denominate as {\it decoupled mild} solutions.
The main contributions  of the paper are essentially the following.
In Section \ref{S2} we introduce the notion of BSDE with no driving martingale 
\eqref{BSDEIntro2}.  Theorem \ref{uniquenessBSDE}
 states existence and uniqueness of a solution for that BSDE,
when the final condition $\xi$ is square integrable and the driver $\hat f$
 verifies some integrability and Lipschitz conditions.
For technical reasons we have decided to provide an independent 
constructive proof from the one of \cite{qian}. 
Indeed we need that construction 
for the sequel of the paper. On the other hand, the particular form of our
BSDE allows a simple and direct proof.

%Moreover we study the space of square integrable martingales whose angular brackets are absolutely continuous with respect to the 
%reference  increasing process $V$. 
 In Section \ref{SecMProcess}, we consider an operator and its 
domain $(a,\mathcal{D}(a))$; $V$ will be a  continuous non-decreasing function.
That section is devoted to the formulation of 
the martingale problem concerning our underlying process $X$.
%and the  related  maps $a$ and $\Gamma$.
For each initial time $s$ and initial point $x$
the solution will be a probability 
  $\mathbbm{P}^{s,x}$ under which for any $\phi\in\mathcal{D}(a)$,
\begin{equation*}
\phi( \cdot,X_{\cdot}) - \phi(s,x)-\int_s^{\cdot}a(\phi)(r,X_r)dV_r
\end{equation*}
is a local martingale starting in zero at time $s$. 
We will then assume that this martingale problem is well-posed and  that its solution
 $(\mathbbm{P}^{s,x})_{(s,x)\in[0,T]\times E}$ defines a Markov process. 
In Proposition \ref{H2V=H2}, we prove that, under 
each one of those probabilities, 
 the   angular bracket of
every square integrable martingale is absolutely 
continuous with respect to $dV$. In Definition \ref{domainextended}, we 
suitably define some
 extended domains for the operators $a$ and $\Gamma$, using some locally convex topology.
 In Section \ref{SecPDE}  
 we introduce the Pseudo-PDE \eqref{PDEIntro}
 to which we associate the Markovian BSDE \eqref{BSDEIntro}, considered under
  every $\mathbbm{P}^{s,x}$. We also introduce the  notions of  
{\it classical  solution} in Definition \ref{MarkovPDE}, and of 
{\it martingale solution}
 in Definition \ref{D417}, which is fully probabilistic.
Proposition \ref{CoroClassic} says the following.
Classical solutions of  \eqref{PDEIntro}
 typically belong to the domain $\mathcal{D}(a)$
and are shown also to be essentially martingale solutions.
Conversely  a martingale solution belonging 
 to $\mathcal{D}(a)$ is  a classical solution,
up to so called zero potential sets,  see Definition \ref{zeropotential}.
 Proposition \ref{classicimpliesBSDE} asserts that, given  a
 classical solution $u\in\mathcal{D}(a)$, then for any $(s,x)$ the processes $Y^{s,x} = u(\cdot,X_{\cdot})$ and 
 \\
$M^{s,x} = u(\cdot,X_{\cdot})-u(s,x)-\int_s^{\cdot}f(\cdot,\cdot,u,\sqrt{\Gamma(u,u)})(r,X_r)dV_r$ solve \eqref{BSDEIntro} under the probability 
$\mathbbm{P}^{s,x}$.

 Theorems \ref{RMartExistence} and \ref{P418} state that the function $u$ is 
the unique martingale solution  of \eqref{PDEIntro}.
Moreover $v$ is also identified as a function of $u$
through the an extesion of the carr\'e du champs operator.
This is the consequence of 
% JE VOUDRAI APPELER PROPOSITION  LE THEOREME EN QUESTION
Theorem \ref{Defuv}, which states that, without any assumptions of regularity, 
there exist Borel functions $u$ and $v$ such that for any
 $(s,x)\in[0,T]\times E$,
 the solution of \eqref{BSDEIntro} verifies
\begin{equation*}
\left\{\begin{array}{l}
     \forall t\geq s: Y^{s,x}_t = u(t,X_t)\quad \mathbbm{P}^{s,x}\text{ a.s.} \\
     \frac{d\langle M^{s,x}\rangle}{dV}(t)=v^2(t,X_t)\quad dV\otimes d\mathbbm{P}^{s,x}\text{ a.e.}
\end{array}\right.
\end{equation*}
%%%  EXPLIQUER LE ROLE DE $v$

 % In particular a classical solution is essentially 
% a martingale solution and there is 
% at most one martingale solution.
% in the classical sense 
%up to  a so called  zero potential set, see Definition \ref{zeropotential}.
%The function $u$ 
% will always be  a mild solution of \eqref{PDEIntro}. 
%whereas several assumptions will have to be strengthened 
% in order for  $u$  to be a viscosity solution of \eqref{PDEIntro}.
%For that reason, the notion of mild solution will appear to be the 
% most natural one  at the analytical level.
In Section \ref{SUpcoming} we list some examples which are developed in
 \cite{paper2}. These include Markov processes defined as weak solutions of 
Stochastic Differential Equations (in short SDEs)  including  possible 
jump terms, $\alpha$-stable L\'evy processes associated to 
fractional Laplace operators, solutions of SDEs with distributional drift and diffusions on compact manifolds.

\section{Preliminaries}

In the whole paper we will use the following notions, notations and vocabulary.
\\
\\
A topological space $E$ will always be considered as a measurable space with its Borel $\sigma$-field which shall be denoted $\mathcal{B}(E)$ and if $(F,d_F)$ is a metric space, $\mathcal{C}(E,F)$ (respectively $\mathcal{C}_b(E,F)$, $\mathcal{B}(E,F)$, $\mathcal{B}_b(E,F)$)  will denote the set of functions from $E$ to $F$ which are continuous (respectively bounded continuous, Borel, bounded Borel).
\\
%Let $(\Omega,\mathcal{F})$, $(E,\mathcal{E})$ be two measurable spaces. A measurable mapping from $(\Omega,\mathcal{F})$ to $(E,\mathcal{E})$ shall often be called a \textbf{random variable} (with values in $E$), or in short r.v. If $\mathbbm{T}$ is some set, an indexed set of r.v. with values in $E$, $(X_t)_{t\in \mathbbm{T}}$ will be called a \textbf{random field} (indexed by $\mathbbm{T}$ with values in $E$). In particular, if $\mathbbm{T}$ is an interval included in $\mathbbm{R}_+$, $(X_t)_{t\in \mathbbm{T}}$ will be called a \textbf{stochastic process} (indexed by $\mathbbm{T}$ with values in $E$). Given a stochastic process, if the mapping 
%\begin{equation*}
%	\begin{array}{rcl}
%	(t,\omega)&\longmapsto&X_t(\omega)\\
%	(\mathbbm{T}\times\Omega,\mathcal{B}(\mathbbm{T})\otimes\mathcal{F})&
%\longrightarrow&(E,\mathcal{E})
%	\end{array}
%\end{equation*}
%is measurable, then the process $(X_t)_{t\in \mathbbm{T}}$ will be called a \textbf{measurable process} (indexed by $\mathbbm{T}$ with values in $E$).
\\
On a fixed probability space $\left(\Omega,\mathcal{F},\mathbbm{P}\right)$, for any $p \ge 1$,
 $L^p$ will denote the set of random variables with finite $p$-th moment.
%Two random fields (or stochastic processes) $(X_t)_{t\in \mathbbm{T}}$, $(Y_t)_{t\in \mathbbm{T}}$ indexed by the same set and with values in the same space will be said to be \textbf{modifications (or versions) of each other} if for every $t\in\mathbbm{T}$, $\mathbbm{P}(X_t=Y_t)=1$.
A measurable space equipped with a right-continuous filtration $\left(\Omega,\mathcal{F},(\mathcal{F}_t)_{t\in\mathbbm{T}}\right)$ (where $\mathbbm{T}$ is equal to $\mathbbm{R}_+$ or to $[0,T]$ for some $T\in\mathbbm{R}_+^*$) will be called a \textbf{filtered space}. 
A probability space equipped with a right-continuous filtration $\left(\Omega,\mathcal{F},(\mathcal{F}_t)_{t\in\mathbbm{T}},\mathbbm{P}\right)$  will be called called a \textbf{stochastic basis} and will be said to \textbf{fulfill the usual conditions} if the probability space is complete and if $\mathcal{F}_0$ contains all the $\mathbbm{P}$-negligible sets.
We introduce now some notations and vocabulary about 
 spaces of stochastic processes, on
 a fixed stochastic basis $\left(\Omega,\mathcal{F},(\mathcal{F}_t)_{t\in\mathbbm{T}},\mathbbm{P}\right)$.
% we will use the following notations and vocabulary,
 Most of them are taken or adapted from \cite{jacod79} or \cite{jacod}.
A process $(X_t)_{t \in \mathbbm{T}}  $ is said to be {\bf integrable} if $X_t$ is an integrable r.v.
for any $t$. 
We will denote $\mathcal{V}$ (resp $\mathcal{V}^+$)  the set of adapted, bounded variation (resp non-decreasing) processes starting at 0; $\mathcal{V}^p$ (resp $\mathcal{V}^{p,+}$) the elements of $\mathcal{V}$ (resp $\mathcal{V}^+$) which are predictable, and $\mathcal{V}^c$ (resp $\mathcal{V}^{c,+}$) the elements of $\mathcal{V}$ (resp $\mathcal{V}^+$) which are continuous. If $A \in \mathcal{V}$, we will denote  $Pos(A)$ and $Neg(A)$ the positive variation and negative variation parts of $A$, meaning the unique pair of elements $\mathcal{V}^+$ such that $A=Pos(A)-Neg(A)$  (see Proposition I.3.3 in \cite{jacod} for their existence) and $Var(A)=Pos(A)+Neg(A)$ its total variation. $\mathcal{M}$ 
will be the space of cadlag martingales.  
For any $p\in[1,\infty]$  $\mathcal{H}^p$ will denote the Banach space of elements of $\mathcal{M}$ for which $\| M\|_{\mathcal{H}^p}:=\mathbbm{E}[|\underset{t\in \mathbbm{T}}{\text{sup }}M_t|^p]^{\frac{1}{p}}<\infty$ and in this set we identify indistinguishable elements. $\mathcal{H}^p_0$ will denote the Banach subspace of $\mathcal{H}^p$
of elements vanishing at zero.
\\
If $\mathbbm{T}=[0,T]$ for some $T\in\mathbbm{R}_+^*$, a stopping time
 will take values in
% be defined as a random variable with values in 
$[0,T]\cup\{+\infty\}$.
% such that for any $t\in[0,T]$, $\{\tau\leq t\}\in \mathcal{F}_t$. 
We define a \textbf{localizing sequence of stopping times} as an a.s. increasing sequence of stopping times $(\tau_n)_{n\geq 0}$ such that there a.s. exists $N\in\mathbbm{N}$ for which $\tau_N=+\infty$. Let $Y$ be a process and $\tau$ a stopping time, we denote by  $Y^{\tau}$ the \textbf{stopped process}
 $t\mapsto Y_{t\wedge\tau}$.
% which we call \textbf{stopped process}.  
If $\mathcal{C}$ is a set of processes, we define its \textbf{localized class} $\mathcal{C}_{loc}$ as the set of processes $Y$ such that there exists a localizing sequence $(\tau_n)_{n\geq 0}$ such that for every $n$, the stopped process $Y^{\tau_n}$ belongs to $\mathcal{C}$. In particular a process $X$ is said to
be locally integrable (resp. locally square integrable) 
if there is a localizing sequence $(\tau_n)_{n\geq 0}$ such that for every $n$, $X_t^{\tau_n}$
is integrable (resp. square integrable) for every $t$.
\\
For any $M\in  \mathcal{M}_{loc}$, we denote $[M]$ its \textbf{quadratic variation} and if moreover  $M\in\mathcal{H}^2_{loc}$, $\langle M\rangle$ will denote its (predictable) \textbf{angular bracket}.
$\mathcal{H}_0^2$ will be equipped with scalar product defined by $(M,N)_{\mathcal{H}_0^2}=\mathbbm{E}[M_TN_T] 
=\mathbbm{E}[\langle M, N\rangle_T] $ which makes it a Hilbert space.
Two local martingales $M,N$ will be said to be \textbf{strongly orthogonal} if $MN$ is a local martingale starting in 0 at time 0. In $\mathcal{H}^2_{0,loc}$ this notion is equivalent to $\langle M,N\rangle=0$.
%\\
%If $M\in  \mathcal{M}_{loc}$, and $p\in[1,\infty]$. We denote $L^p(M)$ the set of predictable processes $H$ such that $\mathbbm{E}\left[\left(\int_0^TH^2_rd[M]_r\right)^{\frac{p}{2}}\right]<\infty$.  This implies that $\int_0^{\cdot}H_rM_r$ belongs to  $\mathcal{H}^p$.

\section{BSDEs without driving martingale}\label{S2}

In the whole present section we are given $T\in\mathbbm{R}_+^*$, and a stochastic basis $\left(\Omega,\mathcal{F},(\mathcal{F}_t)_{t\in[0,T]},\mathbbm{P}\right)$ fulfilling the usual conditions.  
Some proofs and intermediary results of the first part of this
section are postponed  to Appendix \ref{SC}.

\begin{definition} \label{StochMeasures}
Let $A$ and $B$ be in $\mathcal{V}^+$. We will say that $dB$ dominates $dA$
{\bf in the sense of stochastic measures} (written $dA\ll dB$) if for almost all $\omega$, $dA(\omega)\ll dB(\omega)$ as Borel measures on $[0,T]$.
\\
\\
We will say that $dB$ and $dA$ are mutually singular {\bf in the sense of stochastic measures} (written $dA \bot dB$) if for almost all $\omega$,  the Borel measures
$dA(\omega)$ and $dB(\omega)$ are mutually singular.
\\
\\
Let $B\in \mathcal{V}^+$.  $dB\otimes d\mathbbm{P}$ will denote the positive measure on 
\\
$(\Omega\times [0,T],\mathcal{F}\otimes\mathcal{B}([0,T]))$ defined for any $F\in\mathcal{F}\otimes\mathcal{B}([0,T])$ by 
\\
$dB\otimes d\mathbbm{P}( F) = \mathbbm{E}\left[\int_0^{T}\mathds{1}_F(r,\omega)dB_r(\omega)\right]$.
A  property which holds  true everywhere except on a null set for
 this measure will be said to be true $dB\otimes d\mathbbm{P}$ almost 
everywhere (a.e).
\end{definition}
%Proposition below admits a straightforward proof.
%\begin{proposition}\label{EqualdVdP}
%Let $\phi$, $\psi$ be two measurable mappings from 
%\\
%$(\Omega\times [0,T],\mathcal{F}\otimes\mathcal{B}([0,T]))$ to $(\mathbbm{R},\mathcal{B}(\mathbbm{R}))$, then if $\phi=\psi$ $dB\otimes d\mathbbm{P}$ a.e, we have for $\mathbbm{P}$ almost all $\omega$ that $(\phi(\omega)=\psi(\omega)\, dB(\omega)$ a.e.)
%\end{proposition}
% We set $F=\{(\omega,t)\in\Omega\times [0,T]:\, \phi(\omega,t)\neq \psi(\omega,t)\}$ which is measurable. By definition, if $\phi=\psi$ $dB\otimes d\mathbbm{P}$ a.e, then  $dB\otimes d\mathbbm{P}(F)=\mathbbm{E}\left[\int_0^{T}\mathds{1}_{\{\phi\neq \psi\}}dB\right]=0$, so $\int_0^{T}\mathds{1}_{\{\phi\neq \psi\}}dB=0$ $\mathbbm{P}$ a.s.
% \end{proof}

The proof of Proposition below is in Appendix \ref{SC}.
\begin{proposition}\label{Decomposition}
For any $A$ and $B$ in $\mathcal{V}^{p,+}$, there exists a  
(non-negative 
\\
$dB\otimes d\mathbbm{P}$ a.e.)
predictable process $\frac{dA}{dB}$  
%unique up to a null set for $dB\otimes d\mathbbm{P}$,
 and a  process in $\mathcal{V}^{p,+}$ $A^{\bot B}$ such that 
\begin{equation*}\label{eqDecompo}
dA^{\bot B}\bot\, dB \text{    and    } A = 
 A^{ B} 
%\int_0^{\cdot}\frac{dA}{dB}(r)dB_r
 + A^{\bot B} \text{  a.s.}
\end{equation*}
where $ A^{B} = \int_0^{\cdot}\frac{dA}{dB}(r)dB_r.$
The process $A^{\bot B}$ is  unique and
the process  $\frac{dA}{dB}$  is 
unique $dB\otimes d\mathbbm{P}$ a.e. \\
%The process $\int_0^{\cdot}\frac{dA}{dB}(s)dB_s$ will also be denoted by $A^B$.
Moreover, 
%$\int_0^{\cdot}\frac{dA}{dB}(s)dB_s$, also  denoted by $A^B$, and 
there exists a predictable process $K$  with values in  $[0,1]
$ (for every $(\omega,t)$), such that $A^B=\int_0^{\cdot} \mathds{1}_{\{K_r < 1\}}dA_r$ and $A^{\perp B}=\int_0^{\cdot} \mathds{1}_{\{K_r = 1\}}dA_r$.
\end{proposition}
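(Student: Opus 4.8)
The plan is to reduce everything to a pathwise (fixed $\omega$) application of the classical Lebesgue decomposition and Radon--Nikodym theorems, then recover predictability and measurability via a measurable selection / parametrized version of these theorems.

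First I would set $C = A + B \in \mathcal{V}^{p,+}$, which dominates both $dA$ and $dB$ pathwise: for a.e. $\omega$, $dA(\omega) \ll dC(\omega)$ and $dB(\omega) \ll dC(\omega)$ as finite Borel measures on $[0,T]$. By the Radon--Nikodym theorem applied $\omega$ by $\omega$ there are densities; the nontrivial point is to get them jointly measurable and predictable in $(\omega,t)$. For this I would invoke the standard result (e.g. via the predictable Radon--Nikodym theorem, or by approximating $\frac{dA}{dC}$ as an $L^1(dC\otimes d\mathbbm{P})$ limit of predictable step processes of the form $\sum \frac{A_{t_{i+1}} - A_{t_i}}{C_{t_{i+1}} - C_{t_i}} \mathds{1}_{]t_i, t_{i+1}]}$ along refining partitions, using the martingale convergence / Lebesgue differentiation argument) to produce predictable $h_A = \frac{dA}{dC}$ and $h_B = \frac{dB}{dC}$ with values in $[0,1]$ for every $(\omega,t)$ after truncation. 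Then I would define the predictable set $\{h_B > 0\}$ and set $\frac{dA}{dB} := \frac{h_A}{h_B} \mathds{1}_{\{h_B > 0\}}$, which is predictable and non-negative; pathwise this is exactly the Radon--Nikodym density of the absolutely continuous part of $dA(\omega)$ with respect to $dB(\omega)$, while $\{h_B = 0\}$ carries the singular part. Concretely, put $A^B = \int_0^{\cdot} \frac{dA}{dB}(r)\, dB_r$ and $A^{\perp B} = A - A^B$; one checks pathwise that $dA^{\perp B}(\omega) = \mathds{1}_{\{h_B(\omega,\cdot) = 0\}}\, dA(\omega)$ is singular with respect to $dB(\omega) = h_B(\omega,\cdot)\, dC(\omega)$, and that $A^{\perp B} \in \mathcal{V}^{p,+}$ since it is a difference of a predictable non-decreasing process and a predictable process that is pathwise the a.c. part (hence dominated by $A$).

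For uniqueness: if $A = \tilde A^B + \tilde A^{\perp B}$ is another such decomposition with $\tilde A^B = \int_0^\cdot g\, dB$ and $d\tilde A^{\perp B} \perp dB$, then pathwise the uniqueness of the Lebesgue decomposition of the finite measure $dA(\omega)$ relative to $dB(\omega)$ forces $\tilde A^{\perp B}(\omega) = A^{\perp B}(\omega)$ for a.e. $\omega$, and the uniqueness of the Radon--Nikodym density forces $g = \frac{dA}{dB}$ for $dB(\omega)$-a.e. $t$, i.e. $dB\otimes d\mathbbm{P}$ a.e.

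Finally, for the process $K$: working pathwise, $dA(\omega)$ and $dB(\omega)$ are mutually absolutely continuous off the singular parts, and I would apply Proposition I.3.? of \cite{jacod} (or just the decomposition already built applied to the pair $(A,C)$ and $(B,C)$ relative to the common dominating $dC$) to the two densities $h_A, h_B \in [0,1]$. Setting, say, $K = \frac{h_A}{h_A + h_B}$ on $\{h_A + h_B > 0\}$ and $K = 0$ otherwise — more carefully, one wants $\{K = 1\}$ to be exactly the $dA$-carrier of the part singular to $dB$, so I would rather take $K$ so that $\{K < 1\} = \{h_B > 0\}$ up to a $dC\otimes d\mathbbm{P}$-null set, e.g. $K = \mathds{1}_{\{h_B = 0\}}$ works but is not forced to lie strictly below $1$ elsewhere with the right structure; the clean choice is to let $K$ be a predictable $[0,1]$-valued density of $dA^{\perp B}$ with respect to $d(A^B + A^{\perp B}) = dA$, so that $\mathds{1}_{\{K=1\}}\, dA = dA^{\perp B}$ and $\mathds{1}_{\{K<1\}}\, dA = dA^B$ by the mutual singularity. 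Then $A^B = \int_0^\cdot \mathds{1}_{\{K_r < 1\}}\, dA_r$ and $A^{\perp B} = \int_0^\cdot \mathds{1}_{\{K_r = 1\}}\, dA_r$ as claimed.

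The main obstacle is the measurability/predictability upgrade: the Radon--Nikodym and Lebesgue decomposition theorems are pathwise statements, and producing versions of the densities that are genuinely predictable in $(\omega,t)$ — rather than merely measurable for each fixed $\omega$ — requires either a careful limiting argument along predictable partitions with an $L^1(dC\otimes d\mathbbm{P})$ convergence controlled by a Vitali/martingale argument, or an appeal to a measurable-selection theorem. Everything else (the pathwise identities, mutual singularity, uniqueness) is routine measure theory applied $\omega$ by $\omega$.
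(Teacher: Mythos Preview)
Your proposal is correct and follows essentially the same route as the paper: both introduce the dominating process $C=A+B$, invoke the predictable Radon--Nikodym theorem (the paper cites Proposition I.3.13 in \cite{jacod}) to obtain a predictable density, split $dA$ according to where $dB$ vanishes, and deduce uniqueness pathwise from the classical Lebesgue decomposition. The only streamlining you missed is that since $C=A+B$ you have $h_A+h_B=1$ $dC\otimes d\mathbbm{P}$-a.e., so your very first guess $K=h_A$ already satisfies $\{K=1\}=\{h_B=0\}$ and $\{K<1\}=\{h_B>0\}$ --- this is exactly the paper's $K$, and no further search is needed.
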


The predictable process 
$\frac{dA}{dB}$ appearing in the statement of Proposition \ref{Decomposition}
will beF the \textbf{Radon-Nikodym derivative} of $A$ by $B$.

\begin{remark}\label{RemIncr}
Since for any $s<t$ 
$A_t - A_s = \int_s^t\frac{dA}{dB}(r)dB_r + A^{\bot B}_t - A^{\bot B}_s$ a.s.
where $A^{\bot B}$ is increasing, it is clear that for any $s<t$, 
\\
$\int_s^t\frac{dA}{dB}(r)dB_r \leq A_t - A_s$   a.s. and therefore that for any positive measurable process $\phi$ we have 
$\int_0^{T}\phi_r\frac{dA}{dB}(r)dB_r \leq \int_0^{T}\phi_rdA_r$  a.s.
\end{remark}
%If $A \in \mathcal{V}$, we will denote $Pos(A)$ and $Neg(A)$ the positive variation and negative variation parts of $A$, meaning the unique pair of elements $\mathcal{V}^+$ such that $A=Pos(A)-Neg(A)$, see Proposition I.3.3 in \cite{jacod} for their existence.
If $A$ is in $\mathcal{V}^p$, and $B\in\mathcal{V}^{p,+}$. We set $\frac{dA}{dB} := \frac{dPos(A)}{dB}-\frac{dPos(A)}{dB}$ and $A^{\bot B}:=(Pos(A))^{\bot B}-(Neg(A))^{\bot B}$.

\begin{proposition}\label{linearity}
	Let $A_1$ and $A_2$ be in $\mathcal{V}^p$, and $B\in\mathcal{V}^{p,+}$. Then, 
	\\
	$ \frac{d(A_1+A_2)}{dB}=\frac{dA_1}{dB}+\frac{dA_2}{dB}$ $dV\otimes d\mathbbm{P}$ a.e. and $(A_1+A_2)^{\perp B}=A_1^{\perp B}+A_2^{\perp B}$.
\end{proposition}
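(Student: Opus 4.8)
The plan is to reduce the statement to the uniqueness assertion of Proposition \ref{Decomposition}, treating first the non-negative case and then passing to signed processes via the Jordan decomposition.

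I would start with $A_1,A_2\in\mathcal{V}^{p,+}$, so that $A_1+A_2\in\mathcal{V}^{p,+}$. Adding the two decompositions supplied by Proposition \ref{Decomposition} gives
\[
A_1+A_2 \;=\; \int_0^{\cdot}\Big(\tfrac{dA_1}{dB}(r)+\tfrac{dA_2}{dB}(r)\Big)dB_r \;+\;\big(A_1^{\bot B}+A_2^{\bot B}\big).
\]
I would then observe that the right-hand side is a decomposition of $A_1+A_2$ of exactly the form produced by Proposition \ref{Decomposition}: the process $\tfrac{dA_1}{dB}+\tfrac{dA_2}{dB}$ is predictable and non-negative $dB\otimes d\mathbbm{P}$ a.e.; it is $dB$-integrable because $\int_0^{\cdot}\big(\tfrac{dA_1}{dB}+\tfrac{dA_2}{dB}\big)dB_r=A_1^{B}+A_2^{B}\leq A_1+A_2$ by Remark \ref{RemIncr}; the process $A_1^{\bot B}+A_2^{\bot B}$ lies in $\mathcal{V}^{p,+}$; and $d\big(A_1^{\bot B}+A_2^{\bot B}\big)\bot\,dB$ in the sense of stochastic measures, since for fixed $\omega$ the sum of two positive Borel measures that are each singular with respect to $dB(\omega)$ is again singular with respect to $dB(\omega)$ (take the union of the two exceptional sets). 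Invoking the uniqueness part of Proposition \ref{Decomposition} then yields $(A_1+A_2)^{\bot B}=A_1^{\bot B}+A_2^{\bot B}$ and $\tfrac{d(A_1+A_2)}{dB}=\tfrac{dA_1}{dB}+\tfrac{dA_2}{dB}$ $dB\otimes d\mathbbm{P}$ a.e. The same argument works verbatim for a finite sum of elements of $\mathcal{V}^{p,+}$.

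For general $A_1,A_2\in\mathcal{V}^p$ I would use the elementary algebraic identity between elements of $\mathcal{V}^{p,+}$
\[
Pos(A_1+A_2)+Neg(A_1)+Neg(A_2) \;=\; Pos(A_1)+Pos(A_2)+Neg(A_1+A_2),
\]
which follows immediately from $Pos(A_1+A_2)-Neg(A_1+A_2)=A_1+A_2=\big(Pos(A_1)+Pos(A_2)\big)-\big(Neg(A_1)+Neg(A_2)\big)$, recalling that $Pos$ and $Neg$ of a predictable element of $\mathcal{V}$ are again predictable (Proposition I.3.3 in \cite{jacod}), hence belong to $\mathcal{V}^{p,+}$. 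Applying the non-negative case to each of the two (equal) sides and subtracting, the contributions recombine, through the very definitions $\tfrac{dA}{dB}:=\tfrac{dPos(A)}{dB}-\tfrac{dNeg(A)}{dB}$ and $A^{\bot B}:=(Pos(A))^{\bot B}-(Neg(A))^{\bot B}$, into the two announced equalities.

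The only point requiring slight care is the precise meaning of the word \emph{unique} in Proposition \ref{Decomposition}: I will use it in the form that any representation $A=\int_0^{\cdot}h_r\,dB_r+C$ with $h$ predictable and non-negative $dB\otimes d\mathbbm{P}$ a.e., $C\in\mathcal{V}^{p,+}$ and $dC\bot\,dB$, necessarily satisfies $C=A^{\bot B}$ and $h=\tfrac{dA}{dB}$ $dB\otimes d\mathbbm{P}$ a.e. Granting that, the remainder is bookkeeping, and the auxiliary stability of mutual singularity under finite sums used above is immediate $\omega$ by $\omega$.
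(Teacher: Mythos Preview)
Your proof is correct and follows the same route as the paper: both arguments rest on the uniqueness assertion of Proposition \ref{Decomposition}. The paper's own proof is a one-line invocation of that uniqueness, whereas you have spelled out the details (singularity is stable under finite sums, and the passage from $\mathcal{V}^{p,+}$ to $\mathcal{V}^p$ via the Jordan identity), which the paper leaves implicit.
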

\begin{proof}
The proof is an immediate consequence of the uniqueness of the decomposition \eqref{eqDecompo}.
\end{proof}

Let $V\in\mathcal{V}^{p,+}$. We introduce two significant spaces related to $V$.
\\
 $\mathcal{H}^{2,V} := \{M\in\mathcal{H}^2_0|d\langle M\rangle \ll dV\}$ and
$\mathcal{H}^{2,\perp V} := \{M\in\mathcal{H}^2_0|d\langle M\rangle \perp dV\}$.  
\\
\\
The proof of the two propositions below are in Appendix \ref{SC}.
\begin{proposition}\label{DecompoMart}
Let $M\in\mathcal{H}_0^2$, and let $V\in\mathcal{V}^{p,+}$. There exists a pair $(M^V,M^{\bot V})$ in $\mathcal{H}^{2,V}\times\mathcal{H}^{2,\perp V}$ such that $M = M^V+M^{\bot V}$ and $\langle M^V, M^{\bot V}\rangle = 0$.
%\begin{enumerate}\label{E24}
%\item $M = M^V+M^{\bot V}$;
%\item $d\langle M^V \rangle \ll dV$;
%\item $d\langle M^{\bot V} \rangle \bot dV$; 
%\item $\langle M^V, M^{\bot V}\rangle = 0$.
%\end{enumerate} 
  
Moreover, we have  $\langle M^V \rangle = \langle M \rangle^V = \int_0^{\cdot} \frac{d\langle M\rangle}{dV}(r)dV_r$ and $\langle M^{\bot V} \rangle = \langle M \rangle^{\bot V}$ and  there exists a predictable process $K$ with values in $[0,1]$ such that 
\\
$M^V=\int_0^{\cdot} \mathds{1}_{\{K_r < 1\}}dM_r$ and $M^{\bot V}=\int_0^{\cdot} \mathds{1}_{\{K_r = 1\}}dM_r$.
\end{proposition}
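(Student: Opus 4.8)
The plan is to reduce everything to Proposition \ref{Decomposition} applied to the angular bracket of $M$. Since $M\in\mathcal{H}^2_0$, the predictable process $\langle M\rangle$ is well defined, non-decreasing, starts at $0$ and satisfies $\mathbbm{E}[\langle M\rangle_T]=\mathbbm{E}[M_T^2]<\infty$, so $\langle M\rangle\in\mathcal{V}^{p,+}$. Applying Proposition \ref{Decomposition} with $A=\langle M\rangle$ and $B=V$ then provides a non-negative predictable process $\frac{d\langle M\rangle}{dV}$, a process $\langle M\rangle^{\bot V}\in\mathcal{V}^{p,+}$ with $d\langle M\rangle^{\bot V}\bot\,dV$, and a predictable process $K$ valued in $[0,1]$ such that
\begin{equation*}
\langle M\rangle=\langle M\rangle^V+\langle M\rangle^{\bot V},\qquad \langle M\rangle^V=\int_0^{\cdot}\frac{d\langle M\rangle}{dV}(r)dV_r=\int_0^{\cdot}\mathds{1}_{\{K_r<1\}}d\langle M\rangle_r,\qquad \langle M\rangle^{\bot V}=\int_0^{\cdot}\mathds{1}_{\{K_r=1\}}d\langle M\rangle_r.
\end{equation*}

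Next I would simply \emph{define} $M^V:=\int_0^{\cdot}\mathds{1}_{\{K_r<1\}}dM_r$ and $M^{\bot V}:=\int_0^{\cdot}\mathds{1}_{\{K_r=1\}}dM_r$. These stochastic integrals make sense since $\mathds{1}_{\{K<1\}}$ and $\mathds{1}_{\{K=1\}}$ are bounded predictable and $M\in\mathcal{H}^2_0$; moreover they lie in $\mathcal{H}^2_0$ (not merely $\mathcal{H}^2_{0,loc}$), because $\mathbbm{E}[\langle M^V\rangle_T]\le\mathbbm{E}[\langle M\rangle_T]<\infty$ and likewise for $M^{\bot V}$, so Doob's inequality bounds their $\mathcal{H}^2$-norms. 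As $K$ takes values in $[0,1]$, one has $\mathds{1}_{\{K_r<1\}}+\mathds{1}_{\{K_r=1\}}=1$ for every $r$, hence $M=M^V+M^{\bot V}$.

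It then remains to identify the brackets, using the rule $\big\langle \int_0^{\cdot}\varphi_r dM_r,\int_0^{\cdot}\psi_r dM_r\big\rangle=\int_0^{\cdot}\varphi_r\psi_r d\langle M\rangle_r$ for bounded predictable $\varphi,\psi$. This gives $\langle M^V\rangle=\int_0^{\cdot}\mathds{1}_{\{K_r<1\}}d\langle M\rangle_r=\langle M\rangle^V=\int_0^{\cdot}\frac{d\langle M\rangle}{dV}(r)dV_r$, so $d\langle M^V\rangle\ll dV$ and $M^V\in\mathcal{H}^{2,V}$; similarly $\langle M^{\bot V}\rangle=\int_0^{\cdot}\mathds{1}_{\{K_r=1\}}d\langle M\rangle_r=\langle M\rangle^{\bot V}$, which is $dV$-singular, so $M^{\bot V}\in\mathcal{H}^{2,\bot V}$; and $\langle M^V,M^{\bot V}\rangle=\int_0^{\cdot}\mathds{1}_{\{K_r<1\}}\mathds{1}_{\{K_r=1\}}d\langle M\rangle_r=0$ since the two indicators have disjoint support. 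This yields all the asserted properties.

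I do not expect a genuinely hard step here: the substantive content is already contained in Proposition \ref{Decomposition}, and once $K$ is available the martingale decomposition is obtained by integrating the indicators $\mathds{1}_{\{K<1\}}$ and $\mathds{1}_{\{K=1\}}$ against $dM$. The only points needing care are verifying $\langle M\rangle\in\mathcal{V}^{p,+}$ so that Proposition \ref{Decomposition} applies, and checking that the two stochastic integrals remain in $\mathcal{H}^2_0$; both are routine via the $L^2$-isometry and Doob's inequality. (If one also wants uniqueness of the pair, it follows because any martingale in $\mathcal{H}^{2,V}\cap\mathcal{H}^{2,\bot V}$ has a bracket that is simultaneously absolutely continuous and singular with respect to $dV$, hence vanishes.)
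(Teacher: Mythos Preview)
Your proof is correct and follows essentially the same approach as the paper: apply Proposition \ref{Decomposition} to $\langle M\rangle$ and $V$ to obtain the predictable process $K$, define $M^V$ and $M^{\bot V}$ as the stochastic integrals of $\mathds{1}_{\{K<1\}}$ and $\mathds{1}_{\{K=1\}}$ against $dM$, and read off the required bracket identities. Your write-up is in fact slightly more detailed (you explicitly check $\langle M\rangle\in\mathcal{V}^{p,+}$ and comment on uniqueness), but the argument is the same.
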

%The proof of the proposition below  is in Appendix \ref{SC}.
\begin{proposition}\label{OrthogonalSpaces}
$\mathcal{H}^{2,V}$ and $\mathcal{H}^{2,\perp V}$ are orthogonal sub-Hilbert spaces of $\mathcal{H}^2_0$ and $\mathcal{H}^2_0 = \mathcal{H}^{2,V}\oplus^{\perp}\mathcal{H}^{2,\perp V}$. Moreover, any element of $\mathcal{H}^{2,V}_{loc} $ 
 is strongly orthogonal to any element of $\mathcal{H}^{2,\perp V}_{loc}$.
\end{proposition}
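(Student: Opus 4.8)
The plan is to deduce everything from two elementary facts about stochastic measures together with the decomposition already provided by Proposition \ref{DecompoMart}. Both facts follow, $\omega$ by $\omega$, from the classical Lebesgue decomposition: (a) if $A,B,C\in\mathcal{V}^{+}$ with $dA\ll dB$ and $dB\perp dC$, then $dA\perp dC$; and (b) if $A,C\in\mathcal{V}^{+}$ with $dA\ll dC$ and $dA\perp dC$, then $A=0$ a.s. I would also use the Kunita--Watanabe inequality in its pathwise form: for $M,N\in\mathcal{H}^{2}_{0}$, a.s. each of the measures $d\,Pos(\langle M,N\rangle)$ and $d\,Neg(\langle M,N\rangle)$ is absolutely continuous with respect to both $d\langle M\rangle$ and $d\langle N\rangle$.

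First I would check that $\mathcal{H}^{2,V}$ and $\mathcal{H}^{2,\perp V}$ are linear subspaces of $\mathcal{H}^{2}_{0}$: for $M,N$ in one of them, $d\langle M+N\rangle=d\langle M\rangle+2\,d\langle M,N\rangle+d\langle N\rangle\ll d\langle M\rangle+d\langle N\rangle$ by Kunita--Watanabe, and $d\langle M\rangle+d\langle N\rangle$ is $\ll dV$ in the first case and $\perp dV$ in the second (a sum of two measures each singular to $dV$ is singular to $dV$), whence $M+N$ stays in the same space; stability under scalar multiplication is clear. Next, the orthogonality: let $M\in\mathcal{H}^{2,V}$ and $N\in\mathcal{H}^{2,\perp V}$. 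By Kunita--Watanabe, $d\,Pos(\langle M,N\rangle)$ and $d\,Neg(\langle M,N\rangle)$ are $\ll d\langle M\rangle\ll dV$ and also $\ll d\langle N\rangle\perp dV$, so by (a) they are both $\ll dV$ and $\perp dV$, and (b) forces $\langle M,N\rangle=0$. Hence $(M,N)_{\mathcal{H}^{2}_{0}}=\mathbbm{E}[\langle M,N\rangle_{T}]=0$, and since $MN-\langle M,N\rangle$ is a local martingale, $MN$ is a local martingale null at $0$, i.e. $M$ and $N$ are strongly orthogonal. The last assertion of the proposition comes from the same computation after localization: given $M\in\mathcal{H}^{2,V}_{loc}$ and $N\in\mathcal{H}^{2,\perp V}_{loc}$, choose localizing sequences for each and take the pointwise minimum $\tau_n$; since $\langle M^{\tau_n}\rangle=\langle M\rangle^{\tau_n}$ and stopping preserves both $\ll dV$ and $\perp dV$, one gets $M^{\tau_n}\in\mathcal{H}^{2,V}$ and $N^{\tau_n}\in\mathcal{H}^{2,\perp V}$, so $\langle M,N\rangle^{\tau_n}=\langle M^{\tau_n},N^{\tau_n}\rangle=0$ for all $n$, hence $\langle M,N\rangle=0$ and $MN$ is a local martingale null at $0$.

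Finally, the orthogonal direct sum decomposition. Proposition \ref{DecompoMart} already yields $\mathcal{H}^{2}_{0}=\mathcal{H}^{2,V}+\mathcal{H}^{2,\perp V}$; this sum is direct because $\mathcal{H}^{2,V}\cap\mathcal{H}^{2,\perp V}=\{0\}$ by (b) (an $M$ in the intersection has $\langle M\rangle$ both $\ll dV$ and $\perp dV$, so $\langle M\rangle=0$ and $\|M\|_{\mathcal{H}^{2}_{0}}^{2}=\mathbbm{E}[\langle M\rangle_{T}]=0$), and it is orthogonal by the previous step. It remains to see the summands are closed. I claim $\mathcal{H}^{2,V}=(\mathcal{H}^{2,\perp V})^{\perp}$ in $\mathcal{H}^{2}_{0}$: the inclusion $\subseteq$ is the orthogonality just proved, and if $M\perp\mathcal{H}^{2,\perp V}$ then, writing $M=M^{V}+M^{\perp V}$ as in Proposition \ref{DecompoMart} and pairing with $M^{\perp V}$, one finds $0=(M,M^{\perp V})_{\mathcal{H}^{2}_{0}}=(M^{\perp V},M^{\perp V})_{\mathcal{H}^{2}_{0}}$, so $M^{\perp V}=0$ and $M=M^{V}\in\mathcal{H}^{2,V}$; symmetrically $\mathcal{H}^{2,\perp V}=(\mathcal{H}^{2,V})^{\perp}$. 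Being orthogonal complements of subsets of a Hilbert space, both are closed, hence Hilbert spaces for the inherited scalar product, and $\mathcal{H}^{2}_{0}=\mathcal{H}^{2,V}\oplus^{\perp}\mathcal{H}^{2,\perp V}$.

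The only genuinely delicate points are bookkeeping ones: making the "in the sense of stochastic measures" statements precise — in particular that absolute continuity and mutual singularity pass through the $Pos$/$Neg$ decomposition of $\langle M,N\rangle$ and through stopping — and selecting a common localizing sequence for $M$ and $N$ in the last claim. Once (a), (b) and Kunita--Watanabe are granted, the Hilbert-space part of the argument is routine.
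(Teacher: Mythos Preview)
Your proof is correct and follows essentially the same route as the paper's own proof: both rely on the Kunita--Watanabe inequality to control $d\,Var(\langle M,N\rangle)$ by $d\langle M\rangle$ and $d\langle N\rangle$, deduce strong orthogonality from the fact that a measure simultaneously $\ll dV$ and $\perp dV$ vanishes, invoke Proposition~\ref{DecompoMart} for the sum, obtain closedness via orthogonal complements, and finish the localized statement by a standard stopping argument. Your write-up is in fact a bit more explicit on the bookkeeping (the intersection being $\{0\}$, both inclusions in $\mathcal{H}^{2,V}=(\mathcal{H}^{2,\perp V})^{\perp}$, and the common localizing sequence), but the underlying ideas are the same.
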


\begin{remark}
All previous results extend when the filtration is indexed by $\mathbbm{R}_+$.
\end{remark}

We are going to introduce here 
our Backward Stochastic Differential Equation (BSDE) for which there is no need for
having a particular martingale of reference.
\\
\\
We will denote $\mathcal{P}ro$ 
the  $\sigma$-field  generated by progressively measurable processes 
defined on $[0,T]\times \Omega$.
\\
Given some $V\in\mathcal{V}^{c,+}$, we will indicate by $\mathcal{L}^2(dV\otimes d\mathbbm{P})$ (resp. $\mathcal{L}^0(dV\otimes d\mathbbm{P})$) the set of (up to indistinguishability)
 progressively measurable processes $\phi$ such that $\mathbbm{E}[\int_0^T \phi^2_rdV_r]<\infty$ (resp. $\int_0^T |\phi_r|dV_r<\infty$ $\mathbbm{P}$ a.s.) and $L^2(dV\otimes d\mathbbm{P})$ the quotient space of 
$\mathcal{L}^2(dV\otimes d\mathbbm{P})$ with respect to the subspace of processes equal to zero $dV\otimes d\mathbbm{P}$ a.e.
 More formally,  $L^2(dV\otimes d\mathbbm{P})$ corresponds to the classical $L^2$ space 
$L^2([0,T]\times\Omega,\mathcal{P}ro,dV\otimes d\mathbbm{P})$ and is therefore complete for its usual norm.
 \\
$\mathcal{L}^{2,cadlag}(dV\otimes d\mathbbm{P})$ (resp. $L^{2,cadlag}(dV\otimes d\mathbbm{P})$) will denote the subspace of $\mathcal{L}^{2}(dV\otimes d\mathbbm{P})$ (resp. $L^{2}(dV\otimes d\mathbbm{P})$) of cadlag elements (resp. of elements having a cadlag representative). We emphasize that  $L^{2,cadlag}(dV\otimes d\mathbbm{P})$ is not a closed subspace of $L^{2}(dV\otimes d\mathbbm{P})$.
\\
The application which associates to a process its corresponding 
class will be denoted $\phi\mapsto\dot{\phi}$.
\\
\\
The aforementioned BSDE will depend on a triple $(V,\xi,f)$ of coefficients:
$V$ is an integrator process, $\xi$ is the {\bf final} condition,
$f$ is the {\bf driver}.
% We will now consider some adapted non-decreasing process $V$, an $\mathcal{F}_T$-measurable random variable $\xi$ called the \textbf{final condition} and a \textbf{driver} $\hat{f}:\left([0,T]\times\Omega\right)\times\mathbbm{R}\times\mathbbm{R}\longrightarrow\mathbbm{R}$, measurable with respect to  $\mathcal{P}ro\otimes \mathcal{B}(\mathbbm{R})\otimes \mathcal{B}(\mathbbm{R})$.
% We will assume in all this section that $(\xi,\hat{f},V)$ verify the following hypothesis.
\begin{hypothesis}\label{HypBSDE}
\begin{enumerate}
	\item $V$ is bounded continuous non-decreasing adapted process;
    \item $\xi$ is a square integrable ${\mathcal F_T}$-measurable r.v.
\item $\hat{f}:\left([0,T]\times\Omega\right)\times\mathbbm{R}\times\mathbbm{R}\longrightarrow\mathbbm{R}$, measurable with respect to  $\mathcal{P}ro\otimes \mathcal{B}(\mathbbm{R})\otimes \mathcal{B}(\mathbbm{R})$.
    \item $\hat{f}(\cdot,\cdot,0,0)\in\mathcal{L}^2(dV\otimes d\mathbbm{P})$. 
    \item There exist positive constants $K^Y,K^Z$ such that, $\mathbbm{P}$ a.s. we have for all $t,y,y',z,z'$,
    \begin{equation}
        |\hat{f}(t,\cdot,y,z)-\hat{f}(t,\cdot,y',z')|\leq K^Y|y-y'|+K^Z|z-z'|.
    \end{equation}
\end{enumerate}
\end{hypothesis}

We start with a lemma.
\begin{lemma}\label{classdV}
Let $U_1$ and $U_2$ be in $\mathcal{L}^2(dV\otimes d\mathbbm{P})$ and such that $\dot{U}_1=\dot{U}_2$. Let $F:[0,T] \times
 \Omega \times \mathbbm{R}\longrightarrow\mathbbm{R}$ be such that $F(\cdot,\cdot,U_1)$ and  $F(\cdot,\cdot,U_2)$ are in $\mathcal{L}^0(dV\otimes d\mathbbm{P})$, then the processes  $\int_0^{\cdot} F(r,\cdot,U^1_r)dV_r$ and  $\int_0^{\cdot} F(r,\cdot,U^2_r)dV_r$  are indistinguishable. 
\end{lemma}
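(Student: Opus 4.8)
The plan is to reduce the statement to the elementary fact that two functions which agree $dV(\omega)$-almost everywhere have the same indefinite $dV(\omega)$-integral, and to propagate this from $\omega$-by-$\omega$ statements to an indistinguishability statement via the definition of the measure $dV\otimes d\mathbbm{P}$ given in Definition \ref{StochMeasures}.

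First I would introduce the set $N := \{(r,\omega)\in[0,T]\times\Omega : U_1(r,\omega)\neq U_2(r,\omega)\}$. Since $U_1$ and $U_2$ are progressively measurable, $N$ belongs to $\mathcal{P}ro$, hence is product-measurable, and the hypothesis $\dot U_1=\dot U_2$ means precisely that $dV\otimes d\mathbbm{P}(N)=0$, i.e. $\mathbbm{E}\left[\int_0^T\mathds{1}_N(r,\cdot)\,dV_r\right]=0$. Consequently there is a $\mathbbm{P}$-null set $\mathcal{N}_1\in\mathcal{F}$ such that for every $\omega\notin\mathcal{N}_1$ one has $\int_0^T\mathds{1}_N(r,\omega)\,dV_r(\omega)=0$; in other words, for such $\omega$ the Borel measure $dV(\omega)$ on $[0,T]$ assigns no mass to the section $N_\omega=\{r:(r,\omega)\in N\}$, so that $U_1(r,\omega)=U_2(r,\omega)$ for $dV(\omega)$-almost every $r\in[0,T]$.

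Next, using the assumption that $F(\cdot,\cdot,U_1)$ and $F(\cdot,\cdot,U_2)$ belong to $\mathcal{L}^0(dV\otimes d\mathbbm{P})$, I would choose a further $\mathbbm{P}$-null set $\mathcal{N}_2$ outside of which both $r\mapsto F(r,\omega,U_1(r,\omega))$ and $r\mapsto F(r,\omega,U_2(r,\omega))$ are $dV(\omega)$-integrable on $[0,T]$. For $\omega\notin\mathcal{N}_1\cup\mathcal{N}_2$, the two integrands coincide $dV(\omega)$-a.e.\ (they agree at every $r$ for which $U_1(r,\omega)=U_2(r,\omega)$), hence
\[
\int_0^t F(r,\omega,U_1(r,\omega))\,dV_r(\omega)=\int_0^t F(r,\omega,U_2(r,\omega))\,dV_r(\omega)\qquad\text{for every }t\in[0,T].
\]
Since $\mathbbm{P}(\mathcal{N}_1\cup\mathcal{N}_2)=0$, this shows that the processes $\int_0^{\cdot}F(r,\cdot,U^1_r)\,dV_r$ and $\int_0^{\cdot}F(r,\cdot,U^2_r)\,dV_r$ are indistinguishable.

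The argument is essentially a Fubini-type disintegration and contains no genuine difficulty. The only point deserving a little attention is the passage from the global null-set statement $dV\otimes d\mathbbm{P}(N)=0$ to the $\omega$-wise statement that $dV(\omega)(N_\omega)=0$ for $\mathbbm{P}$-almost every $\omega$, which is exactly what the definition of $dV\otimes d\mathbbm{P}$ delivers once one has checked that $N$ is product-measurable; everything else is standard measure theory.
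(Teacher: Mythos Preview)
Your proof is correct and follows essentially the same route as the paper's: disintegrate the $dV\otimes d\mathbbm{P}$-null set to obtain a $\mathbbm{P}$-null set outside of which $U_1(\cdot,\omega)=U_2(\cdot,\omega)$ $dV(\omega)$-a.e., then conclude pathwise equality of the indefinite integrals. Your write-up is in fact slightly more careful than the paper's, since you make explicit the second null set $\mathcal{N}_2$ coming from the $\mathcal{L}^0(dV\otimes d\mathbbm{P})$ integrability assumption, which the paper leaves implicit.
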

\begin{proof}
There exists a $\mathbbm{P}$-null set $\mathcal{N}$ such that for any $\omega\in\mathcal{N}^c$,  $U^1(\omega)=U^2(\omega)$ $dV(\omega)$ a.e. So for any $\omega\in \mathcal{N}^c$,  $F(\cdot,\omega,U^1(\omega))= F(\cdot,\omega,U^2(\omega))$ $dV(\omega)$ a.e. implying $\int_0^{\cdot} F(r,\omega,U^1_r(\omega))dV_r(\omega)= \int_0^{\cdot} F(r,\omega,U^2_r(\omega))dV_r(\omega)$.
So $\int_0^{\cdot} F(r,\cdot,U^1_r)dV_r$ and  $\int_0^{\cdot} F(r,\cdot,U^2_r)dV_r$  are indistinguishable processes.
\end{proof}
In some of the following proofs, we will have to work with classes 
of processes. 
According to Lemma \ref{classdV},  if $\dot{U}$ is an element of $L^2(dV\otimes d\mathbbm{P})$, 
%for any process  $U$ being a representative of $\dot{U}$
 the integral $\int_0^{\cdot} F(r,\omega,U_r)dV_r$  will not depend on the 
representantive process $U$  that we have chosen.
%%% OLD
%\begin{remark}\label{RclassdV}
%In some of the following proofs, we will have to work with classes 
%of processes. 
%%Previous lemma  permits us to remark the following. 
%According to Lemma \ref{classdV},  if $\dot{U}$ is an element of 
%\\
%$L^2(dV\otimes d\mathbbm{P})$ then  we could define the integral process 
% $\int_0^{\cdot} F(r,\omega,\dot{U}_r)dV_r$ 
%as  $\int_0^{\cdot} F(r,\omega,U_r)dV_r$, where $U$ is
%a representative $\dot{U}$.
%%is uniquely
%% (up to indistinguishability) defined in the sense that it does not
%% depend on the representative of $\dot{U}$.
%Nevertheless we will rarely use the dot notation in the integral.
%%So in all what follows if $\dot{U}$ only appears in integrals 
%%of previous type driven by $dV$, we can forget that it is a class
%% of processes and we will drop the dot above $U$.
%\end{remark}
\\
\\
We will now give the formulation of our BSDE.
\begin{definition}\label{firstdefBSDE}
We say that a couple
$(Y,M)\in \mathcal{L}^{2,cadlag}(dV\otimes d\mathbbm{P})\times \mathcal{H}^2_0$ is a
solution of $BSDE(\xi,\hat{f}, V)$ if it verifies
\begin{equation}\label{BSDEcadlag}
Y=\xi +\int_{\cdot}^T\hat{f}\left(r,\cdot,Y_r,\sqrt{\frac{d\langle M \rangle}{dV}}(r)\right)dV_r - (M_T-M_{\cdot})
\end{equation}
in the sense of indistinguishability. 
\end{definition}

%\begin{definition}\label{firstdefBSDE}
%We say that a couple
%$(\dot{Y},M)\in L^2(dV\otimes d\mathbbm{P})\times \mathcal{H}^2_0$ is a
%solution of $\overline{BSDE}(\xi,\hat{f}, V)$ if there exists a cadlag 
%representative $Y$ of $\dot{Y}$ which verifies
%\begin{equation}\label{BSDEcadlag}
%Y=\xi +\int_{\cdot}^T\hat{f}\left(r,\cdot,Y_r,\sqrt{\frac{d\langle M \rangle}{dV}}(r)\right)dV_r - (M_T-M_{\cdot})
%\end{equation}
%in the sense of indistinguishability. 
%\\
%A couple $(Y,M)\in\mathcal{L}^2(dV\otimes d\mathbbm{P})\times \mathcal{H}^2_0$ verifying \eqref{BSDEcadlag} will be said to be a solution of
%  $BSDE(\xi,\hat{f}, V)$. 
%\end{definition}
%
%\begin{proposition}\label{EquivalenceBSDE}
%The mapping $(Y,M)\mapsto(\dot{Y},M)$ induces a bijection between the set of solutions of $BSDE(\xi,\hat{f}, V)$ and the set of solutions of $\overline{BSDE}(\xi,\hat{f}, V)$.
%\end{proposition}
%\begin{proof}
%The set of solutions of $\overline{BSDE}(\xi,\hat{f}, V)$ is by Definition \ref{firstdefBSDE} the image by $(Y,M)\mapsto(\dot{Y},M)$ of the set of solutions of $BSDE(\xi,\hat{f}, V)$, so we only have to show injectivity.
%\\
%Let $(Y,M)$ and $(Y',M')$ be two solutions of $BSDE(\xi,\hat{f}, V)$ such that
% \\
% $(\dot Y,M)=(\dot Y',M')$. By Lemma \ref{classdV}, we have that $\int_0^{\cdot}\hat{f}\left(r,\cdot,Y_r,\sqrt{\frac{d\langle M \rangle}{dV}}(r)\right)dV_r$ and $\int_0^{\cdot}\hat{f}\left(r,\cdot,Y'_r,\sqrt{\frac{d\langle M \rangle}{dV}}(r)\right)dV_r$ are indistinguishable, so by \eqref{BSDEcadlag}, $Y$ and $Y'$ are indistinguishable. 
%\end{proof}

\begin{proposition}\label{BSDEexpectations}
If $(Y,M)$ solves $BSDE(\xi,\hat{f}, V)$, and if we denote
\\
 $\hat{f}\left(r,\cdot,Y_r,\sqrt{\frac{d\langle M \rangle}{dV}}(r)\right)$ by $\hat{f}_r$, then for any $t\in[0,T]$, a.s. we have 
\begin{equation} \label{E32bis}
\left\{\begin{array}{rcl}
Y_t &=& \mathbbm{E}\left[\xi+\int_t^T\hat{f}_rdV_r\middle|\mathcal{F}_t\right] \\
M_t &=& \mathbbm{E}\left[\xi+\int_0^T\hat{f}_rdV_r\middle|\mathcal{F}_t\right]-\mathbbm{E}\left[\xi+\int_0^T\hat{f}_rdV_r\middle|\mathcal{F}_0\right].
\end{array}\right.
\end{equation}
\end{proposition}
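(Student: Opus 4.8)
The plan is to exploit the defining equation \eqref{BSDEcadlag} of the BSDE together with the martingale property of $M$. First I would write the BSDE at time $t$ and at time $T$. Evaluating \eqref{BSDEcadlag} at a fixed $t$ gives, $\mathbbm{P}$ a.s.,
\begin{equation*}
Y_t = \xi + \int_t^T \hat{f}_r\, dV_r - (M_T - M_t),
\end{equation*}
which we rearrange as $Y_t + M_t = \xi + \int_t^T \hat{f}_r\, dV_r + M_T$. The key observation is that $Y_t$ is $\mathcal{F}_t$-measurable (since $Y$ is adapted, being in $\mathcal{L}^{2,cadlag}(dV\otimes d\mathbbm{P})$ and cadlag adapted) and similarly $M_t$ and $\int_0^t \hat{f}_r\, dV_r$ are $\mathcal{F}_t$-measurable. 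Note also that $\hat f_r$ belongs to $\mathcal{L}^2(dV\otimes d\mathbbm P)$: by Hypothesis \ref{HypBSDE} (items 4 and 5) and the Lipschitz bound, $|\hat f_r| \le |\hat f(r,\cdot,0,0)| + K^Y|Y_r| + K^Z\sqrt{\tfrac{d\langle M\rangle}{dV}}(r)$, and each term is in $\mathcal{L}^2(dV\otimes d\mathbbm P)$ (for the last term one uses $\mathbbm{E}[\int_0^T \tfrac{d\langle M\rangle}{dV}(r)\,dV_r] \le \mathbbm{E}[\langle M\rangle_T] = \|M\|_{\mathcal{H}^2_0}^2 < \infty$ by Remark \ref{RemIncr} and Proposition \ref{DecompoMart}); since $V$ is bounded, this also shows $\xi + \int_0^T \hat f_r\, dV_r \in L^1$, so all conditional expectations below are well defined.

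Next I would take $\mathbbm{E}[\,\cdot\mid\mathcal{F}_t]$ in the rearranged identity $Y_t = \xi + \int_t^T \hat{f}_r\, dV_r - M_T + M_t$. On the left, $Y_t$ is $\mathcal{F}_t$-measurable so it is unchanged. On the right, $M_t$ is $\mathcal{F}_t$-measurable, and $\mathbbm{E}[M_T \mid \mathcal{F}_t] = M_t$ since $M \in \mathcal{H}^2_0 \subset \mathcal{M}$ is a (uniformly integrable) martingale. Hence the two $M$ terms cancel and we obtain
\begin{equation*}
Y_t = \mathbbm{E}\!\left[\xi + \int_t^T \hat{f}_r\, dV_r \;\middle|\; \mathcal{F}_t\right] \qquad \mathbbm{P}\text{ a.s.},
\end{equation*}
which is the first line of \eqref{E32bis}.

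For the second line, I would use the full-horizon form: writing \eqref{BSDEcadlag} at time $0$ gives $Y_0 = \xi + \int_0^T \hat f_r\, dV_r - (M_T - M_0)$, i.e. $M_T - M_0 = \xi + \int_0^T \hat f_r\, dV_r - Y_0$. Taking $\mathbbm{E}[\,\cdot\mid\mathcal F_0]$ and using that $M$ is a martingale, $M_0 = \mathbbm{E}[M_T\mid\mathcal F_0]$, so $\mathbbm{E}[\xi + \int_0^T \hat f_r\, dV_r\mid\mathcal F_0] = Y_0 = M_0 + (\text{something }\mathcal F_0\text{-measurable})$; more directly, set $N_t := \mathbbm{E}[\xi + \int_0^T \hat f_r\, dV_r \mid \mathcal{F}_t]$, which is a cadlag martingale (using the right-continuity of the filtration to choose a cadlag version). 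Splitting $\int_0^T = \int_0^t + \int_t^T$ and pulling the $\mathcal F_t$-measurable part $\int_0^t \hat f_r\,dV_r$ out of the conditional expectation, we get $N_t = \int_0^t \hat f_r\, dV_r + Y_t$ by the first line. On the other hand, from \eqref{BSDEcadlag} at a general time, $Y_t + \int_0^t \hat f_r\, dV_r = \xi + \int_0^T \hat f_r\, dV_r - M_T + M_t$; taking $\mathbbm{E}[\,\cdot\mid\mathcal F_t]$ reproduces $N_t = \mathbbm{E}[\xi+\int_0^T\hat f_r\,dV_r\mid\mathcal F_t] - M_T\text{-term}$ — more cleanly: subtract the two expressions for $N_t$ at times $t$ and $0$, namely $N_t - N_0 = (\int_0^t \hat f_r\, dV_r + Y_t) - Y_0 = M_t - M_0 = M_t$, where the middle equality is exactly \eqref{BSDEcadlag} written between $0$ and $t$. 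Since $M_0 = 0$ this gives $M_t = N_t - N_0 = \mathbbm{E}[\xi+\int_0^T\hat f_r\,dV_r\mid\mathcal F_t] - \mathbbm{E}[\xi+\int_0^T\hat f_r\,dV_r\mid\mathcal F_0]$, the second line of \eqref{E32bis}.

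The argument is essentially a bookkeeping exercise once integrability is in place; I do not expect a serious obstacle. The only point requiring a little care is the integrability of $\hat f_r$ in $\mathcal{L}^2(dV\otimes d\mathbbm P)$ (hence the $L^1$-integrability of $\xi + \int_0^T \hat f_r\, dV_r$, needed to make the conditional expectations meaningful), which follows from the Lipschitz assumption in Hypothesis \ref{HypBSDE}, the bound $\sqrt{d\langle M\rangle/dV} \in \mathcal{L}^2(dV\otimes d\mathbbm P)$ coming from $M \in \mathcal{H}^2_0$, and the boundedness of $V$; and the a.s.-versus-everywhere matching of time instances, which is handled by working with cadlag versions of $Y$, $M$ and the martingale $N$ and invoking right-continuity of the filtration.
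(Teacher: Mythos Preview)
Your proposal is correct and follows essentially the same route as the paper: take conditional expectation with respect to $\mathcal{F}_t$ in \eqref{BSDEcadlag}, use that $Y_t$ is $\mathcal{F}_t$-measurable and that $M$ is a martingale to get the first line, then read off $M_T$ from the equation at time $0$ (using $M_0=0$) and condition again for the second line. The only difference is cosmetic: the paper obtains $M_T$ explicitly and conditions on $\mathcal{F}_t$, whereas you introduce $N_t$ and take a difference; your added integrability check for $\hat f_r$ is a useful justification that the paper leaves implicit.
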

\begin{proof}
Since $Y_t=\xi +\int_t^T\hat{f}_rdV_r - (M_T-M_t)$ a.s.,   $Y$ being
an adapted process and $M$ a martingale, taking the expectation in \eqref{BSDEcadlag} at time $t$, we directly
 get  $Y_t = \mathbbm{E}\left[\xi+\int_t^T\hat{f}_rdV_r\middle|\mathcal{F}_t\right]$ and in particular that $Y_0 = \mathbbm{E}\left[\xi+\int_0^T\hat{f}_rdV_r\middle|\mathcal{F}_0\right]$. Since $M_0=0$, looking at the BSDE at time 0 we get
$M_T = \xi +\int_0^T\hat{f}_rdV_r - Y_0
= \xi +\int_0^T\hat{f}_rdV_r -\mathbbm{E}\left[\xi+\int_0^T\hat{f}_rdV_r\middle|\mathcal{F}_0\right]$.
Taking the expectation with respect to $\mathcal{F}_t$ in the above inequality
gives the second line of \eqref{E32bis}.

\end{proof}

We will proceed showing that $BSDE(\xi,\hat{f},V)$ has a unique solution. At this point we introduce a significant map
$\Phi$ which will map $L^2(dV\otimes d\mathbbm{P})\times \mathcal{H}^2_0$
into its subspace $L^{2,cadlag}(dV\otimes d\mathbbm{P})\times \mathcal{H}^2_0$.
From now on, until Notation \ref{contraction}, we fix a couple $(\dot{U},N)\in L^2(dV\otimes d\mathbbm{P})\times \mathcal{H}^2_0$ to which  we will associate $(\dot{Y},M)$ which, as we will show, will belong to  $L^{2,cadlag}(dV\otimes d\mathbbm{P})\times \mathcal{H}^2_0$. We will show that $(\dot{U},N)\mapsto (\dot{Y},M)$ is a contraction for a certain norm.
 In all the  proofs below, $\dot{U}$ will only appear in integrals 
driven by $dV$ through a representative $U$.
%, so we can consider that we
% are working with any element $U$ of the class $\dot{U}$.
% This will however not be the case for $\dot{Y}$ for which we will have to pick a specific 
%representative. 
%Our strategy consists in starting by defining through Definition \ref{defYM} a
%cadlag process $Y$, which will be said to be the {\it cadlag reference process},
%associated with $(\dot U, N)$. Then
%we define  $\dot{Y}$.

\begin{proposition}\label{L1}
For any $t\in[0,T]$, $\int_t^T\hat{f}^2\left(r,\cdot,U_r,\sqrt{\frac{d\langle N \rangle}{dV}}(r)\right)dV_r$ is in $L^1$ and $\left(\int_t^T\hat{f}\left(r,\cdot,U_r,\sqrt{\frac{d\langle N \rangle}{dV}}(r)\right)dV_r\right)$ is in $L^2$.
\end{proposition}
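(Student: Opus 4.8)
The plan is to bound $\hat f\left(r,\cdot,U_r,\sqrt{\frac{d\langle N\rangle}{dV}}(r)\right)$ pointwise using the Lipschitz Hypothesis \ref{HypBSDE}(5) applied at $(y,z)=(U_r,\sqrt{d\langle N\rangle/dV}(r))$ against the reference point $(0,0)$: we get
\begin{equation*}
\left|\hat f\left(r,\cdot,U_r,\sqrt{\tfrac{d\langle N\rangle}{dV}}(r)\right)\right|\leq \left|\hat f(r,\cdot,0,0)\right| + K^Y|U_r| + K^Z\sqrt{\tfrac{d\langle N\rangle}{dV}}(r)\quad\mathbbm{P}\text{ a.s.}
\end{equation*}
Squaring and using $(a+b+c)^2\leq 3(a^2+b^2+c^2)$ reduces the claim to showing that each of the three processes $r\mapsto\hat f(r,\cdot,0,0)^2$, $r\mapsto U_r^2$ and $r\mapsto \frac{d\langle N\rangle}{dV}(r)$ has finite $dV\otimes d\mathbbm{P}$-integral over $[0,T]$; then $\int_t^T(\cdot)\,dV_r\le\int_0^T(\cdot)\,dV_r$ has finite expectation, which is the first assertion, and the second follows from Jensen (or Cauchy--Schwarz with $V_T$ bounded), since $\left(\int_t^T\hat f_r\,dV_r\right)^2\leq V_T\int_t^T\hat f_r^2\,dV_r\leq (\sup V)\int_0^T\hat f_r^2\,dV_r$, whose expectation is finite.

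The first two integrability facts are immediate: $\hat f(\cdot,\cdot,0,0)\in\mathcal{L}^2(dV\otimes d\mathbbm{P})$ is exactly Hypothesis \ref{HypBSDE}(4), and $U$ (a representative of $\dot U\in L^2(dV\otimes d\mathbbm{P})$) satisfies $\mathbbm{E}\int_0^T U_r^2\,dV_r<\infty$ by definition of that space. The only point requiring a short argument is $\mathbbm{E}\int_0^T\frac{d\langle N\rangle}{dV}(r)\,dV_r<\infty$. For this I would invoke Remark \ref{RemIncr} with $A=\langle N\rangle$, $B=V$ (both in $\mathcal{V}^{p,+}$ since $N\in\mathcal{H}^2_0$ and $V\in\mathcal{V}^{c,+}\subset\mathcal{V}^{p,+}$), which gives $\int_0^T\frac{d\langle N\rangle}{dV}(r)\,dV_r\leq \langle N\rangle_T$ a.s.; taking expectations, $\mathbbm{E}[\langle N\rangle_T]=\|N\|_{\mathcal{H}^2_0}^2<\infty$ because $N\in\mathcal{H}^2_0$.

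I do not expect a genuine obstacle here; the statement is essentially a bookkeeping lemma assembling Hypothesis \ref{HypBSDE} with the definitions of the spaces $L^2(dV\otimes d\mathbbm{P})$ and $\mathcal{H}^2_0$. The one place to be slightly careful is measurability and the use of a representative: by Lemma \ref{classdV} the integrals do not depend on the chosen representative $U$ of $\dot U$, so the $L^1$ and $L^2$ membership statements are well posed; and the process $\frac{d\langle N\rangle}{dV}$ is predictable by Proposition \ref{Decomposition}, so the composition $\hat f(r,\cdot,U_r,\sqrt{d\langle N\rangle/dV}(r))$ is progressively measurable by Hypothesis \ref{HypBSDE}(3), making all the integrals meaningful. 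With these remarks in place the bounds above conclude the proof.
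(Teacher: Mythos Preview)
Your proof is correct and follows essentially the same approach as the paper: both use the Lipschitz condition in Hypothesis \ref{HypBSDE} to bound $\hat f$ against $\hat f(\cdot,\cdot,0,0)$, $U$, and $\sqrt{d\langle N\rangle/dV}$, then invoke Hypothesis \ref{HypBSDE}(4), the assumption $\dot U\in L^2(dV\otimes d\mathbbm{P})$, and Remark \ref{RemIncr} (giving $\int_t^T\frac{d\langle N\rangle}{dV}dV_r\le\langle N\rangle_T-\langle N\rangle_t$) together with $N\in\mathcal{H}^2_0$, and finally use Cauchy--Schwarz with the boundedness of $V$ to pass from the $L^1$ statement to the $L^2$ one. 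Your added remarks on measurability and on independence from the representative via Lemma \ref{classdV} are appropriate and do not alter the substance.
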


\begin{proof}
By Cauchy-Schwarz inequality and thanks to  the boundedness of $V$ together
the Lipschitz conditions on $f$ in 
Hypothesis \ref{HypBSDE},
 there exist a positive constant
 $C$ 
such that, for any $t\in[0,T]$, we have 
\\
\begin{equation}
\begin{array}{l}
\left(\int_t^T\hat{f}\left(r,\cdot,U_r,\sqrt{\frac{d\langle N \rangle}{dV}}(r)\right)dV_r\right)^2
      \leq V_T^2\int_t^T\hat{f}^2\left(r,\cdot,U_r,\sqrt{\frac{d\langle N \rangle}{dV}}(r)\right)dV_r\\
      \leq C\left(\int_t^T\hat{f}^2\left(r,\cdot,0,0\right)dV_r + \int_t^TU_r^2dV_r + \int_t^T\frac{d\langle N \rangle}{dV}(r)dV_r\right).
\end{array}
\end{equation}
The three terms on the right are in $L^1$.
Indeed, by Remark \ref{RemIncr} 
\\
$\int_t^T\frac{d\langle N \rangle}{dV}(r)dV_r \leq (\langle N \rangle_T-\langle N \rangle_t)$ which belongs to $L^1$
 since $N$ is taken in $\mathcal{H}^2$. By Hypothesis \ref{HypBSDE}, $f(\cdot,\cdot,0,0)$ is in ${\mathcal L}^2(dV\otimes d\mathbbm{P})$, and
 $\dot{U}$ was also taken in $L^2(dV\otimes d\mathbbm{P})$.
 This concludes the proof.
\end{proof}

We can therefore state
the following definition.

\begin{definition}\label{defYM} Setting 
$\hat{f}_r=\hat{f}\left(r,\cdot,U_r,\sqrt{\frac{d\langle N \rangle}{dV}}(r)\right)$, we define  $M$ 
as the cadlag version  of the martingale $t \mapsto  \mathbbm{E}\left[\xi + \int_0^T \hat{f}_rdV_r\middle|\mathcal{F}_t\right]-\mathbbm{E}\left[\xi + \int_0^T \hat{f}_rdV_r\middle|\mathcal{F}_0\right]$.
 
It admits a cadlag version taking into account
 Theorem 4 in Chapter IV of \cite{dellmeyerB}, since the stochastic basis fulfills the usual conditions.
We denote by $Y$ the cadlag process defined by 
$Y_t = \xi+\int_t^T\hat{f}\left(r,\cdot,U_r,\sqrt{\frac{d\langle N \rangle}{dV}}(r)\right)dV_r - (M_T - M_t)$.
This will be called the {\bf cadlag reference process} and we will
 often omit its
dependence to $(\dot U,N)$.
\end{definition}

According to previous definition,  it is not clear whether
 $Y$ is adapted, however, we have the almost sure equalities
\begin{equation}\label{Ytexpectation}
\begin{array}{rcl}
Y_t &=& \xi+\int_t^T\hat{f}_rdV_r - (M_T - M_t)\\
&=& \xi+\int_t^T\hat{f}_rdV_r - \left(\xi+\int_0^T\hat{f}_rdV_r -\mathbbm{E}\left[\xi + \int_0^T \hat{f}_rdV_r\middle|\mathcal{F}_t\right]\right)\\
&=& \mathbbm{E}\left[\xi + \int_0^T \hat{f}_rdV_r\middle|\mathcal{F}_t\right] -\int_0^t\hat{f}_rdV_r\\
&=& \mathbbm{E}\left[\xi + \int_t^T \hat{f}_rdV_r\middle|\mathcal{F}_t\right].
\end{array}
\end{equation}
Since $Y$ is cadlag and adapted,  
by Theorem 15 Chapter IV of \cite{dellmeyer75}, it is progressively measurable.

\begin{proposition}\label{supY}
$M$ belongs to $\mathcal{H}^2_0$ and  
$\underset{t\in[0,T]}{\text{sup }}|Y_t|\in L^2$.
\end{proposition}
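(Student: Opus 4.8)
The plan is to reduce everything to the single square-integrable random variable $G := \xi + \int_0^T \hat{f}_r\,dV_r$ and then apply Doob's maximal inequality. First I would note that $G\in L^2$: indeed $\xi\in L^2$ by Hypothesis \ref{HypBSDE}, and $\int_0^T\hat{f}_r\,dV_r\in L^2$ by Proposition \ref{L1} taken at $t=0$, so $G$ is a sum of two elements of $L^2$.

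Next, recall from Definition \ref{defYM} that $M$ is the cadlag version of $t\mapsto \mathbb{E}[G|\mathcal{F}_t]-\mathbb{E}[G|\mathcal{F}_0]$, so in particular $M_0=0$. Since $t\mapsto \mathbb{E}[G|\mathcal{F}_t]$ is a cadlag $L^2$-martingale, Doob's maximal inequality in $L^2$ gives $\mathbb{E}\big[\sup_{t\in[0,T]}|\mathbb{E}[G|\mathcal{F}_t]|^2\big]\le 4\,\mathbb{E}[G^2]<\infty$; combining this with Jensen's inequality for $\mathbb{E}[G|\mathcal{F}_0]$ and the bound $(a+b)^2\le 2a^2+2b^2$ yields $\mathbb{E}\big[\sup_{t\in[0,T]}|M_t|^2\big]<\infty$, i.e. $M\in\mathcal{H}^2_0$.

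For the supremum of $Y$, I would start from the a.s. identity $Y_t=\mathbb{E}[G|\mathcal{F}_t]-\int_0^t\hat{f}_r\,dV_r$ established in \eqref{Ytexpectation}. Both sides are cadlag processes (the integral term is even continuous, $V$ being continuous), so this identity holds up to indistinguishability, whence $\sup_{t\in[0,T]}|Y_t|\le \sup_{t\in[0,T]}|\mathbb{E}[G|\mathcal{F}_t]|+\int_0^T|\hat{f}_r|\,dV_r$ a.s. The first term lies in $L^2$ by the Doob estimate above. For the second, Cauchy--Schwarz with respect to the measure $dV$ together with the boundedness of $V$ (Hypothesis \ref{HypBSDE}) gives $\big(\int_0^T|\hat{f}_r|\,dV_r\big)^2\le V_T\int_0^T\hat{f}_r^2\,dV_r\le C\int_0^T\hat{f}_r^2\,dV_r$, which is in $L^1$ by Proposition \ref{L1}; hence $\int_0^T|\hat{f}_r|\,dV_r\in L^2$. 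Adding the two contributions (again via $(a+b)^2\le 2a^2+2b^2$) shows $\sup_{t\in[0,T]}|Y_t|\in L^2$.

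No step here is a genuine obstacle; the only points requiring a little care are that the pointwise-a.s. identities from \eqref{Ytexpectation} may be upgraded to an a.s. inequality between suprema, which is legitimate because all the processes involved are cadlag, and that it is precisely the boundedness of $V$ that turns the Cauchy--Schwarz estimate into an $L^2$ statement.
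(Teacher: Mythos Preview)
Your proof is correct and follows essentially the same approach as the paper: both arguments rest on the $L^2$-integrability of $\xi+\int_0^T\hat f_r\,dV_r$ (via Proposition \ref{L1}), Doob's maximal inequality for $M$, and a Cauchy--Schwarz bound on the $dV$-integral using boundedness of $V$. The only cosmetic difference is that the paper bounds $\sup_t Y_t^2$ starting from the defining formula $Y_t=\xi+\int_t^T\hat f_r\,dV_r-(M_T-M_t)$, whereas you use the equivalent representation $Y_t=\mathbb{E}[G|\mathcal{F}_t]-\int_0^t\hat f_r\,dV_r$ from \eqref{Ytexpectation}; your remark on upgrading the pointwise identity to indistinguishability via cadlag versions is a nice extra bit of care.
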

\begin{proof}
$M$ is square integrable and vanishes at $0$ by Definition \ref{defYM} and Proposition \ref{L1}. A consequence of Definition \ref{defYM}, of Cauchy-Schwarz inequality and of the boundedness of $V$ is the existence of some $C,C'>0$ such that, a.s., 
\begin{equation}
\begin{array}{rcl}
\underset{t\in[0,T]}{\text{sup }}Y^2_t&\leq& C\left(\xi^2 + \underset{t\in[0,T]}{\text{sup }}\left(\int_t^T\hat{f}_rdV_r\right)^2+\underset{t\in[0,T]}{\text{sup }}(M_T-M_t)^2\right)\\
&\leq& C'\left(\xi^2 + \int_0^T\hat{f}^2_rdV_r+\underset{t\in[0,T]}{\text{sup }} M^2_t\right)
\end{array}
\end{equation}
which belongs to $L^1$ by Proposition \ref{L1} and
the fact that
 $\xi$ and $M$ are  square integrable.
\end{proof}

%\begin{proposition}\label{L2}
%$Y$ and $M$  are square integrable processes.
%\end{proposition}
%
%\begin{proof}
%We already know that $M$ is a square integrable martingale. 
%As we have seen in Proposition \ref{L1}, $\int_t^T\hat{f}\left(r,\cdot,U_r,\sqrt{\frac{d\langle N \rangle}{dV}}(r)\right)dV_r$ belongs to $L^2$ for any $t\in[0,T]$ and by Hypothesis \ref{HypBSDE}, $\xi\in L^2$. So by \eqref{Ytexpectation} and   Jensen's inequality for conditional expectation we have
%\begin{equation*}
%    \begin{array}{rcl}
%         \mathbbm{E}\left[Y_t^2\right] &=& \mathbbm{E}\left[\mathbbm{E}\left[\xi+\int_t^T\hat{f}\left(r,\cdot,U_r,\sqrt{\frac{d\langle N \rangle}{dV}}(r)\right)dV_r\middle|\mathcal{F}_t\right]^2\right]\\
%         & \leq & \mathbbm{E}\left[\mathbbm{E}\left[\left(\xi+\int_t^T\hat{f}\left(r,\cdot,U_r,\sqrt{\frac{d\langle N \rangle}{dV}}(r)\right)dV_r\right)^2\middle|\mathcal{F}_t\right]\right] \\
%         & = &\mathbbm{E}\left[\left(\xi+\int_t^T\hat{f}\left(r,\cdot,U_r,\sqrt{\frac{d\langle N \rangle}{dV}}(r)\right)dV_r\right)^2\right],
%    \end{array}
%\end{equation*}
%which is finite.
%\end{proof}
%
%

Since $Y$ is cadlag progressively measurable, $\underset{t\in[0,T]}{\text{sup }}|Y_t|\in L^2$ and since $V$ is bounded, it is clear that $Y\in \mathcal{L}^{2,cadlag}(dV\otimes d\mathbbm{P})$ and the corresponding class $\dot{Y}$ belongs to  
$L^{2,cadlag}(dV\otimes d\mathbbm{P})$.
% We recall that $M\in \mathcal{H}^2_0$ thanks to Proposition \ref{L2}.

\begin{notation}\label{contraction}
We denote by $\Phi$ the operator which associates to a couple 
$(\dot{U},N)$ the couple $(\dot{Y}, M)$.
\begin{equation*}
\Phi: \begin{array}{rcl}
L^2(dV\otimes d\mathbbm{P})\times \mathcal{H}^2_0 &\longrightarrow& L^{2,cadlag}(dV\otimes d\mathbbm{P})\times \mathcal{H}^2_0\\
(\dot{U},N) &\longmapsto& (\dot{Y},M).
\end{array}
\end{equation*}
\end{notation}

\begin{proposition}\label{FixedPoint}
The mapping $(Y,M)\longmapsto(\dot{Y},M)$ induces a bijection between the set of solutions of $BSDE(\xi,\hat{f},V)$ and the set of fixed points of $\Phi$.
\end{proposition}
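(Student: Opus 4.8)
The plan is to verify that the map $(Y,M)\mapsto(\dot Y,M)$ sends solutions to fixed points, that the assignment is injective, and that it is surjective, using the almost sure identities established in Propositions \ref{BSDEexpectations} and in \eqref{Ytexpectation}.

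First I would check that a solution produces a fixed point. Let $(Y,M)$ solve $BSDE(\xi,\hat f,V)$. Then by definition $Y\in\mathcal{L}^{2,cadlag}(dV\otimes d\mathbbm{P})$, so $\dot Y\in L^{2,cadlag}(dV\otimes d\mathbbm{P})\subset L^2(dV\otimes d\mathbbm{P})$, and $M\in\mathcal{H}^2_0$; thus $(\dot Y,M)$ lies in the domain of $\Phi$. Apply $\Phi$ to $(\dot Y, M)$: by Lemma \ref{classdV} the integrals $\int_0^{\cdot}\hat f(r,\cdot,Y_r,\sqrt{\frac{d\langle M\rangle}{dV}}(r))dV_r$ do not depend on the representative of $\dot Y$, so with $\hat f_r$ as in Proposition \ref{BSDEexpectations} the martingale component produced by $\Phi$ is exactly the cadlag version of $t\mapsto\mathbbm{E}[\xi+\int_0^T\hat f_rdV_r|\mathcal{F}_t]-\mathbbm{E}[\xi+\int_0^T\hat f_rdV_r|\mathcal{F}_0]$, which by the second line of \eqref{E32bis} equals $M$ (both are cadlag martingales agreeing a.s. at each time, hence indistinguishable). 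Likewise the $Y$-component produced by $\Phi$ is the cadlag process $t\mapsto\xi+\int_t^T\hat f_rdV_r-(M_T-M_t)$, which is indistinguishable from $Y$ by the BSDE relation; hence the associated classes coincide and $\Phi(\dot Y,M)=(\dot Y,M)$.

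Next, injectivity of $(Y,M)\mapsto(\dot Y,M)$ on the set of solutions: if $(Y^1,M^1)$ and $(Y^2,M^2)$ are solutions with $\dot Y^1=\dot Y^2$ and $M^1=M^2$, then from \eqref{BSDEcadlag}, using Lemma \ref{classdV} to see the $dV$-integrals coincide up to indistinguishability, we get $Y^1_t=\xi+\int_t^T\hat f_rdV_r-(M^1_T-M^1_t)=Y^2_t$ a.s. for each $t$; since both $Y^1$ and $Y^2$ are cadlag this forces indistinguishability, so the map is injective.

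Finally, surjectivity: let $(\dot U,N)$ be a fixed point of $\Phi$, so $\Phi(\dot U,N)=(\dot Y,M)$ with $\dot Y=\dot U$ and $M=N$. By construction (Definition \ref{defYM} and \eqref{Ytexpectation}) the cadlag reference process $Y$ is adapted, progressively measurable, lies in $\mathcal{L}^{2,cadlag}(dV\otimes d\mathbbm{P})$ by Proposition \ref{supY}, satisfies $\dot Y=\dot U$, and obeys $Y_t=\xi+\int_t^T\hat f(r,\cdot,U_r,\sqrt{\frac{d\langle N\rangle}{dV}}(r))dV_r-(M_T-M_t)$ with $M=N\in\mathcal{H}^2_0$. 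Since $\dot Y=\dot U$, Lemma \ref{classdV} lets us replace $U$ by $Y$ inside the driver without changing the $dV$-integral up to indistinguishability, so $(Y,M)$ satisfies \eqref{BSDEcadlag}, i.e. it is a solution of $BSDE(\xi,\hat f,V)$, and it maps to $(\dot Y,M)=(\dot U,N)$. Combining the three points gives the claimed bijection. The main obstacle is bookkeeping: one must be careful that the driver is only ever integrated against $dV$ so that Lemma \ref{classdV} genuinely makes the construction representative-independent, and that passing between the a.s.\ equalities at fixed times and indistinguishability is justified by the cadlag property of $Y$ and $M$.
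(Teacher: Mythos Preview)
Your proof is correct and follows essentially the same route as the paper's: both verify the three points (solution $\Rightarrow$ fixed point via Proposition \ref{BSDEexpectations}/\eqref{E32bis}, surjectivity via Definition \ref{defYM} and Lemma \ref{classdV}, injectivity via Lemma \ref{classdV} applied to the driver integrals). The only cosmetic difference is that in the injectivity step you pass through ``a.s.\ at each $t$'' plus cadlag to reach indistinguishability, whereas the paper uses directly that \eqref{BSDEcadlag} holds in the sense of indistinguishability, so once Lemma \ref{classdV} gives indistinguishability of the integrals one obtains $Y^1=Y^2$ immediately.
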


\begin{proof}
First, let $(U,N)$ be a solution of $BSDE(\xi,\hat{f},V)$, let $(\dot{Y},M):=\Phi(\dot{U},N)$ and let $Y$ be the reference cadlag process associated to $U$ as in Definition \ref{defYM}. By this same definition,  $M$ is the cadlag version of 
\\
$t\mapsto \mathbbm{E}\left[\xi+\int_0^T\hat{f}\left(r,\cdot,U_r,\sqrt{\frac{d\langle N \rangle}{dV}}(r)\right)dV_r\middle|\mathcal{F}_t\right]-\mathbbm{E}\left[\xi+\int_0^T\hat{f}\left(r,\cdot,U_r,\sqrt{\frac{d\langle N \rangle}{dV}}(r)\right)dV_r\middle|\mathcal{F}_0\right]$, but by Proposition \ref{BSDEexpectations}, so is $N$, meaning $M=N$. Again by Definition \ref{defYM}, 
$Y =\xi + \int_{\cdot}^T \hat{f}\left(r,\cdot,U_r,\sqrt{\frac{d\langle N \rangle}{dV}}(r)\right)dV_r -(N_T-N_{\cdot})$ which is equal to $U$ thanks to \eqref{BSDEcadlag}, so $Y=U$
in the sense of indistinguishability, and in particular, $\dot{U}=\dot{Y}$, implying $(\dot{U},N)=(\dot{Y},M)=\Phi(\dot{U},N)$. The mapping $(Y,M)\longmapsto(\dot{Y},M)$ therefore does indeed map the set of solutions of $BSDE(\xi,\hat{f},V)$ into the set of fix points of $\Phi$.
\\
\\
The map is surjective. Indeed let $(\dot{U},N)$ be a fixed point of $\Phi$, the couple $(Y,M)$ of Definition \ref{defYM} verifies  
$Y =\xi + \int_{\cdot}^T \hat{f}\left(r,\cdot,U_r,\sqrt{\frac{d\langle N \rangle}{dV}}(r)\right)dV_r -(M_T-M_{\cdot})$
in the sense of indistinguishability, and $(\dot{Y},M)=\Phi(\dot{U},N)=(\dot{U},N)$, so by Lemma \ref{classdV}, $\int_{\cdot}^T \hat{f}\left(r,\cdot,Y_r,\sqrt{\frac{d\langle M \rangle}{dV}}(r)\right)dV_r$ and $\int_{\cdot}^T \hat{f}\left(r,\cdot,U_r,\sqrt{\frac{d\langle N \rangle}{dV}}(r)\right)dV_r$ are indistinguishable  and
$Y =\xi + \int_{\cdot}^T \hat{f}\left(r,\cdot,Y_r,\sqrt{\frac{d\langle M \rangle}{dV}}(r)\right)dV_r -(M_T-M_{\cdot})$, meaning that
$(Y,M)$ solves $BSDE(\xi,\hat{f},V)$.
\\
\\
We finally show that it is injective. Let us consider two solutions $(Y^1,M)$ and $(Y^2,M)$ of $BSDE(\xi,\hat{f},V)$ with  $\dot{Y^1}=\dot{Y^2}$. By Lemma \ref{classdV}, the processes $\int_{\cdot}^T\hat{f}\left(r,\cdot,Y^1_r,\sqrt{\frac{d\langle M \rangle}{dV}}(r)\right)dV_r$ and $\int_{\cdot}^T\hat{f}\left(r,\cdot,Y^2_r,\sqrt{\frac{d\langle M \rangle}{dV}}(r)\right)dV_r$ are indistinguishable, so taking \eqref{BSDEcadlag} into account, we have $Y^1=Y^2$.

\end{proof}

From now on, if $(\dot{Y},M)$ is the image by $\Phi$ of a couple 
\\
$(\dot{U},N)\in L^2(dV\otimes d\mathbbm{P})\times \mathcal{H}^2_0$,
 by default, we will always refer to the cadlag reference process 
$Y$ of $\dot{Y}$ defined in Definition \ref{defYM}.
 
\begin{lemma}\label{realmartLemma}
Let $Y$ be a cadlag adapted process satisfying 
$\mathbbm{E}\left[\underset{t\in[0,T]}{\text{sup }} Y_t^2\right]<\infty$ and $M$ be a square integrable martingale. Then there exists a constant $C>0$ such that for any $\epsilon >0$ we have 
$$\mathbbm{E}\left[\underset{t\in[0,T]}{\text{sup }}\left|\int_0^tY_{r^-}dM_r\right|\right]\leq  C\left( \frac{\epsilon}{2}\mathbbm{E}\left[\underset{t\in[0,T]}{\text{sup }} Y_t^2\right] + \frac{1}{2\epsilon}\mathbbm{E}\left[[ M]_T\right]\right).$$ In particular, $\int_0^{\cdot}Y_{r^-}dM_r$ is a uniformly integrable martingale.

\end{lemma}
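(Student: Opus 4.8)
The plan is to estimate the running supremum of the stochastic integral $\int_0^\cdot Y_{r^-}dM_r$ by combining the Burkholder--Davis--Gundy (BDG) inequality with Young's inequality. First I would recall that, since $Y$ is cadlag adapted and $M$ is a square integrable martingale, the process $Y_{r^-}$ is predictable and locally bounded, so the stochastic integral $\int_0^\cdot Y_{r^-}dM_r$ is a well-defined local martingale. By BDG applied with exponent $1$, there is a universal constant $C>0$ with
\begin{equation*}
\mathbbm{E}\left[\underset{t\in[0,T]}{\text{sup }}\left|\int_0^tY_{r^-}dM_r\right|\right]\leq C\,\mathbbm{E}\left[\left(\int_0^T Y_{r^-}^2\,d[M]_r\right)^{1/2}\right].
\end{equation*}

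Next I would bound the integrand inside the square root crudely by the running supremum: $\int_0^T Y_{r^-}^2\,d[M]_r\leq \left(\underset{t\in[0,T]}{\text{sup }}Y_t^2\right)[M]_T$, which gives
\begin{equation*}
\mathbbm{E}\left[\underset{t\in[0,T]}{\text{sup }}\left|\int_0^tY_{r^-}dM_r\right|\right]\leq C\,\mathbbm{E}\left[\underset{t\in[0,T]}{\text{sup }}|Y_t|\;[M]_T^{1/2}\right].
\end{equation*}
Then Young's inequality $ab\leq \frac{\epsilon}{2}a^2+\frac{1}{2\epsilon}b^2$ with $a=\underset{t}{\text{sup }}|Y_t|$ and $b=[M]_T^{1/2}$ yields exactly the claimed bound
\begin{equation*}
\mathbbm{E}\left[\underset{t\in[0,T]}{\text{sup }}\left|\int_0^tY_{r^-}dM_r\right|\right]\leq C\left(\frac{\epsilon}{2}\mathbbm{E}\left[\underset{t\in[0,T]}{\text{sup }}Y_t^2\right]+\frac{1}{2\epsilon}\mathbbm{E}\left[[M]_T\right]\right).
\end{equation*}
Finally, by the hypotheses $\mathbbm{E}[\sup_t Y_t^2]<\infty$ and $\mathbbm{E}[[M]_T]=\mathbbm{E}[M_T^2]-\mathbbm{E}[M_0^2]<\infty$ (since $M\in\mathcal{M}$ is square integrable), the right-hand side is finite for, say, $\epsilon=1$; thus $\underset{t}{\text{sup }}|\int_0^tY_{r^-}dM_r|$ is integrable, which upgrades the local martingale $\int_0^\cdot Y_{r^-}dM_r$ to a true (indeed uniformly integrable) martingale by a standard localization and dominated convergence argument.

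The only mild subtlety — the ``main obstacle'', such as it is — is justifying that BDG applies directly to the process rather than only to a localized version: one should first write the inequality along a localizing sequence $(\tau_n)$ reducing $\int_0^\cdot Y_{r^-}dM_r$ to a genuine $\mathcal{H}^1$ martingale, apply BDG and the two elementary inequalities there, observe the resulting bound is uniform in $n$ (the right-hand side does not depend on $n$ after the crude domination), and then pass to the limit by monotone convergence on the left-hand side. This also delivers the uniform integrability: the supremum of $|\int_0^\cdot Y_{r^-}dM_r|$ is an integrable dominating function, so the stopped martingales converge in $L^1$ and the limit is a uniformly integrable martingale.
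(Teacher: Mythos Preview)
Your proof is correct and follows essentially the same route as the paper: BDG with exponent $1$, then the crude bound $\int_0^T Y_{r^-}^2\,d[M]_r\le(\sup_t Y_t^2)[M]_T$, then an arithmetic inequality. The only cosmetic difference is that the paper inserts a Cauchy--Schwarz step $\mathbbm{E}[\sqrt{AB}]\le\sqrt{\mathbbm{E}[A]\,\mathbbm{E}[B]}$ before applying Young, whereas you apply Young pointwise inside the expectation; both yield the same bound, and your extra care with the localizing sequence is a welcome addition that the paper leaves implicit.
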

\begin{proof}
By Burkholder-Davis-Gundy 
%(shortened by BDG)
 and Cauchy-Schwarz 
%% (CS) jamais utilise
  inequalities, there exists $C>0$ such that 
\begin{equation*}
\begin{array}{rcccl}
    &&\mathbbm{E}\left[\underset{t\in[0,T]}{\text{sup }}\left|\int_0^tY_{r^-}dM_r\right|\right] & \leq & C\mathbbm{E}\left[\sqrt{\int_0^TY^2_{r^-}d[M]_r}\right]\\
    &\leq & C\mathbbm{E}\left[\sqrt{\underset{t\in[0,T]}{\text{sup }}Y^2_t[M]_T}\right]
    &\leq & C\sqrt{\mathbbm{E}\left[\underset{t\in[0,T]}{\text{sup }}Y^2_t\right]\mathbbm{E}[[M]]_T}\\
     & \leq & C\left(\frac{\epsilon}{2}\mathbbm{E}\left[\underset{t\in[0,T]}{\text{sup }} Y_t^2\right] + \frac{1}{2\epsilon}\mathbbm{E}\left[[ M]_T\right]\right)
     & < & +\infty.
\end{array}
\end{equation*}
So $\int_0^{\cdot}Y_{r^-}dM_r$ is a uniformly integrable local martingale, and therefore a martingale.
\end{proof}

\begin{lemma}\label{LsupY}
	Let $Y$ be a cadlag adapted process and $M\in\mathcal{H}^2$. Assume the existence of a constant $C>0$ and an $L^1$-random variable $Z$ such that for any $t\in[0,T]$, $Y^2_t \leq C\left(Z+\left|\int_0^tY_{r^-}dM_r\right|\right)$.
	Then $\underset{t\in[0,T]}{\text{sup }}|Y_t|\in L^2$.
\end{lemma}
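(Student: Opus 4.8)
The plan is to apply Lemma \ref{realmartLemma} to control the martingale term and then invoke a Gronwall-type argument. First I would note that by Lemma \ref{realmartLemma}, since $Y$ is cadlag adapted (I should first check $\mathbbm{E}[\sup_t Y_t^2]<\infty$, but this is exactly what we want to prove, so instead I would work with a localized/truncated version), the process $\int_0^{\cdot}Y_{r^-}dM_r$ has the bound
\[
\mathbbm{E}\left[\underset{t\in[0,T]}{\text{sup }}\left|\int_0^tY_{r^-}dM_r\right|\right]\leq C\left(\frac{\epsilon}{2}\mathbbm{E}\left[\underset{t\in[0,T]}{\text{sup }} Y_t^2\right] + \frac{1}{2\epsilon}\mathbbm{E}\left[[M]_T\right]\right).
\]
Taking the supremum over $t$ in the hypothesis $Y_t^2 \leq C(Z + |\int_0^t Y_{r^-}dM_r|)$ and then expectations yields
\[
\mathbbm{E}\left[\underset{t\in[0,T]}{\text{sup }} Y_t^2\right] \leq C\,\mathbbm{E}[Z] + C\,\mathbbm{E}\left[\underset{t\in[0,T]}{\text{sup }}\left|\int_0^t Y_{r^-}dM_r\right|\right] \leq C\,\mathbbm{E}[Z] + C^2\frac{\epsilon}{2}\mathbbm{E}\left[\underset{t\in[0,T]}{\text{sup }} Y_t^2\right] + \frac{C^2}{2\epsilon}\mathbbm{E}[[M]_T].
\]
Choosing $\epsilon$ small enough that $C^2\epsilon/2 \leq 1/2$, I can absorb the $\mathbbm{E}[\sup_t Y_t^2]$ term on the right into the left-hand side, concluding that $\mathbbm{E}[\sup_t Y_t^2] \leq 2C\,\mathbbm{E}[Z] + \frac{C^2}{\epsilon}\mathbbm{E}[[M]_T] < \infty$ since $Z\in L^1$ and $M\in\mathcal{H}^2$.

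The main obstacle is the circularity: Lemma \ref{realmartLemma} requires a priori that $\mathbbm{E}[\sup_t Y_t^2]<\infty$, which is the conclusion we seek, so one cannot apply it directly. The standard fix is a localization argument. I would introduce the stopping times $\tau_n = \inf\{t : |Y_t| \geq n\}\wedge T$ (or $\inf\{t: \int_0^t Y_{r^-}^2 d[M]_r \geq n\}$), so that the stopped process $Y^{\tau_n}$ is bounded and hence the finiteness hypothesis of Lemma \ref{realmartLemma} holds trivially for it. Running the above estimate with $Y$ replaced by $Y^{\tau_n}$ and $M$ by $M^{\tau_n}$ (noting $[M^{\tau_n}]_T = [M]_{\tau_n} \leq [M]_T$ and that the hypothesis $Y_t^2 \le C(Z + |\int_0^t Y_{r^-}dM_r|)$ passes to the stopped process), I obtain a bound $\mathbbm{E}[\sup_{t}(Y^{\tau_n}_t)^2] \leq 2C\,\mathbbm{E}[Z] + \frac{C^2}{\epsilon}\mathbbm{E}[[M]_T]$ that is uniform in $n$. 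Since $\tau_n \uparrow T$ a.s. (using that $Y$ is cadlag, hence locally bounded, so the first $n$ with $\tau_n = T$ exists; more carefully, $\sup_{t\le \tau_n}|Y_t|\uparrow \sup_{t\in[0,T]}|Y_t|$), Fatou's lemma gives $\mathbbm{E}[\sup_{t\in[0,T]} Y_t^2] \leq 2C\,\mathbbm{E}[Z] + \frac{C^2}{\epsilon}\mathbbm{E}[[M]_T] < \infty$, which is the desired conclusion.

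One delicate point in the localization worth stating carefully: with $\tau_n = \inf\{t: |Y_t|\ge n\}\wedge T$, the stopped process satisfies $\sup_{t\in[0,T]}|Y^{\tau_n}_t| = \sup_{t\le\tau_n}|Y_t| \le n \vee |Y_{\tau_n}|$, and since $Y$ is cadlag the jump at $\tau_n$ is finite, so $\sup_t(Y^{\tau_n}_t)^2$ is indeed in $L^2$ — actually it suffices that it is finite a.s.\ and that $\mathbbm{E}[\sup_t (Y^{\tau_n}_t)^2]<\infty$; to get the latter cleanly one may instead take $\tau_n = \inf\{t : \int_0^t Y_{r^-}^2\,d[M]_r > n\}\wedge\inf\{t:|Y_t|>n\}\wedge T$ so that both $Y^{\tau_n}$ and the stochastic integral are genuinely bounded. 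Either way, the uniform-in-$n$ estimate combined with monotone convergence (the quantities $\sup_{t\le\tau_n}Y_t^2$ increase to $\sup_{t\in[0,T]}Y_t^2$ as $n\to\infty$) closes the argument.
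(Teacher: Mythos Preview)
Your proposal is correct and follows essentially the same route as the paper: localize, apply Lemma~\ref{realmartLemma} with a well-chosen $\epsilon$ to absorb the $\sup Y^2$ term, then pass to the limit by monotone convergence. The paper's only refinement is to take $\tau_n=\inf\{t:|Y_{t^-}|\ge n\}$, which directly bounds the integrand $Y_{r^-}$ on $[0,\tau_n]$ and hence (via $\langle \int_0^{\cdot\wedge\tau_n}Y_{r^-}dM_r\rangle\le n^2\langle M\rangle_T$ and Doob) yields $\sup_{t\le\tau_n}Y_t^2\in L^1$ cleanly, avoiding the jump-at-$\tau_n$ issue you flagged.
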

\begin{proof}
	For any stopping time $\tau$ we have 
	\begin{equation} \label{localsup1}
	\underset{t\in[0,\tau]}{\text{sup }} Y_t^2 \leq C\left(Z +\underset{t\in[0,\tau]}{\text{sup }}\left|\int_0^tY_{r^-}dM_r\right|\right).
	\end{equation}
	Since $Y_{t^-}$ is caglad and therefore locally bounded, (see Definition p164 in \cite{protter}) 
	we define $\tau_n = \text{inf }\{t>0: Y_{t^-}\geq n\}$. 
	It yields $\int_0^{\cdot\wedge\tau_n}Y_{r^-}dM_r$ is in $\mathcal{H}^2$ since its 
	angular bracket is equal to $\int_0^{\cdot\wedge\tau_n}Y^2_{r^-}d\langle M\rangle_r$ which is inferior to $n^2\langle M\rangle_T\in L^1$. By Doob's inequality we know that $\underset{t\in[0,\tau_n]}{\text{sup }}\left|\int_0^tY_{r^-}dM_r\right|$ is $L^2$ and  using
	\eqref{localsup1}, we get that  $\underset{t\in[0,\tau_n]}{\text{sup }} Y_t^2$  is $L^1$.
	By \eqref{localsup1} applied with $\tau_n$ and taking expectation, we get
	$\mathbbm{E}\left[\underset{t\in[0,\tau_n]}{\text{sup }} Y_t^2\right] \leq C'\left(1 +\mathbbm{E}\left[\underset{t\in[0,\tau_n]}{\text{sup }}\left|\int_0^tY_{r^-}dM_r\right|\right]\right)$,
	for some $C'$ which does not depend on $n$.
	By Lemma \ref{realmartLemma} applied to $(Y^{\tau_n},M)$ there exists $C''>0$ such that for any $n\in\mathbbm{N}^*$ and $\epsilon>0$, 
	\\
	$\mathbbm{E}\left[\underset{t\in[0,\tau_n]}{\text{sup }} Y_t^2\right] \leq C''\left(1 + \frac{\epsilon}{2}\mathbbm{E}\left[\underset{t\in[0,\tau_n]}{\text{sup }} Y_t^2\right] + \frac{1}{2\epsilon}\mathbbm{E}\left[[ M]_T\right]\right)$. Choosing $\epsilon = \frac{1}{C''}$, it follows that there exists $C_3>0$ such that for any $n>0$,
	\\
	$\frac{1}{2}\mathbbm{E}\left[\underset{t\in[0,\tau_n]}{\text{sup }} Y_t^2\right] \leq C_3\left(1 +  \mathbbm{E}\left[[ M]_T\right]\right)<\infty$.
	By monotone convergence theorem,
	taking the limit in $n$ we get the result.
\end{proof}

\begin{proposition}\label{realmart}
Let $\lambda\in\mathbbm{R}$, let $(\dot U,N)$, $(\dot U',N')$ be in 
\\
$L^2(dV\otimes d\mathbbm{P})\times \mathcal{H}^2_0$, let $(\dot Y,M)$, $(\dot Y',M')$ be their images by $\Phi$ and let $Y,Y'$ be the cadlag representatives of $\dot Y$, $\dot Y'$ introduced in Definition \ref{defYM}. Then $\int_0^{\cdot}e^{\lambda V_r}Y_{r^-}dM_r$, $\int_0^{\cdot}e^{\lambda V_r}Y'_{r^-}dM'_r$, $\int_0^{\cdot}e^{\lambda V_r}Y_{r^-}dM'_r$ and $\int_0^{\cdot}e^{\lambda V_r}Y'_{r^-}dM_r$ are martingales.
\end{proposition}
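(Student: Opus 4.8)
The plan is to reduce everything to Lemma \ref{realmartLemma}, which already gives us the statement for a stochastic integral $\int_0^{\cdot} Z_{r^-}\,dL_r$ whenever $Z$ is a cadlag adapted process with $\mathbb{E}[\sup_t Z_t^2]<\infty$ and $L\in\mathcal{H}^2$. So the only thing to verify for each of the four integrals is that the integrand and the integrator satisfy these two hypotheses. The integrators $M$, $M'$ are in $\mathcal{H}^2_0\subset\mathcal{H}^2$ by construction (Definition \ref{defYM} and Proposition \ref{supY}), so there is nothing to check there. The integrands are $e^{\lambda V_r}Y_{r^-}$ and $e^{\lambda V_r}Y'_{r^-}$; since $V$ is bounded (Hypothesis \ref{HypBSDE}(1)) and continuous, $e^{\lambda V}$ is a bounded continuous adapted process, say bounded by a constant $c_\lambda=e^{|\lambda|\|V\|_\infty}$. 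Hence $e^{\lambda V}Y$ is cadlag adapted and $\sup_{t}(e^{\lambda V_t}Y_t)^2\leq c_\lambda^2\sup_t Y_t^2$, which is in $L^1$ by Proposition \ref{supY} (recall $Y$, $Y'$ are the cadlag reference processes of Definition \ref{defYM}, for which $\sup_t|Y_t|\in L^2$).

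First I would note that $e^{\lambda V}Y$ and $e^{\lambda V}Y'$ are cadlag and adapted, being products of the bounded continuous adapted process $e^{\lambda V}$ with the cadlag adapted processes $Y$, $Y'$. Second, I would record the bound $\sup_{t\in[0,T]}(e^{\lambda V_t}Y_t)^2\leq e^{2|\lambda|\|V\|_\infty}\sup_{t\in[0,T]} Y_t^2\in L^1$, and similarly for $Y'$. Third, I would apply Lemma \ref{realmartLemma} four times: once to the pair $(e^{\lambda V}Y, M)$, once to $(e^{\lambda V}Y', M')$, once to $(e^{\lambda V}Y, M')$, and once to $(e^{\lambda V}Y', M)$; in each case the hypotheses of that lemma are met, so the corresponding process $\int_0^{\cdot}e^{\lambda V_r}Y_{r^-}dM_r$ etc. is a uniformly integrable martingale, in particular a martingale. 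One small point to be careful about is that Lemma \ref{realmartLemma} integrates $Y_{r^-}$ against $dM_r$, whereas here we integrate $e^{\lambda V_r}Y_{r^-}$; since $e^{\lambda V_r}=(e^{\lambda V})_{r^-}$ by continuity of $V$, the integrand is exactly the left limit of the cadlag process $e^{\lambda V}Y$, so the lemma applies verbatim with $Z=e^{\lambda V}Y$.

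There is essentially no obstacle here — the proposition is a bookkeeping consequence of the previously established integrability of $Y$ and the boundedness of $V$. The only thing one might call a ``step'' is making sure the mixed integrals (integrand built from one solution, integrator from the other) are legitimate, but Lemma \ref{realmartLemma} places no compatibility requirement between $Z$ and $M$ beyond the two integrability conditions, so this is automatic. Thus the proof is:

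\begin{proof}
Since $V$ is bounded and continuous, $e^{\lambda V}$ is a bounded continuous adapted process; denote $c_\lambda := e^{|\lambda|\sup_{t}V_t}$, so that $e^{\lambda V_t}\leq c_\lambda$ for all $t$. Consequently $e^{\lambda V}Y$ and $e^{\lambda V}Y'$ are cadlag adapted processes, and by continuity of $V$ one has $e^{\lambda V_r}=(e^{\lambda V})_{r^-}$, so that $e^{\lambda V_r}Y_{r^-}$ is the left limit of the cadlag process $e^{\lambda V}Y$, and likewise for $Y'$. Moreover
$$\mathbbm{E}\left[\underset{t\in[0,T]}{\text{sup }}(e^{\lambda V_t}Y_t)^2\right]\leq c_\lambda^2\,\mathbbm{E}\left[\underset{t\in[0,T]}{\text{sup }}Y_t^2\right]<\infty,$$
by Proposition \ref{supY}, and the same bound holds with $Y'$ in place of $Y$. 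Finally $M,M'\in\mathcal{H}^2_0\subset\mathcal{H}^2$ by Definition \ref{defYM} and Proposition \ref{supY}. Applying Lemma \ref{realmartLemma} to each of the pairs $(e^{\lambda V}Y,M)$, $(e^{\lambda V}Y',M')$, $(e^{\lambda V}Y,M')$ and $(e^{\lambda V}Y',M)$, we conclude that $\int_0^{\cdot}e^{\lambda V_r}Y_{r^-}dM_r$, $\int_0^{\cdot}e^{\lambda V_r}Y'_{r^-}dM'_r$, $\int_0^{\cdot}e^{\lambda V_r}Y_{r^-}dM'_r$ and $\int_0^{\cdot}e^{\lambda V_r}Y'_{r^-}dM_r$ are (uniformly integrable) martingales.
\end{proof}
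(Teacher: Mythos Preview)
Your proof is correct and follows exactly the same approach as the paper: invoke Proposition \ref{supY} for the $L^2$ bound on $\sup_t|Y_t|$ and $\sup_t|Y'_t|$, use boundedness of $V$ to control $e^{\lambda V}$, and apply Lemma \ref{realmartLemma}. You are in fact more explicit than the paper about the identification $e^{\lambda V_r}Y_{r^-}=(e^{\lambda V}Y)_{r^-}$, which is a nice touch.
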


\begin{proof}
Thanks to Proposition \ref{supY} we know that $\underset{t\in[0,T]}{\text{sup }} |Y_t|$ and $\underset{t\in[0,T]}{\text{sup }} |Y'_t|$ are $L^2$. Moreover since $M$ and $M'$ are square integrable, the statement
 yields therefore as a consequence of  Lemma \ref{realmartLemma}
and the fact that $V$ is bounded.
\end{proof}

We will now show that $\Phi$ is a contraction for a certain norm.
 This will imply that it has a unique fixed point in $L^2(dV\otimes d\mathbbm{P})\times\mathcal{H}^2_0$ since this space is complete and therefore that  $BSDE(\xi,\hat{f},V)$ has a unique solution thanks to Proposition \ref{FixedPoint}.
\\
For any $\lambda>0$, 
%we define the following norm,
 on $L^2(dV\otimes d\mathbbm{P})\times\mathcal{H}^2_0$
we define the  norm
\\
$\|(\dot Y,M)\|_{\lambda}^2 :=\mathbbm{E}\left[\int_0^T e^{\lambda V_r}Y_r^2dV_r\right] + \mathbbm{E}\left[\int_0^T e^{\lambda V_r}d\langle M\rangle_r\right]$.
Since $V$ is bounded, these norms are all equivalent to the usual one 
of this space, which corresponds to $\lambda=0$.

\begin{proposition}\label{ProofContraction}
There exists  $\lambda>0$ such that for any 
\\
$(\dot U,N)\in L^2(dV\otimes d\mathbbm{P})\times\mathcal{H}^2_0$, $\left\|\Phi(\dot U,N)\right\|^2_{\lambda}\leq \frac{1}{2}\left\|(\dot U,N)\right\|^2_{\lambda}$. In particular, $\Phi$ is a contraction in 
$L^2(dV\otimes d\mathbbm{P})\times\mathcal{H}^2_0$ for the norm $\|\cdot\|_{\lambda}$. 
\end{proposition}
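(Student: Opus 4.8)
The plan is to fix two input pairs, apply $\Phi$ to each, write down the difference of the two output processes, and then estimate the $\|\cdot\|_\lambda$-norm of that difference by means of an Itô/integration-by-parts computation applied to $e^{\lambda V_r}(Y_r-Y'_r)^2$, exploiting the Lipschitz bound on $\hat f$ and, crucially, the exponential weight $e^{\lambda V}$ which produces a term $\lambda\int_0^T e^{\lambda V_r}(Y_r-Y'_r)^2\,dV_r$ that will dominate the bad terms once $\lambda$ is chosen large enough. Concretely, let $(\dot U,N),(\dot U',N')$ be in $L^2(dV\otimes d\mathbbm{P})\times\mathcal{H}^2_0$, let $(\dot Y,M)=\Phi(\dot U,N)$, $(\dot Y',M')=\Phi(\dot U',N')$, and set $\delta Y=Y-Y'$, $\delta M=M-M'$, $\delta U=U-U'$, $\delta\langle\cdot\rangle$ correspondingly. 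From Definition \ref{defYM}, $\delta Y$ satisfies $\delta Y=\int_\cdot^T(\hat f_r-\hat f'_r)\,dV_r-(\delta M_T-\delta M_\cdot)$ where $\hat f_r=\hat f(r,\cdot,U_r,\sqrt{d\langle N\rangle/dV}(r))$ and similarly for $\hat f'_r$; in particular $\delta Y_T=0$.

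Next I would apply the integration-by-parts formula to the semimartingale $e^{\lambda V_t}(\delta Y_t)^2$ between $t$ and $T$. Since $V$ is continuous of bounded variation, this yields
\[
e^{\lambda V_t}(\delta Y_t)^2 + \lambda\int_t^T e^{\lambda V_r}(\delta Y_r)^2\,dV_r + \int_t^T e^{\lambda V_r}\,d[\delta M]_r
= -2\int_t^T e^{\lambda V_r}\delta Y_{r^-}\,d(\delta M)_r + 2\int_t^T e^{\lambda V_r}\delta Y_r(\hat f_r-\hat f'_r)\,dV_r,
\]
using $\delta Y_T=0$ and the fact that the only continuous finite-variation part of $\delta Y$ comes from the $dV$-integral while $\delta M$ carries the martingale part, so that the bracket of $\delta Y$ equals $[\delta M]$. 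Taking $t=0$, then expectations: by Proposition \ref{realmart} the stochastic integral $\int_0^\cdot e^{\lambda V_r}\delta Y_{r^-}\,d(\delta M)_r$ (a combination of the four martingales listed there) is a true martingale, so its expectation vanishes, and $\mathbbm{E}[\int_0^T e^{\lambda V_r}d[\delta M]_r]=\mathbbm{E}[\int_0^T e^{\lambda V_r}d\langle\delta M\rangle_r]$. We obtain
\[
\lambda\,\mathbbm{E}\!\Big[\int_0^T e^{\lambda V_r}(\delta Y_r)^2 dV_r\Big] + \mathbbm{E}\!\Big[\int_0^T e^{\lambda V_r}d\langle\delta M\rangle_r\Big] \le 2\,\mathbbm{E}\!\Big[\int_0^T e^{\lambda V_r}|\delta Y_r|\,|\hat f_r-\hat f'_r|\,dV_r\Big].
\]

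Now I would bound the right-hand side using Hypothesis \ref{HypBSDE}(5): $|\hat f_r-\hat f'_r|\le K^Y|\delta U_r|+K^Z|\sqrt{d\langle N\rangle/dV}(r)-\sqrt{d\langle N'\rangle/dV}(r)|$, and then the elementary inequality $2ab\le \alpha a^2+\alpha^{-1}b^2$ to split $|\delta Y_r|$ off from the rest, plus the subadditivity $|\sqrt{x}-\sqrt{y}|\le\sqrt{|x-y|}$ applied pointwise to the Radon–Nikodym densities, so that $\mathbbm{E}[\int_0^T e^{\lambda V_r}|\sqrt{d\langle N\rangle/dV}-\sqrt{d\langle N'\rangle/dV}|^2 dV_r]\le \mathbbm{E}[\int_0^T e^{\lambda V_r}\,d\langle\delta(N)\rangle_r^{\,?}]$ — here one must be slightly careful: the correct statement is that $|\frac{d\langle N\rangle}{dV}-\frac{d\langle N'\rangle}{dV}|$ integrated against $dV$ is controlled by $\langle N-N'\rangle_T$ up to a constant, via Kunita–Watanabe / the orthogonal decomposition of Proposition \ref{DecompoMart}; this is the one place needing care and is, I expect, the main technical obstacle. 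Granting that, the right-hand side is bounded by $\tfrac{\varepsilon}{2}\mathbbm{E}[\int_0^T e^{\lambda V_r}(\delta Y_r)^2 dV_r] + C_\varepsilon\big(\mathbbm{E}[\int_0^T e^{\lambda V_r}(\delta U_r)^2 dV_r] + \mathbbm{E}[\int_0^T e^{\lambda V_r}d\langle N-N'\rangle_r]\big)$ for a constant $C_\varepsilon$ depending only on $K^Y,K^Z,\varepsilon$. Choosing $\varepsilon$ small and then $\lambda$ large (e.g. $\lambda-\varepsilon/2\ge 4C_\varepsilon$ and $1\ge$ nothing extra needed, then absorb), the left-hand side dominates and we arrive at
\[
\|\Phi(\dot U,N)-\Phi(\dot U',N')\|_\lambda^2 \;=\; \|(\delta\dot Y,\delta M)\|_\lambda^2 \;\le\; \tfrac12\,\|(\dot U-\dot U',N-N')\|_\lambda^2,
\]
and in particular, taking $(\dot U',N')=0$ and noting $\Phi(0,0)$ has finite $\lambda$-norm (by Propositions \ref{L1} and \ref{supY}), the stated bound $\|\Phi(\dot U,N)\|_\lambda^2\le\tfrac12\|(\dot U,N)\|_\lambda^2$ follows once $\lambda$ is further enlarged so that the inhomogeneous constant is absorbed; alternatively one reads the contraction property directly off the difference estimate. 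This gives that $\Phi$ is a $\tfrac1{\sqrt2}$-contraction for $\|\cdot\|_\lambda$, completing the proof.
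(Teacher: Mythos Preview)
Your overall strategy matches the paper's: integration by parts on $e^{\lambda V}(\delta Y)^2$, taking expectations (using Proposition \ref{realmart} to kill the stochastic integral), the Lipschitz bound, and Young's inequality. However, there is a genuine gap at precisely the step you flagged as ``the main technical obstacle'': the route via $|\sqrt{x}-\sqrt{y}|\le\sqrt{|x-y|}$ does not close. Bounding $\int_0^T e^{\lambda V_r}\bigl|\tfrac{d\langle N\rangle}{dV}-\tfrac{d\langle N'\rangle}{dV}\bigr|\,dV_r$ by a fixed constant times $\int_0^T e^{\lambda V_r}\,d\langle N-N'\rangle_r$ is \emph{false} in general. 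Take $N'=(1+\epsilon)N$ with $d\langle N\rangle\ll dV$: then $\bigl|\tfrac{d\langle N\rangle}{dV}-\tfrac{d\langle N'\rangle}{dV}\bigr|=(2\epsilon+\epsilon^2)\tfrac{d\langle N\rangle}{dV}$ while $\tfrac{d\langle N-N'\rangle}{dV}=\epsilon^2\tfrac{d\langle N\rangle}{dV}$, so the ratio is of order $2/\epsilon$ and no uniform constant exists. Kunita--Watanabe would only give a bound involving $\langle N+N'\rangle$ as well, which is not controlled by the right-hand side of the desired contraction estimate.

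The paper bypasses this with a different (and sharper) pointwise inequality. Writing $a=\tfrac{d\langle N\rangle}{dV}$, $b=\tfrac{d\langle N'\rangle}{dV}$, $c=\tfrac{d\langle N,N'\rangle}{dV}$, Proposition \ref{CS} (a Cauchy--Schwarz inequality for these Radon--Nikodym densities) gives $c^2\le ab$ $dV\otimes d\mathbbm{P}$-a.e., hence $\sqrt{ab}\ge c$ and
\[
\bigl|\sqrt{a}-\sqrt{b}\bigr|^2 \;=\; a-2\sqrt{ab}+b \;\le\; a-2c+b \;=\; \frac{d\langle N-N'\rangle}{dV},
\]
the last equality by the linearity of Proposition \ref{linearity}. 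Combined with Remark \ref{RemIncr} (so that $\int_0^T e^{\lambda V_r}\tfrac{d\langle N-N'\rangle}{dV}(r)\,dV_r\le\int_0^T e^{\lambda V_r}\,d\langle N-N'\rangle_r$), this produces exactly the term you need, with constant $1$. After this, the paper makes the explicit choices $\alpha=2K^Y$, $\beta=2K^Z$ in Young's inequality and $\lambda=1+2((K^Y)^2+(K^Z)^2)$, and the factor $\tfrac12$ drops out directly.

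A minor point you also spotted: the displayed inequality $\|\Phi(\dot U,N)\|_\lambda^2\le\tfrac12\|(\dot U,N)\|_\lambda^2$ cannot hold literally unless $\Phi(0,0)=0$. The paper's proof actually establishes the difference estimate $\|\Phi(\dot U,N)-\Phi(\dot U',N')\|_\lambda^2\le\tfrac12\|(\dot U,N)-(\dot U',N')\|_\lambda^2$, which is the contraction used in Theorem \ref{uniquenessBSDE}; your reading of the intended content is correct.
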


\begin{proof}
Let $(\dot U,N)$ and $(\dot U',N')$ be two couples of $L^2(dV\otimes d\mathbbm{P})\times\mathcal{H}^2_0$, let $(\dot Y,M)$ and $(\dot Y',M')$ be their images via $\Phi$ and let $Y,Y'$ be the cadlag reference process of $\dot Y$, $\dot Y'$ introduced in Definition \ref{defYM}. We will write $\bar{Y}$ for $Y-Y'$ and we adopt a similar notation  for  other processes. We will also write  
\\
$ \bar{f}_t := \hat{f}\left(t,\cdot,U_t,\sqrt{\frac{d\langle N \rangle}{dV}}(t)\right) -\hat{f}\left(t,\cdot,U'_t,\sqrt{\frac{d\langle N' \rangle}{dV}}(t)\right).$
\\
\\
By additivity,
% Definition \ref{defYM}, 
we have $d\bar Y_t=-\bar{f}_tdV_t +d\bar M_t$.
Since $\bar{Y}_T = \xi - \xi = 0$, applying the integration by parts formula to $\bar{Y}_t^2e^{\lambda V_t}$ between $0$ and $T$ we get
\begin{equation*}
\bar{Y}_0^2 - 2\int_0^T e^{\lambda V_r}\bar{Y}_r\bar{f}_rdV_r + 2\int_0^T e^{\lambda V_r}\bar{Y}_{r^-}d\bar{M}_r +\int_0^T e^{\lambda V_r}d[\bar{M}]_r + \lambda\int_0^T e^{\lambda V_r}\bar{Y}^2_rdV_r=0.
\end{equation*}

Since, by Proposition \ref{realmart}, the stochastic integral with respect
to $\bar M$ is a real martingale,
 by taking the expectations we get

\begin{equation*}
\mathbbm{E}\left[\bar{Y}_0^2\right] - 2 \mathbbm{E}\left[\int_0^T e^{\lambda V_r}\bar{Y}_r\bar{f}_rdV_r\right] + \mathbbm{E}\left[\int_0^T e^{\lambda V_r}d\langle \bar{M}\rangle_r\right] + \lambda\mathbbm{E}\left[\int_0^T e^{\lambda V_r}\bar{Y}^2_rdV_r\right]=0.
\end{equation*}
So by re-arranging and by using the Lipschitz condition on $f$ stated in Hypothesis
 \ref{HypBSDE}, we get 

\begin{equation*}
    \begin{array}{lll}
        & &\lambda \mathbbm{E}\left[\int_0^T e^{\lambda V_r}\bar{Y}^2_rdV_r\right] + \mathbbm{E}\left[\int_0^T e^{\lambda V_r}d\langle \bar{M}\rangle_r\right]\\
         & \leq & 2K^Y\mathbbm{E}\left[\int_0^T e^{\lambda V_r}|\bar{Y}_r||\bar{U}_r|dV_r\right]\\
         &&+2K^Z\mathbbm{E}\left[\int_0^T e^{\lambda V_r}|\bar{Y}_r|\left|\sqrt{\frac{d\langle N\rangle }{dV}}(r)-\sqrt{\frac{d\langle N'\rangle}{dV}}(r)\right|dV_r\right] \\
         &\leq& (K^Y\alpha + K^Z\beta)\mathbbm{E}\left[\int_0^T e^{\lambda V_r}|\bar{Y}_r|^2dV_r\right] + \frac{K^Y}{\alpha}\mathbbm{E}\left[\int_0^T e^{\lambda V_r}|\bar{U}_r|^2dV_r\right] \\
         &&+ \frac{K^Z}{\beta}\mathbbm{E}\left[\int_0^T e^{\lambda V_r}\left|\sqrt{\frac{d\langle N\rangle}{dV}}(r)-\sqrt{\frac{d\langle N'\rangle}{dV}}(r)\right|^2dV_r\right],
    \end{array}
\end{equation*}
for any positive $\alpha$ and $\beta$. Then we pick $\alpha = 2K^Y$ and $\beta = 2K^Z$, which gives us

\begin{equation*} 
    \begin{array}{rcl}
        &&\lambda \mathbbm{E}\left[\int_0^T e^{\lambda V_r}\bar{Y}^2_rdV_r\right] + \mathbbm{E}\left[\int_0^T e^{\lambda V_r}d\langle \bar{M}\rangle_r\right] \\
        &\leq& 2((K^Y)^2 + (K^Z)^2)\mathbbm{E}\left[\int_0^T e^{\lambda V_r}|\bar{Y}_r|^2dV_r\right] \\
        &+& \frac{1}{2}\mathbbm{E}\left[\int_0^T e^{\lambda V_r}|\bar{U}_r|^2dV_r\right] 
        + \frac{1}{2}\mathbbm{E}\left[\int_0^T e^{\lambda V_r}\middle|\sqrt{\frac{d\langle N\rangle}{dV}}(r)
        -\sqrt{\frac{d\langle N'\rangle}{dV}}(r)\middle|^2dV_r\right].
    \end{array}
\end{equation*}
We choose now $\lambda = 1 + 2((K^Y)^2+(K^Z)^2)$ and we get
\begin{equation} \label{E152}
\begin{array}{rcl}
&&\mathbbm{E}\left[\int_0^T e^{\lambda V_r}\bar{Y}^2_rdV_r\right] + \mathbbm{E}\left[\int_0^T e^{\lambda V_r}d\langle\bar{M}\rangle_r\right]\\
&\leq& \frac{1}{2}\mathbbm{E}\left[\int_0^T e^{\lambda V_r}|\bar{U}_r|^2dV_r\right]
+ \frac{1}{2}\mathbbm{E}\left[\int_0^T e^{\lambda V_r}\left|\sqrt{\frac{d\langle N\rangle}{dV}}(r)-\sqrt{\frac{d\langle N'\rangle}{dV}}(r)\right|^2dV_r\right].
\end{array}
\end{equation}
On the other hand, since by Proposition \ref{CS} we know that $\frac{d\langle N \rangle}{dV}\frac{d\langle N' \rangle}{dV} - \left(\frac{d\langle N,N' \rangle}{dV}\right)^2$ is a positive process, we have 
\begin{equation}\label{CSRadonNikodymDeriv}
    \begin{array}{rcl}
        \left|\sqrt{\frac{d\langle N\rangle}{dV}}-\sqrt{\frac{d\langle N'\rangle}{dV}}\right|^2 &=& \frac{d\langle N\rangle}{dV} - 2\sqrt{\frac{d\langle N\rangle}{dV}}\sqrt{\frac{d\langle N'\rangle}{dV}} +  \frac{d\langle N'\rangle}{dV}\\
         & \leq & \frac{d\langle N\rangle}{dV} - 2\frac{d\langle N, N'\rangle}{dV} +  \frac{d\langle N'\rangle}{dV}\\
         &=& \frac{d\langle \bar{N}\rangle}{dV} \text{  }dV\otimes d\mathbbm{P} \text{ a.e.}
    \end{array}
\end{equation}
Therefore, since by Remark \ref{RemIncr} we have $\int_0^{\cdot}e^{\lambda V_r}\frac{d\langle \bar{N}\rangle}{dV}(r)dV_r \leq \int_0^{\cdot} e^{\lambda V_r}d\langle \bar{N}\rangle_r$, then expression \eqref{E152} implies
\\
$\mathbbm{E}\left[\int_0^T e^{\lambda V_r}\bar{Y}^2_rdV_r+\int_0^T e^{\lambda V_r}d\langle \bar{M}\rangle_r\right] \leq \frac{1}{2}\mathbbm{E}\left[\int_0^T e^{\lambda V_r}|\bar{U}_r|^2dV_r+\int_0^T e^{\lambda V_r}d\langle \bar{N}\rangle_r\right]$,
which proves the contraction for the norm  $\|\cdot\|_{\lambda}$.
\end{proof}

\begin{theorem}\label{uniquenessBSDE}
If $(\xi,\hat{f})$ verifies Hypothesis \ref{HypBSDE} then $BSDE(\xi,\hat{f},V)$ has a unique solution.
\end{theorem}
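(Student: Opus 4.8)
The plan is to obtain existence and uniqueness as a direct application of the Banach fixed point theorem to the operator $\Phi$ of Notation \ref{contraction}, combining the two structural facts already established: Proposition \ref{FixedPoint}, which sets up a bijection between solutions of $BSDE(\xi,\hat f,V)$ and fixed points of $\Phi$, and Proposition \ref{ProofContraction}, which shows that $\Phi$ is a strict contraction for a suitable norm.

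First I would record that $L^2(dV\otimes d\mathbbm{P})\times\mathcal{H}^2_0$ is a Banach space: $L^2(dV\otimes d\mathbbm{P})$ coincides, as noted after its definition, with the classical space $L^2([0,T]\times\Omega,\mathcal{P}ro,dV\otimes d\mathbbm{P})$ and is therefore complete, while $\mathcal{H}^2_0$ is a Hilbert space; hence the product, with the usual product norm, is complete. Since $V$ is bounded, for every $\lambda>0$ the norm $\|\cdot\|_{\lambda}$ is equivalent to that product norm, so $\left(L^2(dV\otimes d\mathbbm{P})\times\mathcal{H}^2_0,\|\cdot\|_{\lambda}\right)$ is again complete. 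Moreover $\Phi$ maps this space into itself, its range $L^{2,cadlag}(dV\otimes d\mathbbm{P})\times\mathcal{H}^2_0$ being contained in $L^2(dV\otimes d\mathbbm{P})\times\mathcal{H}^2_0$ (no closedness of $L^{2,cadlag}$ is needed).

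Next, pick $\lambda>0$ as in Proposition \ref{ProofContraction}; its proof in fact yields, for any two couples $(\dot U,N),(\dot U',N')$ with images $(\dot Y,M),(\dot Y',M')$, the bound $\|(\dot Y,M)-(\dot Y',M')\|_{\lambda}^2\leq\tfrac12\|(\dot U,N)-(\dot U',N')\|_{\lambda}^2$, so $\Phi$ is $\tfrac{1}{\sqrt2}$-Lipschitz for $\|\cdot\|_{\lambda}$. By the Banach fixed point theorem $\Phi$ admits a unique fixed point $(\dot U^{\ast},N^{\ast})\in L^2(dV\otimes d\mathbbm{P})\times\mathcal{H}^2_0$. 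By Proposition \ref{FixedPoint} the map $(Y,M)\mapsto(\dot Y,M)$ is a bijection between solutions of $BSDE(\xi,\hat f,V)$ and fixed points of $\Phi$: existence follows since $(\dot U^{\ast},N^{\ast})$ lies in the range of this map (take the cadlag reference process of Definition \ref{defYM}), and uniqueness follows since any two solutions are sent by an injective map onto the unique fixed point.

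The genuinely substantive work has all been done beforehand — that $Y$ is adapted and cadlag, that $\sup_{t}|Y_t|\in L^2$, that the relevant stochastic integrals are true martingales (Proposition \ref{realmart}), and that the contraction estimate holds, using the Cauchy--Schwarz bound $\bigl|\sqrt{\tfrac{d\langle N\rangle}{dV}}-\sqrt{\tfrac{d\langle N'\rangle}{dV}}\bigr|^2\leq\tfrac{d\langle \bar N\rangle}{dV}$ together with $\int_0^{\cdot}\tfrac{d\langle\bar N\rangle}{dV}(r)dV_r\leq\langle\bar N\rangle$ from Remark \ref{RemIncr}. Consequently the final step presents no real obstacle; the only point requiring minor care is keeping the equivalence class $\dot Y$ in which the fixed point lives distinct from the honest cadlag process $Y$ demanded by Definition \ref{firstdefBSDE}, which is exactly what Definition \ref{defYM} and Proposition \ref{FixedPoint} are designed to reconcile.
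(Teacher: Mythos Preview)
Your proposal is correct and follows exactly the approach of the paper: completeness of $L^2(dV\otimes d\mathbbm{P})\times\mathcal{H}^2_0$, the contraction estimate of Proposition \ref{ProofContraction} for some $\|\cdot\|_{\lambda}$, and then Proposition \ref{FixedPoint} to translate the unique fixed point into the unique solution. Your write-up is more detailed than the paper's two-line proof, but the logical structure is identical.
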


\begin{proof}
The space $L^2(dV\otimes d\mathbbm{P})\times \mathcal{H}^2_0$ is complete and $\Phi$ defines on it a contraction for the norm $\|(\cdot,\cdot)\|_{\lambda}$ for some $\lambda>0$, so $\Phi$ has a unique fixed point in 
\\
$L^2(dV\otimes d\mathbbm{P})\times \mathcal{H}^2_0$. Then by Proposition \ref{FixedPoint}, $BSDE(\xi,\hat{f},V)$ has a unique solution. 
\end{proof}

\begin{remark}\label{RealMart}
Let $(Y,M)$ be the solution of $BSDE(\xi,\hat{f},V)$ and $\dot{Y}$ the class of $Y$ in $L^2(dV\otimes d\mathbbm{P})$. Thanks to Proposition \ref{FixedPoint}, we know  that 
\\
$(\dot{Y},M)=\Phi(\dot{Y},M)$ and therefore by Propositions \ref{supY} and \ref{realmart} that $\underset{t\in[0,T]}{\text{ sup }}|Y_t|$ is $L^2$ and that $\int_0^{\cdot}Y_{r^-}dM_r$ is a real martingale.
\end{remark}

\begin{remark}\label{BSDESmallInt} Let $(\xi,\hat{f},V)$ satisfying 
Hypothesis \ref{HypBSDE}.
Until now we have considered the related BSDE  on the interval $[0,T]$.
Without restriction of generality we can consider a
BSDE  on a restricted interval $[s,T]$ for some $s \in [0,T[$.
The results and comments of this section immediately
%% 
%The whole previous discussion and all the results expressed above
%trivially 
extend to this case.
%What has been done in this section on the interval $[0,T]$ could also have 
%been done on $[s,T]$. We can therefore note that
In particular there exists a unique couple of processes 
 $(Y^s,M^s)$, indexed by  $[s,T]$
 such that $Y^s$ is adapted, cadlag and verifies 
$\mathbbm{E}[\int_s^T(Y^s_r)^2dV_r]<\infty$, such that $M^s$ is a martingale
 starting at 0 in $s$ and such that
$Y^s_{\cdot} = \xi +\int_{\cdot}^T \hat{f}\left(r,\cdot,Y^s_r,\sqrt{\frac{d\langle M\rangle}{dV}}(r)\right)dV_r -(M^s_T-M^s_{\cdot})$
in the sense of indistinguishability on $[s,T]$. \\ 
 Moreover, if $(Y,M)$ 
denotes the solution of $BSDE(\xi,\hat{f},V)$ then $(Y,M_{\cdot}-M_s)$ and $(Y^s,M^s)$ coincide on $[s,T]$. This follows by the uniqueness argument for the restricted BSDE to $[s,T]$.
\end{remark}

The  lemma below  shows that, in order to verify that a couple
 $(Y,M)$ is the solution of $BSDE(\xi,\hat{f},V)$, it is not necessary 
to verify the square integrability of $Y$ since
it will be automatically fulfilled.
% to relax integrability
% conditions 
% when one wants to show that a couple $(Y,M)$ is the unique solution of $BSDE(\xi,\hat{f},V)$.
\begin{lemma}\label{LED+Pext}
Let $(\xi,\hat{f},V)$ verify Hypothesis \ref{HypBSDE} and consider $BSDE(\xi,\hat{f},V)$ defined in Definition \ref{firstdefBSDE}. Assume that there exists a cadlag adapted process $Y$ with $Y_0\in L^2$ , and $M\in\mathcal{H}^2_0$ such that 
\begin{equation}\label{EqLEDPext}
Y = \xi +\int_{\cdot}^T
 \hat{f}\left(r,\cdot,Y_r,\sqrt{\frac{d\langle M\rangle}{dV}}(r)\right) dV_r - (M_T-M_{\cdot}),
\end{equation}
in the sense of indistinguishability. Then
 $\underset{t\in[0,T]}{\text{sup }}|Y_t| \in L^2$. In particular, 
\\
$Y\in \mathcal{L}^2(dV\otimes d\mathbbm{P})$ and $(Y,M)$ is the unique solution of $BSDE(\xi,\hat{f},V)$ . 
\\
\\
On the other hand if $(Y,M)$ verifies \eqref{EqLEDPext} 
 on $[s,T]$ with $s<T$, if $Y_s\in L^2$,  $M_s=0$ and if we denote $(U,N)$ the unique solution of $BSDE(\xi,\hat{f},V)$, then $(Y,M)$ and $(U,N_{\cdot}-N_s)$ are indistinguishable on $[s,T]$.
\end{lemma}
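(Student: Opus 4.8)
The plan is to reduce the a priori integrability statement to Lemma \ref{LsupY}, which already does the essential work; the only real task is to bring \eqref{EqLEDPext} into the form that Lemma \ref{LsupY} requires. First I would fix a representative and, using \eqref{EqLEDPext} at time $0$, write $Y_t = Y_0 - \int_0^t \hat f\!\left(r,\cdot,Y_r,\sqrt{\tfrac{d\langle M\rangle}{dV}}(r)\right)dV_r + M_t$. Then I would apply the integration by parts formula to $Y_t^2$ between $0$ and $t$ (as in the proof of Proposition \ref{ProofContraction}, but now with $\lambda=0$), obtaining
\[
Y_t^2 = Y_0^2 - 2\int_0^t Y_r \hat f_r\, dV_r + 2\int_0^t Y_{r^-}\, dM_r + [M]_t,
\]
with the shorthand $\hat f_r = \hat f\!\left(r,\cdot,Y_r,\sqrt{\tfrac{d\langle M\rangle}{dV}}(r)\right)$. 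Using the Lipschitz bound from Hypothesis \ref{HypBSDE} together with $|\hat f(r,\cdot,0,0)|$, the elementary inequality $2ab \le a^2 + b^2$, the boundedness of $V$, and Remark \ref{RemIncr} to control $\int_0^T \tfrac{d\langle M\rangle}{dV}(r)\,dV_r \le \langle M\rangle_T$, I would absorb the $\int_0^t Y_r^2\, dV_r$ term into the left side (over $[0,t]$ the factor $V_T$ is bounded, so a Gronwall-type / direct absorption argument works after noting $Y_t^2 \le \ldots + C\int_0^t \sup_{u\le r} Y_u^2\, dV_r$ and iterating, or simply bounding crudely) to arrive at a bound of the shape
\[
Y_t^2 \le C\Big( Z + \big|\textstyle\int_0^t Y_{r^-}\, dM_r\big|\Big),
\]
where $Z := Y_0^2 + \int_0^T \hat f^2(r,\cdot,0,0)\,dV_r + \langle M\rangle_T \in L^1$ and $C$ is a deterministic constant. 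Here I should be slightly careful that the coefficient of the running supremum of $Y^2$ coming from the $dV$-integral can be made harmless: since $V$ is continuous and bounded one can either localise on a fine enough subdivision of $[0,T]$ so that the increment of $V$ on each piece is small, or simply keep the $\int_0^t Y_r^2 dV_r$ term and invoke a stochastic Gronwall argument — in any case this is the one slightly delicate bookkeeping point, and it is the main (mild) obstacle of the proof.

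Once the displayed inequality is in place, Lemma \ref{LsupY} applied to $(Y,M)$ gives directly $\sup_{t\in[0,T]}|Y_t|\in L^2$. Consequently $Y\in\mathcal L^2(dV\otimes d\mathbbm P)$ because $V$ is bounded, so $(Y,M)\in\mathcal L^{2,cadlag}(dV\otimes d\mathbbm P)\times\mathcal H^2_0$ and \eqref{EqLEDPext} is exactly \eqref{BSDEcadlag}; hence $(Y,M)$ is a solution of $BSDE(\xi,\hat f,V)$, and by Theorem \ref{uniquenessBSDE} it is the unique one.

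For the statement on $[s,T]$, I would argue identically on the restricted interval — the integration by parts is performed between $s$ and $t$, using $Y_s\in L^2$ and $M_s=0$ to produce the analogue of $Z$ — and apply the version of Lemma \ref{LsupY} on $[s,T]$ (the results of the section extend to a restricted interval, as recorded in Remark \ref{BSDESmallInt}) to get $\sup_{t\in[s,T]}|Y_t|\in L^2$. Then $(Y,M)$ solves the BSDE on $[s,T]$ in the sense of Remark \ref{BSDESmallInt}, and by the uniqueness stated there it coincides with $(U,N_{\cdot}-N_s)$ on $[s,T]$, which is the claim.
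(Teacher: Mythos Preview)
Your proposal is correct and follows the same overall strategy as the paper: integration by parts on $Y^2$, Lipschitz control of the driver, reduction to the form required by Lemma \ref{LsupY}, and then Remark \ref{BSDESmallInt} for the interval $[s,T]$.

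The one place where you diverge is exactly the step you flag as the ``mild obstacle''. Instead of taking $\lambda=0$ and then invoking a Gronwall/subdivision argument to absorb the term $\int_0^t Y_r^2\,dV_r$, the paper applies integration by parts directly to $e^{-\lambda V_t}Y_t^2$. This produces an additional term $\lambda\int_0^t e^{-\lambda V_r}Y_r^2\,dV_r$ on the left-hand side, and choosing $\lambda = 2K^Y+1+K^Z$ makes it swallow the problematic $Y^2$-integral coming from the Lipschitz bound in one line, with no further bookkeeping. Your Gronwall alternative would also close the argument (since $V$ is bounded the exponential factor is a deterministic constant, and the proof of Lemma \ref{LsupY} immediately passes to running suprema anyway, so having $\sup_{u\le t}\big|\int_0^u Y_{r^-}\,dM_r\big|$ rather than $\big|\int_0^t Y_{r^-}\,dM_r\big|$ on the right is harmless); the exponential-weight trick is simply the cleanest way to resolve the point you left slightly open.
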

\begin{proof}
%We denote $Z:=\sqrt{\frac{d\langle M\rangle}{dV}}$.
Let $\lambda>0$ and $t\in[0,T]$. By integration by parts formula applied to $Y^2e^{-\lambda V}$ between $0$ and $t$ we get
\begin{equation*}
\begin{array}{rcl}
Y^2_te^{-\lambda V_t}-Y_0^2 &=& -2\int_0^te^{-\lambda V_r}Y_r\hat{f}\left(r,\cdot,Y_r,\sqrt{\frac{d\langle M\rangle}{dV}}(r)\right)dV_r +2\int_0^te^{-\lambda V_r}Y_{r^-}dM_r \\
& & +\int_0^te^{-\lambda V_r}d[M]_r-\lambda\int_0^te^{-\lambda V_r}Y_r^2dM_r.
\end{array}
\end{equation*}

By re-arranging the terms  and using the Lipschitz conditions in Hypothesis \ref{HypBSDE}, we get
\begin{equation*} 
\begin{array}{rcl}
&& Y^2_te^{-\lambda V_t}+\lambda\int_0^te^{-\lambda V_r}Y_r^2dV_r\\
&\leq& Y_0^2 + 2\int_0^te^{-\lambda V_r}|Y_r||\hat{f}|\left(r,\cdot,Y_r,\sqrt{\frac{d\langle M\rangle}{dV}}(r)\right)dV_r+2\left|\int_0^te^{-\lambda V_r}Y_{r^-}dM_r\right|\\
&& +\int_0^te^{-\lambda V_r}d[M]_r\\
& \leq& Y_0^2 + \int_0^te^{-\lambda V_r}|\hat{f}|^2(r,\cdot,0,0)dV_r+(2K^Y+1+K^Z)\int_0^te^{-\lambda V_r}|Y_r|^2dV_r\\
& &+2\left|\int_0^te^{-\lambda V_r}Y_{r^-}dM_r\right| +\int_0^te^{-\lambda V_r}d[M]_r.
\end{array}
\end{equation*}
Choosing $\lambda = 2K^Y+1+K^Z$ this gives
\begin{equation*}
\begin{array}{rcl}
Y^2_te^{-\lambda V_t} &\leq& Y_0^2 + \int_0^te^{-\lambda V_r}|\hat{f}|^2(r,\cdot,0,0)dV_r+K^Z\int_0^te^{-\lambda V_r}\frac{d\langle M\rangle}{dV}(r)dV_r\\
&&+2\left|\int_0^te^{-\lambda V_r}Y_{r^-}dM_r\right| +\int_0^te^{-\lambda V_r}d[M]_r.
\end{array}
\end{equation*}
Since $V$ is bounded, there is a constant $C>0$, such that for any $t\in[0,T]$
\begin{equation*}
Y^2_t \leq C\left(Y_0^2 + \int_0^T|\hat{f}|^2(r,\cdot,0,0)dV_r+\int_0^T\frac{d\langle M\rangle}{dV}(r)dV_r +[M]_T+\left|\int_0^tY_{r^-}dM_r\right|\right).
\end{equation*}
By Hypothesis \ref{HypBSDE} and since we assumed $Y_0\in L^2$ and $M\in\mathcal{H}^2$, the first four terms on the right hand side are integrable and we can conclude by Lemma \ref{LsupY}. 
\\
\\
An analogous proof also holds on the interval $[s,T]$ 
 taking into account  Remark \ref{BSDESmallInt}.
\end{proof}

If the underlying filtration is Brownian and $V_t = t$,
 we can identify the solution of the BSDE with no
 driving martingale to the solution of a Brownian BSDE. 

Let $B$ be a $1$-dimensional Brownian motion defined on a complete probability 
space $(\Omega,\mathcal{F},\mathbbm{P})$. Let $T\in\mathbbm{R}_+^*$ and for any $t\in[0,T]$, let $\mathcal{F}^B_t$ denote the $\sigma$-field $\sigma(B_r|r\in[0,t])$ augmented with the $\mathbbm{P}$-negligible sets.
\\
In the stochastic basis $(\Omega,\mathcal{F},\mathcal{F}^B,\mathbbm{P})$, let $V_t=t$ and $(\xi,\hat{f})$ satisfy Hypothesis \ref{HypBSDE}.
Let $(Y,M)$ be the unique solution of $BSDE(\xi,\hat{f},V)$, see
 Theorem \ref{uniquenessBSDE}.
% $BSDE(\xi,\hat{f},V)$ has a unique solution $(Y,M)$ in $\mathcal{L}^2(dt\otimes d\mathbbm{P})\times \mathcal{H}^2_0$ which satisfies 
%\begin{equation}
%Y=\xi +\int_{\cdot}^T\hat{f}\left(r,\cdot,Y_r,\sqrt{\frac{d\langle M \rangle_r}{dr}}\right)dr - (M_T-M_{\cdot})
%\end{equation}
%in the sense of indistinguishability.
%Moreover, the Brownian BSDE 
%has a unique a unique solution $(U,Z)$ in $\mathcal{L}^2(dt\otimes d\mathbbm{P})\times L^2(dt\otimes d\mathbbm{P})$, see Theorem 1.2 in \cite{Pardoux}.

\begin{proposition} 
We have  $Y=U$, $M=\int_0^{\cdot}Z_rdB_r$, 
where $(U,Z)$ is the unique solution of the Brownian BSDE
\begin{equation}\label{BSDEBrownian}
U=\xi +\int_{\cdot}^T\hat{f}\left(r,\cdot,U_r,|Z_r|\right)dr - \int_{\cdot}^TZ_rdB_r.
\end{equation}
% in $\mathcal{L}^2(dt\otimes d\mathbbm{P})\times L^2(dt\otimes d\mathbbm{P})$,
\end{proposition}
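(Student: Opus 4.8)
The plan is to use the classical theory of Brownian BSDEs (existence and uniqueness under Lipschitz conditions, as in \cite{parpen90} or \cite{PardouxRascanu}) to produce the pair $(U,Z)$ solving \eqref{BSDEBrownian}, and then to show that $(U, \int_0^\cdot Z_r dB_r)$ is a solution of $BSDE(\xi,\hat f, V)$ in the sense of Definition \ref{firstdefBSDE}; uniqueness (Theorem \ref{uniquenessBSDE}) then forces $Y=U$ and $M=\int_0^\cdot Z_r dB_r$.

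\begin{prooff}{}
First I would invoke the classical result on Brownian BSDEs: since $V_t=t$ and $(\xi,\hat f)$ satisfy Hypothesis \ref{HypBSDE}, the driver $(y,z)\mapsto \hat f(r,\cdot,y,|z|)$ is progressively measurable, Lipschitz in $(y,z)$ (using $||z|-|z'||\le |z-z'|$), and $\hat f(\cdot,\cdot,0,0)\in \mathcal{L}^2(dt\otimes d\mathbbm{P})$; hence there is a unique solution $(U,Z)$ with $U$ adapted continuous, $\mathbbm{E}[\sup_t U_t^2]<\infty$, $Z$ progressively measurable with $\mathbbm{E}[\int_0^T Z_r^2 dr]<\infty$, solving \eqref{BSDEBrownian}. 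Set $M:=\int_0^\cdot Z_r dB_r$; then $M\in\mathcal{H}^2_0$, with angular bracket $\langle M\rangle_t = \int_0^t Z_r^2 dr$, so $d\langle M\rangle \ll dV=dt$ and $\frac{d\langle M\rangle}{dV}(r)=Z_r^2$ for $dt\otimes d\mathbbm{P}$-a.e.\ $(r,\omega)$.

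The key step is then to substitute this Radon--Nikodym derivative into \eqref{BSDEBrownian}. Since $\sqrt{\frac{d\langle M\rangle}{dV}}(r)=|Z_r|$ $dt\otimes d\mathbbm{P}$-a.e., and since by Lemma \ref{classdV} the integral $\int_\cdot^T \hat f(r,\cdot,U_r,\sqrt{\frac{d\langle M\rangle}{dV}}(r))dr$ does not depend on the chosen representative of the class in $L^2(dt\otimes d\mathbbm{P})$, the process $\int_\cdot^T \hat f(r,\cdot,U_r,\sqrt{\frac{d\langle M\rangle}{dV}}(r))dr$ is indistinguishable from $\int_\cdot^T \hat f(r,\cdot,U_r,|Z_r|)dr$. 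Therefore \eqref{BSDEBrownian} reads $U=\xi+\int_\cdot^T \hat f(r,\cdot,U_r,\sqrt{\frac{d\langle M\rangle}{dV}}(r))dr-(M_T-M_\cdot)$ in the sense of indistinguishability, i.e.\ $(U,M)$ satisfies \eqref{EqLEDPext}. Since $U_0\in L^2$ and $M\in\mathcal{H}^2_0$, Lemma \ref{LED+Pext} yields that $(U,M)$ is \emph{the} unique solution of $BSDE(\xi,\hat f,V)$, so by Theorem \ref{uniquenessBSDE}, $Y=U$ and $M=\int_0^\cdot Z_r dB_r$.

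The only genuine subtlety — not really an obstacle — is the direction $d\langle M\rangle/dV = Z^2$: one must be careful that the Radon--Nikodym derivative in the sense of stochastic measures of Proposition \ref{Decomposition} agrees, $dt\otimes d\mathbbm{P}$-a.e., with the pathwise density $Z_r^2$, which is immediate here because $\langle M\rangle_t=\int_0^t Z_r^2 dr$ is already absolutely continuous with respect to $dt$ with predictable density $Z^2$, and by the uniqueness part of Proposition \ref{Decomposition} any such predictable density coincides $dt\otimes d\mathbbm{P}$-a.e.\ with $\frac{d\langle M\rangle}{dV}$. Everything else is a direct application of the already-established uniqueness for $BSDE(\xi,\hat f,V)$ together with the classical Brownian theory.
\end{prooff}
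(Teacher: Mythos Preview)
Your proof is correct and follows essentially the same approach as the paper: invoke the classical Brownian BSDE theory to obtain $(U,Z)$, set the martingale to be $\int_0^\cdot Z_r\,dB_r$, identify $\frac{d\langle M\rangle}{dV}=Z^2$, rewrite the Brownian BSDE as an instance of \eqref{BSDEcadlag}, and conclude by uniqueness. The only cosmetic difference is that the paper uses the fact $\sup_t|U_t|\in L^2$ (hence $U\in\mathcal{L}^2(dt\otimes d\mathbbm{P})$) to verify Definition~\ref{firstdefBSDE} directly, whereas you route through Lemma~\ref{LED+Pext}; since you already noted $\mathbbm{E}[\sup_t U_t^2]<\infty$, this detour is harmless but unnecessary.
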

\begin{proof}
By Theorem 1.2 in \cite{Pardoux},  \eqref{BSDEBrownian} admits a 
unique solution  $(U,Z)$ of progressively measurable processes 
such that $Z \in L^2(dt\otimes d\mathbbm{P})$. It is 
known that  $\underset{t\in[0,T]}{\text{sup }}|U_t|\in L^2$ and therefore that  $U \in \mathcal{L}^2(dt\otimes d\mathbbm{P})$, see Proposition 1.1 in \cite{Pardoux} for instance.
We define  
 $N=\int_0^{\cdot}Z_rdB_r$. The couple $(U,N)$ belongs to $\mathcal{L}^2(dt\otimes d\mathbbm{P})\times \mathcal{H}^2_0$. $N$ verifies $\frac{d\langle N \rangle_r}{dr}=Z^2_r$ $dt\otimes d\mathbbm{P}$ a.e. So by \eqref{BSDEBrownian}, the couple $(U,N)$ verifies $U=\xi +\int_{\cdot}^T\hat{f}\left(r,\cdot,U_r,\sqrt{\frac{d\langle N \rangle_r}{dr}}\right)dr - (N_T-N_{\cdot})$
in the sense of indistinguishability. It therefore solves $BSDE(\xi,\hat{f},V)$ and the assertion yields by uniqueness of the solution.
\end{proof}

\section{Martingale Problem and Markov classes}\label{SecMProcess}

In this section, we introduce the Markov process which will later 
be the forward process which will be coupled to a BSDE 
in order to constitute  Markovian BSDEs with no driving martingales. For  details about the exact mathematical background that we use to define our Markov process, one can consult the Section \ref{A1} of the
 Appendix. We also introduce the martingale problem related to this
 Markov process.
\\
\\
Let $E$ be a Polish space and $T\in\mathbbm{R}_+^*$ be  a fixed horizon.
From now on, $\left(\Omega,\mathcal{F},(X_t)_{t\in[0,T]},(\mathcal{F}_t)_{t\in[0,T]}\right)$ denotes the canonical space defined in Definition \ref{canonicalspace}.  We consider a  canonical Markov class $(\mathbbm{P}^{s,x})_{(s,x)\in[0,T]\times E}$ associated to a transition function measurable in time as defined in Definitions \ref{defMarkov} and \ref{DefFoncTrans}, and for any $(s,x)\in [0,T]\times E$, $\left(\Omega,\mathcal{F}^{s,x},(\mathcal{F}^{s,x}_t)_{t\in[0,T]},\mathbbm{P}^{s,x}\right)$ will denote the stochastic basis introduced in Definition \ref{CompletedBasis} and which fulfills the usual conditions.
%\begin{remark}
%All notions and results of this section extend to a time index equal to $\mathbbm{R}_+$.
%\end{remark}

The following notion of Martingale Problem comes from \cite{jacod79} Chapter XI.
\begin{definition}\label{firstMP}
Let $\chi$ be a family of stochastic processes defined on a filtered space $(\tilde{\Omega},\tilde{\mathcal{F}},(\tilde{\mathcal{F}}_t)_{t\in\mathbbm{T}})$. We say that a probability measure $\mathbbm{P}$ defined on $(\tilde{\Omega},\tilde{\mathcal{F}})$ solves the \textbf{martingale problem}
 associated to $\chi$ if under $\mathbbm{P}$ all elements of $\chi$ are in $\mathcal{M}_{loc}$.
We denote  $\mathcal{MP}(\chi)$ the set of probability measures solving this martingale problem. $\mathbbm{P}$ in $\mathcal{MP}(\chi)$ is said to be \textbf{extremal}  if there can not exist distinct probability measures $\mathbbm{Q},\mathbbm{Q}'$ in $\mathcal{MP}(\chi)$ and $\alpha\in]0,1[$ such that 
\\
$\mathbbm{P}=\alpha\mathbbm{Q}+(1-\alpha)\mathbbm{Q}'$. 
\end{definition}

We now  introduce a Martingale problem associated to an operator, following closely  the formalism of D.W. Stroock and S.R.S Varadhan in \cite{stroock}. We will see in Remark \ref{RPBM} that both Definitions \ref{firstMP} and \ref{MartingaleProblem} are closely related.
\begin{definition}\label{MartingaleProblem}
Let us consider  a domain 
$\mathcal{D}(a)\subset \mathcal{B}([0,T]\times E,\mathbbm{R})$ which is
a linear algebra; a linear operator 
$a:\mathcal{D}(a)\longrightarrow \mathcal{B}([0,T]\times E,\mathbbm{R})$ and
 a non-decreasing continuous function 
$V:[0,T]\rightarrow\mathbbm{R}_+$ starting at 0.
\\
We say that a set of probability measures $(\mathbbm{P}^{s,x})_{(s,x)\in [0,T]\times E}$ defined on $(\Omega,\mathcal{F})$ solves the {\bf martingale problem associated to}  $(\mathcal{D}(a),a,V)$ if, for  any 
\\
$(s,x)\in[0,T]\times E$, $\mathbbm{P}^{s,x}$ verifies
\begin{description}
%\label{MP}
\item{(a)} $\mathbbm{P}^{s,x}(\forall t\in[0,s], X_t=x)=1$;
\item{(b)} for every $\phi\in\mathcal{D}(a)$, $\left(t\longmapsto \phi(t,X_t)- \phi(s,x) - \int_s^t a(\phi)(r,X_r)dV_r \right)$, \\
$t \in [s,T]$, is  a cadlag $(\mathbbm{P}^{s,x},(\mathcal{F}_t)_{t\in[s,T]})$-local martingale.
\end{description}
We say that the Martingale Problem is \textbf{well-posed} if for any $(s,x)\in[0,T]\times E$, $\mathbbm{P}^{s,x}$ is the only probability measure satisfying 
those two properties.
% for all $\phi\in \mathcal{D}(a)$.
\end{definition}

\begin{remark}\label{RPBM}
In other words, $(\mathbbm{P}^{s,x})_{(s,x)\in [0,T]\times E}$ solves the  martingale problem associated to  $(\mathcal{D}(a),a,V)$ if and only if, for  any $(s,x)\in[0,T]\times E$, 
\\
$\mathbbm{P}^{s,x}\in \mathcal{MP}(\chi^{s,x})$ (see Definition \ref{firstMP}), where $\chi^{s,x}$ is the family of processes 
\\
$\left\{ t\mapsto\mathds{1}_{[s,T]}(t)\left(\phi(t,X_{t})-\phi(s,x)-\int_s^{t}a(\phi)(r,X_r)dV_r\right)\middle|\phi\in\mathcal{D}(a)\right\}$,
together with processes $\left\{t\mapsto\mathds{1}_{\{r\}}(t)(X_t-x)\middle|r\in[0,s]\right\}$.
%Indeed for some $r\in[0,s]$, $t\mapsto\mathds{1}_{\{r\}}(t)(X_t-x)$ is a cadlag local martingale iff $X_r=x$ a.s. so requiring that processes $t\mapsto\mathds{1}_{\{r\}}(t)(X_t-x)$ are cadlag local martingales for every $r\in[0,s]$ is equivalent to requiring that $X_r=x$ a.s. for every $r\in[0,s]$, and therefore since $X$ and $x$ are cadlag, it is equivalent to requiring item 
%%\eqref{MP}.
%a) in Definition \ref{MartingaleProblem}.
%\\
%$(\mathbbm{P}^{s,x})_{(s,x)\in [0,T]\times E}$ solves the well-posed martingale problem associated to  $(\mathcal{D}(a),a,V)$ if and only if, for  any $(s,x)\in[0,T]\times E$, $\mathbbm{P}^{s,x}$, $\mathcal{MP}(\chi^{s,x})=\{\mathbbm{P}^{s,x}\}$.
\end{remark} 

\begin{notation}\label{Mphi}
For every $(s,x)\in[0,T]\times E$ and $\phi\in\mathcal{D}(a)$, the process 
\\
$t\mapsto\mathds{1}_{[s,T]}(t)\left(\phi(t,X_{t})-\phi(s,x)-\int_s^{t}a(\phi)(r,X_r)dV_r\right)$ will be denoted $M[\phi]^{s,x}$.
\end{notation}
$M[\phi]^{s,x}$ is a cadlag $(\mathbbm{P}^{s,x},(\mathcal{F}_t)_{t\in[0,T]})$-local
 martingale which is equal to $0$ on $[0,s]$, and by Proposition \ref{ConditionalExp}, it is also a $(\mathbbm{P}^{s,x},(\mathcal{F}^{s,x}_t)_{t\in[0,T]})$-local martingale.
\\
\\
The following Hypothesis \ref{MPwellposed} is assumed for the rest of this section.
%%%  NOUVELLE HYPOTHESE
%% CITER LA NOTE SUR LES F. ADDITIVES
\begin{hypothesis}\label{MPwellposed}
The Markov class $(\mathbbm{P}^{s,x})_{(s,x)\in [0,T]\times E}$ solves a well-posed Martingale Problem associated to a triplet $(\mathcal{D}(a),a,V)$  in the sense 
of  Definition \ref{MartingaleProblem}.
\end{hypothesis}

The bilinear operator below was introduced (in the case of time-homogeneous operators) by J.P. Roth in potential analysis (see Chapter III in \cite{roth}),
 and popularized by P.A. Meyer 
 in the study of homogeneous Markov processes, see e.g. \cite{dellmeyerD} Chapter XV Comment 23 or \cite{jacod79} Remark 13.46. It has
finally become a fundamental tool in the study of Markov processes and semi-groups, 
 see for instance \cite{bakry}. It will be central in our work.
\begin{definition}
We set
\begin{equation} \label{D45}
\Gamma : \begin{array}{r c l}
    \mathcal{D}(a) \times \mathcal{D}(a)  & \rightarrow & \mathcal{B}([0,T]\times E) \\
    (\phi, \psi) & \mapsto & a(\phi\psi) - \phi a(\psi) - \psi a(\phi).
   \end{array}
\end{equation} 
The operator $\Gamma$ is 
called the \textbf{carr\'e du champs operator}.
\end{definition}

This operator will appear in the expression of the angular bracket of the local martingales
 that we have defined.

\begin{proposition}\label{bracketindomain}
For any $\phi\in \mathcal{D}(a)$ and $(s,x)\in[0,T]\times E$, $M[\phi]^{s,x}$ 
belongs to $\mathcal{H}^2_{0,loc}$.
Moreover, for any $(\phi, \psi)\in \mathcal{D}(a) \times \mathcal{D}(a)$ and $(s,x)\in[0,T]\times E,$ we have
\begin{equation*}
\langle M[\phi]^{s,x} , M[\psi]^{s,x} \rangle = \int_s^{\cdot} \Gamma(\phi,\psi)(r,X_r)dV_r,
\end{equation*}
on the interval $[s,T]$,
 in the stochastic basis $(\Omega,\mathcal{F}^{s,x},(\mathcal{F}^{s,x}_t)_{t\in[0,T]},\mathbbm{P}^{s,x}).$
\end{proposition}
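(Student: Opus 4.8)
The plan is to prove the two assertions separately, beginning with the claim that $M[\phi]^{s,x}\in\mathcal{H}^2_{0,\mathrm{loc}}$. First I would fix $(s,x)$ and $\phi\in\mathcal{D}(a)$. Since $\mathcal{D}(a)$ is a linear algebra, $\phi^2\in\mathcal{D}(a)$, so by Hypothesis \ref{MPwellposed} the process $M[\phi^2]^{s,x}$ is a cadlag $(\mathbbm{P}^{s,x},(\mathcal{F}^{s,x}_t))$-local martingale. The key algebraic step is to apply the integration by parts (It\^o) formula to $\phi(t,X_t)^2$ on $[s,T]$ and compare it with $M[\phi^2]^{s,x}$; using $\phi(t,X_t)=\phi(s,x)+M[\phi]^{s,x}_t+\int_s^t a(\phi)(r,X_r)dV_r$ and the definition of $\Gamma$ in \eqref{D45}, one obtains that
\[
[M[\phi]^{s,x}] - \int_s^{\cdot}\Gamma(\phi,\phi)(r,X_r)dV_r
\]
is a local martingale (all the finite-variation cross terms involving $dV_r$ cancel against $a(\phi^2)-2\phi a(\phi)=\Gamma(\phi,\phi)$, and the genuinely local-martingale terms are $\int \phi\,dM[\phi^2]$-type objects and $\int \phi_{r^-}\,dM[\phi]$). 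In particular $[M[\phi]^{s,x}]$ has a predictable compensator, namely $\int_s^{\cdot}\Gamma(\phi,\phi)(r,X_r)dV_r$, which shows $M[\phi]^{s,x}$ is locally square integrable; since it also starts at $0$, it lies in $\mathcal{H}^2_{0,\mathrm{loc}}$, and moreover $\langle M[\phi]^{s,x}\rangle=\int_s^{\cdot}\Gamma(\phi,\phi)(r,X_r)dV_r$.

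For the general bracket identity, I would then use polarization together with the fact that $\mathcal{D}(a)$ is a linear space: for $\phi,\psi\in\mathcal{D}(a)$ we have $M[\phi+\psi]^{s,x}=M[\phi]^{s,x}+M[\psi]^{s,x}$ by linearity of $a$ and of the stochastic integral, and similarly for $\phi-\psi$. Applying the already-proved diagonal identity to $\phi+\psi$ and to $\phi-\psi$ and subtracting, the parallelogram computation gives
\[
\langle M[\phi]^{s,x},M[\psi]^{s,x}\rangle
=\tfrac14\Big(\langle M[\phi+\psi]^{s,x}\rangle-\langle M[\phi-\psi]^{s,x}\rangle\Big)
=\int_s^{\cdot}\tfrac14\big(\Gamma(\phi+\psi,\phi+\psi)-\Gamma(\phi-\psi,\phi-\psi)\big)(r,X_r)dV_r,
\]
and since $\Gamma$ is bilinear and symmetric by its very definition \eqref{D45}, the integrand equals $\Gamma(\phi,\psi)(r,X_r)$. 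This gives the claimed formula on $[s,T]$ in the stochastic basis $(\Omega,\mathcal{F}^{s,x},(\mathcal{F}^{s,x}_t),\mathbbm{P}^{s,x})$.

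The main obstacle I anticipate is the justification of the It\^o/integration-by-parts step at the level of \emph{local} martingales and the bookkeeping of which terms are local martingales versus finite variation: one must localize along a sequence of stopping times reducing simultaneously $M[\phi]^{s,x}$ and $M[\phi^2]^{s,x}$ (and along which $\phi(\cdot,X_\cdot)$ is bounded, which is automatic once the time-space process is stopped appropriately, since $\phi\in\mathcal{B}([0,T]\times E,\mathbbm{R})$ but need not be bounded — here one stops when $|\phi(t,X_t)|$ exits a large ball, using its cadlag paths), and then verify that the $dV$-driven terms combine exactly into $\int_s^\cdot\Gamma(\phi,\phi)(r,X_r)dV_r$. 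One should also note that by Notation \ref{Mphi} these processes vanish on $[0,s]$, so there is nothing to check there, and that the passage from $(\mathcal{F}_t)$-local martingale to $(\mathcal{F}^{s,x}_t)$-local martingale was already recorded just before Hypothesis \ref{MPwellposed}. Everything else is routine.
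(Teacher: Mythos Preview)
Your proposal is correct and follows essentially the same route as the paper: exploit that $\mathcal{D}(a)$ is an algebra, apply integration by parts to identify $[M[\phi]^{s,x}]-\int_s^{\cdot}\Gamma(\phi,\phi)(r,X_r)dV_r$ as a local martingale, deduce local square integrability, and read off the angular bracket by uniqueness of the predictable finite-variation part. The only organisational difference is that the paper carries out the integration by parts directly on the product $M[\phi]^{s,x}M[\psi]^{s,x}$ (so the bilinear formula and the diagonal case come out of one computation), whereas you do the diagonal case via It\^o on $\phi(\cdot,X_\cdot)^2$ and then recover the off-diagonal identity by polarization; this is a cosmetic rearrangement, not a different argument.
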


\begin{proof}
We fix some $(s,x)\in[0,T]\times E$ and the associated probability $\mathbbm{P}^{s,x}$. For any $\phi,\psi$ in  $\mathcal{D}(a)$, by integration by parts 
 on $[s,T]$ we have
\begin{equation*}
\begin{array}{rcl}
&&M[\phi]^{s,x}M[\psi]^{s,x}\\
&=& \int_s^{\cdot}M[\phi]^{s,x}_{r^-}dM[\psi]^{s,x}_r +\int_s^{\cdot}M[\psi]^{s,x}_{r^-}dM[\phi]^{s,x}_r + [M[\phi]^{s,x},M[\psi]^{s,x}] \\
&= &\int_s^{\cdot}M[\phi]^{s,x}_{r^-}dM[\psi]^{s,x}_r +\int_s^{\cdot}M[\psi]^{s,x}_{r^-}dM[\phi]^{s,x}_r  + [\phi(\cdot,X_{\cdot}),\psi(\cdot,X_{\cdot})]  \\
&= &\int_s^{\cdot}M[\phi]^{s,x}_{r^-}dM[\psi]^{s,x}_r +\int_s^{\cdot}M[\psi]^{s,x}_{r^-}dM[\phi]^{s,x}_r+ \phi\psi(\cdot,X_{\cdot})\\
&&-\phi\psi(s,x)-\int_s^{\cdot}\phi(r^-,X_{r^-})d\psi(r,X_r)-\int_s^{\cdot}\psi(r^-,X_{r^-})d\phi(r,X_r). 
\end{array}
\end{equation*}

Since $\phi\psi$ belongs to $\mathcal{D}(a)$, we can use the decomposition 
of $\phi\psi(\cdot,X_{\cdot})$ given by (b) in Definition 
\ref{MartingaleProblem} 
 and
\begin{equation}\label{eqBracket}
\begin{array}{rcl}
&&M[\phi]^{s,x}M[\psi]^{s,x}\\
&=& \int_s^{\cdot}M[\phi]^{s,x}_{r^-}dM[\psi]^{s,x}_r +\int_s^{\cdot}M[\psi]^{s,x}_{r^-}dM[\phi]^{s,x}_r + \int_s^{\cdot}a(\phi\psi)(r,X_r)dV_r\\
&& + M^{s,x}[\phi\psi] -\int_s^{\cdot}\phi a(\psi)(r,X_r)dV_r - \int_s^{\cdot}\psi a(\phi)(r,X_r)dV_r \\
&&- \int_s^{\cdot}\phi(r^-,X_{r^-})dM^{s,x}[\psi]_r-\int_s^{\cdot}\psi(r^-,X_{r^-})dM^{s,x}[\phi]_r\\
 &=& \int_s^{\cdot}\Gamma(\phi,\psi)(r,X_r)dV_r + \int_s^{\cdot}M[\phi]^{s,x}_{r^-}dM[\psi]^{s,x}_r +\int_s^{\cdot}M[\psi]^{s,x}_{r^-}dM[\phi]^{s,x}_r \\
 &&+ M^{s,x}[\phi\psi]- \int_s^{\cdot}\phi(r^-,X_{r^-})dM^{s,x}[\psi]_r-\int_s^{\cdot}\psi(r^-,X_{r^-})dM^{s,x}[\phi]_r.
\end{array}
\end{equation}
Since $V$ is continuous, this implies that $M[\phi]^{s,x}M[\psi]^{s,x}$ is a special semi-martingale with bounded variation predictable part $\int_s^{\cdot}\Gamma(\phi,\psi)(r,X_r)dV_r$. In particular taking $\phi = \psi$, we have on $[s,T]$ that
$(M[\phi]^{s,x})^2=\int_s^{\cdot}\Gamma(\phi,\phi)(r,X_r)dV_r+N^{s,x}$,
where $N^{s,x}$ is some local martingale.
The first element in previous sum is  locally bounded
since it is a continuous process.  
The second one  is locally integrable as
 every  local martingale. Finally
  $\left(M[\phi]^{s,x}\right)^2$ is locally integrable, implying that
  $M[\phi]^{s,x}$ is in $\mathcal{H}^2_{0,loc}$. 
\\
\\
Let us come back to two given $\phi,\psi \in \mathcal{D}(a)$. Since we know that  $M[\phi]^{s,x}$, $M[\psi]^{s,x}$ belong to $\mathcal{H}^2_{0,loc}$ we can consider 
 $\langle M[\phi]^{s,x},M[\psi]^{s,x}\rangle$ which, by definition, is the unique predictable process with bounded variation such that 
\\
$M[\phi]^{s,x}M[\psi]^{s,x}-\langle M[\phi]^{s,x},M[\psi]^{s,x}\rangle$ is a local martingale. So necessarily, taking \eqref{eqBracket} into account,
$\langle M[\phi]^{s,x},M[\psi]^{s,x}\rangle=\int_s^{\cdot}\Gamma(\phi,\psi)(r,X_r)dV_r$.
\end{proof}
Taking $\phi = \psi$ in Proposition \ref{bracketindomain}, yields the following.
\begin{corollary}\label{H2Vloc}	
	For any $(s,x)\in[0,T]\times E$ and $\phi\in\mathcal{D}(a)$, $M[\phi]^{s,x}\in \mathcal{H}^{2,V}_{loc}$.
\end{corollary}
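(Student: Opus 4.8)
The statement is an immediate consequence of Proposition \ref{bracketindomain} applied with $\psi = \phi$, so the plan is essentially to unwind the definition of $\mathcal{H}^{2,V}_{loc}$. First, recall from Proposition \ref{bracketindomain} that $M[\phi]^{s,x}\in\mathcal{H}^2_{0,loc}$ and that, on $[s,T]$ and in the stochastic basis $(\Omega,\mathcal{F}^{s,x},(\mathcal{F}^{s,x}_t)_{t\in[0,T]},\mathbbm{P}^{s,x})$,
\[
\langle M[\phi]^{s,x}\rangle = \int_s^{\cdot}\Gamma(\phi,\phi)(r,X_r)\,dV_r .
\]
Since $M[\phi]^{s,x}$ vanishes identically on $[0,s]$ (by Notation \ref{Mphi}), so does its angular bracket, and hence the displayed identity actually holds on all of $[0,T]$, with the right-hand side understood as $\int_0^{\cdot}\mathds{1}_{]s,T]}(r)\Gamma(\phi,\phi)(r,X_r)\,dV_r$.

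Next I would produce an explicit localizing sequence witnessing membership in $\mathcal{H}^{2,V}_{loc}$. By definition of $\mathcal{H}^2_{0,loc}$ there is a localizing sequence $(\tau_n)_{n\geq 0}$ such that $\big(M[\phi]^{s,x}\big)^{\tau_n}\in\mathcal{H}^2_0$ for every $n$. For each fixed $n$, the angular bracket of the stopped martingale is $\langle \big(M[\phi]^{s,x}\big)^{\tau_n}\rangle = \langle M[\phi]^{s,x}\rangle^{\tau_n} = \int_0^{\cdot}\mathds{1}_{]s,\tau_n]}(r)\Gamma(\phi,\phi)(r,X_r)\,dV_r$, which is by construction an indefinite $dV$-integral of a predictable process; consequently $d\langle \big(M[\phi]^{s,x}\big)^{\tau_n}\rangle \ll dV$ in the sense of stochastic measures. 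Therefore $\big(M[\phi]^{s,x}\big)^{\tau_n}\in\mathcal{H}^{2,V}$ for every $n$, which is exactly the assertion that $M[\phi]^{s,x}\in\mathcal{H}^{2,V}_{loc}$.

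There is no real obstacle here: the only points requiring a word of care are that $\mathcal{H}^{2,V}$ demands genuine membership in $\mathcal{H}^2_0$ (handled by reusing the localizing sequence coming from $M[\phi]^{s,x}\in\mathcal{H}^2_{0,loc}$ in Proposition \ref{bracketindomain}) and that the absolute-continuity requirement $d\langle M\rangle\ll dV$ is automatic because Proposition \ref{bracketindomain} exhibits the bracket directly as a $dV$-integral. Everything else is bookkeeping with the stopping time $\tau_n$ and the fact that $M[\phi]^{s,x}$ is null on $[0,s]$.
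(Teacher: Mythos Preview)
Your proof is correct and takes exactly the same approach as the paper, which simply records that the corollary follows by taking $\psi=\phi$ in Proposition~\ref{bracketindomain}. You have merely spelled out the routine localization step and the absolute-continuity check that the paper leaves implicit.
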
 

% \begin{remark}\label{ortholoc}
% By Proposition \ref{OrthogonalSpaces}, it is clear that any element of  $\mathcal{H}^{2,\perp V}$ is strongly orthogonal to any element of $\mathcal{H}^{2,V}_{loc}$.
% \end{remark}

We now show that in our setup, $\mathcal{H}_0^2$ is always equal to $\mathcal{H}^{2,V}$. This can be seen as a generalization of Theorem 13.43 in \cite{jacod79}. 
%We  need for this a theorem proven by J. Jacod and M. Yor which states  (see e.g. Theorem 11.2 in \cite{jacod79}) the 
%following.
%\begin{theorem} \label{TC2}
%Let $\chi$ be a set of processes defined on some fixed filtered space $(\tilde{\Omega},\tilde{\mathcal{F}},(\tilde{\mathcal{F}}_t)_{t\in\mathbbm{T}})$.
%If $\mathbbm{P}\in\mathcal{MP}(\chi)$ then the following assertions are equivalent.
%
%\begin{enumerate}
%\item $\mathbbm{P}\in\mathcal{MP}_e(\chi)$;
%\item any $N\in\mathcal{H}^{\infty}_{0,loc}(\mathbbm{P})$  strongly orthogonal to all elements of $\chi$  is equal to zero, and $\tilde{\mathcal{F}}_0$ is $\mathbbm{P}$-trivial.
%\end{enumerate}
%\end{theorem}

\begin{proposition}\label{BoundOrthoMart}
Let $(s,x)\in[0,T]\times E$ and $\mathbbm{P}^{s,x}$ be fixed. If $N\in\mathcal{H}^{\infty}_{0,loc}$ is strongly orthogonal to $M[\phi]^{s,x}$ for all $\phi\in\mathcal{D}(a)$ then it  is necessarily equal to $0$.
\end{proposition}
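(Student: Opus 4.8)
The plan is to show that $N$, being a bounded local martingale strongly orthogonal to all the $M[\phi]^{s,x}$, must coincide with its own constant initial value, hence be zero since $N_0 = 0$. The strategy mimics the classical argument (cf. Theorem 13.43 in \cite{jacod79}): one uses the martingale property of $N$ to test it against the generator, exploiting that for $\phi \in \mathcal{D}(a)$ the process $\phi(\cdot,X_{\cdot})$ decomposes as $\phi(s,x) + M[\phi]^{s,x}_{\cdot} + \int_s^{\cdot} a(\phi)(r,X_r)\,dV_r$, a martingale plus an absolutely continuous (w.r.t. $dV$) finite-variation term. First I would fix $(s,x)$ and, by localization, reduce to the case where $N$ is a bounded martingale and all relevant processes are genuine (not merely local) martingales; here the boundedness of $N \in \mathcal{H}^\infty_{0,loc}$ together with Corollary \ref{H2Vloc} and Proposition \ref{bracketindomain} guarantees the needed integrability after stopping.

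The key computation is an integration by parts applied to the product $N_{\cdot}\,\phi(\cdot,X_{\cdot})$ on $[s,T]$, for an arbitrary $\phi \in \mathcal{D}(a)$. Writing $\phi(\cdot,X_{\cdot}) = \phi(s,x) + M[\phi]^{s,x} + \int_s^{\cdot} a(\phi)(r,X_r)\,dV_r$, the product rule gives
\begin{equation*}
N_t\,\phi(t,X_t) = \phi(s,x)N_t + \int_s^t N_{r^-}\,dM[\phi]^{s,x}_r + \int_s^t \phi(r^-,X_{r^-})\,dN_r + \int_s^t N_{r^-}\,a(\phi)(r,X_r)\,dV_r + [N, M[\phi]^{s,x}]_t,
\end{equation*}
using that $\phi(\cdot,X_{\cdot})$ and $M[\phi]^{s,x}$ have the same jumps and the $dV$-integral term is continuous of finite variation (so it contributes nothing to the covariation). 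By strong orthogonality, $N M[\phi]^{s,x}$ is a local martingale starting at $0$, so $[N, M[\phi]^{s,x}]$ has a vanishing angular-bracket-type compensator; more precisely $\langle N, M[\phi]^{s,x}\rangle = 0$. Taking expectations under $\mathbbm{P}^{s,x}$ (after localization to make every stochastic integral a true martingale), all the martingale terms drop out and I obtain
\begin{equation*}
\mathbbm{E}^{s,x}\!\left[N_t\,\phi(t,X_t)\right] = \mathbbm{E}^{s,x}\!\left[\int_s^t N_r\,a(\phi)(r,X_r)\,dV_r\right].
\end{equation*}

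From here the conclusion follows by a now-standard functional-analytic argument: the identity above, valid for all $\phi \in \mathcal{D}(a)$, combined with the density/richness of $\mathcal{D}(a)$ inherent in the well-posedness of the martingale problem (Hypothesis \ref{MPwellposed}), forces the measure $N_r\, dV_r \otimes d\mathbbm{P}^{s,x}$ to annihilate a separating class, whence $N \equiv 0$; alternatively one can run a Gronwall-type estimate on $\mathbbm{E}^{s,x}[N_t^2]$ using that $\langle N \rangle$ is itself $dV$-absolutely continuous (Proposition \ref{H2V=H2}, or rather its proof) and that $N$ is orthogonal to the $M[\phi]$ which span the relevant martingale space. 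I expect the main obstacle to be precisely this last step — turning "$N$ is orthogonal to every $M[\phi]^{s,x}$" into "$N = 0$" — because it requires knowing that the family $\{M[\phi]^{s,x} : \phi \in \mathcal{D}(a)\}$ is rich enough to generate, in the sense of stable subspaces of $\mathcal{H}^2_{0,loc}$, the whole martingale space under $\mathbbm{P}^{s,x}$; this is exactly where well-posedness of the martingale problem is used, via a monotone-class or extremality argument (in the spirit of Jacod–Yor), rather than any soft manipulation. The boundedness assumption $N \in \mathcal{H}^\infty_{0,loc}$ is what makes all the integrability bookkeeping in the localization routine, so care must be taken that the stopping times chosen control $N$, the $M[\phi]^{s,x}$, and the $dV$-integrals simultaneously.
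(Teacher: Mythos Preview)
Your integration-by-parts computation is correct but does no work: the identity
$\mathbbm{E}^{s,x}[N_t\,\phi(t,X_t)] = \mathbbm{E}^{s,x}[\int_s^t N_r\,a(\phi)(r,X_r)\,dV_r]$
is nothing more than a restatement of the hypothesis $\langle N, M[\phi]^{s,x}\rangle = 0$, and neither of your two proposed exit routes from it succeeds. The first (``density/richness of $\mathcal{D}(a)$ \ldots forces the measure to annihilate a separating class'') is not an argument but a hope: the domain $\mathcal{D}(a)$ is an abstract linear algebra with no density assumption, and your identity involves $a(\phi)$ on the right, so you cannot simply test against a separating family of $\phi$'s. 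The second route is circular: Proposition~\ref{H2V=H2} is proved \emph{using} the present proposition (look at its proof: it invokes Proposition~\ref{BoundOrthoMart} to kill $N^{\perp V}$), so you may not appeal to it, even ``its proof'', here.

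You have in fact already written down the correct proof in your final paragraph, but as a diagnosis rather than as the argument. The paper's proof is exactly and only the Jacod--Yor step you mention: well-posedness (Hypothesis~\ref{MPwellposed}) means $\mathbbm{P}^{s,x}$ is the unique element of $\mathcal{MP}(\chi^{s,x})$, hence extremal; the Jacod--Yor theorem (Theorem~11.2 in \cite{jacod79}) then says directly that any $N\in\mathcal{H}^{\infty}_{0,loc}$ strongly orthogonal to every element of $\chi^{s,x}$ vanishes. No integration by parts, no Gronwall, no localization bookkeeping is needed. Drop the preliminary computation and promote your parenthetical remark to the proof itself.
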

\begin{proof}
In Hypothesis \ref{MPwellposed}, for any $(s,x)\in[0,T]\times E$ we have assumed that $\mathbbm{P}^{s,x}$ was the unique element of $\mathcal{MP}(\chi^{s,x})$, where $\chi^{s,x}$ was introduced in Remark \ref{RPBM}.
Therefore $\mathbbm{P}^{s,x}$ is extremal in $\mathcal{MP}(\chi^{s,x})$. So thanks to the Jacod-Yor Theorem (see e.g. Theorem 11.2 in \cite{jacod79}), we know that if an element $N$ of $\mathcal{H}^{\infty}_{0,loc}$ is strongly orthogonal to all the $M[\phi]^{s,x}$ then it is equal to zero. 
\end{proof}

\begin{proposition}\label{H2V=H2}
If Hypothesis \ref{MPwellposed} is verified then for any $(s,x)\in[0,T]\times E$, in the stochastic basis $\left(\Omega,\mathcal{F}^{s,x},(\mathcal{F}^{s,x}_t)_{t\in[0,T]},\mathbbm{P}^{s,x}\right)$, we have $\mathcal{H}_0^2=\mathcal{H}^{2,V}$.
\end{proposition}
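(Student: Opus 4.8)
The plan is to show $\mathcal{H}^2_0 \subseteq \mathcal{H}^{2,V}$, since the reverse inclusion holds by definition. Fix $(s,x)$ and work in the completed stochastic basis. Let $N\in\mathcal{H}^2_0$; I must show $d\langle N\rangle \ll dV$. The natural route is via the orthogonal decomposition provided by Proposition~\ref{DecompoMart}: write $N = N^V + N^{\bot V}$ with $N^V\in\mathcal{H}^{2,V}$, $N^{\bot V}\in\mathcal{H}^{2,\bot V}$ and $\langle N^V,N^{\bot V}\rangle=0$. It then suffices to prove $N^{\bot V}=0$. By Proposition~\ref{OrthogonalSpaces}, $N^{\bot V}$, being in $\mathcal{H}^{2,\bot V}\subseteq \mathcal{H}^{2,\bot V}_{loc}$, is strongly orthogonal to every element of $\mathcal{H}^{2,V}_{loc}$, and by Corollary~\ref{H2Vloc} each $M[\phi]^{s,x}$ lies in $\mathcal{H}^{2,V}_{loc}$; hence $N^{\bot V}$ is strongly orthogonal to all $M[\phi]^{s,x}$, $\phi\in\mathcal{D}(a)$.

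Now the obstacle: Proposition~\ref{BoundOrthoMart} concludes vanishing only for processes in $\mathcal{H}^{\infty}_{0,loc}$ (locally bounded martingales), whereas $N^{\bot V}$ is merely in $\mathcal{H}^2_{0,loc}$. So the key step is a localization/truncation argument reducing to the bounded case. The plan is: since $N^{\bot V}$ is cadlag, its jumps are bounded by a localizing sequence and $(N^{\bot V})^-$ is locally bounded, so one can find a localizing sequence $(\tau_n)$ such that each stopped martingale $(N^{\bot V})^{\tau_n}$ is bounded, i.e.\ in $\mathcal{H}^{\infty}_0\subseteq\mathcal{H}^{\infty}_{0,loc}$. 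One must check that strong orthogonality is preserved under stopping: if $N^{\bot V}M[\phi]^{s,x}$ is a local martingale starting at $0$, then so is $(N^{\bot V})^{\tau_n}M[\phi]^{s,x}$ — indeed the latter equals $(N^{\bot V}M[\phi]^{s,x})^{\tau_n} + \int (M[\phi]^{s,x}_{r}-M[\phi]^{s,x}_{r\wedge\tau_n})\,d(N^{\bot V})_{r\wedge\tau_n}$ type manipulation, or more cleanly, one uses that $(N^{\bot V})^{\tau_n}$ remains strongly orthogonal to $M[\phi]^{s,x}$ because stopping an element strongly orthogonal to all $M[\phi]^{s,x}$ and to the stable subspace they generate stays orthogonal; alternatively stop $M[\phi]^{s,x}$ too and use that $\langle (N^{\bot V})^{\tau_n}, M[\phi]^{s,x}\rangle = \langle N^{\bot V}, M[\phi]^{s,x}\rangle^{\tau_n} = 0$ once we pass to $\mathcal{H}^2_{loc}$ brackets. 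This is the delicate bookkeeping point; I expect it to be the main technical nuisance rather than a deep difficulty.

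With that in hand, apply Proposition~\ref{BoundOrthoMart} to each $(N^{\bot V})^{\tau_n}\in\mathcal{H}^{\infty}_{0,loc}$: it is strongly orthogonal to all $M[\phi]^{s,x}$, hence $(N^{\bot V})^{\tau_n}=0$ for every $n$. Letting $n\to\infty$ along the localizing sequence (which a.s.\ stabilizes at $+\infty$) gives $N^{\bot V}=0$, so $N = N^V\in\mathcal{H}^{2,V}$ and $d\langle N\rangle = d\langle N^V\rangle \ll dV$ by the moreover part of Proposition~\ref{DecompoMart}. This proves $\mathcal{H}^2_0\subseteq\mathcal{H}^{2,V}$ and hence equality. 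I would present this as: (1)~reduce to showing $N^{\bot V}=0$ via Proposition~\ref{DecompoMart}; (2)~establish strong orthogonality of $N^{\bot V}$ to all $M[\phi]^{s,x}$ via Corollary~\ref{H2Vloc} and Proposition~\ref{OrthogonalSpaces}; (3)~localize to $\mathcal{H}^{\infty}_{0,loc}$ and invoke Proposition~\ref{BoundOrthoMart}; (4)~conclude. The cleanest phrasing of step (3) may avoid explicit stopping of $N^{\bot V}$ by instead noting directly that $N^{\bot V}\in\mathcal{H}^2_{0,loc}$ and that the Jacod--Yor extremality argument, as used in Proposition~\ref{BoundOrthoMart}, in fact already applies to $\mathcal{H}^2_{0,loc}$ after the standard reduction — but since the paper only stated Proposition~\ref{BoundOrthoMart} for $\mathcal{H}^{\infty}_{0,loc}$, the explicit localization is the safe route to write down.
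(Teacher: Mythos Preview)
Your localization step (3) has a genuine gap. You assert that because $N^{\bot V}$ is cadlag, one can find a localizing sequence $(\tau_n)$ making each $(N^{\bot V})^{\tau_n}$ bounded, i.e.\ that $N^{\bot V}\in\mathcal{H}^{\infty}_{0,loc}$. This is false for general elements of $\mathcal{H}^2_0$: a square integrable martingale need not be locally bounded. The obstruction is the jumps. A cadlag local martingale is locally bounded if and only if its jump process is locally bounded; for a compensated compound Poisson process whose jump law has finite variance but unbounded support, the jumps are not locally bounded (the first jump of size exceeding $n$ occurs at a totally inaccessible time and cannot be avoided by any stopping time without also including that jump). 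In your setting $N$ is an arbitrary element of $\mathcal{H}^2_0$, so $\Delta N^{\bot V}_t=\mathds{1}_{\{K_t=1\}}\Delta N_t$ inherits no boundedness, and your ``safe route'' does not go through. Your fallback suggestion---that Jacod--Yor already applies to $\mathcal{H}^2_{0,loc}$---is not available either: the Jacod--Yor extremality theorem (Theorem~11.2 in \cite{jacod79}) is genuinely a statement about bounded martingales, since its proof builds a competing probability via a bounded density.

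The paper repairs this by reversing the order of your reductions. Instead of decomposing a general $N$ and then trying to localize the singular part, it first restricts to $N\in\mathcal{H}^{\infty}_0$. Then $N$ has bounded jumps, and since $N^V=\int\mathds{1}_{\{K<1\}}dN$ and $N^{\bot V}=\int\mathds{1}_{\{K=1\}}dN$, their jumps $\mathds{1}_{\{K<1\}}\Delta N$ and $\mathds{1}_{\{K=1\}}\Delta N$ are bounded as well, hence $N^V,N^{\bot V}$ are locally bounded. Now your steps (1)--(2) and Proposition~\ref{BoundOrthoMart} legitimately give $N^{\bot V}=0$, so $\mathcal{H}^{\infty}_0\subset\mathcal{H}^{2,V}$. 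The passage to all of $\mathcal{H}^2_0$ is then by density: truncate the terminal value of $M\in\mathcal{H}^2_0$ to get $M^n\in\mathcal{H}^{\infty}_0\subset\mathcal{H}^{2,V}$ with $M^n\to M$ in $\mathcal{H}^2$, and conclude using that $\mathcal{H}^{2,V}$ is closed (Proposition~\ref{OrthogonalSpaces}). In short, the missing idea in your argument is that local boundedness of $N^{\bot V}$ must be \emph{inherited} from a bounded $N$, not manufactured a posteriori; the density/closedness argument then replaces your attempted localization.
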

\begin{proof}
We fix $(s,x)\in[0,T]\times E$. 
It is enough to show the inclusion 
$\mathcal{H}_0^2 \subset \mathcal{H}^{2,V}$.
 We start considering a bounded martingale $N\in\mathcal{H}^{\infty}_0$ and 
 showing that it belongs to $\mathcal{H}^{2,V}$. Since $N$ belongs to 
$\mathcal{H}^2_0$, we can consider the corresponding 
$N^V,N^{\perp V}$ in $\mathcal{H}^2_0$, appearing in the statement
of Proposition \ref{DecompoMart}.
% we can consider $N^V,N^{\perp V}$ in $\mathcal{H}^2_0$. 
We show below that $N^V$ and $N^{\perp V}$ are locally bounded, which will permit us to use  Jacod-Yor theorem on $N^{\perp V}$. 
\\
\\
Indeed, by Proposition \ref{DecompoMart}  there exists a predictable process $K$ such that 
\\
$N^V=\int_s^{\cdot}\mathds{1}_{\{K_r<1\}}dN_r$ and $N^{\perp V}=\int_s^{\cdot}\mathds{1}_{\{K_r=1\}}dN_r$. So if $N$ is bounded then it has bounded jumps; 
by previous way of characterizing $N^V$ and $N^{\perp V}$, their jumps can be expressed $(\Delta N^V)_t = \mathds{1}_{\{K_t<1\}}\Delta N_t$ and $(\Delta N^{\perp V})_t = \mathds{1}_{\{K_t=1\}}\Delta N_t$ (see Theorem 8 Chapter IV.3 in \cite{protter}), so they 
also have bounded jumps  which implies that they are locally bounded, see (2.4) in \cite{jacod79}.
\\ \\
So $N^{\perp V}$ is in $\mathcal{H}^{\infty}_{0,loc}$ and
by construction it belongs to $\mathcal{H}^{2,\perp V}$.
 Since 
by Corollary \ref{H2Vloc}, all the  $M[\phi]^{s,x}$ belong to
 $\mathcal{H}^{2,V}_{loc}$, then, by 
Proposition \ref{OrthogonalSpaces},
%Remark \ref{ortholoc},
  $N^{\perp V}$
is strongly orthogonal to all the $M[\phi]^{s,x}$.
Consequently, by Proposition \ref{BoundOrthoMart}, $N^{\perp V}$ is equal to zero. This shows that $N=N^V$ which by construction belongs to $\mathcal{H}^{2,V}$, and consequently that
 $\mathcal{H}^{\infty}_0\subset\mathcal{H}^{2,V}$,
which concludes the proof when $N$ is a bounded martingale.
\\
\\
 We can conclude by density arguments as follows.
Let $M\in\mathcal{H}^2_0$. 
For any integer $n\in\mathbbm{N}^*$, we denote by $M^{n}$
the martingale in $\mathcal{H}^{\infty}_0$
 defined as the cadlag version of  
$t\mapsto\mathbbm{E}^{s,x}[((-n)\vee M_{T}\wedge n)|\mathcal{F}_t]$.
Now 
  $\left(M_{T}^{n}-M_{T}\right)^2\underset{n\rightarrow \infty}{\longrightarrow}0$  a.s. and this sequence is bounded by $4M_{T}^2$ 
which is an integrable r.v. So by the dominated convergence theorem
 $\mathbbm{E}^{s,x}\left[\left(M_{T}^{n}-M_{T}\right)^2\right]\underset{n\rightarrow \infty}{\longrightarrow} 0$. Then by Doob's inequality, $\underset{t\in[0,T]}{\text{sup }} (M^{n}_t-M_t)\underset{n\rightarrow \infty}{\overset{L^2}{\longrightarrow}} 0$ meaning that $M^{n}\underset{n\rightarrow \infty}{\overset{\mathcal{H}^2}{\longrightarrow}}M$. Since $\mathcal{H}^{\infty}_0\subset\mathcal{H}^{2,V}$, then $M^{n}$ belongs to $\mathcal{H}^{2,V}$ for any $n\geq 0$. Moreover $\mathcal{H}^{2,V}$ is closed in $\mathcal{H}^{2}$, 
 since  by Proposition \ref{OrthogonalSpaces}, it is a sub-Hilbert space. Finally we have shown that  $M\in\mathcal{H}^{2,V}$.
\end{proof}

Since $V$ is continuous, it follows  in particular that every
 $(\mathbbm{P}^{s,x},(\mathcal{F}^{s,x}_t)_{t\in[0,T]})$-square integrable martingale has a continuous angular bracket. By localization, the same assertion holds for local square integrable martingales. 
\\
\\
We will now be interested in extending the domain $\mathcal{D}(a)$.
\\
\\
For any $(s,x)\in[0,T]\times E$ we define the positive bounded \textbf{potential measure} $U(s,x,\cdot)$ on $\left([0,T]\times E,\mathcal{B}([0,T])\otimes \mathcal{B}(E)\right)$ by 
\\
$U(s,x,\cdot):\begin{array}{rcl}
\mathcal{B}([0,T])\otimes \mathcal{B}(E)&\longrightarrow& [0,V_T]\\
A &\longmapsto& \mathbbm{E}^{s,x}\left[\int_s^{T} \mathds{1}_{\{(t,X_t)\in A\}}dV_t\right].
\end{array}$

\begin{definition}\label{zeropotential}
A Borel set $A\subset [0,T]\times E$ will be said to be
 {\bf of zero potential} if, for any $(s,x)\in[0,T]\times E$  we have  $U(s,x,A) = 0$.
\end{definition}
\begin{notation}\label{topo}
Let %$p\in\mathbbm{N}^*$.
$p > 0$. We introduce
\\
${\mathcal L}^p_{s,x} :={\mathcal L}^p(U(s,x,\cdot)) =\left\{ f\in \mathcal{B}([0,T]\times E,\mathbbm{R}):\, \mathbbm{E}^{s,x}\left[\int_s^{T} |f|^p(r,X_r)dV_r\right] < \infty\right\}$.
\\
That classical $\mathcal{L}^p$-space is equipped with the seminorm
\\
$\|\cdot\|_{p,s,x}:f\mapsto \left(\mathbbm{E}^{s,x}\left[\int_s^{T}|f(r,X_r)|^pdV_r\right]\right)^{\frac{1}{p}}$.
 We also introduce 
\\
${\mathcal L}^0_{s,x} :={\mathcal L}^0(U(s,x,\cdot)) = \left\{ f\in \mathcal{B}([0,T]\times E,\mathbbm{R}):\, \int_s^{T} |f|(r,X_r)dV_r < \infty\quad\mathbbm{P}^{s,x}\text{ a.s.}\right\}$.
\\
We then denote for any $p\in\mathbbm{N}$ 
\begin{equation}
\mathcal{L}^p_X =\underset{(s,x)\in[0,T]\times E}{\bigcap}{\mathcal L}^p_{s,x}.
\end{equation}
Let $\mathcal{N}$ be the linear sub-space of $\mathcal{B}([0,T]\times E,\mathbbm{R})$ containing all functions which are equal to 0, $U(s,x,\cdot)$ a.e. for every $(s,x)$.
\\
For any $p\in\mathbbm{N}$, we define  the quotient space $L^p_X = \mathcal{L}^p_X /\mathcal{N}$.
\\
If $p\in\mathbbm{N}^*$, $L^p_X$ can be equipped with the topology generated by the family of semi-norms $\left(\|\cdot\|_{p,s,x}\right)_{(s,x)\in[0,T]\times E}$ which makes it a separate locally convex topological vector space, 
see Theorem 5.76 in \cite{aliprantis}.
\end{notation}

\begin{proposition}\label{uniquenessupto}
Let $f$ and $g$ be in $\mathcal{L}^0_X$.
Then $f$ and $g$ are equal up to a set of zero potential if and only if for any $(s,x)\in[0,T]\times E$, the processes $\int_s^{\cdot}f(r,X_r)dV_r$ and $\int_s^{\cdot}g(r,X_r)dV_r$ are indistinguishable under $\mathbbm{P}^{s,x}$.
Of course in  this case $f$ and $g$ correspond to the same element of $L^0_X$.
\end{proposition}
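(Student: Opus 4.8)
The statement to prove is Proposition \ref{uniquenessupto}: for $f,g \in \mathcal{L}^0_X$, $f=g$ up to a set of zero potential iff for every $(s,x)$ the processes $\int_s^{\cdot}f(r,X_r)dV_r$ and $\int_s^{\cdot}g(r,X_r)dV_r$ are $\mathbbm{P}^{s,x}$-indistinguishable. Since the claim is symmetric and additive in $f-g$ (and both integral processes are well-defined by the $\mathcal{L}^0_X$ assumption), it is harmless to work with $h := f-g$, but one must be mildly careful because $h$ need not be nonnegative; so I would instead argue directly with $f$ and $g$, or reduce to $|f-g|$. The cleaner route is to prove the two implications separately.

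\textbf{Necessity (zero potential $\Rightarrow$ indistinguishable integrals).} Suppose $A := \{(t,y) : f(t,y)\neq g(t,y)\}$ has zero potential, i.e. $U(s,x,A)=0$ for all $(s,x)$. Fix $(s,x)$. By definition of $U$, $\mathbbm{E}^{s,x}\left[\int_s^T \mathds{1}_{\{(r,X_r)\in A\}}dV_r\right]=0$, hence $dV\otimes d\mathbbm{P}^{s,x}$-a.e. we have $(r,X_r)\notin A$, i.e. $f(r,X_r)=g(r,X_r)$. Therefore there is a $\mathbbm{P}^{s,x}$-null set outside of which $f(\cdot,X_\cdot(\omega))=g(\cdot,X_\cdot(\omega))$ holds $dV(\omega)$-a.e.; integrating, $\int_s^t f(r,X_r)dV_r = \int_s^t g(r,X_r)dV_r$ for all $t$ simultaneously, outside that null set. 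This is exactly the argument already used in the proof of Lemma \ref{classdV}, so I would simply invoke that reasoning. This gives indistinguishability.

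\textbf{Sufficiency (indistinguishable integrals $\Rightarrow$ zero potential).} Fix $(s,x)$ and assume $\int_s^{\cdot}f(r,X_r)dV_r$ and $\int_s^{\cdot}g(r,X_r)dV_r$ are $\mathbbm{P}^{s,x}$-indistinguishable. The difference process $t \mapsto \int_s^t (f-g)(r,X_r)dV_r$ is then $\mathbbm{P}^{s,x}$-a.s. identically zero. A continuous (here only $dV$-a.e. well-defined, but the integral process is continuous since $V$ is continuous) function of bounded variation that vanishes identically has zero total variation, so the signed measure $(f-g)(r,X_r)\,dV_r$ is the zero measure $\mathbbm{P}^{s,x}$-a.s.; in particular its total variation measure $|f-g|(r,X_r)\,dV_r$ vanishes, giving $\mathbbm{E}^{s,x}\left[\int_s^T |f-g|(r,X_r)dV_r\right]=0$, i.e. $U(s,x,A)=0$ where $A=\{f\neq g\}$. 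Doing this for every $(s,x)$ shows $A$ has zero potential. The last sentence of the statement (that $f$ and $g$ then agree as elements of $L^0_X$) is immediate since $\mathcal{N}$ is precisely the space of functions vanishing $U(s,x,\cdot)$-a.e. for all $(s,x)$, and $A$ having zero potential means exactly $f-g \in \mathcal{N}$.

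\textbf{Main obstacle.} There is no deep obstacle here; the proposition is essentially a bookkeeping translation between the potential measure $U$, the measure $dV\otimes d\mathbbm{P}^{s,x}$, and pathwise statements. The only point requiring a little care is the passage, in the sufficiency direction, from "the integral process is a.s. zero" to "the integrand times $dV$ is the zero measure a.s."; this uses that a right-continuous bounded-variation function identically equal to zero induces the zero Lebesgue--Stieltjes measure, applied $\omega$ by $\omega$, together with measurability of the relevant null sets (which follows since all processes involved are progressively measurable and $V\in\mathcal{V}^{c,+}$). One should also note at the outset that both $\int_s^\cdot f(r,X_r)dV_r$ and $\int_s^\cdot g(r,X_r)dV_r$ are genuinely defined $\mathbbm{P}^{s,x}$-a.s. because $f,g\in\mathcal{L}^0_X\subset\mathcal{L}^0_{s,x}$, so that subtracting them is legitimate.
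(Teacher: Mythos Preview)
Your proof is correct and follows essentially the same route as the paper: both arguments pass through the total variation of $\int_s^{\cdot}(f-g)(r,X_r)dV_r$ to reduce indistinguishability to $\int_s^T |f-g|(r,X_r)dV_r=0$ $\mathbbm{P}^{s,x}$-a.s., and then use non-negativity to identify this with $U(s,x,\{f\neq g\})=0$. The paper simply packages the two directions as a single chain of equivalences, whereas you split them, but the content is identical.
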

\begin{proof}
Let  $\mathbbm{P}^{s,x}$ be fixed. 
Evaluating the total variation of $\int_s^{\cdot}(f-g)(r,X_r)dV_r$
 yields  that
 $\int_s^{\cdot}f(r,X_r)dV_r$ and $\int_s^{\cdot}g(r,X_r)dV_r$ are indistinguishable  if and only if 
 $\int_s^{T}|f-g|(r,X_r)dV_r=0$ a.s. Since that r.v. is non-negative,
 this is true if and only if 
$\mathbbm{E}^{s,x}\left[\int_s^{T} |f-g|(r,X_r)dV_r\right]=0$ and therefore if and only if $U(s,x,N)=0$, where
 $N$ is the Borel subset
of $[0,T] \times E$,
 defined by 
 $\left\{(t,y):\, f(t,y)\neq g(t,y)\right\}$.
This concludes the proof of Proposition \ref{uniquenessupto}.

\end{proof}

We can now define our notion of \textbf{extended generator}.
\begin{definition}\label{domainextended}
 We first define the \textbf{extended domain} $\mathcal{D}(\mathfrak{a})$ as the set functions $\phi\in\mathcal{B}([0,T]\times E,\mathbbm{R})$ for which there exists 
\\
$\psi\in\mathcal{B}([0,T]\times E,\mathbbm{R})$ such that under any $\mathbbm{P}^{s,x}$ the process
\begin{equation} \label{E45}
     \mathds{1}_{[s,T]}\left(\phi(\cdot,X_{\cdot}) - \phi(s,x) - \int_s^{\cdot}\psi(r,X_r)dV_r \right) 
\end{equation}
(which is not necessarily cadlag) has a cadlag modification in $\mathcal{H}^2_{0}$. 
\end{definition}

%Je propose de fusionner comme suit:

\begin{proposition}\label{uniquenesspsi}
	Let $\phi \in {\mathcal B}([0,T] \times E, {\mathbbm R}).$
%$\mathcal{D}(\mathfrak{a})$. 
% Then the function $\psi$ satisfying the above condition  is unique up to zero potential sets.
There is at most one (up to zero potential sets)  $\psi 
\in {\mathcal B}([0,T] \times E, {\mathbbm R})$ such that
under any $\mathbbm{P}^{s,x}$, the process defined in \eqref{E45} 
has a modification which belongs to ${\mathcal M}_{loc}$.
	\\
	If moreover $\phi\in\mathcal{D}(a)$, then $a(\phi)=\psi$ up to zero potential sets. In this case, according to Notation \ref{Mphi},
 for every $(s,x)\in[0,T]\times E$, 
  $M[\phi]^{s,x}$ is the $\mathbbm{P}^{s,x}$ cadlag modification
 in $\mathcal{H}^2_{0}$ of
	 $\mathds{1}_{[s,T]}\left(\phi(\cdot,X_{\cdot}) - \phi(s,x) - \int_s^{\cdot}\psi(r,X_r)dV_r \right)$.
\end{proposition}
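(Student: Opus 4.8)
The plan is to reduce the uniqueness claim to Proposition \ref{uniquenessupto}, using the classical fact that a continuous local martingale of bounded variation which starts at zero is indistinguishable from zero.

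First I would fix $(s,x)\in[0,T]\times E$ and suppose that $\psi_{1},\psi_{2}\in\mathcal{B}([0,T]\times E,\mathbbm{R})$ are two functions such that, for each $i\in\{1,2\}$, the process $N^{i}:=\mathds{1}_{[s,T]}\left(\phi(\cdot,X_{\cdot})-\phi(s,x)-\int_{s}^{\cdot}\psi_{i}(r,X_{r})dV_{r}\right)$ admits a modification $\widetilde{N}^{i}\in\mathcal{M}_{loc}$ under $\mathbbm{P}^{s,x}$. For $N^{i}$ to be a well-defined real-valued process one necessarily has $\psi_{i}(\cdot,X_{\cdot})\in\mathcal{L}^{0}$ under $\mathbbm{P}^{s,x}$, and since this holds for every $(s,x)$ we get $\psi_{i}\in\mathcal{L}^{0}_{X}$, which is what is needed to invoke Proposition \ref{uniquenessupto} later. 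The difference $\widetilde{N}^{1}-\widetilde{N}^{2}$ is then a cadlag local martingale on the stochastic basis $\left(\Omega,\mathcal{F}^{s,x},(\mathcal{F}^{s,x}_{t})_{t\in[0,T]},\mathbbm{P}^{s,x}\right)$; on the other hand it is a modification of $N^{1}-N^{2}=\mathds{1}_{[s,T]}\int_{s}^{\cdot}(\psi_{2}-\psi_{1})(r,X_{r})dV_{r}$, which is continuous (because $V$ is continuous) and of bounded variation. A cadlag process that is a modification of a continuous one is indistinguishable from it, so $\widetilde{N}^{1}-\widetilde{N}^{2}$ is a continuous local martingale of bounded variation vanishing at $s$, hence identically zero on $[s,T]$. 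Therefore $\int_{s}^{\cdot}(\psi_{2}-\psi_{1})(r,X_{r})dV_{r}\equiv 0$ $\mathbbm{P}^{s,x}$-a.s., and since this holds for every $(s,x)$, Proposition \ref{uniquenessupto} applied with $f=\psi_{1}$, $g=\psi_{2}$ yields $\psi_{1}=\psi_{2}$ up to a set of zero potential.

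Assume now in addition that $\phi\in\mathcal{D}(a)$. By property (b) of Definition \ref{MartingaleProblem} together with Notation \ref{Mphi}, for every $(s,x)$ the process $\mathds{1}_{[s,T]}\left(\phi(\cdot,X_{\cdot})-\phi(s,x)-\int_{s}^{\cdot}a(\phi)(r,X_{r})dV_{r}\right)$ is precisely $M[\phi]^{s,x}$, which by Proposition \ref{bracketindomain} belongs to $\mathcal{H}^{2}_{0,loc}\subset\mathcal{M}_{loc}$. Thus $a(\phi)$ is itself an admissible choice of $\psi$, and the uniqueness already proved forces $a(\phi)=\psi$ up to a set of zero potential. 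Finally, by Proposition \ref{uniquenessupto} this equality implies that for each $(s,x)$ the processes $\int_{s}^{\cdot}\psi(r,X_{r})dV_{r}$ and $\int_{s}^{\cdot}a(\phi)(r,X_{r})dV_{r}$ are $\mathbbm{P}^{s,x}$-indistinguishable, so the process in \eqref{E45} built with $\psi$ is indistinguishable from the one built with $a(\phi)$; since cadlag modifications are unique up to indistinguishability, its cadlag version is exactly $M[\phi]^{s,x}$, which is the cadlag modification asserted in the statement (its membership in $\mathcal{H}^{2}_{0}$ being inherited, when applicable, from Definition \ref{domainextended}, while in general $M[\phi]^{s,x}$ lies in $\mathcal{H}^{2}_{0,loc}$ by Proposition \ref{bracketindomain}).

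I do not expect a genuine obstacle here: once one trusts the "continuous bounded-variation local martingale is constant" principle, everything is an application of it together with Proposition \ref{uniquenessupto}. The only points requiring care are bookkeeping ones, namely checking that the difference of the two $\mathcal{M}_{loc}$-modifications is continuous and of bounded variation — which uses the continuity of $V$ and the implicit $\mathcal{L}^{0}$-integrability of $\psi_{i}(\cdot,X_{\cdot})$ — and being consistent about the filtration with respect to which the local martingale property is taken, both modifications living on the same basis $\left(\Omega,\mathcal{F}^{s,x},(\mathcal{F}^{s,x}_{t})_{t\in[0,T]},\mathbbm{P}^{s,x}\right)$.
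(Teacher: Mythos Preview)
Your proof is correct and follows essentially the same approach as the paper. The paper phrases the key step as ``uniqueness of the decomposition of special semi-martingales'' applied to the cadlag version of $\phi(\cdot,X_\cdot)$, whereas you unpack this into ``a continuous bounded-variation local martingale starting at zero is identically zero''; these are equivalent formulations of the same argument, and your version is if anything more explicit about the bookkeeping (in particular the $\mathcal{L}^0_X$-integrability needed for the integrals to make sense).
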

\begin{proof}
	Let  $\psi^1$ and $\psi^2$ be two functions
%	verifying the condition imposed by Definition \ref{domainextended}.
%	We fix   
such that for any 
%$(s,x)$  and the related probability 
$\mathbbm{P}^{s,x}$, \\
	$\mathds{1}_{[s,T]}\left(\phi(\cdot,X_{\cdot}) - \phi(s,x) - 
	\int_s^{\cdot}\psi^i(r,X_r)dV_r \right), \quad i =1,2,$
	admits a cadlag modification which is a local martingale.
	% and
	%\\
	%$\mathds{1}_{[s,T]}\left(\phi(\cdot,X_{\cdot}) - \phi(s,x) - \int_s^{\cdot}\psi^2(r,X_r)dV_r \right),$
	% have both  $M^1,M^2$ as cadlag modifications,
	% which are  in  $\mathcal{H}^2_0$.
	Then, under a fixed $\mathbbm{P}^{s,x}$,
  $\phi(\cdot,X_{\cdot})$ has two cadlag modifications  which are therefore
	indistinguishable, and by uniqueness of the decomposition of special 
	semi-martingales,  $\int_s^{\cdot}\psi^1(r,X_r)dV_r$ and $\int_s^{\cdot}\psi^2(r,X_r)dV_r$ are indistinguishable on $[s,T]$. Since this is true under any $\mathbbm{P}^{s,x}$,
	the two functions are equal up to a zero-potential set because of Proposition \ref{uniquenessupto}.
	\\
	Concerning the second part of the statement,
let $\phi\in\mathcal{D}(a)\cap\mathcal{D}(\mathfrak{a})$.
        The result follows by Definition \ref{MartingaleProblem} and 
the uniqueness of the function $\phi$ established just before.
 % then under a fixed probability $\mathbbm{P}^{s,x}$, the process $\phi(\cdot,X_{\cdot})$ is a cadlag special semimartingale admitting on $[s,T]$ the decomposition given by the Martingale problem (see Definition \ref{MartingaleProblem}) and having a modification being another cadlag special semimartingale given by Definition \ref{domainextended}. The statement follows as above by uniqueness of the decomposition of a semimartingale.
\end{proof}

\begin{definition}\label{extended}
Let $\phi \in \mathcal{D}(\mathfrak{a})$ as in Definition
 \ref{domainextended}.
 We denote again  by $M[\phi]^{s,x}$, the unique
 cadlag version
 of the process \eqref{E45} in $\mathcal{H}^2_{0}$.
Taking Proposition \ref{uniquenessupto} into account, this will not
 generate  any ambiguity with respect
to Notation \ref{Mphi}.   
Proposition \ref{uniquenessupto}, also permits to define without ambiguity the operator  
% \\
\begin{equation*}
\mathfrak{a}:
\begin{array}{rcl}
\mathcal{D}(\mathfrak{a})&\longrightarrow& L^0_X\\
\phi &\longmapsto & \psi.
\end{array}
\end{equation*}
$\mathfrak{a}$ will be called the \textbf{extended generator}.
\end{definition}

We now want to extend the carr\'e du champs operator $\Gamma(\cdot,\cdot)$
 to $\mathcal{D}(\mathfrak{a})\times\mathcal{D}(\mathfrak{a})$.

\begin{proposition} \label{P321}
Let $\phi_1, \phi_2$ be in $\mathcal{D}(\mathfrak{a})$. 
There exists a (unique up to zero-potential sets) function in $\mathcal{B}([0,T]\times E,\mathbbm{R})$ which we will denote $\mathfrak{G}(\phi_1,\phi_2)$ such that under any $\mathbbm{P}^{s,x}$, 
$\langle M[\phi_1]^{s,x},M[\phi_2]^{s,x}\rangle=\int_s^{\cdot}\mathfrak{G}(\phi_1,\phi_2)(r,X_r)dV_r$ on $[s,T]$, up to indistinguishability.
\\
If moreover $\phi_1$ and $\phi_2$ belong to $\mathcal{D}(a)$, then $\Gamma(\phi_1,\phi_2)=\mathfrak{G}(\phi_1,\phi_2)$ up to zero potential sets.
\end{proposition}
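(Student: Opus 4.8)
The plan is to mimic the structure already used for the extension of the generator $\mathfrak{a}$, replacing "decomposition of special semi-martingales" with "uniqueness of the predictable angular bracket". First I would fix $\phi_1,\phi_2\in\mathcal{D}(\mathfrak{a})$. By Definition \ref{domainextended}, for each $(s,x)$ the processes $M[\phi_1]^{s,x}$ and $M[\phi_2]^{s,x}$ are well-defined elements of $\mathcal{H}^2_0$ (vanishing on $[0,s]$), so $\langle M[\phi_1]^{s,x},M[\phi_2]^{s,x}\rangle$ exists as the unique predictable bounded-variation process vanishing at $0$ whose difference with the product is a local martingale. By Proposition \ref{H2V=H2} (and the remark following it, which gives continuity of angular brackets) together with the observation that $M[\phi_i]^{s,x}\in\mathcal{H}^{2,V}_{\mathrm{loc}}$, the bracket $\langle M[\phi_1]^{s,x},M[\phi_2]^{s,x}\rangle$ is absolutely continuous with respect to $dV$ on $[s,T]$; hence there exists a predictable process which we may write as a density, and the content of the statement is that this density can be chosen of the form $\mathfrak{G}(\phi_1,\phi_2)(r,X_r)$ for a single Borel function $\mathfrak{G}(\phi_1,\phi_2)$ not depending on $(s,x)$.

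The key step is the construction of that common Borel function. The plan is to use polarization: since $\langle M[\phi]^{s,x}\rangle$ for $\phi\in\{\phi_1,\phi_2,\phi_1+\phi_2\}$ are each $dV$-absolutely continuous, it suffices to treat the diagonal case $\phi_1=\phi_2=:\phi$ and then set $\mathfrak{G}(\phi_1,\phi_2)=\tfrac14(\mathfrak{G}(\phi_1+\phi_2,\phi_1+\phi_2)-\mathfrak{G}(\phi_1-\phi_2,\phi_1-\phi_2))$. For the diagonal case, for each $(s,x)$ let $h^{s,x}:=\frac{d\langle M[\phi]^{s,x}\rangle}{dV}$, a non-negative predictable process on $[s,T]$. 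One then has to produce a Borel $g:[0,T]\times E\to\mathbbm{R}_+$ with $h^{s,x}_r=g(r,X_r)$ $dV\otimes d\mathbbm{P}^{s,x}$-a.e. for every $(s,x)$. This is where I expect to invoke a measurable-selection / Markov-consistency argument analogous to the one establishing Theorem \ref{Defuv}: heuristically $h^{s,x}$ should, by the Markov property and the $(s,x)$-consistency of the family, depend on $\omega$ only through $(r,X_r)$. Concretely, I would take a version $g$ built from $h^{0,x_0}$ for a fixed reference point (or from a countable exhausting argument over $(s,x)$), use the transition-function structure of the canonical Markov class and the fact that the $M[\phi]^{s,x}$ are the restrictions/translates of a single $(s,x)$-independent additive functional, to check that $g(r,X_r)$ serves as the density under every $\mathbbm{P}^{s,x}$. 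Uniqueness of $\mathfrak{G}(\phi_1,\phi_2)$ up to zero-potential sets is then immediate: if $g_1(r,X_r)$ and $g_2(r,X_r)$ are both densities, then $\int_s^{\cdot}(g_1-g_2)(r,X_r)dV_r\equiv 0$ under every $\mathbbm{P}^{s,x}$, so by Proposition \ref{uniquenessupto} $g_1=g_2$ up to zero potential.

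For the last sentence, suppose in addition $\phi_1,\phi_2\in\mathcal{D}(a)$. Then by Proposition \ref{bracketindomain} we already have $\langle M[\phi_1]^{s,x},M[\phi_2]^{s,x}\rangle=\int_s^{\cdot}\Gamma(\phi_1,\phi_2)(r,X_r)dV_r$ on $[s,T]$ under every $\mathbbm{P}^{s,x}$, while by the just-proved part the same bracket equals $\int_s^{\cdot}\mathfrak{G}(\phi_1,\phi_2)(r,X_r)dV_r$. Subtracting and applying Proposition \ref{uniquenessupto} once more gives $\Gamma(\phi_1,\phi_2)=\mathfrak{G}(\phi_1,\phi_2)$ up to zero-potential sets, which completes the argument.

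The main obstacle is the measurable-selection step producing the single Borel function $\mathfrak{G}(\phi_1,\phi_2)$ out of the family of densities $\left(\frac{d\langle M[\phi_1]^{s,x},M[\phi_2]^{s,x}\rangle}{dV}\right)_{(s,x)}$; everything else (existence and absolute continuity of the bracket, polarization, uniqueness via Proposition \ref{uniquenessupto}, compatibility with $\Gamma$ on $\mathcal{D}(a)$) is routine. I would expect the authors to either cite their companion construction of $(u,v)$ in Theorem \ref{Defuv} or to run a parallel argument exploiting that $M[\phi_i]^{s,x}$ arises from the $(s,x)$-independent process $\phi_i(\cdot,X_\cdot)-\int\mathfrak{a}(\phi_i)(r,X_r)dV_r$.
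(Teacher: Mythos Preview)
Your last sentence is exactly right, and it is precisely what the paper does: the authors observe that each $M[\phi_i]$ is (the family of cadlag versions of) a single \emph{square integrable Martingale Additive Functional} in the sense of Definition~\ref{DefAF}, namely
\[
M[\phi_i]^t_u(\omega)=\phi_i(u,X_u(\omega))-\phi_i(t,X_t(\omega))-\int_t^u\mathfrak{a}(\phi_i)(r,X_r(\omega))\,dV_r
\]
(set to $0$ when the integral diverges). Once this is recognized, the existence of the common Borel density $\mathfrak{G}(\phi_1,\phi_2)$ is a direct application of Proposition~\ref{bracketMAFs}, which already handles the cross bracket $\langle M^{s,x},M'^{s,x}\rangle$ of two MAFs simultaneously for all $(s,x)$. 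Uniqueness via Proposition~\ref{uniquenessupto} and the compatibility with $\Gamma$ via Proposition~\ref{bracketindomain} are then exactly as you describe.

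By contrast, your primary plan---polarization to the diagonal case plus an ad hoc measurable-selection argument built from a fixed reference $(0,x_0)$ or a countable exhausting family of $(s,x)$---does not go through as stated: there is no guarantee that a single starting point sees enough of the state space, and a countable family of $(s,x)$ need not determine behaviour up to zero potential. The MAF route bypasses both the polarization step (Proposition~\ref{bracketMAFs} treats two distinct MAFs directly) and the selection step (the additive-functional structure is exactly the ``$(s,x)$-independence'' you were reaching for, and the hard work of producing the Borel density from it has been packaged once and for all in the companion paper cited there). So your closing guess is the proof; the earlier scaffolding is unnecessary.
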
 
\begin{proof}
Let $\phi_1, \phi_2  \in \mathcal{D}(\mathfrak{a})$ according to 
Definition \ref{extended}. We take some representative of the classes
 $\mathfrak{a}(\phi_i)$ for $i=1,2$, still denoted by the same symbol
 and define the square integrable MAFs (see Definition \ref{DefAF})  $M[\phi_i]$ by
\begin{equation}
M[\phi_i]^t_u(\omega)=\left\{\begin{array}{l}
\phi_i(u,X_u(\omega))-\phi_i(t,X_t(\omega))-\int_t^u\mathfrak{a}(\phi_i)(r,X_r(\omega))dV_r\\
\quad\quad\text{ if }\int_t^u|\mathfrak{a}(\phi_i)|(r,X_r(\omega))dV_r<+\infty\\
0 \ \text{otherwise.}
\end{array}\right.
\end{equation}
Indeed, for every 
$(s,x)\in[0,T]\times E$,  $M[\phi_i]^{s,x}$
 is the cadlag version under $\mathbbm{P}^{s,x}$.
\\
The existence of   $\mathfrak{G}(\phi_1,\phi_2)$
 therefore derives from Proposition \ref{bracketMAFs}. 
By Proposition \ref{uniquenessupto} that
 function is determined up to a zero-potential set.
 \\
 The second statement holds because of Proposition \ref{bracketindomain}.
\end{proof}

%\begin{remark} \label{R423}
%$\mathfrak{G}$ therefore defines a bilinear operator from 
%$\mathcal{D}(\mathfrak{a})\times\mathcal{D}(\mathfrak{a})$ to $L^0_X$.
%\\
%If $\phi\in\mathcal{D}(a)\cap\mathcal{D}(\mathfrak{a})$, by Proposition \ref{bracketindomain} and the uniqueness stated in the above proposition,  $\Gamma(\phi,\phi)$ is in the class $\mathfrak{G}(\phi,\phi)$.
%% which
%% extends  $\Gamma$ in the following sense. If $\phi,\psi$ are in
%% $\mathcal{D}(a)$ and such that $M[\phi]^{s,x}$, $M[\psi]^{s,x}$ are square integrable for all $(s,x)\in[0,T]\times E$ (in particular if $\Gamma(\phi,\phi)$ and $\Gamma(\psi,\psi)$ belong to $\mathcal{L}^1_X$), then $\phi,\psi$ belong to $\mathcal{D}(\mathfrak{a})$. Since, by Proposition \ref{bracketindomain},
%%under $\mathbbm{P}^{s,x}$, on $[s,T]$, we have
%%\\
%%$\langle M[\phi]^{s,x},M[\psi]^{s,x}\rangle= \int_s^{\cdot}\Gamma(\phi,\psi)(r,X_r)dV_r$, then it is clear that $\Gamma(\phi,\psi)$ is in the class $\mathfrak{G}(\phi,\psi)$.
%\end{remark}

\begin{definition}\label{extendedgamma}
The bilinear operator $\mathfrak{G}:\mathcal{D}(\mathfrak{a})\times\mathcal{D}(\mathfrak{a})\longmapsto L^0_X$ will be called  the \textbf{extended carr\'e du champs operator}.
\end{definition}
According to Definition \ref{domainextended}, we do not have
necessarily  $\mathcal{D}(a)\subset\mathcal{D}(\mathfrak{a})$,
however we have the following.
\begin{corollary}\label{RExtendedClassical} 
If $\phi\in\mathcal{D}(a)$ and $\Gamma(\phi,\phi)\in\mathcal{L}^1_X$,
 then $\phi\in\mathcal{D}(\mathfrak{a})$ and $(a(\phi),\Gamma(\phi,\phi))=(\mathfrak{a}(\phi),\mathfrak{G}(\phi,\phi))$ up to zero potential sets.
\end{corollary}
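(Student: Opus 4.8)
The plan is to take $\psi := a(\phi)$ as the candidate witnessing $\phi \in \mathcal{D}(\mathfrak{a})$, and to verify the two requirements of Definition \ref{domainextended}. For a fixed $(s,x)$ the process appearing in \eqref{E45} with this choice of $\psi$ is precisely $M[\phi]^{s,x}$ in the sense of Notation \ref{Mphi}; by item (b) of Definition \ref{MartingaleProblem} it is already cadlag, it is a local martingale, it vanishes on $[0,s]$, and since $X_s = x$ $\mathbbm{P}^{s,x}$-a.s. it vanishes at time $s$ as well. Proposition \ref{bracketindomain} then tells us that $M[\phi]^{s,x} \in \mathcal{H}^2_{0,loc}$ with $\langle M[\phi]^{s,x}\rangle = \int_s^\cdot \Gamma(\phi,\phi)(r,X_r)\,dV_r$ on $[s,T]$.

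The only point to establish is that this local martingale is in fact a genuine element of $\mathcal{H}^2_0$. First I would note that the angular bracket above is non-decreasing while $dV$ is a non-negative measure, so $\Gamma(\phi,\phi)(r,X_r) \ge 0$ holds $dV \otimes d\mathbbm{P}^{s,x}$-a.e. for every $(s,x)$; consequently $\Gamma(\phi,\phi) \ge 0$ outside a set of zero potential, and $\int_s^T |\Gamma(\phi,\phi)|(r,X_r)\,dV_r = \langle M[\phi]^{s,x}\rangle_T$ $\mathbbm{P}^{s,x}$-a.s. The hypothesis $\Gamma(\phi,\phi) \in \mathcal{L}^1_X$ therefore says exactly that $\mathbbm{E}^{s,x}\big[\langle M[\phi]^{s,x}\rangle_T\big] = \|\Gamma(\phi,\phi)\|_{1,s,x} < \infty$ for each $(s,x)$. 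From here a routine localization argument finishes it: along a localizing sequence $(\tau_n)$ one has $\mathbbm{E}^{s,x}\big[(M[\phi]^{s,x}_{t\wedge\tau_n})^2\big] = \mathbbm{E}^{s,x}\big[\langle M[\phi]^{s,x}\rangle_{t\wedge\tau_n}\big] \le \mathbbm{E}^{s,x}\big[\langle M[\phi]^{s,x}\rangle_T\big]$, so the stopped martingales are bounded in $L^2$; passing to the limit and using Fatou together with Doob's $L^2$ inequality gives $\mathbbm{E}^{s,x}\big[\sup_{t\in[0,T]} (M[\phi]^{s,x}_t)^2\big] < \infty$, i.e. $M[\phi]^{s,x} \in \mathcal{H}^2_0$. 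Since this holds under every $\mathbbm{P}^{s,x}$, we conclude $\phi \in \mathcal{D}(\mathfrak{a})$ with $\psi = a(\phi)$ a valid representative of $\mathfrak{a}(\phi)$.

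The identification of the two pairs of operators is then immediate from earlier results. Since $\phi \in \mathcal{D}(a)$, the second part of Proposition \ref{uniquenesspsi} gives $\mathfrak{a}(\phi) = a(\phi)$ up to zero potential sets (and confirms that $M[\phi]^{s,x}$ of Notation \ref{Mphi} is indeed the cadlag modification required in Definition \ref{domainextended}); and, $\phi$ now belonging to $\mathcal{D}(\mathfrak{a})$ as well as to $\mathcal{D}(a)$, the second assertion of Proposition \ref{P321} yields $\mathfrak{G}(\phi,\phi) = \Gamma(\phi,\phi)$ up to zero potential sets. I do not anticipate a genuine obstacle here: the sole non-bookkeeping step is the upgrade from $\mathcal{H}^2_{0,loc}$ to $\mathcal{H}^2_0$, and the $\mathcal{L}^1_X$ assumption is tailored precisely to make the terminal angular bracket integrable, which is exactly what that step consumes.
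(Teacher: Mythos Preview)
Your proof is correct and follows essentially the same route as the paper: show via Proposition \ref{bracketindomain} that $M[\phi]^{s,x}\in\mathcal{H}^2_{0,loc}$ with angular bracket $\int_s^{\cdot}\Gamma(\phi,\phi)(r,X_r)\,dV_r$, use the $\mathcal{L}^1_X$ assumption to make the terminal bracket integrable and hence upgrade to $\mathcal{H}^2_0$, and then invoke Propositions \ref{uniquenesspsi} and \ref{P321} for the identifications. The paper's proof is terser, leaving implicit the localization/Fatou step that you spell out.
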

\begin{proof} \
Given some $\phi\in\mathcal{D}(a)$, by Definition \ref{domainextended},
if for every  $(s,x)\in[0,T]\times E$, $M[\phi]^{s,x}$ is square integrable,
 then $\phi\in\mathcal{D}(\mathfrak{a})$. By Proposition \ref{bracketindomain},
for every  $(s,x)\in[0,T]\times E$
 $M[\phi]^{s,x}$ is a $\mathbbm{P}^{s,x}$  square integrable if
and only if $\Gamma(\phi,\phi)\in\mathcal{L}^1_X$. So 
the statement holds because of Propositions \ref{uniquenesspsi} and \ref{P321}.
\end{proof}

\section{Pseudo-PDEs and associated Markovian BSDEs with no driving martingale}\label{SecPDE}

In this section, we still consider $T\in\mathbbm{R}_+^*$, a Polish space $E$ and the associated canonical space $\left(\Omega,\mathcal{F},(X_t)_{t\in[0,T]},(\mathcal{F}_t)_{t\in[0,T]}\right)$, see Definition \ref{canonicalspace}. We also 
consider a  canonical Markov class $(\mathbbm{P}^{s,x})_{(s,x)\in[0,T]\times E}$ and assume the following for the rest of the Section.
\begin{hypothesis}\label{HypX}
The transition function of $(\mathbbm{P}^{s,x})_{(s,x)\in[0,T]\times E}$ is measurable in time (see Definitions \ref{defMarkov} and \ref{DefFoncTrans}) and $(\mathbbm{P}^{s,x})_{(s,x)\in[0,T]\times E}$ solves a well-posed martingale problem associated to a triplet $(\mathcal{D}(a),a,V)$, see Definition \ref{MartingaleProblem} and Hypothesis \ref{MPwellposed}.
\end{hypothesis}
We will investigate here  a specific type of BSDE with no driving martingale $BSDE(\xi,\hat{f},V)$ which we will call \textbf{of Markovian type}, 
or {\bf Markovian} BSDE, 
in the following sense. The process $V$  will be the (deterministic)   function $V$ introduced in
  Definition \ref{MartingaleProblem},  the final condition $\xi$ will only depend on the final value of the canonical process $X_T$ and the  randomness of the driver $\hat{f}$  at time $t$ will only appear via the value at time $t$ of the forward process $X$. 
Given a function 
\\
$f:[0,T] \times E \times {\mathbb R} \times {\mathbb R} \rightarrow {\mathbb R}$,
we will set ${\hat f}(t,\omega, y,z) = f(t,X_t(\omega), y, z)$
for $t \in [0,T], \omega \in \Omega, y, z \in {\mathbb R}$.
\\
\\
That BSDE will be connected with the deterministic problem below.
\begin{notation}
From now on, we fix some $g\in\mathcal{B}(E,\mathbbm{R})$ and 
\\
$f\in\mathcal{B}([0,T]\times E\times\mathbbm{R}\times\mathbbm{R},\mathbbm{R})$.
\end{notation}

\begin{definition}\label{MarkovPDE}
We will call \textbf{Pseudo-Partial Differential Equation} (in short Pseudo-PDE) the following equation with final condition:
\begin{equation}\label{PDE}
\left\{
\begin{array}{rccc}
 a(u)(t,x) + f\left(t,x,u(t,x),\sqrt{\Gamma(u,u)(t,x)}\right)&=&0& \text{ on } [0,T]\times E   \\
 u(T,\cdot)&=&g.& 
\end{array}\right.
\end{equation}
We will say that $u$ is a \textbf{classical solution} of the Pseudo-PDE if it belongs to $\mathcal{D}(a)$ and verifies \eqref{PDE}.
\end{definition}
\begin{notation}
Equation \eqref{PDE} will be denoted $Pseudo-PDE(f,g)$.
\end{notation}

%To be able to perform the connection between forward BSDEs and
%\\
%$Pseudo-PDE(f,g)$,
%we will  assume some generalized 
%moments conditions on $X$, and some growth conditions
% on the functions $(f,g)$. Those will be related to 
%two functions $\zeta,\eta \in\mathcal{B}(E,\mathbbm{R}_+)$.
%\begin{hypothesis}\label{HypMom}
%The Markov class will be said \textbf{to verify}  $H^{mom}(\zeta,\eta)$ if 
%\begin{enumerate}
%\item for any $(s,x)\in[0,T]\times E$, $\mathbbm{E}^{s,x}[\zeta^2(X_T)]$ is finite;
%\item for any $(s,x)\in[0,T]\times E$, $\mathbbm{E}^{s,x}\left[\int_0^T\eta^2(X_r)dV_r\right]$ is finite.
%\end{enumerate}
%If $(E,\|\cdot\|)$  is a separable Banach space, it is often useful
%to choose 
%\\
%$\zeta:x\mapsto\|x\|^p$, $\eta:x\mapsto\|x\|^q$ for some $p,q\in\mathbbm{R}_+$,
%see \cite{paper2}. 
%In that context we will write $H^{mom}(p,q)$ instead of  $H^{mom}(\zeta,\eta)$.
%\end{hypothesis}

For the rest of this section, we will also assume that $f,g$ verify the following.
\vfill \eject
\begin{hypothesis}\label{Hpq}
\begin{itemize}
\item $g(X_T)$ is $L^2$ under $\mathbb{P}^{s,x}$ for every $(s,x)\in[0,T]\times E$;
\item $t\longmapsto f(t,X_t,0,0)\in\mathcal{L}^2_X$;
\item there exist $K^Y,K^Z>0$ such that for all $(t,x,y,y',z,z')$,
\begin{equation}
|f(t,x,y,z)-f(t,x,y',z')|\leq K^Y|y-y'|+K^Z|z-z'|.
\end{equation}
\end{itemize}
\end{hypothesis}

%\begin{hypothesis}\label{Hpq}
%A couple of functions  $f\in\mathcal{B}([0,T]\times E\times\mathbbm{R}\times\mathbbm{R},\mathbbm{R})$ and $g\in\mathcal{B}(E,\mathbbm{R})$ will be said \textbf{to verify}   $H(\zeta,\eta)$ if
%there exist positive  constants $K^Y,K^Z,C,C'$ such that
%\begin{enumerate}
%\item $\forall x: \quad |g(x)|\leq C(1+\zeta(x))$,
%\item $\forall (t,x):\quad  |f(t,x,0,0)|\leq C'(1+\eta(x))$,
%\item $\forall (t,x,y,y',z,z'):\quad  |f(t,x,y,z)-f(t,x,y',z')|\leq K^Y|y-y'|+K^Z|z-z'|$.
%\end{enumerate}
%%If  $(E,\|\cdot\|)$  is a separable Banach space
%%and $\zeta:x\mapsto\|x\|^p$, $\eta:x\mapsto\|x\|^q$ for some
%% $p,q\in\mathbbm{R}_+$, we will write $H(p,q)$ instead of  $H(\zeta,\eta)$.
%\end{hypothesis}

With the equation $Pseudo-PDE(f,g)$, we will associate the family of BSDEs with no driving martingale indexed by $(s,x)\in[0,T]\times E$ and defined on the interval $[0,T]$ and in the stochastic basis $\left(\Omega,\mathcal{F}^{s,x},(\mathcal{F}^{s,x}_t)_{t\in[0,T]},\mathbbm{P}^{s,x}\right)$, given by
\begin{equation}\label{BSDE}
Y^{s,x}_t = g(X_T) + \int_t^T f\left(r,X_r,Y^{s,x}_r,\sqrt{\frac{d\langle M^{s,x}\rangle}{dV}}(r)\right)dV_r  -(M^{s,x}_T - M^{s,x}_t).
\end{equation}

\begin{notation} \label{N55bis}
Equation \eqref{BSDE} will be denoted $BSDE^{s,x}(f,g)$.
It corresponds to $BSDE(g(X_T), \hat f, V)$ with $\mathbbm P:= \mathbbm 
P^{s,x}$. 
\end{notation}

\begin{remark}\label{MarkovBSDEsol}.
\begin{enumerate}
\item If Hypothesis \ref{Hpq} is verified  then Hypothesis \ref{HypBSDE}
 is verified for \eqref{BSDE}.
 By Theorem \ref{uniquenessBSDE}, for any $(s,x)$,  $BSDE^{s,x}(f,g)$ has a unique solution, in the sense of Definition \ref{firstdefBSDE}.
\item 
Even if  the underlying process $X$ admits no (even generalized)  moments,  given 
  a couple  $(f,g)$ such that $f(\cdot,\cdot,0,0)$ and $g$ are bounded, the considerations of this section still apply. In particular  the connection   between the
 $BSDE^{s,x}(f,g)$ and the corresponding $Pseudo-PDE(f,g)$ still exists.
% In particular, if $E$ is finite
%dimensional then, the underlyng process $X$ is allowed not to have any moments.
\end{enumerate}
\end{remark}
%
%For the rest of this section, the positive functions $\zeta,\eta$ and
% the functions $(f,g)$ appearing in $Pseudo-PDE(f,g)$ will be  fixed,  and we will  assume that the Markov class verifies $H^{mom}(\zeta,\eta)$ and that
% $(f,g)$ verify $H(\zeta,\eta)$.
 
\begin{notation} \label{N55}
From now on, $(Y^{s,x}, M^{s,x})$
will always denote the (unique) solution of $BSDE^{s,x}(f,g)$.
\end{notation}
 
%
%\begin{remark}\label{deterministic}
%Let $(s,x)\in[0,T]\times E$ be fixed. We know (see Proposition \ref{Ftrivial}) that if $t<s$, $\mathcal{F}_t$ is $\mathbbm{P}^{s,x}$-trivial.  So since $Y^{s,x}$ and $M^{s,x}$ are adapted, they are deterministic on $[0,s[$. Moreover since $M^{s,x}$ belongs to $\mathcal{H}^2_0$, then it is equal to zero on $[0,s[$. 
%\\
%We will not be interested in the value of $Y^{s,x}$ before $s$. However, we will later show that $Y^{s,x}_s$ is also deterministic. 
%However, already at this stage,  it is interesting to realize that on $[0,s]$, $Y^{s,x}$ is almost surely equal to the solution of the deterministic integral equation
%\begin{equation*}
%Y^{s,x}_t = Y^{s,x}_s + \int_s^tf(r,x,Y^{s,x}_r,0)dV_r, \ t \in [0,s].
%\end{equation*}
%So,  on $[0,s]$,  $Y^{s,x}$ is almost surely to a deterministic function absolutely continuous with respect to $V$ and solving the ODE (parametrized by $x$)
%\begin{equation*}
%\frac{dY^{s,x}}{dV}(t) =  -f(t,x,Y^{s,x}_t,0)dV_t.
%\end{equation*}
%\end{remark}

The goal of our work is to understand if and how the solutions of equations  
 $BSDE^{s,x}(f,g)$ produce a solution of $Pseudo-PDE(f,g)$  and 
reciprocally.
\\
\\  
We will start  by showing that if $Pseudo-PDE(f,g)$ has a classical solution,
 then this one provides solutions to the associated $BSDE^{s,x}(f,g)$.
\begin{proposition}\label{classicimpliesBSDE}
Let $u$ be a classical solution of $Pseudo-PDE(f,g)$ verifying $\Gamma(u,u)\in\mathcal{L}^1_X$.
%
%
%We assume that there exists $u\in\mathcal{D}(a)$ such that
%\begin{equation}\label{EqclassicimpliesBSDE}
%\left\{
%\begin{array}{rccc}
% a(u)(s,x) + f(s,x,u(s,x),\sqrt{\Gamma(u,u)(s,x)})&=&0& \text{ on } [0,T]\times E   \\
% u(T,\cdot)&=&g,& 
%\end{array}\right.
%\end{equation} 
%and we also assume the existence of a positive $C>0$ such that 
%for every $(s,x)\in[0,T]\times E$, $\sqrt{\Gamma(u,u)(s,x)}\leq C(1+\eta(x))$.
Then, for any  $(s,x)\in [0,T]\times E$, if $M[u]^{s,x}$ is as in Notation \ref{Mphi},
% and $(Y^{s,x},M^{s,x})$  is the unique solution of $BSDE^{s,x}(f,g)$,
we have that $(u(\cdot,X_{\cdot}),M[u]^{s,x})$ and  $(Y^{s,x},M^{s,x}_{\cdot}-M^{s,x}_s)$  are ${\mathbb P}^{s,x}$-indistinguishable on $[s,T]$.

\end{proposition}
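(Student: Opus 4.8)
The plan is to verify that the pair $(u(\cdot,X_\cdot), M[u]^{s,x})$ satisfies the integral equation defining $BSDE^{s,x}(f,g)$ on $[s,T]$ and then invoke the uniqueness statement for BSDEs on a restricted interval, namely the second part of Lemma \ref{LED+Pext} together with Remark \ref{BSDESmallInt}. First I would fix $(s,x)\in[0,T]\times E$ and work under $\mathbb{P}^{s,x}$ in the stochastic basis $(\Omega,\mathcal{F}^{s,x},(\mathcal{F}^{s,x}_t)_{t\in[0,T]},\mathbb{P}^{s,x})$. Since $u\in\mathcal{D}(a)$ and $\Gamma(u,u)\in\mathcal{L}^1_X$, Corollary \ref{RExtendedClassical} gives $u\in\mathcal{D}(\mathfrak{a})$, and Proposition \ref{bracketindomain} (with $\phi=\psi=u$) tells us that $M[u]^{s,x}\in\mathcal{H}^2_{0,\mathrm{loc}}$ with $\langle M[u]^{s,x}\rangle = \int_s^\cdot \Gamma(u,u)(r,X_r)\,dV_r$ on $[s,T]$; the integrability assumption $\Gamma(u,u)\in\mathcal{L}^1_X$ actually upgrades this to $M[u]^{s,x}\in\mathcal{H}^2_0$. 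In particular $d\langle M[u]^{s,x}\rangle \ll dV$ and, by uniqueness of the Radon--Nikodym derivative (Proposition \ref{Decomposition}), $\frac{d\langle M[u]^{s,x}\rangle}{dV}(r) = \Gamma(u,u)(r,X_r)$, hence $\sqrt{\frac{d\langle M[u]^{s,x}\rangle}{dV}}(r) = \sqrt{\Gamma(u,u)}(r,X_r)$, $dV\otimes d\mathbb{P}^{s,x}$ a.e.

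Next I would rewrite the defining relation of $M[u]^{s,x}$ from Notation \ref{Mphi}: on $[s,T]$,
\begin{equation*}
M[u]^{s,x}_t = u(t,X_t) - u(s,x) - \int_s^t a(u)(r,X_r)\,dV_r .
\end{equation*}
Using that $u$ is a classical solution of $Pseudo-PDE(f,g)$, so that $a(u)(r,x) = -f(r,x,u(r,x),\sqrt{\Gamma(u,u)}(r,x))$ pointwise, and the terminal condition $u(T,\cdot)=g$, I would substitute to obtain, for $t\in[s,T]$,
\begin{equation*}
u(t,X_t) = g(X_T) + \int_t^T f\!\left(r,X_r,u(r,X_r),\sqrt{\Gamma(u,u)}(r,X_r)\right)dV_r - \left(M[u]^{s,x}_T - M[u]^{s,x}_t\right),
\end{equation*}
which, after replacing $\sqrt{\Gamma(u,u)}(r,X_r)$ by $\sqrt{\frac{d\langle M[u]^{s,x}\rangle}{dV}}(r)$ thanks to the identification above, is exactly equation \eqref{EqLEDPext} of Lemma \ref{LED+Pext} on the interval $[s,T]$, with $Y := u(\cdot,X_\cdot)$ and $M := M[u]^{s,x}$. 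The process $Y$ is cadlag and adapted (being of the form $u(\cdot,X_\cdot)$ plus a cadlag martingale plus a continuous bounded-variation term), $Y_s = u(s,x)$ which is deterministic hence in $L^2$, and $M[u]^{s,x}_s = 0$ by construction.

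Then I would apply the second assertion of Lemma \ref{LED+Pext}: since $(Y,M)$ verifies \eqref{EqLEDPext} on $[s,T]$ with $Y_s\in L^2$, $M_s=0$, and since $(Y^{s,x},M^{s,x})$ is the unique solution of $BSDE^{s,x}(f,g) = BSDE(g(X_T),\hat f, V)$, the lemma yields that $(Y,M)$ and $(Y^{s,x}, M^{s,x}_\cdot - M^{s,x}_s)$ are $\mathbb{P}^{s,x}$-indistinguishable on $[s,T]$, which is exactly the claim. The only genuinely delicate points — and hence the step I would be most careful about — are checking that $M[u]^{s,x}$ is truly in $\mathcal{H}^2_0$ and not merely in $\mathcal{H}^2_{0,\mathrm{loc}}$ (this is where $\Gamma(u,u)\in\mathcal{L}^1_X$ enters, via $\mathbb{E}^{s,x}[\langle M[u]^{s,x}\rangle_T] = \mathbb{E}^{s,x}[\int_s^T\Gamma(u,u)(r,X_r)dV_r]<\infty$), and the $dV\otimes d\mathbb{P}^{s,x}$-a.e. identification of $\sqrt{\frac{d\langle M[u]^{s,x}\rangle}{dV}}$ with $\sqrt{\Gamma(u,u)}(\cdot,X_\cdot)$, which is what makes the driver in the BSDE match the nonlinearity in the Pseudo-PDE; everything else is a direct substitution and an appeal to the uniqueness results already established.
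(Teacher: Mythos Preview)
Your proof is correct and follows essentially the same route as the paper: rewrite the martingale-problem decomposition of $u(\cdot,X_\cdot)$ using the Pseudo-PDE to obtain the BSDE integral equation, use Proposition \ref{bracketindomain} to identify $\sqrt{d\langle M[u]^{s,x}\rangle/dV}$ with $\sqrt{\Gamma(u,u)}(\cdot,X_\cdot)$, invoke $\Gamma(u,u)\in\mathcal{L}^1_X$ to get $M[u]^{s,x}\in\mathcal{H}^2_0$, and conclude via Lemma \ref{LED+Pext}. The appeal to Corollary \ref{RExtendedClassical} is harmless but not needed here, since the argument only uses $u\in\mathcal{D}(a)$.
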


\begin{proof}
Let $(s,x)$ be fixed. Since $u\in\mathcal{D}(a)$, 
 the martingale problem in the sense of Definition
 \ref{MartingaleProblem} and  
\eqref{PDE} imply  that, on $[s,T]$, under $\mathbbm{P}^{s,x}$
\begin{equation*}
    \begin{array}{rcl}
     &&u(\cdot,X_{\cdot})\\
     &=& u(T,X_T)-\int_{\cdot}^T a(u)(r,X_r)dV_r -(M[u]^{s,x}_T-M[u]^{s,x}_{\cdot})\\
     &=& g(X_T)+\int_{\cdot}^T f\left(r,X_r,u(r,X_r),\sqrt{\Gamma(u,u)(r,X_r)}\right) -(M[u]^{s,x}_T-M[u]^{s,x}_{\cdot})\\
     &=&g(X_T) + \int_{\cdot}^T f\left(r,X_r,Y_r,\sqrt{\frac{d\langle M[u]^{s,x}\rangle}{dV}}(r)\right)dV_r  -(M[u]^{s,x}_T-M[u]^{s,x}_{\cdot}),
    \end{array}
\end{equation*}
where the latter equality comes from Proposition \ref{bracketindomain}.
Since $\Gamma(u,u)\in\mathcal{L}^1_X$
it follows that 
$\mathbbm{E}^{s,x}\left[\langle M[u]^{s,x}\rangle_T\right]=\mathbbm{E}^{s,x}\left[\int_s^T\Gamma(u,u)(r,X_r)dV_r\right]<\infty$.
This means that 
$M[u]^{s,x}\in\mathcal{H}^2_0$, so by Lemma \ref{LED+Pext} 
$(u(\cdot,X_{\cdot}),M[u]^{s,x}_{\cdot})$ and 
 $(Y^{s,x},M^{s,x}_{\cdot}-M^{s,x}_s)$ are indistinguishable on $[s,T]$.
\end{proof}

We will now adopt the converse point of view, 
 starting with the solutions of the equations $BSDE^{s,x}(f,g)$.
Below we will show  
 that there exist Borel functions $u$ and $v\geq 0$ such 
that for any $(s,x)\in[0,T]\times E$, for all $t\in[s,T]$, $Y^{s,x}_t=u(t,X_t)$ $\mathbbm{P}^{s,x}$-a.s., and $\frac{d\langle M^{s,x}\rangle}{dV}=v^2(\cdot,X_{\cdot})\quad dV\otimes d\mathbbm{P}^{s,x}$ a.e. on $[s,T]$. 
This will be the object of Theorem
\ref{Defuv}, whose an analogous formulation  exists in the Brownian framework, see e.g.
 Theorem 4.1 in \cite{el1997backward}. We start with a lemma.

\begin{lemma} \label{L41}  
%Let $\tilde{f}\in\mathcal{B}([0,T]\times E,\mathbbm{R})$ be such that for any 
%\\
%$(s,x)\in [0,T]\times E$, $\tilde{f}(\cdot,X_{\cdot})\mathds{1}_{[s,T]}$ belongs to $\mathcal{L}^2(dV\otimes d\mathbbm{P}^{s,x})$.
Let $\tilde{f}\in\mathcal{L}^2_X$.
Let, for any 
$(s,x)\in[0,T]\times E$, 
 $(\tilde Y^{s,x},\tilde M^{s,x})$ be the (unique by Theorem \ref{uniquenessBSDE} and Remark \ref{BSDESmallInt}) solution of 
\begin{equation*}
\tilde Y^{s,x}_t = g(X_T) + \int_t^T \tilde f\left(r,X_r\right)dV_r  -(\tilde M^{s,x}_T - \tilde M^{s,x}_t),\quad t\in[s,T],
\end{equation*}
in $\left(\Omega,\mathcal{F}^{s,x},(\mathcal{F}^{s,x}_t)_{t\in[0,T]},\mathbbm{P}^{s,x}\right)$.
  Then there exist two functions $u$ and $v\geq 0$ in $\mathcal{B}([0,T]\times E,\mathbbm{R})$ such that for any $(s,x)\in[0,T]\times E$
\begin{equation*}
\left\{\begin{array}{r}
     \forall t\in [s,T]: \tilde Y^{s,x}_t = u(t,X_t)\quad \mathbbm{P}^{s,x}\text{a.s.}  \\
     \frac{d\langle \tilde M^{s,x}\rangle}{dV}=v^2(\cdot,X_{\cdot})\quad dV\otimes d\mathbbm{P}^{s,x}\text{ a.e. on }[s,T].
\end{array}\right.
\end{equation*}
\end{lemma}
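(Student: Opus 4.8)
The key point is that the BSDE here has a driver $\tilde f(r,X_r)$ that does \emph{not} depend on $(Y,M)$, so the solution is given explicitly by the conditional-expectation formulas of Proposition \ref{BSDEexpectations} (adapted to the interval $[s,T]$ as in Remark \ref{BSDESmallInt}). Thus $\tilde Y^{s,x}_t = \mathbbm{E}^{s,x}\!\left[g(X_T)+\int_t^T \tilde f(r,X_r)\,dV_r\,\middle|\,\mathcal{F}^{s,x}_t\right]$ for $t\in[s,T]$. The plan is to identify this conditional expectation, for $t$ fixed, as a deterministic Borel function of $(t,X_t)$ using the Markov property of the canonical class $(\mathbbm{P}^{s,x})$, and then to patch these ``time-slice'' functions together into a single space-time Borel function $u$; the bracket statement is handled similarly via the extended carr\'e du champs, or more elementarily by a direct martingale-bracket computation.

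First I would treat $u$. Fix $t$. By the Markov property (in the form recalled in the appendix of the paper, e.g. the relation $\mathbbm{E}^{s,x}[\,Z\circ\theta_t\mid\mathcal{F}^{s,x}_t]=\mathbbm{E}^{t,X_t}[Z]$ for suitable $Z$, valid for the canonical Markov class with time-measurable transition function), the random variable $\mathbbm{E}^{s,x}\!\left[g(X_T)+\int_t^T \tilde f(r,X_r)\,dV_r\,\middle|\,\mathcal{F}^{s,x}_t\right]$ equals $h(t,X_t)$ $\mathbbm{P}^{s,x}$-a.s., where $h(t,y):=\mathbbm{E}^{t,y}\!\left[g(X_T)+\int_t^T \tilde f(r,X_r)\,dV_r\right]$; here $\tilde f\in\mathcal{L}^2_X$ and $g(X_T)\in L^2$ under every $\mathbbm{P}^{s,x}$ (Hypothesis \ref{Hpq}) guarantee integrability, and measurability of $(t,y)\mapsto h(t,y)$ in the Borel sense follows from the measurability in time of the transition function together with a monotone-class argument. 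Importantly $h$ does not depend on the starting point $(s,x)$, only on $(t,y)$, so we may set $u:=h$. Then for every $(s,x)$ and every $t\in[s,T]$ we get $\tilde Y^{s,x}_t = u(t,X_t)$ $\mathbbm{P}^{s,x}$-a.s., which is the first assertion. (One does not need a version jointly measurable in $(t,\omega)$ for this statement, since it is asserted ``for all $t$, a.s.''; but cadlag-ness of $\tilde Y^{s,x}$ gives it anyway if desired.)

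For $v$, note $\tilde M^{s,x}$ is, up to an additive constant, the cadlag martingale $t\mapsto \mathbbm{E}^{s,x}\!\left[g(X_T)+\int_0^T\tilde f(r,X_r)\,dV_r\mid\mathcal{F}^{s,x}_t\right]$; on $[s,T]$ it coincides with $u(\cdot,X_\cdot)+\int_s^\cdot \tilde f(r,X_r)\,dV_r - u(s,x)$, so up to a bounded-variation continuous term it \emph{is} $u(\cdot,X_\cdot)$. Since every square-integrable martingale here has absolutely continuous angular bracket with respect to $dV$ (Proposition \ref{H2V=H2} and $\mathcal{H}^2_0=\mathcal{H}^{2,V}$), we may write $\frac{d\langle \tilde M^{s,x}\rangle}{dV}(r)=G^{s,x}(r)$, a predictable process. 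The content of the claim is that this density is $(s,x)$-independent and of the form $v^2(r,X_r)$. I would obtain this by running the previous Markov-property argument one level up: for $t<t'$ in $[s,T]$, $\mathbbm{E}^{s,x}\big[(\tilde M^{s,x}_{t'}-\tilde M^{s,x}_t)^2\mid\mathcal{F}^{s,x}_t\big]=\mathbbm{E}^{s,x}\big[\langle\tilde M^{s,x}\rangle_{t'}-\langle\tilde M^{s,x}\rangle_t\mid\mathcal{F}^{s,x}_t\big]$, and by the Markov property the left side is a Borel function of $(t,X_t)$ (and of $t'$); letting $t'\downarrow t$ along rationals and using continuity of $V$ and a Lebesgue-type differentiation in the $dV$-sense yields a Borel function $w(t,y)\ge 0$ with $G^{s,x}(t)=w(t,X_t)$, $dV\otimes d\mathbbm{P}^{s,x}$-a.e., for every $(s,x)$. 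Set $v:=\sqrt{w}$. Alternatively, and perhaps more cleanly, one observes that $u$ plays the role of a function in an extended domain: the martingale $\tilde M^{s,x}=M[\phi]^{s,x}$-type object associated to $\phi=u$ with $\mathfrak a(u)=-\tilde f$ shows $u\in\mathcal{D}(\mathfrak a)$, and then Proposition \ref{P321} furnishes directly a Borel function $\mathfrak G(u,u)$, $(s,x)$-independent up to zero potential, with $\langle\tilde M^{s,x}\rangle=\int_s^\cdot \mathfrak G(u,u)(r,X_r)\,dV_r$; one takes $v:=\sqrt{\mathfrak G(u,u)^+}$.

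The main obstacle is the careful invocation of the Markov property in the conditional form $\mathbbm{E}^{s,x}[\cdot\mid\mathcal{F}^{s,x}_t]=\mathbbm{E}^{t,X_t}[\cdot]$ on the \emph{completed} filtration $(\mathcal{F}^{s,x}_t)$ and for functionals that are themselves built from $dV$-integrals over random time windows — one must check the relevant functionals are in the right measurability class and that the resulting $(t,y)$-function is genuinely Borel, not merely universally measurable; this is exactly the kind of bookkeeping the paper has set up in its appendix (canonical space, shift operators, transition function measurable in time), so I would lean on those statements rather than reprove them. The second, more technical, obstacle is passing from ``for each fixed $t$'' statements to a single function, and from increments of $\langle\tilde M^{s,x}\rangle$ to a pointwise $dV$-density that is a function of $X_r$; the extended-domain route via Proposition \ref{P321} is the way to sidestep an ad hoc differentiation argument.
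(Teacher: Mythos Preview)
Your proposal is correct and matches the paper's proof. The paper defines $u(t,y)=\mathbbm{E}^{t,y}\big[g(X_T)+\int_t^T\tilde f(r,X_r)\,dV_r\big]$ (your $h$), cites Proposition \ref{Borel} and Lemma \ref{LemmaBorel} for Borel measurability, and invokes the Markov property \eqref{Markov3} exactly as you outline. For $v$, the paper takes precisely your second route, just one abstraction level lower: rather than showing $u\in\mathcal{D}(\mathfrak a)$ and citing Proposition \ref{P321}, it directly exhibits the square-integrable MAF $M^t_{t'}=u(t',X_{t'})-u(t,X_t)+\int_t^{t'}\tilde f(r,X_r)\,dV_r$ and applies Proposition \ref{bracketMAFs} (which is exactly what the proof of Proposition \ref{P321} does internally); your Lebesgue-differentiation alternative is not pursued.
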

\begin{proof}
We set  $u:(s,x)\mapsto \mathbbm{E}^{s,x}\left[g(X_T) + \int_s^T \tilde{f}\left(r,X_r\right)dV_r\right]$ 
which is Borel by Proposition \ref{Borel} and Lemma \ref{LemmaBorel}. Therefore 
by \eqref{Markov3} in Remark \ref{Rfuturefiltration},
 for a fixed $t \in[s,T]$  
$\mathbb {P}^{s,x}$- a.s. we have
\begin{equation*}
\begin{array}{rcl}
    u(t,X_t) &=& \mathbbm{E}^{t,X_t}\left[g(X_T) + \int_t^T \tilde{f}\left(r,X_r\right)dV_r\right]\\
     &=& \mathbbm{E}^{s,x}\left[g(X_T) + \int_t^T \tilde{f}\left(r,X_r\right)dV_r\middle|\mathcal{F}_t\right]\\
     &=& \mathbbm{E}^{s,x}\left[\tilde Y^{s,x}_t+(\tilde M^{s,x}_T-\tilde M^{s,x}_t)|\mathcal{F}_t\right]\\
     &=& \tilde Y^{s,x}_t,
\end{array}
\end{equation*}
since $\tilde M^{s,x}$ is a martingale and $\tilde Y^{s,x}$ is adapted. Then  the square integrable AF (see Definition \ref{DefAF}) defined by 
\begin{equation}
M^t_{t'}(\omega)=\left\{\begin{array}{l}
u(t',X_{t'}) - u(t,X_{t}) + \int_{t}^{t'}\tilde{f}(r,X_r(\omega))dV_r\\
\quad\quad\text{ if }\int_{t}^{t'}|\tilde{f}|(r,X_r(\omega))dV_r<+\infty\\
0 \text{ otherwise}
\end{array}\right.
\end{equation}
is a MAF whose cadlag version under $\mathbbm{P}^{s,x}$
is  $\tilde M^{s,x}$.  
The existence of the function
$v$ follows setting $v = \sqrt{k}$ in Proposition \ref{bracketMAFs}.

\end{proof}

We now define the Picard iterations associated to the contraction defining the solution of the BSDE associated with  $BSDE^{s,x}(f,g)$.

\begin{notation}\label{DefPicard}
For a fixed $(s,x)\in [0,T]\times E$, $\Phi^{s,x}$ will denote the 
contraction $\Phi: L^2(dV\otimes d\mathbbm{P}^{s,x})\times\mathcal{H}^2_0$
introduced in  Notation \ref{contraction} with respect to 
the BSDE  associated with  $BSDE^{s,x}(f,g)$, see Notation \ref{N55}
% whose fixed point
% defines the solution of $BSDE^{s,x}(f,g)$, see Definition \ref{contraction}. 
In the sequel we will not distinguish between a couple
$(\dot Y,M)$ in $L^2(dV\otimes d\mathbbm{P}^{s,x})\times\mathcal{H}^2_0$ 
and $(Y,M)$, where $Y$ is the reference cadlag process of $\dot Y$,
according to Definition \ref{defYM}. We then convene the following.
\begin{enumerate}
\item $(Y^{0,s,x},M^{0,s,x}):=(0,0)$;
\item $\forall k\in\mathbbm{N}^*:(Y^{k,s,x},M^{k,s,x}):=\Phi^{s,x}(Y^{k-1,s,x},M^{k-1,s,x})$,
\end{enumerate}
meaning that for $k\in\mathbbm{N}^*$, $(Y^{k,s,x},M^{k,s,x})$ is the solution of the BSDE
\begin{equation}\label{defYk}
Y^{k,s,x} = g(X_T) + \int_{\cdot}^T f\left(r,X_r,Y^{k-1,s,x},\sqrt{\frac{d\langle M^{k-1,s,x}\rangle}{dV}}(r)\right)dV_r  -(M^{k,s,x}_T - M^{k,s,x}_{\cdot}).
\end{equation}
\end{notation}

The  processes $(Y^{k,s,x},M^{k,s,x})$
will be called the \textbf{Picard iterations} of  $BSDE^{s,x}(f,g)$

\begin{proposition} \label{P511}
For each $k\in\mathbbm{N}$, there exist functions $u_k$ and $v_k\geq 0$ in
 $\mathcal{B}([0,T]\times E,\mathbbm{R})$ such that for every $(s,x)\in[0,T]\times E$
\begin{equation}\label{defuk}
\left\{\begin{array}{l}
     \forall t\in [s,T]: Y^{k,s,x}_t = u_k(t,X_t) \quad \mathbbm{P}^{s,x}a.s.  \\
     \frac{d\langle M^{k,s,x}\rangle}{dV}=v^2_k(\cdot,X_{\cdot})\quad dV\otimes d\mathbbm{P}^{s,x}\text{ a.e. on }[s,T].
\end{array}\right.
\end{equation}
\end{proposition}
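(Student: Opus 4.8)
The plan is to argue by induction on $k$, with Lemma \ref{L41} supplying the inductive step. For $k=0$ one takes $u_0\equiv 0$ and $v_0\equiv 0$: since $(Y^{0,s,x},M^{0,s,x})=(0,0)$ for every $(s,x)$, both lines of \eqref{defuk} hold trivially at rank $0$.

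Assume now that Borel functions $u_{k-1}$ and $v_{k-1}\geq 0$ satisfying \eqref{defuk} at rank $k-1$ have been constructed. The first step is to observe that $u_{k-1},v_{k-1}\in\mathcal{L}^2_X$. Indeed, fix $(s,x)$: since $(Y^{k-1,s,x},M^{k-1,s,x})$ solves \eqref{defYk} at rank $k-1$, we have $Y^{k-1,s,x}\in\mathcal{L}^2(dV\otimes d\mathbbm{P}^{s,x})$ and $M^{k-1,s,x}\in\mathcal{H}^2_0$; using $Y^{k-1,s,x}_r=u_{k-1}(r,X_r)$ on $[s,T]$ yields $\mathbbm{E}^{s,x}\!\left[\int_s^T u_{k-1}^2(r,X_r)\,dV_r\right]<\infty$, and using $\frac{d\langle M^{k-1,s,x}\rangle}{dV}=v_{k-1}^2(\cdot,X_\cdot)$ on $[s,T]$ together with Remark \ref{RemIncr} yields $\mathbbm{E}^{s,x}\!\left[\int_s^T v_{k-1}^2(r,X_r)\,dV_r\right]\leq\mathbbm{E}^{s,x}\!\left[\langle M^{k-1,s,x}\rangle_T\right]<\infty$. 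As $(s,x)$ is arbitrary, $u_{k-1},v_{k-1}\in\mathcal{L}^2_X$. I then set $\tilde f_k(r,x):=f\!\left(r,x,u_{k-1}(r,x),v_{k-1}(r,x)\right)$, which is Borel; the Lipschitz bound of Hypothesis \ref{Hpq} gives $|\tilde f_k(r,x)|\leq|f(r,x,0,0)|+K^Y|u_{k-1}(r,x)|+K^Z|v_{k-1}(r,x)|$, so $\tilde f_k\in\mathcal{L}^2_X$ by the previous step and Hypothesis \ref{Hpq}.

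It remains to recognize $(Y^{k,s,x},M^{k,s,x})$ as the solution of a BSDE whose driver does not depend on $(y,z)$ and then to invoke Lemma \ref{L41}. By the induction hypothesis and since $v_{k-1}\geq 0$, the driver of \eqref{defYk} evaluated along the $(k-1)$-th iterate equals $\tilde f_k(r,X_r)$, $dV\otimes d\mathbbm{P}^{s,x}$-a.e.\ on $[s,T]$; hence by Lemma \ref{classdV} (applied on $[s,T]$ with the identity as third-variable map) the processes $\int_\cdot^T f\!\left(r,X_r,Y^{k-1,s,x}_r,\sqrt{\frac{d\langle M^{k-1,s,x}\rangle}{dV}}(r)\right)dV_r$ and $\int_\cdot^T\tilde f_k(r,X_r)\,dV_r$ are $\mathbbm{P}^{s,x}$-indistinguishable on $[s,T]$, and by Remark \ref{BSDESmallInt} the pair $(Y^{k,s,x},M^{k,s,x}_\cdot-M^{k,s,x}_s)$ is, on $[s,T]$, the unique solution of the BSDE with terminal condition $g(X_T)$ and $(y,z)$-independent driver $\tilde f_k(r,X_r)$. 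Applying Lemma \ref{L41} with $\tilde f=\tilde f_k\in\mathcal{L}^2_X$ produces Borel functions $u_k$ and $v_k\geq 0$, not depending on $(s,x)$, with $Y^{k,s,x}_t=u_k(t,X_t)$ on $[s,T]$ $\mathbbm{P}^{s,x}$-a.s.\ and $\frac{d\langle M^{k,s,x}_\cdot-M^{k,s,x}_s\rangle}{dV}=v_k^2(\cdot,X_\cdot)$, $dV\otimes d\mathbbm{P}^{s,x}$-a.e.\ on $[s,T]$; since on $[s,T]$ the bracket $\langle M^{k,s,x}_\cdot-M^{k,s,x}_s\rangle$ differs from $\langle M^{k,s,x}\rangle$ only by the constant $\langle M^{k,s,x}\rangle_s$, this is precisely \eqref{defuk} at rank $k$, completing the induction. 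The only point requiring care is the bookkeeping above, namely keeping the iterates inside $\mathcal{L}^2_X$ so that Lemma \ref{L41} applies verbatim, and handling the renormalization of the martingale so that it vanishes at time $s$ rather than at time $0$ while $X$ is frozen on $[0,s]$; none of this is deep, but it must be carried out cleanly.
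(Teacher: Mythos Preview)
Your proof is correct and follows essentially the same inductive approach as the paper: establish $u_{k-1},v_{k-1}\in\mathcal{L}^2_X$ from the $L^2$ bounds on the Picard iterates, deduce that $\tilde f_k:=f(\cdot,\cdot,u_{k-1},v_{k-1})\in\mathcal{L}^2_X$ via Hypothesis \ref{Hpq}, and then apply Lemma \ref{L41}. The only cosmetic difference is that the paper invokes Lemma \ref{ModifImpliesdV} explicitly to pass from the modification statement on $Y^{k-1,s,x}$ to the $dV\otimes d\mathbbm{P}^{s,x}$-a.e.\ equality needed inside the integral, whereas you fold this into the phrase ``by the induction hypothesis''; conversely, you are slightly more explicit than the paper about the renormalization $M^{k,s,x}_\cdot-M^{k,s,x}_s$ when matching with the solution on $[s,T]$ in Lemma \ref{L41}.
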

\begin{lemma}\label{ModifImpliesdV}
Let $(s,x)\in[0,T]\times E$ be fixed and let $\phi,\psi$ be two measurable processes. If $\phi$ and $\psi$ are $\mathbbm{P}^{s,x}$-modifications of each other, then they are equal $dV\otimes d\mathbbm{P}^{s,x}$ a.e.
\end{lemma}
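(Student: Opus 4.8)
The plan is to reduce the statement to a single application of Tonelli's theorem for the product measure $dV\otimes d\mathbbm{P}^{s,x}$ on $[0,T]\times\Omega$. First I would observe that, $\phi$ and $\psi$ being (jointly) measurable processes, the set $A:=\{(r,\omega)\in[0,T]\times\Omega:\ \phi_r(\omega)\neq\psi_r(\omega)\}$ belongs to $\mathcal{B}([0,T])\otimes\mathcal{F}$, so that $dV\otimes d\mathbbm{P}^{s,x}(A)$ is meaningful according to Definition \ref{StochMeasures}, and the claim of the lemma is exactly $dV\otimes d\mathbbm{P}^{s,x}(A)=0$.

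Next, I would use that in this section $V$ is a \emph{deterministic} non-decreasing continuous function. This allows one to view $dV\otimes d\mathbbm{P}^{s,x}$ as the genuine product of the (finite) Borel measure $dV$ on $[0,T]$ with $\mathbbm{P}^{s,x}$ and to exchange the order of integration:
\[
dV\otimes d\mathbbm{P}^{s,x}(A)=\mathbbm{E}^{s,x}\Big[\int_0^T\mathds{1}_A(r,\cdot)\,dV_r\Big]=\int_0^T\mathbbm{P}^{s,x}(\phi_r\neq\psi_r)\,dV_r .
\]
Then I would invoke the hypothesis: $\phi$ and $\psi$ are $\mathbbm{P}^{s,x}$-modifications of each other, i.e. $\mathbbm{P}^{s,x}(\phi_r\neq\psi_r)=0$ for every fixed $r\in[0,T]$; hence the integrand on the right vanishes identically and the integral is $0$. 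This yields $dV\otimes d\mathbbm{P}^{s,x}(A)=0$, which is the assertion.

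The only point that needs care — and the reason the determinism of $V$ genuinely matters — is the Tonelli exchange: the modification property gives, for \emph{each fixed time}, a $\mathbbm{P}^{s,x}$-negligible exceptional slice, whereas the conclusion concerns the product-measure size of the union of these slices over $r\in[0,T]$; it is precisely Tonelli applied to $dV(dr)\otimes d\mathbbm{P}^{s,x}(d\omega)$ (licit since $dV$ does not depend on $\omega$) that passes from the former to the latter. I expect no other difficulty: $A$ is measurable by joint measurability of the processes, and all integrability issues are trivial because the integrand is bounded by $1$ and $V_T<\infty$.
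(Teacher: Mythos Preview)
Your proof is correct and follows essentially the same route as the paper's own argument: both apply Fubini/Tonelli to the indicator of $\{\phi\neq\psi\}$ to obtain $\mathbbm{E}^{s,x}\big[\int_0^T\mathds{1}_{\{\phi_t\neq\psi_t\}}\,dV_t\big]=\int_0^T\mathbbm{P}^{s,x}(\phi_t\neq\psi_t)\,dV_t=0$ from the modification hypothesis. Your version is slightly more explicit about measurability of the set $A$ and about why the determinism of $V$ is what licenses the exchange of integration, but the idea is identical.
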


\begin{proof}
Since for any $t\in[0,T]$, $\phi_t=\psi_t$ $\mathbbm{P}^{s,x}$ a.s.  we can write by Fubini's theorem
$\mathbbm{E}^{s,x}\left[\int_0^{T}\mathds{1}_{\phi_t\neq\psi_t}dV_t\right] = \int_0^{T}\mathbbm{P}^{s,x}(\phi_t\neq\psi_t)dV_t = 0$.
\end{proof} 
\begin{prooff}. of Proposition \ref{P511}.
\\
We proceed by induction on $k$.
It is clear that $(u_0,v_0)=(0,0)$ verifies the assertion for $k=0$. 
\\
Now let us assume that functions $u_{k-1}$, $v_{k-1}$ exist, for some integer 
$k \ge 1$, 
verifying \eqref{defuk} for $k$ replaced with $k-1$.
\\
We fix $(s,x)\in[0,T]\times E$. By Lemma \ref{ModifImpliesdV},
  $(Y^{k-1,s,x},Z^{k-1,s,x})=(u_{k-1},v_{k-1})(\cdot,X_{\cdot})$ $dV\otimes \mathbbm{P}^{s,x}$ a.e. on [s,T]. Therefore by \eqref{defYk}, on $[s,T]$
\\
$Y^{k,s,x} = g(X_T) + \int_{\cdot}^T f\left(r,X_r,u_{k-1}(r,X_r),v_{k-1}(r,X_r)\right)dV_r  -(M^{k,s,x}_T - M^{k,s,x}_{\cdot})$.
\\
 Since $\Phi^{s,x}$ maps $L^2(dV\otimes d\mathbbm{P}^{s,x})\times \mathcal{H}^2_0$ into itself (see Definition \ref{contraction}), obviously all 
the Picard iterations 
belong to  
$L^2(dV\otimes d\mathbbm{P}^{s,x})\times \mathcal{H}^2_0$. 
In particular,
 $Y^{k-1,s,x}$ and $\sqrt{\frac{d\langle M^{k-1,s,x}\rangle}{dV}}$ belong to
$\mathcal{L}^2(dV\otimes d\mathbbm{P}^{s,x})$. So, 
 by recurrence assumption 
on  $ u_{k-1}$ and  $ v_{k-1}$, it follows that
%$u_{k-1}(\cdot,X_{\cdot})\mathds{1}_{[s,T]}$ and 
%$v_{k-1}(\cdot,X_{\cdot})\mathds{1}_{[s,T]}$ belong to 
% $\mathcal{L}^2(dV\otimes d\mathbbm{P}^{s,x})$. 
 $u_{k-1}$ and $v_{k-1}$ belong to $\mathcal{L}^2_X$.
% Combining $H^{mom}(\zeta,\eta)$ and the growth condition of $f$ in $H(\zeta,\eta)$,  
% $f(\cdot,X_{\cdot},0,0)$ also 
%belongs to  
%$\mathcal{L}^2(dV\otimes d\mathbbm{P}^{s,x})$.
%$f(\cdot,\cdot,0,0)$ also belongs to $\mathcal{L}^2_X$.
% Therefore thanks to the Lipschitz conditions on $f$ assumed in $H(\zeta,\eta)$,
Therefore, using the assumptions $f$ in Hypothesis \ref{Hpq},  $f(\cdot,\cdot,u_{k-1},v_{k-1}) \in \mathcal{L}^2_X$.
% $f\left(\cdot,X_{\cdot},u_{k-1}(\cdot,X_{\cdot}),v_{k-1}(\cdot,X_{\cdot})\right)\mathds{1}_{[s,T]}$ is in 
%$\mathcal{L}^2(dV\otimes d\mathbbm{P}^{s,x})$.
The existence of $u_k$ and $v_k$ now comes from Lemma \ref{L41} applied to $\tilde f:=f(\cdot,\cdot,u_{k-1},v_{k-1})$. This establishes the induction step
 for a general $k$ and allows to conclude the proof.

\end{prooff}

%\begin{remark} \label{R512}
%For any $k\in\mathbbm{N}^*$ we have
%\begin{enumerate}
%\item $u_k\in\mathcal{D}(\mathfrak{a})$;
%\item $v^2_k = \mathfrak{G}(u_k,u_k)$;
%\item $\mathfrak{a}(u_k)= -f(\cdot,\cdot,u_{k-1},v_{k-1})$.
%\end{enumerate}
%Indeed for any $(s,x)\in [0,T]\times E$,  under $\mathbbm{P}^{s,x}$
% for $t\in[s,T]$, we have
%\\
%$u_k(t,X_t)-u_k(s,x)= -\int_s^tf(\cdot,\cdot,u_{k-1},v_{k-1})(r,X_r)dV_r +(M^{k,s,x}_t-M^{k,s,x}_s)$ a.s. and we have $  \frac{d\langle M^{k,s,x}\rangle}{dV} = v^2_k(\cdot,X_{\cdot})\text{  } dV\otimes d\mathbbm{P}^{s,x}$  a.e. on $[s,T]$.
%\\
%So from Definition \ref{extended}  $u_k\in\mathcal{D}(\mathfrak{a})$ and $\mathfrak{a}(u_k)= -f(\cdot,\cdot,u_{k-1},v_{k-1})$ and by Definition \ref{extendedgamma}, $v^2_k = \mathfrak{G}(u_k,u_k)$, which shows the statement. \\
%
%\end{remark}
%Remark \ref{R512} shows a first  link between the BSDE, 
%the martingale problem introduced in Hypothesis \ref{MartingaleProblem} and the
% Pseudo-PDE with extended operators.

Now we intend  to pass to the limit in $k$. For any $(s,x)\in[0,T]\times E$, we
 have seen in Proposition \ref{ProofContraction} that $\Phi^{s,x}$ is a contraction in $\left(L^2(dV\otimes d\mathbbm{P}^{s,x})\times\mathcal{H}^2_0,\|\cdot\|_{\lambda}\right)$ for some $\lambda>0$, so we know that the sequence $(Y^{k,s,x},M^{k,s,x})$ converges to $(Y^{s,x},M^{s,x})$ in this topology.
\\
The proposition below also shows an a.e. corresponding convergence,
adapting the techniques of
 Corollary 2.1 in \cite{el1997backward}.
%% PAS TROP IMPORTANT MAIS LA PROPOSITION QUI SUIT EST
%%% VRAIE POUR UNE BSDE GENERALE
\begin{proposition}\label{cvdt}
For any $(s,x)\in[0,T]\times E$, $Y^{k,s,x}\underset{k\rightarrow \infty}{\longrightarrow} Y^{s,x}\quad dV\otimes d\mathbbm{P}^{s,x}$ a.e.  and $\sqrt{\frac{d\langle M^{k,s,x}\rangle}{dV}}\underset{k\rightarrow \infty}{\longrightarrow}\sqrt{\frac{d\langle M^{s,x}\rangle}{dV}}\quad dV\otimes d\mathbbm{P}^{s,x}$ a.e.

%if we set $Z^{k,s,x}=\sqrt{\frac{d\langle M^{k,s,x}\rangle}{dV}}$ and $Z^{s,x}=\sqrt{\frac{d\langle M^{s,x}\rangle}{dV}}$ then $Z^{k,s,x}\longrightarrow Z^{s,x}$   $dV\otimes d\mathbbm{P}^{s,x}$ a.e.
\end{proposition}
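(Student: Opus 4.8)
The key point is that the Picard iteration converges in the norm $\|\cdot\|_\lambda$ on $L^2(dV\otimes d\mathbbm P^{s,x})\times\mathcal H^2_0$, so in particular
$$\mathbbm E^{s,x}\!\left[\int_s^T |Y^{k,s,x}_r - Y^{s,x}_r|^2\,dV_r\right]\longrightarrow 0 \qquad\text{and}\qquad \mathbbm E^{s,x}\!\left[\int_s^T d\langle M^{k,s,x}-M^{s,x}\rangle_r\right]\longrightarrow 0$$
as $k\to\infty$; the second convergence together with the inequality \eqref{CSRadonNikodymDeriv} (applied to $N=M^{k,s,x}$, $N'=M^{s,x}$) and Remark \ref{RemIncr} gives
$$\mathbbm E^{s,x}\!\left[\int_s^T \left|\sqrt{\tfrac{d\langle M^{k,s,x}\rangle}{dV}}(r)-\sqrt{\tfrac{d\langle M^{s,x}\rangle}{dV}}(r)\right|^2 dV_r\right]\le \mathbbm E^{s,x}\!\left[\int_s^T \frac{d\langle \overline{M}\rangle}{dV}(r)\,dV_r\right]\le \mathbbm E^{s,x}\big[\langle M^{k,s,x}-M^{s,x}\rangle_T\big]\longrightarrow 0,$$
where $\overline M=M^{k,s,x}-M^{s,x}$. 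Hence both sequences $Y^{k,s,x}\to Y^{s,x}$ and $\sqrt{d\langle M^{k,s,x}\rangle/dV}\to\sqrt{d\langle M^{s,x}\rangle/dV}$ converge to their limits in $L^2(dV\otimes d\mathbbm P^{s,x})$, i.e.\ in the Banach space $L^2([0,T]\times\Omega,\mathcal{P}ro,dV\otimes d\mathbbm P^{s,x})$.

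From $L^2$ convergence one extracts a subsequence converging $dV\otimes d\mathbbm P^{s,x}$ a.e.; but the statement asserts convergence of the full sequence. The way to upgrade this is the standard trick (this is what "adapting the techniques of Corollary 2.1 in \cite{el1997backward}" refers to): because $\Phi^{s,x}$ is a strict contraction with ratio $\le \tfrac1{\sqrt 2}$ for $\|\cdot\|_\lambda$, one has $\|(\dot Y^{k+1,s,x},M^{k+1,s,x})-(\dot Y^{k,s,x},M^{k,s,x})\|_\lambda^2\le 2^{-k}C$ for some constant $C$, so the series $\sum_k \|(\dot Y^{k+1,s,x},M^{k+1,s,x})-(\dot Y^{k,s,x},M^{k,s,x})\|_\lambda$ converges. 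Writing $g_k:=|Y^{k+1,s,x}-Y^{k,s,x}|$ and using the same comparison \eqref{CSRadonNikodymDeriv} for the bracket increments, $h_k:=\left|\sqrt{d\langle M^{k+1,s,x}\rangle/dV}-\sqrt{d\langle M^{k,s,x}\rangle/dV}\right|$ satisfies $\|h_k\|_{L^2(dV\otimes d\mathbbm P^{s,x})}^2\le \|M^{k+1,s,x}-M^{k,s,x}\|_{\mathcal H^2}^2\le 2^{-k}C'$, so $\sum_k\|g_k\|_{L^2}<\infty$ and $\sum_k\|h_k\|_{L^2}<\infty$. By monotone convergence $\mathbbm E^{s,x}[\int_s^T(\sum_k g_k)^2 dV_r]<\infty$ (triangle inequality in $L^2$ for the partial sums plus Fatou), hence $\sum_k g_k<\infty$ and $\sum_k h_k<\infty$ $dV\otimes d\mathbbm P^{s,x}$ a.e.; in particular the sequences $(Y^{k,s,x})_k$ and $(\sqrt{d\langle M^{k,s,x}\rangle/dV})_k$ are $dV\otimes d\mathbbm P^{s,x}$ a.e.\ Cauchy, hence converge $dV\otimes d\mathbbm P^{s,x}$ a.e.; and their a.e.\ limits must coincide with the $L^2$ limits $Y^{s,x}$ and $\sqrt{d\langle M^{s,x}\rangle/dV}$ respectively, since a.e.\ convergence of a subsequence already identifies the limit.

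The only genuinely delicate point is the control of the bracket terms: one must be careful that $\sqrt{d\langle M^{k}\rangle/dV}$ is not linear in $M^k$, so one cannot directly say it is Cauchy in $L^2$ from $M^k$ being Cauchy in $\mathcal H^2$. This is handled exactly as in the proof of Proposition \ref{ProofContraction} via \eqref{CSRadonNikodymDeriv}: for any two square integrable martingales $N,N'$ one has $\left|\sqrt{d\langle N\rangle/dV}-\sqrt{d\langle N'\rangle/dV}\right|^2\le d\langle N-N'\rangle/dV$ $dV\otimes d\mathbbm P^{s,x}$ a.e., and then Remark \ref{RemIncr} converts the $dV$-integral of the right-hand side into the $\mathcal H^2$-norm of the difference. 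Everything else is the routine "contraction $\Rightarrow$ geometric $\Rightarrow$ summable increments $\Rightarrow$ a.e.\ Cauchy" argument, applied separately (or jointly on the product space) to the $Y$-component and to the square-root-of-Radon–Nikodym-derivative component, and then invoking Proposition \ref{FixedPoint} to identify the limits with $(Y^{s,x},M^{s,x})$.
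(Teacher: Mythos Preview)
Your argument is correct and follows essentially the same route as the paper: exploit the geometric decay of the Picard increments coming from Proposition \ref{ProofContraction}, control the square-root-of-bracket increments via \eqref{CSRadonNikodymDeriv} and Remark \ref{RemIncr}, and then use Fubini/monotone convergence to pass from summability in $L^2(dV\otimes d\mathbbm P^{s,x})$ to a.e.\ convergence, finally identifying the a.e.\ limits with the already known $L^2$ limits. The only cosmetic difference is that the paper sums the \emph{squares} $\sum_k\|g_k\|_{L^2}^2$ and $\sum_k\|h_k\|_{L^2}^2$ and then applies Fubini directly, whereas you sum the $L^2$-norms themselves and use Minkowski; your version makes the ``a.e.\ Cauchy'' step a bit more transparent.
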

\begin{proof}
We fix $(s,x)$ and the associated probability. In this proof, all superscripts
 $s,x$  are dropped.
We set $Z^{k}=\sqrt{\frac{d\langle M^{k}\rangle}{dV}}$ and $Z^{ }=\sqrt{\frac{d\langle M^{ }\rangle}{dV}}$.
 By Proposition \ref{ProofContraction},  
there exists $\lambda>0$ such that for any $k\in\mathbbm{N}^*$
\begin{equation*}
\begin{array}{rl}
&\mathbbm{E}\left[\int_0^Te^{-\lambda V_r}|Y^{k+1}_r-Y^{k}_r|^2dV_r + \int_0^Te^{-\lambda V_r}d\langle M^{k+1}-M^{k}\rangle_r\right]\\
\leq &\frac{1}{2}\mathbbm{E}\left[\int_0^Te^{-\lambda V_r}|Y^{k}_r-Y^{k-1}_r|^2dV_r + \int_0^Te^{-\lambda V_r}d\langle M^{k}-M^{k-1}\rangle_r\right],
\end{array}
\end{equation*}
therefore
\begin{equation} \label{E514}
\begin{array}{rl}
 &\underset{k\geq 0}{\sum}\mathbbm{E}\left[\int_0^Te^{-\lambda V_r}|Y^{k+1}_r-Y^{k}_r|^2dV_r\right] + \mathbbm{E}\left[\int_0^Te^{-\lambda V_r}d\langle M^{k+1}-M^{k}\rangle_r\right]\\
 \leq& \underset{k\geq 0}{\sum}\frac{1}{2^k}\left(\mathbbm{E}\left[\int_0^Te^{-\lambda V_r}|Y^{1 }_r|^2dV_r\right] + \mathbbm{E}\left[\int_0^Te^{-\lambda V_r}d\langle M^{1}\rangle_r\right]\right)\\
 <& \infty.
\end{array}
\end{equation}
Thanks to \eqref{CSRadonNikodymDeriv} and \eqref{E514} we have 
\\
$\sum_{k\geq 0}\left(\mathbbm{E}\left[\int_0^Te^{-\lambda V_r}|Y^{k+1}_r-Y^{k}_r|^2dV_r\right] + \mathbbm{E}\left[\int_0^Te^{-\lambda V_r}|Z^{k+1}_r-Z^{k}_r|^2dV_r\right]\right)<\infty$.
So by Fubini's theorem we have 
\\
$\mathbbm{E}\left[\int_0^Te^{-\lambda V_r}\left(\sum_{k\geq 0}(|Y^{k+1}_r-Y^{k}_r|^2+|Z^{k+1}_r-Z^{k}_r|^2)\right)dV_r\right]<\infty$.
\\
\\
Consequently  the sum 
$\sum_{k\geq 0}\left(|Y^{k+1}_r(\omega)-Y^{k}_r(\omega)|^2 + |Z_r^{k+1}(\omega)-Z^{k}_r(\omega)|^2\right)$
is finite on a set of full $dV\otimes d\mathbbm{P}$ measure. So on this set of full measure, the sequence $(Y^{k+1}_t(\omega),Z^{k+1}_t(\omega))$ converges, and the limit is necessarily equal to $(Y^{ }_t(\omega),Z^{ }_t(\omega))$ $dV\otimes d\mathbbm{P}$ a.e. because of the $L^2(dV\otimes d\mathbbm{P})$ convergence 
that we have mentioned in the lines before the statement of the
present Proposition \ref{cvdt}. 

\end{proof}

\begin{theorem}\label{Defuv}
There exist two functions $u$ and $v\geq 0$ in 
\\
$\mathcal{B}([0,T]\times E,\mathbbm{R})$ such that for every $(s,x)\in[0,T]\times E$,
%$\mathbbm{P}^{s,x}$
\begin{equation} \label{E418}
\left\{\begin{array}{l}
     \forall t\in[s,T]: Y^{s,x}_t = u(t,X_t)  \quad \mathbbm{P}^{s,x} \text{ a.s.} \\
     \frac{d\langle M^{s,x}\rangle}{dV}=v^2(\cdot,X_{\cdot})\quad 
dV\otimes d  \mathbbm{P}^{s,x} \text{ a.e. on }[s,T].
\end{array}\right.
\end{equation}
\end{theorem}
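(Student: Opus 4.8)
The plan is to manufacture $u$ and $v$ from the Picard iterations of Proposition~\ref{P511} and then to \emph{bootstrap} the identification through Lemma~\ref{L41}. First I would set
$$u_\infty := \limsup_{k\to\infty} u_k, \qquad v_\infty := \limsup_{k\to\infty} v_k,$$
these limits superior being understood as $0$ on the Borel set where they are $+\infty$; $u_\infty$ and $v_\infty$ are then Borel on $[0,T]\times E$. Fixing $(s,x)$, Proposition~\ref{P511} gives $Y^{k,s,x}_t = u_k(t,X_t)$ $\mathbbm{P}^{s,x}$-a.s.\ for each $t$, hence by Lemma~\ref{ModifImpliesdV} $Y^{k,s,x}=u_k(\cdot,X_\cdot)$ and, since $v_k\ge 0$, $\sqrt{d\langle M^{k,s,x}\rangle/dV}=v_k(\cdot,X_\cdot)$, both $dV\otimes d\mathbbm{P}^{s,x}$-a.e.\ on $[s,T]$. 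Proposition~\ref{cvdt} then yields $u_k(\cdot,X_\cdot)\to Y^{s,x}$ and $v_k(\cdot,X_\cdot)\to\sqrt{d\langle M^{s,x}\rangle/dV}$ $dV\otimes d\mathbbm{P}^{s,x}$-a.e.\ on $[s,T]$, whence
$$u_\infty(\cdot,X_\cdot)=Y^{s,x},\qquad v_\infty^2(\cdot,X_\cdot)=\tfrac{d\langle M^{s,x}\rangle}{dV}\qquad dV\otimes d\mathbbm{P}^{s,x}\text{-a.e.\ on }[s,T],$$
for every $(s,x)$ (so in particular the exceptional set where the $\limsup$'s are infinite is of zero potential).

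Next I would record that $u_\infty,v_\infty\in\mathcal{L}^2_X$: indeed $\|u_\infty\|_{2,s,x}^2=\mathbbm{E}^{s,x}[\int_s^T (Y^{s,x}_r)^2dV_r]<\infty$ because $Y^{s,x}\in\mathcal{L}^{2,cadlag}(dV\otimes d\mathbbm{P}^{s,x})$ (cf.\ Remark~\ref{RealMart}), and $\|v_\infty\|_{2,s,x}^2=\mathbbm{E}^{s,x}[\int_s^T\frac{d\langle M^{s,x}\rangle}{dV}(r)dV_r]\le\mathbbm{E}^{s,x}[\langle M^{s,x}\rangle_T]<\infty$ since $M^{s,x}\in\mathcal{H}^2_0$; both bounds hold for all $(s,x)$. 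By Hypothesis~\ref{Hpq} the function $\tilde f:=f(\cdot,\cdot,u_\infty,v_\infty)$ therefore belongs to $\mathcal{L}^2_X$.

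Now fix $(s,x)$. Since $Y^{s,x}_r=u_\infty(r,X_r)$ and $\sqrt{d\langle M^{s,x}\rangle/dV}(r)=v_\infty(r,X_r)$ hold $dV\otimes d\mathbbm{P}^{s,x}$-a.e., the argument of Lemma~\ref{classdV} shows that $\int_\cdot^T f\!\left(r,X_r,Y^{s,x}_r,\sqrt{d\langle M^{s,x}\rangle/dV}(r)\right)dV_r$ and $\int_\cdot^T\tilde f(r,X_r)dV_r$ are $\mathbbm{P}^{s,x}$-indistinguishable on $[s,T]$; hence, invoking Remark~\ref{BSDESmallInt}, $(Y^{s,x},M^{s,x}_\cdot-M^{s,x}_s)$ is, on $[s,T]$, the unique solution of the BSDE with terminal condition $g(X_T)$ and driver $\tilde f(\cdot,X_\cdot)$. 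Applying Lemma~\ref{L41} to this $\tilde f\in\mathcal{L}^2_X$ produces Borel functions $u$ and $v\ge 0$ such that, for every $(s,x)$, this unique solution satisfies $Y^{s,x}_t=u(t,X_t)$ for \emph{all} $t\in[s,T]$ $\mathbbm{P}^{s,x}$-a.s.\ and $d\langle M^{s,x}_\cdot-M^{s,x}_s\rangle/dV=v^2(\cdot,X_\cdot)$ $dV\otimes d\mathbbm{P}^{s,x}$-a.e.\ on $[s,T]$; since $M^{s,x}_\cdot-M^{s,x}_s$ and $M^{s,x}$ have the same angular bracket on $[s,T]$, this is precisely \eqref{E418}.

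The main obstacle is the upgrade from the $dV\otimes d\mathbbm{P}^{s,x}$-a.e.\ equality $u_\infty(\cdot,X_\cdot)=Y^{s,x}$, which is all the Picard convergence directly delivers, to the pointwise-in-$t$ statement ``$Y^{s,x}_t=u(t,X_t)$ for all $t\in[s,T]$, $\mathbbm{P}^{s,x}$-a.s.''\ This is exactly what re-running Lemma~\ref{L41} achieves --- equivalently, the Markov property $\mathbbm{E}^{s,x}[\,\cdot\mid\mathcal{F}_t]=\mathbbm{E}^{t,X_t}[\,\cdot\,]$ applied to the conditional-expectation representation of $Y^{s,x}$ --- once the driver has been frozen into the $(y,z)$-free function $\tilde f$. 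The two points requiring a little care are that $u_\infty,v_\infty$ genuinely lie in $\mathcal{L}^2_X$ (so that $\tilde f\in\mathcal{L}^2_X$ and Lemma~\ref{L41} is applicable) and that the various null sets, although $(s,x)$-dependent, are each of zero potential and hence harmless.
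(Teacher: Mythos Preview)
Your proof is correct and follows essentially the same route as the paper: define limsup functions from the Picard iterates, use Proposition~\ref{cvdt} to identify them $dV\otimes d\mathbbm{P}^{s,x}$-a.e.\ with $(Y^{s,x},Z^{s,x})$, freeze the driver into $\tilde f=f(\cdot,\cdot,u_\infty,v_\infty)\in\mathcal{L}^2_X$, and then invoke Lemma~\ref{L41} to upgrade to the pointwise-in-$t$ statement. The only cosmetic difference is that the paper takes $v$ directly as $\limsup_k v_k$ (the a.e.\ identification already gives the second line of \eqref{E418}) and applies Lemma~\ref{L41} solely to produce $u$, whereas you extract both $u$ and $v$ from that lemma; either choice is fine.
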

\begin{proof}
We  set $\bar{u}:=\underset{k\in\mathbbm{N}}{\text{limsup }}u_k$,
 in the sense that for any $(s,x)\in[0,T]\times E$, 
\\
${\bar u}(s,x)= \underset{k\in\mathbbm{N}}{\text{limsup }}u_k(s,x)$ and $v:=\underset{k\in\mathbbm{N}}{\text{limsup }}v_k$. $\bar{u}$ and $v$ are Borel functions. We know by Propositions \ref{P511}, \ref{cvdt} and Lemma \ref{ModifImpliesdV} that for every $(s,x)\in[0,T]\times E$
\begin{equation*}
\left\{\begin{array}{rcl}
     u_k(\cdot,X_{\cdot})&\underset{k\rightarrow\infty}{\longrightarrow}& Y^{s,x}  \quad dV\otimes d\mathbbm{P}^{s,x}\text{ a.e. on }[s,T]\\
     v_k(\cdot,X_{\cdot})&\underset{k\rightarrow\infty}{\longrightarrow}& Z^{s,x}  \quad dV\otimes d\mathbbm{P}^{s,x}\text{ a.e. on }[s,T],
\end{array}\right.
\end{equation*}
where $Z^{s,x} := \sqrt{\frac{d\langle M^{s,x}\rangle}{dV}}$.
Therefore, for some fixed $(s,x)\in[0,T]\times E$ and on the set of full $dV\otimes d\mathbbm{P}^{s,x}$ measure on which these convergences hold we have 
\begin{equation}\label{E420}
\left\{\begin{array}{r}
     \bar{u}(t,X_t(\omega))=\underset{k\in\mathbbm{N}}{\text{limsup }}u_k(t,X_t(\omega))=\underset{k\in\mathbbm{N}}{\text{lim }}u_k(t,X_t(\omega)) = Y^{s,x}_t(\omega)  \\
     v(t,X_t(\omega))=\underset{k\in\mathbbm{N}}{\text{limsup }}v_k(t,X_t(\omega))=\underset{k\in\mathbbm{N}}{\text{lim  }}v_k(t,X_t(\omega)) = Z^{s,x}_t(\omega).
\end{array}\right.
\end{equation}
This shows in particular the existence of $v$ and
 the validity of the second line of \eqref{E418}. \\

It remains to show the existence of $u$ so that 
the first line of \eqref{E418} holds.
% but we would like to go further with the $Y$ part.
 Thanks to the $dV\otimes d\mathbbm{P}^{s,x}$ equalities concerning $v$ and $\bar{u}$ stated in 
\eqref{E420}, under every $\mathbbm{P}^{s,x}$ we actually have
\begin{equation} \label{E421}
Y^{s,x} = g(X_T) + \int_{\cdot}^T f\left(r,X_r,\bar{u}(r,X_r),v(r,X_r)\right)dV_r  -(M^{s,x}_T - M^{s,x}_{\cdot}).
\end{equation}
Now \eqref{E421} can be considered as a BSDE where the driver does
not depend on $y$ and $z$.
For any $(s,x)\in[0,T]\times E$, $Y^{s,x}$ and $Z^{s,x}$ belong to $\mathcal{L}^2(dV\otimes d\mathbbm{P}^{s,x})$, then by \eqref{E420}, so do $\bar{u}(\cdot,X_{\cdot})\mathds{1}_{[s,T]}$ and $v(\cdot,X_{\cdot})\mathds{1}_{[s,T]}$, meaning that $\bar{u}$ and $v$ belong to $\mathcal{L}^2_X$.  Using the two assumptions made on $f$ in Hypothesis \ref{Hpq},   $f(\cdot,\cdot,\bar{u},v)$ also belongs to $\mathcal{L}^2_X$. We can therefore apply Lemma \ref{L41} to 
$\tilde{f}=f(\cdot,\cdot,\bar{u},v)$, and conclude to the existence of a Borel function $u$ such that for every $(s,x)\in[0,T]\times E$, 
 $Y^{s,x}$ is on $[s,T]$ a $\mathbbm{P}^{s,x}$-version of $u(\cdot,X_{\cdot})$. 

\end{proof}

%\begin{remark}
%In particular, $Y^{s,x}_s=u(s,x)$ is deterministic and 
%\\
%$M^{s,x}_s = Y^{s,x}_s -Y^{s,x}_0 +\int_0^sf\left(r,X_r,Y^{s,x}_r,0\right)dV_r$ is also
% deterministic and it  is therefore equal to 0 since $M^{s,x}\in\mathcal{H}^2_0$,
%by Remark \ref{deterministic}.
%\end{remark}
%
\begin{remark}
Since $\bar{u}(\cdot,X_{\cdot})=Y^{s,x}=u(\cdot,X_{\cdot})$ $dV\otimes d\mathbbm{P}^{s,x}$ a.e. for every $(s,x)\in[0,T]\times E$, one can remark that $u=\bar{u}$ up to a zero potential set, and in particular that $u\in\mathcal{L}^2_X$ since $\bar{u}$ does.
\\
Moreover, for any $(s,x)\in[0,T]\times E$, the stochastic convergence 
\\
$(Y^{k,s,x},M^{k,s,x})\underset{k\rightarrow\infty}{\xrightarrow{L^2(dV\otimes d\mathbbm{P}^{s,x})\times \mathcal{H}^2}}(Y^{s,x},M^{s,x})$ now has the 
functional counterpart 
$\left\{
\begin{array}{rcl}
u_k&\xrightarrow{\|\cdot\|_{2,s,x}}&u\\
v_k&\xrightarrow{\|\cdot\|_{2,s,x}}&v,
\end{array}\right.$
which yields $\left\{
\begin{array}{rcl}
u_k&\overset{L^2_X}{\longrightarrow}&u\\
v_k&\overset{L^2_X}{\longrightarrow}&v,
\end{array}\right.$
where we recall that the locally convex topological space $L^2_X$
was 
introduced in Notation \ref{topo}.
\end{remark}

\begin{corollary}\label{uvBSDE}
 For any $(s,x)\in[0,T]\times E$  and for any $t\in[s,T]$, the couple of functions $(u,v)$ obtained in Theorem \ref{Defuv} verifies $\mathbbm{P}^{s,x}$ a.s.
\begin{equation*}
u(t,X_t) = g(X_T) + \int_t^T f\left(r,X_r,u(r,X_r),v(r,X_r)\right)dV_r  -(M^{s,x}_T - M^{s,x}_t),
\end{equation*}
where $M^{s,x}$ denotes the martingale part of the unique solution of $BSDE^{s,x}(f,g)$.
\end{corollary}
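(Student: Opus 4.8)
The plan is to substitute the identification provided by Theorem~\ref{Defuv} into the defining equation $BSDE^{s,x}(f,g)$; the only real work is to justify the substitution inside the $dV$-integral.

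Fix $(s,x)\in[0,T]\times E$. By Notation~\ref{N55} and Remark~\ref{BSDESmallInt}, the solution $(Y^{s,x},M^{s,x})$ satisfies \eqref{BSDE}, that is, $\mathbbm{P}^{s,x}$ a.s.
\[
Y^{s,x}_t = g(X_T) + \int_t^T f\Big(r,X_r,Y^{s,x}_r,\sqrt{\tfrac{d\langle M^{s,x}\rangle}{dV}}(r)\Big)dV_r - (M^{s,x}_T-M^{s,x}_t),\qquad t\in[s,T].
\]
By the first line of \eqref{E418}, $Y^{s,x}_t=u(t,X_t)$ $\mathbbm{P}^{s,x}$ a.s.\ for each fixed $t\in[s,T]$; hence it suffices to prove that $\mathbbm{P}^{s,x}$ a.s.
\[
\int_t^T f\Big(r,X_r,Y^{s,x}_r,\sqrt{\tfrac{d\langle M^{s,x}\rangle}{dV}}(r)\Big)dV_r = \int_t^T f\big(r,X_r,u(r,X_r),v(r,X_r)\big)dV_r .
\]

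To obtain this, I would first pass from the modification statements of Theorem~\ref{Defuv} to $dV\otimes d\mathbbm{P}^{s,x}$ a.e.\ statements. Since $Y^{s,x}$ and $u(\cdot,X_\cdot)$ are $\mathbbm{P}^{s,x}$-modifications of each other on $[s,T]$, Lemma~\ref{ModifImpliesdV} gives $Y^{s,x}=u(\cdot,X_\cdot)$ $dV\otimes d\mathbbm{P}^{s,x}$ a.e.\ on $[s,T]$, while the second line of \eqref{E418} together with $v\ge 0$ gives $\sqrt{d\langle M^{s,x}\rangle/dV}=v(\cdot,X_\cdot)$ $dV\otimes d\mathbbm{P}^{s,x}$ a.e.\ on $[s,T]$. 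Arguing as in the proof of Lemma~\ref{classdV} (applied to the two-parameter map $(y,z)\mapsto f(\cdot,X_\cdot,y,z)$), there is then a $\mathbbm{P}^{s,x}$-null set outside of which the pair $\big(Y^{s,x}_r,\sqrt{d\langle M^{s,x}\rangle/dV}(r)\big)$ equals $\big(u(r,X_r),v(r,X_r)\big)$ for $dV$-a.e.\ $r\in[s,T]$, so the two integrands coincide $dV$-a.e.\ and the two processes above are indistinguishable on $[s,T]$. For this to be legitimate both integrands must lie in $\mathcal{L}^0(dV\otimes d\mathbbm{P}^{s,x})$: the first does since $(Y^{s,x},M^{s,x})$ is a genuine solution of $BSDE^{s,x}(f,g)$, and the second does since $u,v\in\mathcal{L}^2_X$ (see the remark following Theorem~\ref{Defuv}, where $u=\bar u$ up to a zero-potential set) and $f$ obeys Hypothesis~\ref{Hpq}, so that $f(\cdot,\cdot,u,v)\in\mathcal{L}^2_X\subset\mathcal{L}^0_X$.

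Plugging the resulting equality of the two $dV$-integrals, together with $Y^{s,x}_t=u(t,X_t)$, into the BSDE yields the asserted relation for the fixed $t$. No step is genuinely difficult; the only delicate point is that Theorem~\ref{Defuv} identifies $Y^{s,x}$ with $u(\cdot,X_\cdot)$ only as modifications — $u(\cdot,X_\cdot)$ need not be cadlag — so Lemma~\ref{ModifImpliesdV} and the Lemma~\ref{classdV}-type reasoning are exactly what is needed to carry the identification inside the $dV$-integral.
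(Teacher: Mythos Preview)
Your argument is correct and is precisely the unpacking of the paper's one-line proof, which merely cites Theorem~\ref{Defuv} and Lemma~\ref{ModifImpliesdV}. You spell out the intended mechanism --- upgrading the modification statements to $dV\otimes d\mathbbm{P}^{s,x}$ a.e.\ equalities via Lemma~\ref{ModifImpliesdV} and then substituting inside the $dV$-integral as in Lemma~\ref{classdV} --- exactly as the authors intend.
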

\begin{proof}
The corollary follows from Theorem \ref{Defuv} and Lemma \ref{ModifImpliesdV}. 
\end{proof}

We now introduce now a probabilistic notion of solution for $Pseudo-PDE(f,g)$.
\begin{definition} \label{D417}
A function $u: [0,T] \times E \rightarrow {\mathbbm R}$
will be said 
to be a {\bf martingale solution} of $Pseudo-PDE(f,g)$ if 
% solve $Pseudo-PDE(f,g)$ in the \textbf{martingale sense} if 
$u\in\mathcal{D}(\mathfrak{a})$  and
	\begin{equation}\label{PDEextended}
	\left\{\begin{array}{rcl}
	\mathfrak{a}(u)&=& -f(\cdot,\cdot,u,\sqrt{\mathfrak{G}(u,u)})\\
	u(T,\cdot)&=&g.
	\end{array}\right.
	\end{equation}
\end{definition}
\begin{remark} \label{R417}
The first equation of \eqref{PDEextended} holds in $L^0_X$, hence up to a zero potential set. The second one is a pointwise equality.
\end{remark}

\begin{proposition}\label{CoroClassic}
A classical solution $u$ of $Pseudo-PDE(f,g)$ such that 
$\Gamma(u,u)\in\mathcal{L}^1_X$,  is also a martingale solution.	
\\
Conversely, if $u$ is a martingale solution of $Pseudo-PDE(f,g)$  
%\\$u\in
belonging to $\mathcal{D}(a)$, then $u$ is a classical solution of $Pseudo-PDE(f,g)$ up to a zero-potential set, meaning that the first equality of \eqref{PDE} holds up to a set of zero potential. 
%Version Francesco:	
%A classical solution $u$ of $Pseudo-PDE(f,g)$ 
%such that 
%\\
%$\Gamma(u,u) \in \mathcal{L}^1_X$
%is a martingale solution.

\end{proposition}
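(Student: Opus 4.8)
The plan is to prove the two implications separately, both relying essentially on the uniqueness statements of Propositions \ref{uniquenesspsi} and \ref{P321} together with Corollary \ref{RExtendedClassical}. For the first (direct) implication, suppose $u$ is a classical solution with $\Gamma(u,u)\in\mathcal{L}^1_X$. By Corollary \ref{RExtendedClassical}, the hypothesis $\Gamma(u,u)\in\mathcal{L}^1_X$ guarantees that $u\in\mathcal{D}(\mathfrak{a})$ and that, up to zero potential sets, $\mathfrak{a}(u)=a(u)$ and $\mathfrak{G}(u,u)=\Gamma(u,u)$. Then I would simply substitute these identities into the classical equation \eqref{PDE}: since $a(u)=-f(\cdot,\cdot,u,\sqrt{\Gamma(u,u)})$ on $[0,T]\times E$, we get $\mathfrak{a}(u)=-f(\cdot,\cdot,u,\sqrt{\mathfrak{G}(u,u)})$ in $L^0_X$ (i.e.\ up to a zero potential set), which is exactly the first line of \eqref{PDEextended}; the terminal condition $u(T,\cdot)=g$ is inherited verbatim. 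Hence $u$ is a martingale solution.

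For the converse, suppose $u$ is a martingale solution belonging to $\mathcal{D}(a)$. Since $u\in\mathcal{D}(a)$, Proposition \ref{bracketindomain} tells us that for each $(s,x)$ the local martingale $M[\phi]^{s,x}$ of Notation \ref{Mphi} associated with $\phi=u$ is in $\mathcal{H}^2_{0,loc}$ with angular bracket $\int_s^{\cdot}\Gamma(u,u)(r,X_r)dV_r$. On the other hand, since $u\in\mathcal{D}(\mathfrak{a})$, the process \eqref{E45} with $\psi=\mathfrak{a}(u)$ has, under every $\mathbbm{P}^{s,x}$, a cadlag modification in $\mathcal{H}^2_0$; by the uniqueness part of Proposition \ref{uniquenesspsi} (applied since $u\in\mathcal{D}(a)$), we have $\mathfrak{a}(u)=a(u)$ up to a zero potential set, and moreover this cadlag modification is exactly $M[u]^{s,x}$. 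Then the uniqueness of $\mathfrak{G}$ in Proposition \ref{P321}, combined with the fact that $u\in\mathcal{D}(a)$, gives $\mathfrak{G}(u,u)=\Gamma(u,u)$ up to a zero potential set. Feeding these two identifications into the first line of \eqref{PDEextended}, $\mathfrak{a}(u)=-f(\cdot,\cdot,u,\sqrt{\mathfrak{G}(u,u)})$, yields $a(u)=-f(\cdot,\cdot,u,\sqrt{\Gamma(u,u)})$ up to a zero potential set; together with the pointwise terminal condition $u(T,\cdot)=g$ this is precisely the assertion that $u$ is a classical solution up to a set of zero potential.

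The content is genuinely light once the right earlier results are invoked; the only point requiring care is making sure all the a.e.\ identifications ($\mathfrak{a}(u)=a(u)$, $\mathfrak{G}(u,u)=\Gamma(u,u)$) hold simultaneously outside a \emph{single} zero potential set, which is immediate since a finite (here, countable is not even needed) union of zero potential sets is again of zero potential. The main obstacle, if any, is purely bookkeeping: tracking the distinction between pointwise equalities (the terminal condition) and $L^0_X$-equalities (the generator equation), as flagged in Remark \ref{R417}, and checking that in the converse direction the hypothesis $\Gamma(u,u)\in\mathcal{L}^1_X$ is \emph{not} needed — membership of $u$ in $\mathcal{D}(\mathfrak{a})$ already forces $M[u]^{s,x}\in\mathcal{H}^2_0$, hence $\mathbbm{E}^{s,x}[\int_s^T\Gamma(u,u)(r,X_r)dV_r]<\infty$ for every $(s,x)$, so that in fact $\Gamma(u,u)\in\mathcal{L}^1_X$ holds automatically by Proposition \ref{bracketindomain}.
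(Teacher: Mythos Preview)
Your proof is correct and follows essentially the same route as the paper: both implications rest on identifying $\mathfrak{a}(u)$ with $a(u)$ and $\mathfrak{G}(u,u)$ with $\Gamma(u,u)$ up to zero potential sets, which the paper packages via Corollary \ref{RExtendedClassical} while you unpack it directly through Propositions \ref{uniquenesspsi} and \ref{P321}. Your additional observation that $\Gamma(u,u)\in\mathcal{L}^1_X$ is automatic in the converse direction (via $M[u]^{s,x}\in\mathcal{H}^2_0$ and Proposition \ref{bracketindomain}) is a useful remark that the paper leaves implicit when it invokes Corollary \ref{RExtendedClassical} for the second statement.
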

\begin{proof} 
Let $u$ be a classical solution of $Pseudo-PDE(f,g)$ verifying 
\\$\Gamma(u,u)\in\mathcal{L}^1_X$, 
% to \eqref{PDE} 
 Definition \ref{MarkovPDE}   and Corollary \ref{RExtendedClassical} imply 
that $u\in\mathcal{D}(\mathfrak{a})$, 
\\$u(T,\cdot)=g$, and the equalities up to zero potential sets
\begin{equation}
\mathfrak{a}(u) = a(u)
=-f(\cdot,\cdot,u,\Gamma(u,u))
=-f(\cdot,\cdot,u,\mathfrak{G}(u,u)),
\end{equation}
which shows that $u$ is a martingale solution.
%Concerning the second statement, let $u$ be a  martingale solution of $Pseudo-PDE(f,g)$ with $u\in\mathcal{D}(a)$. 
%Taking \eqref{PDEextended} and \ref{RExtendedClassical} into account, we have 
% $u(T,\cdot)=g$, and the equalities up to zero potential sets
% \begin{equation}
%  a(u)= \mathfrak{a}(u)
% =-f(\cdot,\cdot,u,\mathfrak{G}(u,u)=-f(\cdot,\cdot,u,\Gamma(u,u)).
%\end{equation}
Similarly, the second statement follows by Definition \ref{D417}
and again Corollary \ref{RExtendedClassical}.

%Version Francesco:
%The proof is a consequence of Proposition \ref{classicimpliesBSDE},
%Theorem \ref{Defuv} and Corollary \ref{uvBSDE}.
\end{proof}

%%%%  THEOREME OU PROP? LE VRAI THEOREME EST LE SUIVANT
\begin{theorem} \label{RMartExistence}
%Let $(\mathbbm{P}^{s,x})_{(s,x)\in[0,T]\times E}$ be a Markov class associated to a transition function measurable 
%in time (see Definitions \ref{defMarkov} and \ref{DefFoncTrans}) which
%fulfills Hypothesis \ref{MPwellposed},
%i.e. it is a solution of a well-posed martingale problem associated with
%the triplet $(\mathcal{D}(a),a,V)$.
%Moreover we suppose  Hypothesis $H^{mom}(\zeta,\eta)$ for some positive
% $\zeta,\eta$. Let $\mathfrak{a}$, $\mathfrak{G}$ be the extended operators defined in Definitions \ref{extended} and \ref{extendedgamma}. Let $(f,g)$ be a couple verifying $H(\zeta,\eta)$.
Assume Hypothesis \ref{HypX} and \ref{Hpq} and  let $(u,v)$ be the functions defined in Theorem \ref{Defuv}. 

Then $u\in\mathcal{D}(\mathfrak{a})$, $v^2 = \mathfrak{G}(u,u)$ and
$u$ is a martingale solution of $Pseudo-PDE(f,g)$.
% $u$ solves $Pseudo-PDE(f,g)$ in the \textbf{martingale sense}.
\end{theorem}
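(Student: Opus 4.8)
The plan is to show that the function $u$ from Theorem \ref{Defuv} lies in $\mathcal{D}(\mathfrak{a})$ by producing, for every $(s,x)$, a cadlag modification in $\mathcal{H}^2_0$ of the process $\mathds{1}_{[s,T]}(u(\cdot,X_\cdot)-u(s,x)-\int_s^\cdot \psi(r,X_r)dV_r)$ with a single Borel function $\psi$ independent of $(s,x)$; the natural candidate is $\psi := -f(\cdot,\cdot,u,v)$. First I would invoke Corollary \ref{uvBSDE}, which gives, for every $(s,x)$ and all $t\in[s,T]$, $\mathbbm{P}^{s,x}$-a.s.
\begin{equation*}
u(t,X_t) = g(X_T) + \int_t^T f(r,X_r,u(r,X_r),v(r,X_r))dV_r - (M^{s,x}_T - M^{s,x}_t),
\end{equation*}
with $M^{s,x}$ the martingale part of $BSDE^{s,x}(f,g)$, which belongs to $\mathcal{H}^2_0$. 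Since $u$ and $v$ belong to $\mathcal{L}^2_X$ (by Theorem \ref{Defuv} and the remark following it) and $f$ satisfies Hypothesis \ref{Hpq}, the process $\tilde f := f(\cdot,\cdot,u,v)$ lies in $\mathcal{L}^2_X$, so $r\mapsto \tilde f(r,X_r)$ is $dV\otimes d\mathbbm{P}^{s,x}$-integrable and $\int_s^\cdot \tilde f(r,X_r)dV_r$ is well defined. Rearranging the displayed identity on $[s,T]$ gives, $\mathbbm{P}^{s,x}$-a.s.,
\begin{equation*}
u(t,X_t) - u(s,X_s) - \int_s^t \big(-\tilde f(r,X_r)\big)dV_r = M^{s,x}_t - M^{s,x}_s = M^{s,x}_t,
\end{equation*}
using $M^{s,x}_s = 0$ and $X_s = x$ $\mathbbm{P}^{s,x}$-a.s. (condition (a) of Definition \ref{MartingaleProblem}). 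Hence $M^{s,x}\in\mathcal{H}^2_0$ is exactly a cadlag modification on $[s,T]$ of the process in \eqref{E45} with $\psi = -\tilde f$, so $u\in\mathcal{D}(\mathfrak{a})$ and, by Proposition \ref{uniquenesspsi}, $\mathfrak{a}(u) = -f(\cdot,\cdot,u,v)$ up to zero potential.

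Next I would identify $v^2$ with $\mathfrak{G}(u,u)$. By the construction just made, $M[u]^{s,x} = M^{s,x}$ (the cadlag version from Definition \ref{extended}), so by Proposition \ref{P321} we have $\langle M^{s,x}\rangle = \int_s^\cdot \mathfrak{G}(u,u)(r,X_r)dV_r$ on $[s,T]$ under every $\mathbbm{P}^{s,x}$; differentiating, $\frac{d\langle M^{s,x}\rangle}{dV} = \mathfrak{G}(u,u)(\cdot,X_\cdot)$ $dV\otimes d\mathbbm{P}^{s,x}$-a.e. On the other hand the second line of \eqref{E418} in Theorem \ref{Defuv} gives $\frac{d\langle M^{s,x}\rangle}{dV} = v^2(\cdot,X_\cdot)$ $dV\otimes d\mathbbm{P}^{s,x}$-a.e. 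Thus $v^2(\cdot,X_\cdot) = \mathfrak{G}(u,u)(\cdot,X_\cdot)$ $dV\otimes d\mathbbm{P}^{s,x}$-a.e. for every $(s,x)$, which by Proposition \ref{uniquenessupto} means $v^2 = \mathfrak{G}(u,u)$ up to a zero potential set, in particular as elements of $L^0_X$.

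Combining the two points: $\mathfrak{a}(u) = -f(\cdot,\cdot,u,v) = -f(\cdot,\cdot,u,\sqrt{\mathfrak{G}(u,u)})$ in $L^0_X$ (using $v\geq 0$, so $v = \sqrt{v^2} = \sqrt{\mathfrak{G}(u,u)}$ up to zero potential), and $u(T,\cdot) = g$ pointwise since plugging $t=T$ into Corollary \ref{uvBSDE} (or directly the terminal condition of $BSDE^{s,x}(f,g)$, evaluated via $Y^{s,x}_T = u(T,X_T) = g(X_T)$ $\mathbbm{P}^{s,x}$-a.s. for all $(s,x)$, hence for $(s,x)=(T,y)$ one gets $u(T,y)=g(y)$). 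This verifies \eqref{PDEextended}, so $u$ is a martingale solution. The main obstacle I anticipate is the bookkeeping around "up to zero potential" equalities: one must be careful that $\psi = -f(\cdot,\cdot,u,v)$ is genuinely a single Borel function (not merely an $L^0_X$-class) serving simultaneously for all $(s,x)$ — this is fine because $u$, $v$ are fixed Borel functions from Theorem \ref{Defuv} — and that the terminal-condition identity is a pointwise statement, obtained by specializing the a.s. identity $Y^{s,x}_T = g(X_T)$ under $\mathbbm{P}^{s,x}$ to starting points $(T,y)$ where $X_T = y$ a.s.
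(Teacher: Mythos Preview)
Your proof is correct and follows essentially the same route as the paper's: invoke Corollary \ref{uvBSDE}, rearrange to see that $-f(\cdot,\cdot,u,v)$ serves as the Borel $\psi$ in Definition \ref{domainextended}, identify $M[u]^{s,x}$ with the martingale part of $BSDE^{s,x}(f,g)$, then use Theorem \ref{Defuv} together with Proposition \ref{P321} to get $v^2=\mathfrak{G}(u,u)$, and finish the terminal condition by specializing to $(s,x)=(T,y)$. One small imprecision: you assert $M^{s,x}_s=0$, which is not justified in the paper's generality (the germ $\sigma$-field $\mathcal{F}^{s,x}_s$ need not be trivial); the paper sidesteps this by writing $M[u]^{s,x}=M^{s,x}_{\cdot}-M^{s,x}_s$, and your argument goes through verbatim with that shifted martingale, so nothing essential is affected.
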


\begin{proof}
For any $(s,x)\in[0,T]\times E$, by Corollary \ref{uvBSDE}, for $t \in [s,T]$,  we have
\\
$u(t,X_t)-u(s,x) = -\int_s^tf(r,X_r,u(r,X_r),v(r,X_r))dV_r +(M^{s,x}_t-M^{s,x}_s)\quad \mathbbm{P}^{s,x}$ a.s.
so by Definition \ref{extended},  $u\in \mathcal{D}(\mathfrak{a})$, $\mathfrak{a}(u)= -f(\cdot,\cdot,u,v)$ and 
\\
$M[u]^{s,x}=M^{s,x}_{\cdot}-M^{s,x}_s$.
\\
Moreover by Theorem \ref{Defuv} we have $\frac{d\langle M^{s,x}\rangle}{dV}=v^2(\cdot,X_{\cdot})$ $dV\otimes d\mathbbm{P}^{s,x}$ a.e. on $[s,T]$, 
so by Proposition \ref{P321} 
%by Proposition \ref{uniquenessupto}
 it follows $v^2 = \mathfrak{G}(u,u)$ and therefore,
the $L^2_X$ equality
\\
$\mathfrak{a}(u)= -f(\cdot,\cdot,u,\sqrt{\mathfrak{G}(u,u)})$,
which establishes the first line of \eqref{PDEextended}.
\\
Concerning the second line, we have for any $x\in E$,
\\
%%the last item, for any $x\in E$, we also have under $\mathbbm{P}^{T,x}$ the a.s. equalities 
$u(T,x)=u(T,X_T)=g(X_T)=g(x)$ $\mathbbm{P}^{T,x}$ a.s. so $u(T,\cdot)=g$ (in the deterministic pointwise sense). 
\end{proof}
% \begin{remark}
% The equality $\mathfrak{a}(u)= -f(\cdot,\cdot,u,\sqrt{\mathfrak{G}(u,u)})$ 
% takes place in $L^2_X$.
% \end{remark}
%So in the most general setup, the function $u$ constructed by
% the $BSDE^{s,x}(f,g)$ is a martingale solution of  $Pseudo-PDE(f,g)$.
% in the martingale sense.
% However, a priori, the extended operators 
%have  no analytical meaning.

%%%%%%% . FAUT-IL EN PARLER ICI? N'EST
%PAS SUFFISANT DANS L'INTRODUTION?
% In  the companion paper  \cite{paper2}
% we will show that $u$ also solves $Pseudo-PDE(f,g)$ in
%  a more specific analytical mild sense.
%However,
 % The notion  of martingale solution for  Pseudo-PDE 
% stated above  has  two interesting features. First,
%   any classical solution (see Proposition \ref{CoroClassic}) 
% of
% $Pseudo-PDE(f,g)$ is  a martingale solution,
%  secondly
We conclude the section  with  Theorem  \ref{P418} 
which states that  the previously constructed martingale solution of $Pseudo-PDE(f,g)$
%\eqref{PDEextended}
 is unique.

%
%so by 
%Corollary \ref{uvBSDE} and Definition \ref{domainextended}, it is clear that $u'\in\mathcal{D}(\mathfrak{a})$ with $\mathfrak{a}(u')=-f(\cdot,\cdot,u,v)$, where $u,v$ are the functions built in Theorem \ref{Defuv}.  We know by definition of $u$ that for any $(s,x)\in[0,T]\times E$,
%$Y^{s,x}$ is also a $\mathbbm{P}^{s,x}$-version of $u(\cdot, X_{\cdot})$ on $[s,T]$.
%So  $u'(\cdot,X_{\cdot})$ and $u(\cdot,X_{\cdot})$ are 
%$\mathbbm{P}^{s,x}$-modifications on $[s,T]$;
% by Lemma \ref{ModifImpliesdV} and Proposition \ref{uniquenessupto}, $u=u'$ up to a zero potential set. Moreover by Proposition \ref{bracketindomain}, under any $\mathbbm{P}^{s,x}$,
%\begin{equation*}
% \int_s^{\cdot}\Gamma(u',u')(r,X_r)dV_r=\langle M[u']^{s,x}\rangle=\langle M^{s,x}\rangle = \int_s^{\cdot} v^2(r,X_r)dV_r,
%\end{equation*}
%so $v^2=\Gamma(u',u')$ up to a zero potential set, and $u'$ solves $Pseudo-PDE(f,g)$  in the martingale sense. 

%So under a certain growth condition on the carr\'e du champs operator, the only possible classical solution is $u$.

\begin{theorem} \label{P418}
%The problem \eqref{PDEextended} admits a  unique solution.
Under Hypothesis \ref{HypX} and \ref{Hpq}, $Pseudo-PDE(f,g)$ admits a unique martingale solution.
\end{theorem}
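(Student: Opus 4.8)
The existence of a martingale solution is already contained in Theorem \ref{RMartExistence}, where the function $u^\star$ of Theorem \ref{Defuv} (together with $v$) was shown to solve \eqref{PDEextended}; hence only uniqueness has to be proved. The plan is to show that an arbitrary martingale solution $u$ satisfies $u(s,x)=Y^{s,x}_s$ for every $(s,x)\in[0,T]\times E$, where $(Y^{s,x},M^{s,x})$ is the solution of $BSDE^{s,x}(f,g)$ from Notation \ref{N55}; since by Theorem \ref{Defuv} we also have $Y^{s,x}_s=u^\star(s,x)$, this forces $u=u^\star$.

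Fix $(s,x)$ with $s<T$ (the case $s=T$ is trivial since $u(T,\cdot)=g=u^\star(T,\cdot)$). As $u\in\mathcal{D}(\mathfrak{a})$, let $M[u]^{s,x}$ be given by Definition \ref{extended}: it belongs to $\mathcal{H}^2_0$, vanishes on $[0,s]$, and is the cadlag $\mathbbm{P}^{s,x}$-modification of $\mathds{1}_{[s,T]}\big(u(\cdot,X_\cdot)-u(s,x)-\int_s^\cdot\mathfrak{a}(u)(r,X_r)dV_r\big)$. Set $\tilde Y_t:=u(s,x)+\int_s^t\mathfrak{a}(u)(r,X_r)dV_r+M[u]^{s,x}_t$ for $t\in[s,T]$; this is a cadlag adapted process, a $\mathbbm{P}^{s,x}$-modification of $t\mapsto u(t,X_t)$ on $[s,T]$, with $\tilde Y_s=u(s,x)\in L^2$ and $M[u]^{s,x}_s=0$. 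The next step is to rewrite the identity defining $\tilde Y$ as the BSDE \eqref{EqLEDPext} on $[s,T]$ for the data $\mathbbm{P}:=\mathbbm{P}^{s,x}$, $\xi:=g(X_T)$, $\hat f(r,\cdot,y,z):=f(r,X_r,y,z)$ (which verify Hypothesis \ref{HypBSDE} by Remark \ref{MarkovBSDEsol}). The needed identifications are: $\tilde Y_T=u(T,X_T)=g(X_T)$ $\mathbbm{P}^{s,x}$-a.s., from $u(T,\cdot)=g$; the $L^0_X$-equality $\mathfrak{a}(u)=-f(\cdot,\cdot,u,\sqrt{\mathfrak{G}(u,u)})$, which by Proposition \ref{uniquenessupto} makes $\int_s^\cdot\mathfrak{a}(u)(r,X_r)dV_r$ and $-\int_s^\cdot f(r,X_r,u(r,X_r),\sqrt{\mathfrak{G}(u,u)(r,X_r)})dV_r$ $\mathbbm{P}^{s,x}$-indistinguishable; the identity $\langle M[u]^{s,x}\rangle=\int_s^\cdot\mathfrak{G}(u,u)(r,X_r)dV_r$ of Proposition \ref{P321}, whence $\sqrt{\frac{d\langle M[u]^{s,x}\rangle}{dV}}(r)=\sqrt{\mathfrak{G}(u,u)(r,X_r)}$ and, by Lemma \ref{ModifImpliesdV}, $\tilde Y_r=u(r,X_r)$, both $dV\otimes d\mathbbm{P}^{s,x}$ a.e.; and Lemma \ref{classdV}, which guarantees that performing these substitutions inside the $dV$-integral does not alter the process. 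Note that $\mathfrak{G}(u,u)\in\mathcal{L}^1_X$ (since $M[u]^{s,x}\in\mathcal{H}^2_0$ for every $(s,x)$) and $\mathfrak{a}(u)\in L^0_X$ supply the integrability these manipulations require.

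Once \eqref{EqLEDPext} is established for $(\tilde Y,M[u]^{s,x})$ on $[s,T]$, the second part of Lemma \ref{LED+Pext} shows that $(\tilde Y,M[u]^{s,x})$ and $(Y^{s,x},M^{s,x}_\cdot-M^{s,x}_s)$ are $\mathbbm{P}^{s,x}$-indistinguishable on $[s,T]$; taking $t=s$ and using $X_s=x$ $\mathbbm{P}^{s,x}$-a.s.\ yields the equality of constants $u(s,x)=\tilde Y_s=Y^{s,x}_s=u^\star(s,x)$, and since $(s,x)$ was arbitrary, $u=u^\star$. I expect the main obstacle to be the second paragraph: carefully upgrading the mere martingale-problem description of $M[u]^{s,x}$ into a genuine instance of $BSDE(\xi,\hat f,V)$ with driver $f$ evaluated at $\big(\tilde Y_r,\sqrt{\frac{d\langle M[u]^{s,x}\rangle}{dV}}(r)\big)$, which amounts to chaining the ``up to zero potential'' and ``$dV\otimes d\mathbbm{P}^{s,x}$-a.e.'' identifications through Propositions \ref{uniquenessupto} and \ref{P321} and Lemmas \ref{classdV} and \ref{ModifImpliesdV}, and verifying the integrability required by Lemma \ref{LED+Pext}; after that, uniqueness is immediate from Theorem \ref{uniquenessBSDE} and Theorem \ref{Defuv}.
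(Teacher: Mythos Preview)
Your proof is correct and follows essentially the same route as the paper's own argument: both show that any martingale solution $u$ gives rise, under each $\mathbbm{P}^{s,x}$, to a couple $(\tilde Y,M[u]^{s,x})$ satisfying \eqref{EqLEDPext} on $[s,T]$ (via Proposition \ref{P321}, Lemma \ref{ModifImpliesdV}, and Proposition \ref{uniquenessupto}), then invoke Lemma \ref{LED+Pext} and BSDE uniqueness to identify it with $(Y^{s,x},M^{s,x}_\cdot-M^{s,x}_s)$, and finally evaluate at $t=s$. The only cosmetic difference is that the paper compares two arbitrary martingale solutions $u,u'$ directly, whereas you compare an arbitrary $u$ to the canonical $u^\star$ of Theorem \ref{Defuv}; this is logically equivalent and uses identical machinery.
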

\begin{proof}
Existence has been the object of Theorem \ref{RMartExistence}.

Let $u$ and $u'$ be two elements of $\mathcal{D}(\mathfrak{a})$ solving \eqref{PDEextended} and let 
$(s,x)\in[0,T]\times E$ be fixed. By Definition \ref{domainextended} and Remark
\ref{BSDESmallInt}, the process $u(\cdot,X_{\cdot})$ (respectively $u'(\cdot,X_{\cdot})$) under $\mathbbm{P}^{s,x}$ admits 
 a cadlag modification 
 $U^{s,x}$ (respectively $U'^{s,x}$) on $[s,T]$,
 which is a special semi-martingale with decomposition 
\begin{equation} \label{E531}
\begin{array}{rcl}
U^{s,x} &=& u(s,x) + \int_s^{\cdot} \mathfrak{a}(u)(r,X_r)dV_r + M[u]^{s,x} \\
&=&   u(s,x) - \int_s^{\cdot} f\left(r,X_r,u(r,X_r),\sqrt{\mathfrak{G}(u,u)}(r,X_r)\right)dV_r + M[u]^{s,x} \\
&=&	u(s,x) - \int_s^{\cdot} f\left(r,X_r,U^{s,x},\sqrt{\mathfrak{G}(u,u)}(r,X_r)\right)dV_r + M[u]^{s,x}, 
\end{array}
\end{equation} 
 where the third equality of \eqref{E531} comes from Lemma \ref{ModifImpliesdV}.
Similarly we have   $U'^{s,x}=u'(s,x) - \int_s^{\cdot} f\left(r,X_r,U'^{s,x},\sqrt{\mathfrak{G}(u',u')}(r,X_r)\right)dV_r + M[u']^{s,x}$).
\\
The processes $M[u]^{s,x}$ and $M[u']^{s,x}$  (introduced at Definition \ref{extended}) belong to
 $\mathcal{H}^2_0$; by
Proposition \ref{P321},
 $\langle M[u]^{s,x}\rangle =\int_s^{\cdot}\mathfrak{G}(u,u)(r,X_r)dV_r$ (respectively 
 \\
  $\langle M[u']^{s,x}\rangle =\int_s^{\cdot}\mathfrak{G}(u',u')(r,X_r)dV_r$). 
Moreover since 
\\
$u(T,\cdot)=u'(T,\cdot)=g$, then  $u(T,X_T)=u'(T,X_T)=g(X_T)$ a.s. then 
the couples  $(U^{s,x}, M[u]^{s,x})$ and $(U'^{s,x}, M[u']^{s,x})$ both 
verify the equation (with respect to $\mathbbm P^{s,x}$).
\begin{equation} \label{EBSDEweaker}
Y_{\cdot} = g(X_T)+\int_{\cdot}^Tf\left(r,X_r,Y_r,\sqrt{\frac{d\langle M\rangle}{dV}}(r)\right)dV_r - (M_T-M_{\cdot})
\end{equation}
on $[s,T]$.
\\
Even though we do not have a priori information  on the square
 integrability of $U^{s,x}$ and $U'^{s,x}$, we know that  $M[u]^{s,x}$ and $M[u']^{s,x}$ are 
in $\mathcal{H}^2$ and equal to zero at time $s$, and that $U^{s,x}_s$ and $U'^{s,x}_s$ are deterministic so $L^2$. By  Lemma \ref{LED+Pext} and
the fact that $(U^{s,x},M[u]^{s,x})$ and $(U'^{s,x},M[u']^{s,x})$ 
solve the BSDE in the weaker sense \eqref{EBSDEweaker}, it is sufficient to 
%\eqref{EqLedPext}.
conclude that both
solve $BSDE^{s,x}(f,g)$ on $[s,T]$. 
By  Theorem \ref{uniquenessBSDE} and Remark \ref{BSDESmallInt} 
the two couples are $\mathbb {P}^{s,x}$-indistinguishable.
This implies that $u(\cdot,X_{\cdot})$ and $u'(\cdot,X_{\cdot})$ are 
$\mathbb {P}^{s,x}$-modifications one of the other on $[s,T]$. In particular, considering their values at time $s$, we have $u(s,x)=u'(s,x)$. We therefore have $u'=u$.

\end{proof}
\begin{corollary} 
There is at most one classical solution $u$ of $Pseudo-PDE(f,g)$ 
such that $\Gamma(u,u) \in {\mathcal L}^1_X$.
\end{corollary}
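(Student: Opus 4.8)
The plan is to reduce the uniqueness of classical solutions (with the integrability condition $\Gamma(u,u)\in\mathcal{L}^1_X$) directly to the already-established uniqueness of martingale solutions, Theorem~\ref{P418}. First I would invoke Proposition~\ref{CoroClassic}: any classical solution $u$ of $Pseudo-PDE(f,g)$ satisfying $\Gamma(u,u)\in\mathcal{L}^1_X$ is in particular a martingale solution of $Pseudo-PDE(f,g)$. Hence if $u_1$ and $u_2$ are two such classical solutions, they are both martingale solutions.

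Then I would apply Theorem~\ref{P418}, which asserts that under Hypotheses~\ref{HypX} and \ref{Hpq} the equation $Pseudo-PDE(f,g)$ admits a \emph{unique} martingale solution. Consequently $u_1$ and $u_2$ must coincide as martingale solutions. A minor point to address is the precise sense of this equality: the first line of \eqref{PDEextended} holds in $L^0_X$, i.e.\ up to a zero-potential set, but the boundary condition $u(T,\cdot)=g$ is pointwise; tracking through the proof of Theorem~\ref{P418} one actually gets $u_1(s,x)=u_2(s,x)$ for every $(s,x)\in[0,T]\times E$, so the identification is genuinely pointwise, and I would simply quote that conclusion.

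There is essentially no obstacle here: the corollary is a direct logical consequence of Proposition~\ref{CoroClassic} (classical $\Rightarrow$ martingale solution, using the hypothesis $\Gamma(u,u)\in\mathcal{L}^1_X$ to land in $\mathcal{D}(\mathfrak a)$ via Corollary~\ref{RExtendedClassical}) followed by the uniqueness part of Theorem~\ref{P418}. The only thing worth making explicit is that the integrability assumption $\Gamma(u,u)\in\mathcal{L}^1_X$ is exactly what is needed to apply Proposition~\ref{CoroClassic}, and that ``at most one'' is the correct phrasing since existence of a classical solution is not claimed (only the canonically constructed $u$ of Theorem~\ref{Defuv} is known to be a martingale solution, not necessarily a classical one).

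\begin{proof}
Let $u_1$ and $u_2$ be two classical solutions of $Pseudo-PDE(f,g)$ with $\Gamma(u_i,u_i)\in\mathcal{L}^1_X$ for $i=1,2$. By Proposition~\ref{CoroClassic}, each $u_i$ is a martingale solution of $Pseudo-PDE(f,g)$. By Theorem~\ref{P418}, the martingale solution is unique; more precisely, the proof of that theorem shows that two martingale solutions agree pointwise on $[0,T]\times E$. Hence $u_1=u_2$.
\end{proof}
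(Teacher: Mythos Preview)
Your proof is correct and matches the paper's own proof exactly: the paper simply writes ``The proof follows from Proposition~\ref{CoroClassic} and Theorem~\ref{P418}.'' Your additional remark that the proof of Theorem~\ref{P418} yields genuine pointwise equality (not merely up to zero-potential sets) is a helpful clarification that the paper leaves implicit.
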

\begin{proof}
The proof follows from Proposition \ref{CoroClassic} and Theorem \ref{P418}.
\end{proof}

\section{Applications}
\label{SUpcoming}

In \cite{paper2} which is the continuation of the present paper,
 several examples are  studied. The examples below
fit in the framework of Section \ref{SecMProcess}.
\\
A first typical example is the setup of jump diffusions as in the formalism D.W. Stroock in \cite{stroock1975diffusion}. These are Markov processes which solve a Martingale problem associated to an operator of the type 
\begin{equation*}
\begin{array}{rcl}
a(\phi) &=&\partial_t\phi + \frac{1}{2}\underset{i,j\leq d}{\sum} (\sigma\sigma^\intercal)_{i,j}\partial^2_{x_ix_j}\phi + \underset{i\leq d}{\sum} \mu_i\partial_{x_i}\phi \\
&&+\int\left(\phi(\cdot,\cdot+y)-\phi(\cdot,y)-\frac{1}{1+\|y\|^2}\underset{i\leq d}{\sum}y_i\partial_{x_i}\phi\right)K(\cdot,\cdot,dy),
\end{array}
\end{equation*}
where $\mu$ is a bounded Borel function with values in $\mathbbm{R}^d$ and $\sigma$ is a continuousBorel function with values in 
$GL_d(\mathbbm{R})$, the set of invertible matrices of size $d$. 
$K$ is a L\'evy kernel.
%, meaning that for every $(t,x)\in[0,T]\times \mathbbm{R}^d$, $K(t,x,\cdot)$ is a $\sigma$-finite measure on $\mathbbm{R}^d\backslash\{0\}$ verifying
%\\
%$\int \frac{\| y\|^2}{1+\| y\|^2}K(t,x,dy)<\infty$ and for every Borel set $A\in\mathcal{B}(\mathbbm{R}^d\backslash\{0\})$,
%\\
%$(t,x)\longmapsto \int_A \frac{\| y\|^2}{1+\| y\|^2}K(t,x,dy)$ is Borel.
\\
\\
We also study Markov processes associated to a large class of pseudo-differential operators 
%of type $q(\cdot,D)$ where
%\begin{equation}
%q(\cdot,D)(\phi):x\longmapsto\frac{1}{(2\pi)^{\frac{d}{2}}}\int_{\mathbbm{R}^d} e^{i(x,\xi)}q(x,\xi)\hat{\phi}(\xi)d\xi.
%\end{equation}
with the formalism of N. Jacob in \cite{jacob2005pseudo}.
% Here $\hat{\phi}$ denotes the Fourier transform of $\phi$.
A typical example of equation considered is
\begin{equation}\label{PDEsymbol}
\left\{
\begin{array}{lcl}
\partial_tu-(-\Delta)^{\frac{\alpha}{2}}u = f(\cdot,\cdot,u,\sqrt{\Gamma^{\alpha}(u,u)}) \,\text{on }[0,T]\times\mathbbm{R}^d\\
u(T,\cdot)=g.
\end{array}\right.
\end{equation}
Here, the fractional Laplace operator $(-\Delta)^{\frac{\alpha}{2}}$ is given for some $\alpha\in]0,2[$ by $\phi\longmapsto  c_{\alpha}PV\int_{\mathbbm{R}^d} \frac{(\phi(\cdot+y)-\phi)}{\| y\|^{d+\alpha}}dy$ where $c_{\alpha}$ is some positive constant and $PV$ denotes the principal value operator. 
\begin{equation}
\Gamma^{\alpha}(\phi,\phi)=c_{\alpha}PV\int_{\mathbbm{R}^d} \frac{(\phi(\cdot,\cdot+y)-\phi)^2}{\| y\|^{d+\alpha}}dy
\end{equation}
is the corresponding Carré du champ. The forward process of the corresponding BSDEs is the $\alpha$-stable Levy process.
\\
\\
An other example of application is given by solutions of SDEs with distributional drift, which are studied in \cite{frw1}.  These  permit to tackle semilinear parabolic PDEs with distributional drift of type 
\begin{equation}\label{PDEdistri}
\left\{
\begin{array}{l}
 \partial_tu + \frac{1}{2}\sigma^2\partial^2_x u + b'\partial_xu +f(\cdot,\cdot,u,\sigma|\partial_xu|)=0\quad\text{ on }[0,T]\times\mathbbm{R}\\
 u(T,\cdot) = g,
\end{array}\right.
\end{equation}
where $b$ is only a continuous function, hence $b'$ is a distribution.
\\
\\
Finally, examples in non Euclidean state spaces are given with the study of diffusions in a compact differential manifold $M$. A typical example is the Brownian motion in a Riemannian manifold. The equation considered is then of type
\begin{equation}\label{PDEmanifold}
\left\{
\begin{array}{l}
 \partial_tu + \Delta_M u +f(\cdot,\cdot,u,\|\nabla_Mu\|_2)=0\quad\text{ on }[0,T]\times M\\
 u(T,\cdot) = g,
\end{array}\right.
\end{equation}
where $\Delta_M$  is the Laplace-Beltrami operator and $\nabla_M$ is the gradient in local coordinates. More general equations are considered in \cite{paper2}.

\begin{appendices}

\section{Markov classes}\label{A1}

We recall in this Appendix some basic definitions and results concerning Markov processes. For a complete study of homogeneous Markov processes, one may consult \cite{dellmeyerD}, concerning non-homogeneous Markov classes, our reference was chapter VI of \cite{dynkin1982markov}. Some results are only stated, but the advised reader may consult \cite{paper1preprint1} and \cite{paperAF} in which all announced results are carefully proven.

The first definition refers to the  canonical space that one can find in \cite{jacod79}, see paragraph 12.63.
\begin{notation}\label{canonicalspace}
In the whole section  $E$ will be a fixed  Polish  space (a separable completely metrizable topological space), and $\mathcal{B}(E)$  its Borel $\sigma$-field. $E$ will be called the \textbf{state space}. 
\\
\\
We consider $T\in\mathbbm{R}^*_+$. We denote $\Omega:=\mathbbm{D}(E)$ the Skorokhod space of functions from $[0,T]$ to $E$  right-continuous  with left limits and continuous at time $T$ (e.g. cadlag). For any $t\in[0,T]$ we denote the coordinate mapping $X_t:\omega\mapsto\omega(t)$, and we introduce on $\Omega$ the $\sigma$-field  $\mathcal{F}:=\sigma(X_r|r\in[0,T])$. 
\\
\\
On the measurable space $(\Omega,\mathcal{F})$, we introduce the measurable \textbf{canonical process}
\begin{equation*}
X:
\begin{array}{rcl}
(t,\omega)&\longmapsto& \omega(t)\\ \relax
([0,T]\times \Omega,\mathcal{B}([0,T])\otimes\mathcal{F}) &\longrightarrow & (E,\mathcal{B}(E)),
\end{array}
\end{equation*}
and the right-continuous filtration $(\mathcal{F}_t)_{t\in[0,T]}$ where $\mathcal{F}_t:=\underset{s\in]t,T]}{\bigcap}\sigma(X_r|r\leq s)$ if $t<T$, and $\mathcal{F}_T:= \sigma(X_r|r\in[0,T])=\mathcal{F}$.
\\
\\
$\left(\Omega,\mathcal{F},(X_t)_{t\in[0,T]},(\mathcal{F}_t)_{t\in[0,T]}\right)$ will be called the \textbf{canonical space} (associated to $T$ and $E$).
\\
\\
For any $t \in [0,T]$ we denote $\mathcal{F}_{t,T}:=\sigma(X_r|r\geq t)$, and
for any $0\leq t\leq u<T$ we will denote
$\mathcal{F}_{t,u}:= \underset{n\geq 0}{\bigcap}\sigma(X_r|r\in[t,u+\frac{1}{n}])$.
\end{notation}
%We recall that since $E$ is Polish, then $\mathbbm{D}(E)$ can be equipped with a Skorokhod distance which makes it a Polish metric space (see Theorem 5.6 in chapter 3 of \cite{EthierKurz}, and for which the Borel $\sigma$-field is $\mathcal{F}$ (see Proposition 7.1 in chapter 3 of \cite{EthierKurz}). This in particular implies that $\mathcal{F}$ is separable, as the Borel $\sigma$-field of a separable metric space.

\begin{remark}
Previous definitions and all the notions of this Appendix,
 extend to a time interval equal to $\mathbbm{R}_+$ or replacing the Skorokhod space with the Wiener space of continuous functions from $[0,T]$ (or $\mathbbm{R}_+$) to $E$.
% but since our goal is to work on a finite time interval, we will not consider this situation.
\end{remark}

\begin{definition}\label{Defp}
The function 
\begin{equation*}
    p:\begin{array}{rcl}
        (s,x,t,A) &\longmapsto& p(s,x,t,A)   \\ \relax
        [0,T]\times E\times[0,T]\times\mathcal{B}(E) &\longrightarrow& [0,1], 
    \end{array}
\end{equation*}
will be called \textbf{transition function} if, for any $s,t$ in $[0,T]$, $x\in E$,  $A\in \mathcal{B}(E)$, it verifies

\begin{enumerate}
\item $x \mapsto p(s,x,t,A)$ is Borel,
\item $B \mapsto p(s,x,t,B)$ is a probability measure on $(E,\mathcal{B}(E))$,
\item if $t\leq s$ then $p(s,x,t,A)=\mathds{1}_A(x)$,
\item if $s<t$, for any $u>t$, $\int_{E} p(s,x,t,dy)p(t,y,u,A) = p(s,x,u,A)$.
\end{enumerate}
\end{definition}
The latter statement is the well-known \textbf{Chapman-Kolmogorov equation}.

\begin{definition}\label{DefFoncTrans}
A transition function $p$ for which  the first item is reinforced 
supposing that $(s,x)\longmapsto p(s,x,t,A)$ is Borel for any $t,A$,
 will be said  \textbf{measurable in time}.

\end{definition}
%\begin{remark} \label{RDefFoncTrans}
% Let $p$ be a transition function which is measurable in time.
%By approximation by step functions, one can easily show that,
%  for any Borel function $\phi$ from $E$ to $\mathbbm{R}$ then
%$(s,x)\mapsto \int \phi(y)p(s,x,t,dy)$ is Borel, provided
%previous integral  makes sense.
%In this paper we will only consider transition functions which are measurable in time.
%\end{remark}
\begin{definition}\label{defMarkov}
A \textbf{canonical Markov class} associated to a transition function $p$ is a set of probability measures $(\mathbbm{P}^{s,x})_{(s,x)\in[0,T]\times E}$ defined on the measurable space 
$(\Omega,\mathcal{F})$ and verifying for any $t \in [0,T]$ and $A\in\mathcal{B}(E)$
\begin{equation}\label{Markov1}
\mathbbm{P}^{s,x}(X_t\in A)=p(s,x,t,A),
\end{equation}
and for any $s\leq t\leq u$
\begin{equation}\label{Markov2}
\mathbbm{P}^{s,x}(X_u\in A|\mathcal{F}_t)=p(t,X_t,u,A)\quad \mathbbm{P}^{s,x}\text{ a.s.}
\end{equation}
%\eqref{Markov2} will  be called the \textbf{Markov property}.
\end{definition}
\begin{remark}\label{Rfuturefiltration}
	Formula 1.7 in Chapter 6 of \cite{dynkin1982markov} states
	that for any $F\in \mathcal{F}_{t,T}$ yields
	\begin{equation}\label{Markov3}
	\mathbbm{P}^{s,x}(F|\mathcal{F}_t) = \mathbbm{P}^{t,X_t}(F)=\mathbbm{P}^{s,x}(F|X_t)\,\text{  }\,  \mathbbm{P}^{s,x} \text{a.s.}
	\end{equation}
	Property  \eqref{Markov3}  will  be called 
	\textbf{Markov property}.
\end{remark}
For the rest of this section, we are given a canonical Markov class $(\mathbbm{P}^{s,x})_{(s,x)\in[0,T]\times E}$ which transition function is measurable in time.

\begin{proposition}\label{Borel}
For any event $F\in \mathcal{F}$,  
$(s,x)\longmapsto \mathbbm{P}^{s,x}(F)$ is Borel.
% For any random variable $Z$ such that all these expectations exist and are
For any random variable $Z$, if the function $(s,x)\longmapsto \mathbbm{E}^{s,x}[Z]$ 
is well-defined (with possible values in $[-\infty, \infty]$),
then it is Borel. 
\end{proposition}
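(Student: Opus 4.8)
The plan is to run a monotone-class argument, after first describing $\mathbbm{P}^{s,x}$ on cylinder events by means of the transition function $p$, whose measurability in time is the crucial ingredient.

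The first thing I would establish is a measurability lemma: for fixed $0\le s\le t\le T$ and every bounded Borel $h:E\to\mathbbm{R}$, the map $(s,x)\mapsto\int_E h(y)\,p(s,x,t,dy)$ is Borel on $[0,T]\times E$. For $h=\mathds{1}_A$ this is exactly the hypothesis that $p$ is measurable in time (Definition \ref{DefFoncTrans}); by linearity it holds for simple $h$, and by bounded convergence together with the fact that pointwise limits of Borel functions are Borel it extends to every bounded Borel $h$. The same reasoning, now using only item 1 of Definition \ref{Defp}, shows that for fixed $t\le t'$ the map $y\mapsto\int_E h(z)\,p(t,y,t',dz)$ is Borel.

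Next I would treat cylinder events. Fix $n\ge1$, times $t_1\le\dots\le t_n$ in $[0,T]$ and Borel sets $A_1,\dots,A_n$, and put $F=\{X_{t_1}\in A_1,\dots,X_{t_n}\in A_n\}$. Assuming first $s\le t_1$, iterating \eqref{Markov1}--\eqref{Markov2} and the tower property yields
\[
\mathbbm{P}^{s,x}(F)=\int_{A_1}p(s,x,t_1,dy_1)\int_{A_2}p(t_1,y_1,t_2,dy_2)\cdots\int_{A_n}p(t_{n-1},y_{n-1},t_n,dy_n).
\]
A downward induction on the index, applying the lemma at each stage --- the innermost integral is a bounded Borel function of $y_{n-1}$, each subsequent integration against $p$ preserves boundedness and Borel measurability in the remaining variable, and the last one produces joint Borel measurability in $(s,x)$ --- shows that $(s,x)\mapsto\mathbbm{P}^{s,x}(F)$ is Borel. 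If some $t_i<s$, then $X_{t_i}=x$ holds $\mathbbm{P}^{s,x}$-a.s.\ by item 3 of Definition \ref{Defp}, so those coordinates merely add a factor $\mathds{1}_{A_i}(x)$, and the general case reduces to the previous one.

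Finally I would close by the $\pi$--$\lambda$ theorem. The cylinder events form a $\pi$-system generating $\mathcal{F}=\sigma(X_r:r\in[0,T])$, and the class $\mathcal{L}=\{F\in\mathcal{F}:(s,x)\mapsto\mathbbm{P}^{s,x}(F)\text{ is Borel}\}$ contains $\Omega$, is stable under complementation (because $\mathbbm{P}^{s,x}(F^c)=1-\mathbbm{P}^{s,x}(F)$) and under countable increasing unions (by monotone convergence and stability of Borel functions under pointwise limits); being a $\lambda$-system containing the generating $\pi$-system, it equals $\mathcal{F}$, which is the first assertion. For the statement about $Z$, linearity then gives Borel measurability of $(s,x)\mapsto\mathbbm{E}^{s,x}[Z]$ for simple $Z$, hence for bounded Borel $Z$ by bounded convergence; for $Z\ge0$ one writes $\mathbbm{E}^{s,x}[Z]=\lim_n\mathbbm{E}^{s,x}[Z\wedge n]$, a pointwise supremum of Borel functions and thus Borel with values in $[0,+\infty]$; and when $Z$ is general with $(s,x)\mapsto\mathbbm{E}^{s,x}[Z]$ well defined, $\mathbbm{E}^{s,x}[Z]=\mathbbm{E}^{s,x}[Z^+]-\mathbbm{E}^{s,x}[Z^-]$ with one term finite, hence Borel. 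I expect the only real obstacle to be the measurability lemma and the careful bookkeeping of the cylinder induction; the rest is routine monotone-class machinery.
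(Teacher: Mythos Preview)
Your argument is correct and is the standard monotone-class route to this result. The paper itself does not prove Proposition~\ref{Borel}; it only states it and defers the proof to the companion references \cite{paper1preprint1} and \cite{paperAF}, so there is nothing to compare against in the present text. Two small points of bookkeeping worth tightening: (i) in your measurability lemma the phrase ``for fixed $0\le s\le t\le T$'' should read ``for fixed $t\in[0,T]$'', since you then vary $(s,x)$; (ii) when handling cylinders, the formula genuinely changes according to which $t_i$ lie below $s$, so joint Borel measurability in $(s,x)$ comes from observing that the regions $\{s\le t_1\}$, $\{t_k<s\le t_{k+1}\}$, $\{s>t_n\}$ partition $[0,T]\times E$ into finitely many Borel pieces, on each of which your expression is Borel. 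With these cosmetic fixes the proof is complete.
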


\begin{lemma}\label{LemmaBorel}
Let $V$ be a continuous  non-decreasing function on $[0,T]$ and 
\\
$f\in\mathcal{B}([0,T]\times E)$ be such that for every $(s,x)$, $\mathbbm{E}^{s,x}[\int_s^{T}|f(r,X_r)|dV_r]<\infty$, then 
\\
$(s,x)\longmapsto \mathbbm{E}^{s,x}[\int_s^{T}f(r,X_r)dV_r]$ is Borel.
\end{lemma}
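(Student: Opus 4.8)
The plan is to reduce to a bounded $f$ and then to decouple the two occurrences of the pair $(s,x)$ in the expression: one in the lower integration limit, the other in the law $\mathbbm{P}^{s,x}$. For the reduction, put $f_n:=(-n)\vee f\wedge n$ for $n\in\mathbbm{N}^*$. Since $|f_n|\le|f|$ and $f_n\to f$ pointwise, and since $\int_s^T|f(r,X_r)|dV_r\in L^1(\mathbbm{P}^{s,x})$ by assumption, two successive applications of dominated convergence (first for the $dV_r$-integral, then for $\mathbbm{E}^{s,x}$) give $\mathbbm{E}^{s,x}[\int_s^Tf_n(r,X_r)dV_r]\to\mathbbm{E}^{s,x}[\int_s^Tf(r,X_r)dV_r]$ for every $(s,x)$. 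A pointwise limit of Borel functions being Borel, it is enough to treat bounded $f$.

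So assume $|f|\le C$. For $(a,s,x)\in[0,T]\times[0,T]\times E$ set $G(a,s,x):=\mathbbm{E}^{s,x}[\int_a^Tf(r,X_r)dV_r]$, which is well defined and bounded since the integrand is. For fixed $a$, the map $\omega\mapsto\int_a^Tf(r,X_r(\omega))dV_r$ is a fixed bounded random variable; it is $\mathcal{F}$-measurable because $(r,\omega)\mapsto f(r,X_r(\omega))$ is $\mathcal{B}([0,T])\otimes\mathcal{F}$-measurable (joint measurability of the canonical process in Notation \ref{canonicalspace}) and one integrates in $r$ against the finite measure $dV$; hence Proposition \ref{Borel} gives that $(s,x)\mapsto G(a,s,x)$ is Borel. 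For fixed $(s,x)$, the map $a\mapsto\int_a^Tf(r,X_r(\omega))dV_r$ is continuous for every $\omega$ (the increment over $[a,a']$ is bounded by $C|V_{a'}-V_a|$ and $V$ is continuous) and dominated by $C(V_T-V_0)$, so $a\mapsto G(a,s,x)$ is continuous by dominated convergence.

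A real function on a product space which is Borel in one variable and continuous in the other is jointly Borel: replacing $a$ by its dyadic approximation $\lfloor 2^n a/T\rfloor T/2^n$ exhibits $G$ as the pointwise limit of the jointly Borel functions $(a,s,x)\mapsto\sum_{k=0}^{2^n-1}\mathds{1}_{[kT/2^n,(k+1)T/2^n)}(a)\,G(kT/2^n,s,x)$, the limit existing by continuity of $G$ in $a$. Hence $G$ is Borel on $[0,T]\times[0,T]\times E$, and $\mathbbm{E}^{s,x}[\int_s^Tf(r,X_r)dV_r]=G(s,s,x)$ is Borel as the composition of $G$ with the continuous map $(s,x)\mapsto(s,s,x)$. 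The only real subtlety, hence the step I expect to be the main obstacle, is exactly this coupling: Proposition \ref{Borel} concerns a fixed random variable whereas $\int_s^Tf(r,X_r)dV_r$ varies with $s$; it is the passage through the two-parameter family $G$, its joint measurability resting on the continuity of $V$ and on Fubini, followed by restriction to the diagonal $a=s$, that circumvents it. The truncation step and the measurability bookkeeping are routine.
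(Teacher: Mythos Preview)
Your argument is correct. The reduction to bounded $f$ by truncation and dominated convergence is standard, and the key device---introducing the two-variable function $G(a,s,x)$, establishing joint Borel measurability via continuity in $a$ and Proposition~\ref{Borel} in $(s,x)$, then restricting to the diagonal---cleanly resolves the coupling issue you identified. One cosmetic point: your dyadic partition $\bigcup_{k=0}^{2^n-1}[kT/2^n,(k+1)T/2^n)$ misses the endpoint $a=T$, but since $G(T,s,x)=0$ and $G$ is continuous in $a$, this is harmless.

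As for comparison with the paper: Lemma~\ref{LemmaBorel} is stated in the appendix without proof, with a reference to the companion papers \cite{paper1preprint1} and \cite{paperAF} for the details. Your approach is in any case a natural one, and I would expect the referenced proof to follow essentially the same lines---separate the dependence on the lower limit from the dependence on the law, invoke Proposition~\ref{Borel} for the latter, and use continuity of $V$ for the former.
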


\begin{definition}\label{CompletedBasis}
For any $(s,x)\in[0,T]\times E$ we will consider the  $(s,x)$-\textbf{completion} $\left(\Omega,\mathcal{F}^{s,x},(\mathcal{F}^{s,x}_t)_{t\in[0,T]},\mathbbm{P}^{s,x}\right)$ of the stochastic basis $\left(\Omega,\mathcal{F},(\mathcal{F}_t)_{t\in[0,T]},\mathbbm{P}^{s,x}\right)$ by defining $\mathcal{F}^{s,x}$ as  the $\mathbbm{P}^{s,x}$-completion of $\mathcal{F}$ , by extending $\mathbbm{P}^{s,x}$ to $\mathcal{F}^{s,x}$ and finally by defining  $\mathcal{F}^{s,x}_t$ as the $\mathbbm{P}^{s,x}$-closure of $\mathcal{F}_t$ for every $t\in[0,T]$. 
\end{definition}
We remark that, for any $(s,x)\in[0,T]\times E$, $\left(\Omega,\mathcal{F}^{s,x},(\mathcal{F}^{s,x}_t)_{t\in[0,T]},\mathbbm{P}^{s,x}\right)$ is a stochastic basis fulfilling the usual conditions. 

Proposition 3.13 in \cite{paperAF} states the following.
\begin{proposition}\label{ConditionalExp} Let $(s,x)\in[0,T]\times E$ be fixed, $Z$ be a random variable and $t\in[s,T]$, then 
$\mathbbm{E}^{s,x}[Z|\mathcal{F}_t]=\mathbbm{E}^{s,x}[Z|\mathcal{F}^{s,x}_t]$ $\mathbbm{P}^{s,x}$ a.s.
\end{proposition}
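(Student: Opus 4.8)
The plan is to reduce this identity to the elementary measure-theoretic fact that enlarging a $\sigma$-field by $\mathbbm{P}^{s,x}$-negligible sets does not alter conditional expectations. Fix $(s,x)\in[0,T]\times E$ and work throughout on $\left(\Omega,\mathcal{F}^{s,x},(\mathcal{F}^{s,x}_t)_{t\in[0,T]},\mathbbm{P}^{s,x}\right)$. I may assume $Z$ is $\mathbbm{P}^{s,x}$-integrable: if $\mathbbm{E}^{s,x}[Z\mid\cdot]$ is only defined with values in $[-\infty,+\infty]$, one first treats $Z^+\wedge n$ and $Z^-\wedge n$ and then lets $n\to\infty$ using the monotone convergence theorem for conditional expectations, so no genuine generality is lost.

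First I would make explicit the structure of the completed $\sigma$-field. By Definition \ref{CompletedBasis}, $\mathcal{F}^{s,x}_t$ is the $\mathbbm{P}^{s,x}$-closure of $\mathcal{F}_t$, hence it coincides with $\sigma\!\left(\mathcal{F}_t\cup\mathcal{N}^{s,x}\right)$, where $\mathcal{N}^{s,x}$ denotes the family of $\mathbbm{P}^{s,x}$-negligible sets of $\mathcal{F}^{s,x}$. The collection of sets $A$ for which there exists $B\in\mathcal{F}_t$ with $\mathbbm{P}^{s,x}(A\triangle B)=0$ is a $\sigma$-field, it contains $\mathcal{F}_t$ and $\mathcal{N}^{s,x}$, and it is visibly contained in $\mathcal{F}^{s,x}_t$; hence it equals $\mathcal{F}^{s,x}_t$. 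Consequently every $A\in\mathcal{F}^{s,x}_t$ admits a $B\in\mathcal{F}_t$ with $\mathds{1}_A=\mathds{1}_B$ $\mathbbm{P}^{s,x}$ a.s.

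Then the proof is a short verification that $\mathbbm{E}^{s,x}[Z\mid\mathcal{F}_t]$ is a version of $\mathbbm{E}^{s,x}[Z\mid\mathcal{F}^{s,x}_t]$: it is $\mathcal{F}_t$-measurable, hence $\mathcal{F}^{s,x}_t$-measurable, and integrable, and for any $A\in\mathcal{F}^{s,x}_t$, choosing $B\in\mathcal{F}_t$ with $\mathds{1}_A=\mathds{1}_B$ a.s. and using the defining property of $\mathbbm{E}^{s,x}[Z\mid\mathcal{F}_t]$ against the $\mathcal{F}_t$-set $B$,
\[
\mathbbm{E}^{s,x}\!\left[\mathds{1}_A\,\mathbbm{E}^{s,x}[Z\mid\mathcal{F}_t]\right]
=\mathbbm{E}^{s,x}\!\left[\mathds{1}_B\,\mathbbm{E}^{s,x}[Z\mid\mathcal{F}_t]\right]
=\mathbbm{E}^{s,x}\!\left[\mathds{1}_B\,Z\right]
=\mathbbm{E}^{s,x}\!\left[\mathds{1}_A\,Z\right].
\]
By $\mathbbm{P}^{s,x}$-a.s. uniqueness of the conditional expectation with respect to $\mathcal{F}^{s,x}_t$, this gives the claimed equality.

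The only step that is not completely mechanical is the identification $\mathcal{F}^{s,x}_t=\sigma(\mathcal{F}_t\cup\mathcal{N}^{s,x})$ together with the approximation of $\mathcal{F}^{s,x}_t$-sets by $\mathcal{F}_t$-sets up to a $\mathbbm{P}^{s,x}$-null set; I expect this to be the only point requiring a little care, and it is entirely standard. Everything else is a one-line computation, and the truncation argument indicated above disposes of any integrability restriction on $Z$ without new ideas.
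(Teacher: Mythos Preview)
Your proof is correct. The paper does not actually prove this proposition: it merely records it as ``Proposition 3.13 in \cite{paperAF}'' and moves on, so there is no argument to compare against. Your self-contained treatment --- identifying $\mathcal{F}^{s,x}_t$ with $\sigma(\mathcal{F}_t\cup\mathcal{N}^{s,x})$, approximating each $\mathcal{F}^{s,x}_t$-set by an $\mathcal{F}_t$-set up to a null set, and then verifying the defining property of the conditional expectation --- is the standard way to establish this fact and is more informative than a bare citation.
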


We recall here Definition 4.1 in \cite{paperAF}.
\begin{definition}\label{DefAF} We denote $\Delta:=\{(t,u)\in[0,T]^2|t\leq u\}$.
 On $(\Omega,\mathcal{F})$, we define a \textbf{non-homogeneous Additive Functional} (shortened AF) as a random-field indexed by $\Delta$ 
 $A:=(A^t_u)_{(t,u)\in\Delta}$,
with values in $\mathbbm{R}$,  
 verifying the two following conditions.
\begin{enumerate}
\item For any $(t,u)\in\Delta$, $A^t_u$ is $\mathcal{F}_{t,u}$-measurable;
\item for any $(s,x)\in[0,T]\times E$, there exists a real cadlag $\mathcal{F}^{s,x}$-adapted process $A^{s,x}$ (taken equal to zero on $[0,s]$ by convention) such that for any $x\in E$ and $s\leq t\leq u$, $A^t_u = A^{s,x}_u-A^{s,x}_t \,\text{  }\, \mathbbm{P}^{s,x}$ a.s.
\end{enumerate}
%We denote by $A^t$ the ($\mathcal{F}_{t,\cdot}$-adapted) process
% $u\mapsto A^t_u$ indexed by $[t,T]$. For any $(s,x)\in [0,t]\times E$, $A^{s,x}_{\cdot}-A^{s,x}_t$ 
% is a $\mathbbm{P}^{s,x}$-version  of $A^t$ on $[t,T]$.
%\\
$A^{s,x}$ will be called the \textbf{cadlag version of $A$ under} $\mathbbm{P}^{s,x}$.
% since $A$ is not a process.
\\
\\
An AF will be called a \textbf{non-homogeneous square integrable Martingale Additive Functional} (shortened square integrable MAF) if under any $\mathbbm{P}^{s,x}$ its cadlag version is a square integrable martingale.
\end{definition}

A immediate consequence of Proposition 4.17 in \cite{paperAF} is the following.
\begin{proposition}\label{bracketMAFs}
Given an increasing continuous function $V$, if in every stochastic basis $\left(\Omega,\mathcal{F}^{s,x},(\mathcal{F}^{s,x}_t)_{t\in[0,T]},\mathbbm{P}^{s,x}\right)$, we have $\mathcal{H}^2_0=\mathcal{H}^{2,V}$, then we can state the following.
\\
Let $M$ and $M'$ be two square integrable MAFs and let $M^{s,x}$ (respectively $M'^{s,x}$) be the cadlag version of $M$ (respectively $M'$) under a fixed $\mathbbm{P}^{s,x}$. There exists a Borel function $k\in\mathcal{B}([0,T]\times E,\mathbbm{R})$  such that for any $(s,x)\in[0,T]\times E$, $\langle M^{s,x},M'^{s,x}\rangle =  \int_s^{\cdot}k(r,X_r)dV_r$.
\\
In particular if $M$ is a square integrable MAF and $M^{s,x}$ its cadlag version under a fixed $\mathbbm{P}^{s,x}$,  there exists a Borel function $k\in\mathcal{B}([0,T]\times E,\mathbbm{R})$ (which can be taken positive) such that for any $(s,x)\in[0,T]\times E$, $\langle M^{s,x}\rangle =  \int_s^{\cdot}k(r,X_r)dV_r$.
\end{proposition}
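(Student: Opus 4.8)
The plan is to reduce to a single square-integrable MAF, use the standing assumption $\mathcal{H}^2_0=\mathcal{H}^{2,V}$ to obtain absolute continuity of the angular bracket with respect to $dV$, promote the compensator of the quadratic variation to a genuine additive functional by means of the Markov property, and then realise its $dV$-density as one space-time Borel function through a measurable selection argument.

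First I would dispose of $M'$ by polarisation. If $M$ and $M'$ are square integrable MAFs, then $M+M'$ and $M-M'$ are again MAFs whose cadlag versions under each $\mathbbm{P}^{s,x}$ are square integrable martingales vanishing at $0$, and $\langle M^{s,x},M'^{s,x}\rangle=\tfrac14\left(\langle(M+M')^{s,x}\rangle-\langle(M-M')^{s,x}\rangle\right)$. So once the single-MAF case is settled, providing Borel $k_{+},k_{-}\geq 0$ with $\langle(M\pm M')^{s,x}\rangle=\int_s^{\cdot}k_{\pm}(r,X_r)\,dV_r$, the function $k:=\tfrac14(k_{+}-k_{-})$ handles the general pair. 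Hence fix one square integrable MAF $M$ with cadlag versions $M^{s,x}$.

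Now, since by hypothesis $\mathcal{H}^2_0=\mathcal{H}^{2,V}$ in every $(s,x)$-stochastic basis, each $M^{s,x}\in\mathcal{H}^2_0$ in fact lies in $\mathcal{H}^{2,V}$, so $\langle M^{s,x}\rangle\ll dV$ and, by Proposition \ref{Decomposition}, $\langle M^{s,x}\rangle=\int_s^{\cdot}h^{s,x}_r\,dV_r$ for a non-negative predictable process $h^{s,x}=\frac{d\langle M^{s,x}\rangle}{dV}$. The quadratic variation $[M]$ is itself an additive functional: for $s\leq t\leq u$ the increment $[M^{s,x}]_u-[M^{s,x}]_t$ is a functional of the increments of $M^{s,x}$ on $[t,u]$, hence agrees $\mathbbm{P}^{s,x}$-a.s.\ with an $\mathcal{F}_{t,u}$-measurable random variable independent of $(s,x)$, whose cadlag version under $\mathbbm{P}^{t,y}$ is $[M^{t,y}]$. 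The crucial observation is that the predictable compensator of $[M]$, namely $\langle M\rangle$, is again an additive functional: by the Markov property \eqref{Markov3} (and Proposition \ref{ConditionalExp}) the $\mathbbm{P}^{s,x}$-conditional law given $\mathcal{F}_t$ of the path after time $t$ is $\mathbbm{P}^{t,X_t}$, and the compensator of an integrable increasing process is determined by its law, whence $\langle M^{s,x}\rangle_u-\langle M^{s,x}\rangle_t=\langle M^{t,X_t}\rangle_u$ $\mathbbm{P}^{s,x}$-a.s.\ for all $s\leq t\leq u$.

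It then remains to glue the densities $h^{s,x}$, defined a priori only as predictable processes up to $dV\otimes d\mathbbm{P}^{s,x}$-negligible sets, into a single Borel function $k\in\mathcal{B}([0,T]\times E,\mathbbm{R})$ with $h^{s,x}_r=k(r,X_r)$ holding $dV\otimes d\mathbbm{P}^{s,x}$-a.e.\ on $[s,T]$ for every $(s,x)$; equivalently, by Proposition \ref{uniquenessupto}, $\int_s^{\cdot}k(r,X_r)\,dV_r=\langle M^{s,x}\rangle$ up to indistinguishability. This is exactly the content borrowed from Proposition 4.17 of \cite{paperAF}: the additivity identity $\langle M^{s,x}\rangle_u-\langle M^{s,x}\rangle_t=\langle M^{t,X_t}\rangle_u$, together with the measurability in time of the transition function and the Borel-measurability results of Proposition \ref{Borel} and Lemma \ref{LemmaBorel}, allows one to select such a $k$ by a measurable-selection procedure. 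Since $\langle M^{s,x}\rangle$ and $V$ are non-decreasing, the density is $\geq 0$ $dV\otimes d\mathbbm{P}^{s,x}$-a.e., so $\{k<0\}$ has zero potential and $k$ may be replaced by $k\vee 0$, which is Borel, non-negative everywhere, and still satisfies the identity. The main obstacle is precisely this last step: collapsing, simultaneously over all starting points $(s,x)$, a family of predictable densities into a single deterministic function of $(r,X_r)$ genuinely requires the additive-functional structure of $\langle M\rangle$ plus measurable selection, and is the reason the statement is imported from \cite{paperAF}.
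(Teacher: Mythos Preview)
The paper does not prove this proposition at all: it merely records it as ``an immediate consequence of Proposition 4.17 in \cite{paperAF}''. Your proposal is consistent with that, since you explicitly defer the decisive gluing/measurable-selection step to the same external reference. The extra scaffolding you provide --- polarisation to reduce to a single MAF, the observation that $[M]$ and hence its compensator $\langle M\rangle$ inherit the additive-functional structure via the Markov property, and the use of $\mathcal{H}^2_0=\mathcal{H}^{2,V}$ to force $d\langle M^{s,x}\rangle\ll dV$ --- is correct and is precisely the reduction that makes Proposition 4.17 of \cite{paperAF} applicable. So your write-up and the paper agree in substance; you simply spell out the preparatory steps that the paper leaves implicit in the word ``immediate''.
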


\section{Technicalities related to Section \ref{S2}}\label{SC}

\begin{prooff} \\ of Proposition \ref{Decomposition}. Since we have $dA \ll dA + dB$ in the sense of stochastic measures with
$A,B$ predictable,  there
exists a predictable positive process $K$  such that $A = \int_0^{\cdot} K_sdA_s + \int_0^{\cdot} K_sdB_s$
up to indistinguishability, see Proposition I.3.13 in \cite{jacod}. Now there exists a $\mathbbm{P}$-null set $\mathcal{N}$ such that for any $\omega\in\mathcal{N}^c$ we have $0\leq \int_0^\cdot K_s(\omega)dB_s(\omega) =
	\int_0^\cdot (1-K_s(\omega))dA_s(\omega)$ ,
	so $K(\omega)\leq 1$ $dA(\omega)$ a.e. on $\mathcal{N}^c$. Therefore if we set  $E(\omega) = \{t: K_t(\omega) = 1\}$ and $F(\omega) = \{t: K_t(\omega) < 1\}$ then $E(\omega)$ and $F(\omega)$ are disjoint Borel sets and $dA(\omega)$ has all its mass in $E(\omega)\cup F(\omega)$ so we can decompose $dA(\omega)$ within these two sets. 
	\\
	\\
	We therefore define the processes $A^{\perp B} =  \int_0^{\cdot} \mathds{1}_{\{K_s = 1\}}dA_s$ and ; $A^B = \int_0^{\cdot} \mathds{1}_{\{K_s < 1\}}dA_s$.
	$A^{\perp B}$ and $A^B$ are both in $\mathcal{V}^{p,+}$, and $A= A^{\perp B}+A^B$. 
In particular the  (stochastic) measures $dA^{\perp B}$ and $dA^B$ fulfill
$dA^{\perp B}(\omega)(G) = dA(\omega)(E(\omega) \cap G)$ and
$dA^B(\omega)(G) = dA(\omega)(F(\omega) \cap G)$.
\\
We remark $dA^{\perp B} \bot dB$ in the sense of stochastic measures. 
Indeed, fixing $\omega \in \mathcal{N}^c$,
for  $t\in E(\omega)$, $K_t(\omega)=1$, so 
$\int_{E(\omega)} dA(\omega) = \int_{E(\omega)} dA(\omega) + \int_{E(\omega)} dB(\omega)$ implying that $\int_{E(\omega)} dB(\omega) = 0$.
% So $dA^{\perp B} \bot dB$ in the sense of stochastic measures 
Since for any $\omega\in\mathcal{N}^c$, $dB(\omega)\left(E(\omega)\right)=0$ while $dA^{\perp B}(\omega)$ has all its mass in $E(\omega)$, which gives
 this first result.
\\
\\
Now let us prove  $dA^B\ll dB$ in the sense of stochastic measure.
\\
Let $\omega\in\mathcal{N}^c$, and let $G\in\mathcal{B}([0,T])$, such that $\int_GdB(\omega)=0$. Then
\begin{equation*}
	\begin{array}{rcl}
	\int_GdA^B(\omega) &=& \int_{G\cap F(\omega)}dA(\omega) \\
	&=& \int_{G\cap F(\omega)}K(\omega)dA(\omega) + \int_{G\cap F(\omega)}K(\omega)dB(\omega) \\
	&=& \int_{G\cap F(\omega)}K(\omega)dA(\omega).
	\end{array}
\end{equation*}
	So $\int_{G\cap F(\omega)}(1-K(\omega))dA(\omega)=0$, but $(1-K(\omega))>0$ on $F(\omega)$. 
	\\
	So $dA^B(\omega)(G) = 0$. Consequently for every $\omega\in\mathcal{N}^c$, $dA^B(\omega)\ll dB(\omega)$ and so that $dA^B\ll dB$.
	\\
	Now, since $K$ is positive and $K(\omega)\leq 1$ $dA(\omega)$ a.e. for almost all $\omega$, we can replace $K$ by $K\wedge 1$ which is still positive predictable, without changing the associated stochastic measures $dA^B,dA^{\perp B}$; therefore we can
 consider that $K_t(\omega)\in[0,1]$ for all $(\omega,t)$.
	\\
	We remark that for   $\mathbbm{P}$ almost all $\omega$
	the decomposition  $A^{\perp B}$ and $A^B$ is unique
	because of the corresponding   uniqueness  of the decomposition 
	in the Lebesgue-Radon-Nikodym theorem for each fixed $\omega\in\mathcal{N}^c$. 
	
	Since $dA^B \ll dB$, again by Proposition I.3.13 in \cite{jacod}, 
	there exists a predictable positive process that we will call $\frac{dA}{dB}$ 
	such that $A^B = \int_0^\cdot \frac{dA}{dB}dB$ and which is only unique up to $dB\otimes d\mathbbm{P}$ null sets. 
\end{prooff}

 \begin{proposition}\label{CS}
 	Let $M$ and $M'$ be two local martingales in $\mathcal{H}^2_{loc}$ and  let 
 	\\
 	$V\in\mathcal{V}^{p,+}$. We have $\frac{d\langle M\rangle}{dV}\frac{d\langle M'\rangle}{dV} -\left(\frac{d\langle M,M'\rangle}{dV}\right)^2 \geq 0\quad dV\otimes d\mathbbm{P}$ a.e.
 \end{proposition}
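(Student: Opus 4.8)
The plan is to reduce this pointwise Cauchy--Schwarz-type inequality to the non-negativity of the Radon--Nikodym derivative of a non-decreasing angular bracket, which is already provided by Proposition \ref{Decomposition}. First I would dispose of integrability by localization: choosing a common localizing sequence $(\tau_n)$ for which $M^{\tau_n}$ and $(M')^{\tau_n}$ lie in $\mathcal{H}^2$, the processes $\langle M\rangle^{\tau_n}$, $\langle M'\rangle^{\tau_n}$ belong to $\mathcal{V}^{p,+}$ and $\langle M,M'\rangle^{\tau_n}$ to $\mathcal{V}^p$, and the corresponding Radon--Nikodym derivatives on the successive intervals $[0,\tau_n]$ are mutually consistent, so it suffices to prove the statement under the extra assumption $M,M'\in\mathcal{H}^2$.

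Next, I would fix a rational $\lambda\in\mathbbm{Q}$ and consider $M+\lambda M'\in\mathcal{H}^2$. By bilinearity and symmetry of the predictable bracket, $\langle M+\lambda M'\rangle=\langle M\rangle+2\lambda\langle M,M'\rangle+\lambda^2\langle M'\rangle$, which is non-decreasing; hence Proposition \ref{Decomposition} yields $\frac{d\langle M+\lambda M'\rangle}{dV}\geq 0$ $dV\otimes d\mathbbm{P}$ a.e. On the other hand, Proposition \ref{linearity}, together with the homogeneity $\frac{d(\alpha A)}{dV}=\alpha\frac{dA}{dV}$ for $\alpha\in\mathbbm{R}$ (which follows from the same uniqueness argument, using the $Pos/Neg$ convention introduced before Proposition \ref{linearity} to handle the signed process $\langle M,M'\rangle$), gives $\frac{d\langle M+\lambda M'\rangle}{dV}=\frac{d\langle M\rangle}{dV}+2\lambda\frac{d\langle M,M'\rangle}{dV}+\lambda^2\frac{d\langle M'\rangle}{dV}$ $dV\otimes d\mathbbm{P}$ a.e. Writing $a:=\frac{d\langle M\rangle}{dV}$, $b:=\frac{d\langle M,M'\rangle}{dV}$, $c:=\frac{d\langle M'\rangle}{dV}$, I conclude that for each $\lambda\in\mathbbm{Q}$ the inequality $a+2\lambda b+\lambda^2 c\geq 0$ holds outside a $dV\otimes d\mathbbm{P}$-null set.

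Finally, I would take the union of these countably many null sets over $\lambda\in\mathbbm{Q}$ to get a single $dV\otimes d\mathbbm{P}$-null set off which $a+2\lambda b+\lambda^2 c\geq 0$ for every rational $\lambda$, hence, by continuity of $\lambda\mapsto a+2\lambda b+\lambda^2 c$, for every real $\lambda$. A real quadratic in $\lambda$ that is everywhere non-negative has non-positive discriminant, the degenerate case $c=0$ forcing $b=0$; in all cases $b^2-ac\leq 0$, which is exactly $\frac{d\langle M\rangle}{dV}\frac{d\langle M'\rangle}{dV}-\left(\frac{d\langle M,M'\rangle}{dV}\right)^2\geq 0$ $dV\otimes d\mathbbm{P}$ a.e.

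The computation itself is routine; the only point requiring care is the bookkeeping of null sets. The non-negativity of $\frac{d\langle M+\lambda M'\rangle}{dV}$ and the linearity identity each hold only modulo a $\lambda$-dependent $dV\otimes d\mathbbm{P}$-null set, so it is essential to quantify over a countable dense set of $\lambda$'s before passing to all real $\lambda$ by continuity, and to make sure the homogeneity and additivity of $\frac{d\,\cdot}{dV}$ are being applied legitimately to the signed finite-variation process $\langle M,M'\rangle$.
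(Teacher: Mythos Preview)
Your proof is correct and follows essentially the same route as the paper's own argument: non-negativity of $\frac{d\langle M+\lambda M'\rangle}{dV}$ for rational $\lambda$ via Proposition~\ref{Decomposition}, linearity via Proposition~\ref{linearity}, a countable union of null sets, extension to real $\lambda$ by continuity, and the discriminant conclusion. The only cosmetic differences are your explicit localization step (which the paper omits) and your explicit handling of the degenerate case $c=0$.
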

 
 \begin{proof}
 	Let $x\in\mathbbm{Q}$. Since 
 	$\langle M+xM'\rangle$ is
 	an increasing process starting at zero, then
 	by Proposition \ref{Decomposition}, we have $\frac{d\langle M+xM'\rangle}{dV} \geq 0\quad dV\otimes d\mathbbm{P}$ a.e. 
 	\\
 	By the linearity property stated
 	in Proposition \ref{linearity}, we have
 	\\
 	$0\leq \frac{d\langle M+xM'\rangle}{dV} = \frac{d\langle M\rangle}{dV} + 2x\frac{d\langle M,M'\rangle}{dV}+x^2\frac{d\langle M'\rangle}{dV}$ $dV\otimes d\mathbbm{P}$ a.e.
 	Since $\mathbbm{Q}$ is countable, there exists a $dV\otimes d\mathbbm{P}$-null set $\mathcal{N}$ such that for 
$(\omega,t)\notin\mathcal{N}$ and $x\in\mathbbm{Q}$,
 	\\
 	$\frac{d\langle M\rangle}{dV}(\omega,t) + 2x\frac{d\langle M,M'\rangle}{dV}(\omega,t)+x^2\frac{d\langle M'\rangle}{dV}(\omega,t) \geq 0$.
 	By continuity of polynomes, this holds for any $x\in\mathbbm{R}$.
 Expressing the discriminant of this polynome, we deduce that  $4\left(\frac{d\langle M,M'\rangle}{dV}(\omega,t)\right)^2 - 4\frac{d\langle M\rangle}{dV}(\omega,t)\frac{d\langle M'\rangle}{dV}(\omega,t) \leq 0$ for all
 $(\omega,t)\notin\mathcal{N}$. 
 \end{proof}

 \begin{prooff}  \\ of Proposition \ref{DecompoMart}. Since the angular bracket $\langle M\rangle$ of a square integrable martingale
$M$  always belongs to 
 	$\mathcal{V}^{p,+}$, by Proposition \ref{Decomposition}, we can 
consider the
 	processes $\langle M\rangle^V$ and $\langle M\rangle^{\perp V}$;
in particular  there
 	exists a predictable process $K$ with values in $[0,1]$ such that $\langle M\rangle^V = \int_0^{\cdot}\mathds{1}_{\{K_s<1\}}d\langle M\rangle_s$ and  $\langle M\rangle^{\perp V}  =   \int_0^{\cdot}\mathds{1}_{\{K_s=1\}}d\langle M\rangle_s$.
 	\\
 	\\
 	We can then set $M^V = \int_0^{\cdot}\mathds{1}_{\{K_s<1\}}dM_s$ and $M^{\bot V}= \int_0^{\cdot}\mathds{1}_{\{K_s=1\}}dM_s$ which are well-defined because $K$ is predictable, and therefore $\mathds{1}_{\{K_t<1\}}$ and $\mathds{1}_{\{K_t=1\}}$ are also predictable. $M^V,M^{\bot V}$ belong to $\mathcal{H}_0^2$ because their
 	angular brackets are both  bounded by $\langle M\rangle_T\in L^1$. Since $K$ takes values in $[0,1]$, we have 
 	\\
 	$M^V+M^{\perp V} = \int_0^{\cdot}\mathds{1}_{\{K_s<1\}}dM_s+\int_0^{\cdot}\mathds{1}_{\{K_s=1\}}dM_s = M$;
 	\\
 	$\langle M^V \rangle = \int_0^{\cdot}\mathds{1}_{\{K_s<1\}}d\langle M\rangle_s = \langle M \rangle^V$;
 	$\langle M^{\perp V} \rangle = \int_0^{\cdot}\mathds{1}_{\{K_s=1\}}d\langle M\rangle_s = \langle M\rangle^{\perp V}$
 	\\
 	and $\langle M^V, M^{\bot V}\rangle = \int_0^{\cdot}\mathds{1}_{\{K_s<1\}}\mathds{1}_{\{K_s=1\}}d\langle M\rangle_s=0$. 
 \end{prooff}

\begin{prooff} \\ of Proposition \ref{OrthogonalSpaces}.
	We start by remarking that for any $M_1,M_2$ in $\mathcal{H}^2_0$, a consequence of Kunita-Watanabe's decomposition (see Theorem 4.27 in \cite{jacod}) is that $dVar(\langle M_1,M_2\rangle)\ll d\langle M_1\rangle$ and $dVar(\langle M_1,M_2\rangle)\ll d\langle M_2\rangle$.
\\
\\
Now, let  $M_1$ and $M_2$ be in $\mathcal{H}^{2,V}$. We have
$dVar(\langle M_1,M_2\rangle)\ll  d\langle M_1\rangle\ll dV$. So since $\langle M_1+M_2\rangle=\langle M_1\rangle+2\langle M_1,M_2\rangle+\langle M_2\rangle$, then 
	$d\langle M_1+M_2\rangle \ll dV$ which shows that $\mathcal{H}^{2,V}$ is a vector space.
	\\
	\\
	If $M_1$ and $M_2$ are in $\mathcal{H}^{2,\perp V}$, then since $dVar(\langle M_1,M_2\rangle)\ll  d\langle M_1\rangle$ we can write $Var(\langle M_1,M_2\rangle)=\int_0^{\cdot}\frac{dVar(\langle M_1,M_2\rangle)}{d\langle M_1\rangle}d\langle M_1\rangle$ which is almost surely singular with respect to $dV$ since $M_1$
	belongs to $ \mathcal{H}^{2,\perp V}$. 
	% it is supported in support of $d\langle M_1\rangle$.
	So, by the bilinearity of the angular bracket
	$\mathcal{H}^{2,\perp V}$ is also a vector space.
	\\
	\\
	Finally if $M_1\in\mathcal{H}^{2,V}$ and $M_2\in \mathcal{H}^{2,\perp V}$ then $dVar(\langle M_1,M_2\rangle)\ll  d\langle M_1\rangle\ll dV$ but we 
also have seen  that if $d\langle M_2\rangle$ is singular to $dV$ 
then so is $dVar(\langle M_1,M_2\rangle)  \ll d\langle M_2\rangle$.
	\\
	For fixed $\omega$, a measure being simultaneously dominated and singular 
	with respect to to $dV(\omega)$ is necessarily the null measure, so $dVar(\langle M_1,M_2\rangle)=0$ as a stochastic measure. Therefore $M_1$ and $M_2$ are strongly orthogonal, which implies in particular that $M_1$ and $M_2$ are orthogonal in $\mathcal{H}^2_0$.
	\\
	So we have shown that $\mathcal{H}^{2,V}$ and $\mathcal{H}^{2,\perp V}$ are orthogonal sublinear-spaces of $\mathcal{H}^2_0$ but we also know that $\mathcal{H}^2_0 = \mathcal{H}^{2,V}+\mathcal{H}^{2,\perp V}$ thanks to Proposition \ref{DecompoMart}, so 
	\\
	$\mathcal{H}^2_0 = \mathcal{H}^{2,V}\oplus^{\perp}\mathcal{H}^{2,\perp V}$.
	This implies that $\mathcal{H}^{2,V}=(\mathcal{H}^{2,\perp V})^{\perp}$ and 
	\\
	$\mathcal{H}^{2,\perp V}=(\mathcal{H}^{2, V})^{\perp}$ and therefore that these spaces are closed. So they are sub-Hilbert spaces. 
	%By the characterization
	%of the scalar product in $\mathcal{H}^{2}$, 
	We also have shown that they were strongly orthogonal spaces, 
	in the sense that any
	$M^1\in\mathcal{H}^{2,V}$, $M^2\in\mathcal{H}^{2,\perp V}$ are strongly orthogonal.
By localization the strong orthogonality property also extends
to  $M^1\in\mathcal{H}^{2,V}_{loc}$, $M^2\in\mathcal{H}^{2,\perp V}_{loc}$. 
\end{prooff}

%
%
%We recall here a classical notion of martingale theory.
%\begin{definition}
%	Let $p\in[1,\infty[$, a subset $\mathcal{H}\subset\mathcal{H}^p$ will be called a \textbf{stable sub-space} if it is a closed sublinear space such that
%	for any $M\in\mathcal{H}$, any event $A\in\mathcal{F}_0$ and any stopping time $\tau$ then $\mathds{1}_A M^{\tau}\in\mathcal{H}$.
%\end{definition}
%
%\begin{proposition}
%	$\mathcal{H}^{2, V}$ and $\mathcal{H}^{2, \perp V}$ are stable subspaces of $\mathcal{H}^2$.
%\end{proposition}
%\begin{proof}
%	Since by Proposition \ref{OrthogonalSpaces}, $\mathcal{H}^{2, V}$ and $\mathcal{H}^{2, \perp V}$ are closed sub-linear spaces of $\mathcal{H}^2$, then Proposition 4.3 in \cite{jacod79} states that
%the result will follow 
% if we  show that for any $M$ in $\mathcal{H}^{2, V}$ (respectively in $\mathcal{H}^{2, \perp V}$) and $H$ in $L^2(M)$ then $\int_0^{\cdot} HdM$ is in $\mathcal{H}^{2, V}$ (respectively in $\mathcal{H}^{2, \perp V}$). So let $M \in \mathcal{H}^{2, V}$ (respectively in $\mathcal{H}^{2, \perp V}$) and $H$ in $L^2(M)$, then $\langle \int_0^{\cdot} HdM\rangle = \int_0^{\cdot} H^2 d\langle M\rangle$ therefore if $d\langle M\rangle$ is dominated by $dV$ (respectively singular to $dV$), so is $d\langle \int_0^{\cdot} HdM\rangle$.
%\end{proof}

\end{appendices}
{\bf ACKNOWLEDGEMENTS.} The authors are grateful to Andrea Cosso
for stimulating discussions. The research of the first named author
was provided by a PhD fellowship (AMX) of the Ecole Polytechnique. 

%\newpage
\bibliographystyle{plain}
\bibliography{../../biblioPhDBarrasso_bib/biblioPhDBarrasso}
%\bibliography{biblioPhDBarrasso}

\end{document}